\theoremstyle{plain}
\newtheorem{thm}{Theorem}[section] 
\newtheorem{defn}[thm]{Definition} 
\newtheorem{exmp}[thm]{Example} 
\newtheorem{lem}[thm]{Lemma} 
\newtheorem{pro}[thm]{Proposition} 
\newtheorem{rem}[thm]{Remark} 
\newtheorem{co}[thm]{Corollary}
\newtheorem{open}[thm]{Open Question}
\newtheorem{conj}[thm]{Conjecture}
\newcommand{\vast}{\bBigg@{3.2}}
\newcommand{\Vast}{\bBigg@{5}}
\begin{document}
\begin{center}
\section*{Spectral representations of quasi-infinitely divisible processes}
\subsection*{Riccardo Passeggeri\footnote{Department of Mathematics, Imperial College London. Email: riccardo.passeggeri14@imperial.ac.uk}}
\today
\end{center}
\tableofcontents
\begin{abstract}
	In this work we first introduce quasi-infinitely divisible (QID) random measures and formulate spectral representations. Then, we introduce QID stochastic integrals and present integrability conditions and continuity properties. Further, we introduce QID stochastic processes, \textit{i.e.~}stochastic processes with QID finite dimensional distributions. For example, a process $X$ is QID if there exist two ID processes $Y$ and $Z$ such that $X+Y\stackrel{d}{=}Z$ with $Y$ independent of $X$. The class of QID processes is strictly larger than the class of ID processes. We provide spectral representations and L\'{e}vy-Khintchine formulations for potentially all QID processes. Finally, we prove that QID random measures are dense in the space of random measures under convergence in distribution. Throughout this work we present many examples.
\end{abstract}
\textbf{Key words:} quasi-infinitely divisible distributions, random measure, stochastic integral, L\'{e}vy-Khintchine formulation, infinitely divisible.
\section{Introduction}
Infinitely divisible (ID) distributions represents one of the main class of probability distributions. Their development goes back to the work of L\'{e}vy and De Finetti. Concerning ID processes, one of the most pivotal work in the field is given by the Rajput and Rosinski paper in 1989 \cite{RajRos}. This work provides extremely useful results on the spectral representation of discrete and centred continuous ID process. 
\\Recently, a series of works shed new light on a broader class of distributions called quasi-ID (QID) distributions, where the L\'{e}vy measure is now a signed measure (thus taking negative values). The recent work of Lindner, Pan and Sato \cite{LPS} represents one of the most important papers of this series. It shows for example that the set of QID distributions is dense in the set of all probability distributions with respect to weak convergence. Moreover, they proved that a distribution concentrated on the integers is QID if and only if its characteristic function does not have zeroes. This last result is extended in \cite{Berger} to distributions which can be written as the sum of a distribution concentrated on the integers and a distribution which is absolutely continuous w.r.t.~the Lebesgue measure. An interesting result shown in \cite{Berger} states that a distribution which has a L\'{e}vy measure with complex values cannot exist. 
\\ QID distributions have been shown to have links to the field of prime numbers as well. Indeed, in \cite{Naka} it is shown that $\frac{\xi(\sigma-it)}{\xi(\sigma)}$, where $\xi$ is the complete Riemann zeta function and $t,\sigma\in\mathbb{R}$, has a characteristic function which is QID, but not ID, for $\sigma>1$. In addition, in \cite{Naka2} it is shown that any zeta distribution defined by Dirichlet series
$\sum_{n=1}^\infty a(n) n^{-s}$ with $a(1) >0$, $a(n) \ge 0$ when
$n \ge 2$ and $a(n) = O(n^\varepsilon)$ for any $\varepsilon >0$
is quasi infinitely divisible if $\Re (s) >1$ is sufficiently large. Further, QID distributions turn out to be of importance in mathematical physics too, as seen in \cite{Physics2} and \cite{Physics1}.

Therefore, it appears natural to see to what extent the results in \cite{RajRos} extend to the QID case. This is part of the content of the present work. Indeed, this work extends the results of the celebrated 1989 paper by Rajput and Rosinski to the QID framework. However, we also investigate general questions like: What is the L\'{e}vy-Khintchine representation of QID processes? What is the L\'{e}vy-Khintchine representation when the QID process belongs to $l_{2}$ (the Hilbert space of square summable sequences)? What is the connection between QID processes and L\'{e}vy processes? Are QID random measures dense in the space of random measures?
\\ This work is structured as follows. Section \ref{Notation} introduces the notation and preliminaries, including the introduction of QID random measures. Section \ref{chapter-IDvQID} shows the connections between QID and ID random measures. In Section \ref{Ch-QIDrm} we present general results on QID random measures and show explicit cases in which they arise. In Section \ref{Ch-QIDstochint} QID stochastic integrals are defined. We extend the measure theoretical results at the heart of the Rajput and Rosinski's 1989 paper (\cite{RajRos}) to the signed measure framework, and provide a L\'{e}vy-Khintchine representation and integrability conditions for QID stochastic integrals. In Section \ref{Ch-Continuity} we show a continuity property of the QID stochastic integral. In Section \ref{Ch-Spectr} quasi-L\'{e}vy measures on $l_{2}$ and QID processes are defined. We prove that $X$ is QID if there exist two ID processes $Y$ and $Z$ such that $X+Y\stackrel{d}{=}Z$ with $Y$ independent of $X$. In this case we say that $X$ is \textit{generated} by $Y$ and $Z$, and the class of such QID processes is called \textit{generated QID processes}. We provide a L\'{e}vy-Khintchine representation for generated QID processes on $l_{2}$ and for generated QID processes of any arbitrary index set, and present a spectral representation for discrete parameters generated QID processes \textit{\`{a} la} Rajput and Rosinski. In the last subsection of Section \ref{Ch-Spectr}, we show further results on general QID processes and provide some examples. Finally, in Section \ref{Sec-Atomless}, we discuss the atomless condition of random measures in the QID framework and then show that QID random measures are dense in the space of random measures under convergence in distribution.
\section{Notation and Preliminaries}\label{Notation}
In this section we introduce the notation and the preliminaries needed in this work. \\By a measure on a measurable space $(X,\mathcal{G})$ we always mean a positive measure on $(X,\mathcal{G})$, namely an $[0,\infty]$-valued $\sigma$-additive set function on $\mathcal{G}$ that assigns the value 0 to the empty set. Given a non-empty set $X$, the symbol $\mathcal{B}(X)$ stands for the Borel $\sigma$-algebra of $X$, unless stated differently. The law and the characteristic function of a random variable $X$ will be denoted by $\mathcal{L}(X)$ and by $\hat{\mathcal{L}}(X)$, respectively. We will use term \textit{measure} for a positive measure and the term \textit{signed measure} for a signed measure. Finally, due to their frequent use we abbreviate the following words: random variable by r.v., random measure by r.m., characteristic function by c.f.~and characteristic triplet by c.t..
\\Given the importance of signed measures in this work we recall now the definition and some properties.
\begin{defn}[signed measure]\label{Def-signedmeasure}
Given a measurable space $(X, \Sigma)$, that is, a set $X$ with a $\sigma$-algebra $\Sigma$ on it, an extended signed measure is a function $\ \mu :\Sigma \to {\mathbb {R}}\cup \{\infty ,-\infty \}$ s.t.~$\mu (\emptyset )=0$ and $\mu$ is sigma additive, that is, it satisfies the equality $ \mu \left(\bigcup _{{n=1}}^{\infty }A_{n}\right)=\sum _{{n=1}}^{\infty }\mu (A_{n})$ where the series on the right must converge in ${\mathbb {R}}\cup \{\infty ,-\infty \}$ absolutely (namely the value of the series is independent of the order of its elements), for any sequence $A_{1}, A_{2},...$ of disjoint sets in $\Sigma$.
\end{defn}
\noindent As a consequence any extended signed measure can take plus or minus infinity as value but not both. Recall also that the \textit{total variation} of a signed measure $\mu$ is defined as the measure $|\mu|:\Sigma\rightarrow [0, \infty]$ defined by
\begin{equation}\label{def-totalvariation}
|\mu|(A):=\sup\sum_{j=1}^{\infty}|\mu(A_{j})|
\end{equation}
where the supremum is taken over all the partitions $\{A_{j}\}$ of $A\in\Sigma$. The total variation $|\mu|$ is finite if and only if $\mu$ is finite. By the Hahn decomposition theorem, for any signed measure $\mu$, there exist disjoint Borel sets $C^{+}$ and $C^{-}$ with $C^{+}\cup C^{-}=X$ and $C^{+}\cap C^{-}=\emptyset$ s.t.~$\mu(A\cap C^{+})\geq 0$ and $\mu(A\cap C^{-})\leq 0$ for every $A\in\Sigma$. In the following, we present the definition of mutually singular measures and the Jordan decomposition theorem.
\begin{defn}
	Two measures $\mu$ and $\nu$ on $(X, \Sigma)$ are said to be \textnormal{mutually singular} if there are disjoint sets $A, B \in \Sigma$ with $X = A \cup B$ and $\mu(A) = 0$ while $\nu(B) = 0$. In this case, we write	$\mu\perp \nu$.
\end{defn}
\begin{thm}[Jordan Decomposition Theorem]\label{Jordan}
	Let $\mu$ be a signed measure on $(X, \Sigma)$. Then there exist two mutually singular positive measures $\mu^{+}$ and $\mu^{-}$ such that $\mu= \mu^{+} - \mu^{-}$. Furthermore, if $\lambda$ and $\nu$ are any two positive measures with $\mu= \lambda -\nu$, then for each $E \in \Sigma$ we have $\lambda(E) \geq \mu^{+}(E)$ and $\nu(E) \geq \mu^{-}(E)$. Finally, if $\lambda\perp\nu$, then $\lambda=\mu^{+}$ and $\nu=\mu^{-}$.
\end{thm}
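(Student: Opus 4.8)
The plan is to read off both the decomposition and its extremal property directly from the Hahn decomposition recorded above, so that no new machinery is needed. Fix the Hahn sets $C^{+},C^{-}$ with $C^{+}\cup C^{-}=X$, $C^{+}\cap C^{-}=\emptyset$, $\mu(A\cap C^{+})\geq 0$ and $\mu(A\cap C^{-})\leq 0$ for all $A\in\Sigma$, and define $\mu^{+}(E):=\mu(E\cap C^{+})$ and $\mu^{-}(E):=-\mu(E\cap C^{-})$. First I would check that these are genuine positive measures: non-negativity is exactly the sign condition on $C^{\pm}$, the value $0$ on $\emptyset$ is clear, and $\sigma$-additivity is inherited from that of $\mu$ applied to the traces $A_{n}\cap C^{\pm}$ of a disjoint sequence. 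Splitting $E=(E\cap C^{+})\cup(E\cap C^{-})$ gives $\mu(E)=\mu^{+}(E)-\mu^{-}(E)$, where the difference is well defined precisely because an extended signed measure omits at least one of $\pm\infty$. Finally $\mu^{+}\perp\mu^{-}$, with witnesses $C^{-}$ and $C^{+}$, since $\mu^{+}(C^{-})=\mu(\emptyset)=0$ and $\mu^{-}(C^{+})=-\mu(\emptyset)=0$; this settles existence.

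For the extremal inequalities, suppose $\mu=\lambda-\nu$ with $\lambda,\nu$ positive. Then for every $E\in\Sigma$ I would estimate
\[
\mu^{+}(E)=\mu(E\cap C^{+})=\lambda(E\cap C^{+})-\nu(E\cap C^{+})\leq\lambda(E\cap C^{+})\leq\lambda(E),
\]
where the first inequality uses $\nu\geq 0$ and the second uses monotonicity of $\lambda$; the symmetric computation on $C^{-}$ yields $\mu^{-}(E)\leq\nu(E)$.

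For uniqueness, assume in addition $\lambda\perp\nu$ and pick a partition $X=A\cup B$ with $\lambda(A)=0=\nu(B)$. The inequalities just proved give $\mu^{+}(A)\leq\lambda(A)=0$ and $\mu^{-}(B)\leq\nu(B)=0$, so $\mu^{+}$ is concentrated on $B$ and $\mu^{-}$ on $A$, while $\lambda$ is concentrated on $B$ and $\nu$ on $A$. A short set-chase then collapses everything: for each $E$ one has $\mu^{+}(E)=\mu^{+}(E\cap B)=\mu(E\cap B)=\lambda(E\cap B)=\lambda(E)$, using that $\mu^{+}$ and $\lambda$ vanish on $A$ while $\mu^{-}$ and $\nu$ vanish on $B$; the mirror-image argument on $A$ gives $\mu^{-}(E)=\nu(E)$.

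I do not expect a genuine obstacle here, since the Hahn decomposition does the heavy lifting and all three parts reduce to elementary bookkeeping; the only point demanding care is the extended-valued setting. At each subtraction above I would verify that no $\infty-\infty$ arises: by the convention that an extended signed measure takes at most one of the values $\pm\infty$, at most one of $\mu^{+},\mu^{-}$ is infinite, and in every displayed identity the subtracted term is either $0$ or finite, so all manipulations remain legitimate.
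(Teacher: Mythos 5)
Your proof is correct, and it takes the route the paper itself sets up: the paper states this classical theorem without proof, immediately after recalling the Hahn decomposition, which is exactly the tool you use to define $\mu^{+}(E)=\mu(E\cap C^{+})$ and $\mu^{-}(E)=-\mu(E\cap C^{-})$ and to deduce minimality and uniqueness. Your attention to the extended-valued setting (at most one of $\mu^{+},\mu^{-}$ can be infinite, so no $\infty-\infty$ occurs) is the right point of care and is handled correctly.
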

\noindent Recall that while the Jordan decomposition is unique the Hahn decomposition is only essential unique, indeed $\mu$-null sets can be transferred from $C^{+}$ to $C^{-}$ and vice versa.
\begin{defn}[Signed bimeasure]
	Let $(X,\Sigma)$ and $(Y,\Gamma)$ be two measurable spaces, $\Sigma\times\Gamma$ the Cartesian product of $\Sigma$ and $\Gamma$ (to be distinguished from $\Sigma\otimes\Gamma$, which stands for the product $\sigma$-algebra of $\Sigma$ and $\Gamma$). A \textnormal{signed bimeasure} is a function $M:\Sigma\times\Gamma\rightarrow[-\infty,\infty]$ such that:
	\\\textnormal{(i)} the function $A\rightarrow M(A,B)$ is a signed measure on $\Sigma$ for every $B\in\Gamma$,
	\\\textnormal{(i)} the function $B\rightarrow M(A,B)$ is a signed measure on $\Gamma$ for every $A\in\Sigma$.
	
\end{defn}
\noindent For a signed bimeasure $M$, we denote by $M^{+}$ and $M^{-}$ the Jordan decomposition of $M(A,B)$ for fixed $A\in\Sigma$, and $M_{+}$ and $M_{-}$ the Jordan decomposition of $M(A,B)$ for fixed $B\in\Gamma$.

Now, we introduce the concept of a quasi-L\'{e}vy type measure. Although it is called measure it is not always a measure. This explains the need of the following definitions, which we recall from \cite{LPS}:
\begin{defn}\label{def1}
Let $\mathcal{B}_{r}(\mathbb{R}):=\{B\in\mathcal{B}(\mathbb{R})| B \cap(−r, r) = \emptyset\}$ for $r > 0$ and $\mathcal{B}_{0}(\mathbb{R}):= \bigcup_{r>0} \mathcal{B}_{r}(\mathbb{R})$ be the class of all
Borel sets that are bounded away from zero. Let $\nu : \mathcal{B}_{0}(\mathbb{R})\rightarrow\mathbb{R}$ be a function such that
$\nu_{|\mathcal{B}_{r}(\mathbb{R})}$ is a finite signed measure for each $r > 0$ and denote the total variation, positive and negative part of $\nu_{|\mathcal{B}_{r}(\mathbb{R})}$ by $|\nu_{|\mathcal{B}_{r}(\mathbb{R})}|$, $\nu^{+}_{|\mathcal{B}_{r}(\mathbb{R})}$ and $\nu^{-}_{|\mathcal{B}_{r}(\mathbb{R})}$ respectively. Then the \textnormal{total variation} $|\nu|$, the \textnormal{positive part} $\nu^{+}$ and the \textnormal{negative part} $\nu^{-}$ of $\nu$ are defined to be the unique measures on $(\mathbb{R},\mathcal{B}(\mathbb{R}))$ satisfying
\begin{equation*}
|\nu|(\{0\})=\nu^{+}(\{0\})=\nu^{-}(\{0\})=0
\end{equation*}
\begin{equation*}
\text{and}\quad|\nu|(A)=|\nu_{|\mathcal{B}_{r}(\mathbb{R})}|,\,\,\nu^{+}(A)=\nu_{|\mathcal{B}_{r}(\mathbb{R})}^{+}(A),\,\,\nu^{-}(A)=\nu_{|\mathcal{B}_{r}(\mathbb{R})}^{-}(A),
\end{equation*}
for $A\in\mathcal{B}_{r}(\mathbb{R})$, for some $r>0$.
\end{defn}
 As mentioned in \cite{LPS}, $\nu$ is not a a signed measure because it is defined on $\mathcal{B}_{0}(\mathbb{R})$, which is not a $\sigma$-algebra. In the case it is possible to extend the definition of $\nu$ to $\mathcal{B}(\mathbb{R})$ such that $\nu$ will be a signed measure then we will identify $\nu$ with its extension to $\mathcal{B}(\mathbb{R})$ and speak of $\nu$ as a signed measure. Moreover, the uniqueness of $|\nu|$, $\nu^{+}$ and $\nu^{-}$ is ensured by the Carath\'{e}odory's extension theorem.
\begin{rem}
	For the sake of clarity, notice that $\mathcal{B}_{0}(\mathbb{R})= \{B\in\mathcal{B}(\mathbb{R}):0\notin \overline{B} \}\neq \{B\in\mathcal{B}(\mathbb{R}):0\notin B \}$. Indeed, consider the set $A=\{\frac{1}{n}:n\in\mathbb{N}\}$ then $A$ is a Borel set since it is a countable intersection of closed Borel sets. Notice that $A\in \{B\in\mathcal{B}(\mathbb{R}):0\notin B \}$. Moreover, observe that for every $\epsilon>0$ we can find an element of $A$ that do not belong to $\mathcal{B}_{\epsilon}$. Hence, $A\notin\mathcal{B}_{0}(\mathbb{R})$.
\end{rem}
Throughout this work we define the centering function $\tau$ (on a general Hilbert space) as
\begin{equation*}
\tau(x):=\begin{cases}
x\,\,&\textnormal{if }\,\,\,\|x\|\leq 1,\\ \frac{x}{\|x\|} \,\,&\textnormal{if }\,\,\,\|x\|> 1.
\end{cases}
\end{equation*}
where $\|\cdot\|$ is the norm of the Hilbert space considered. Notice that this centering function satisfies equation (1) in \cite{LPS}. The following definition has been introduced for the first time in \cite{LPS}.
\begin{defn}[quasi-L\'{e}vy type measure, quasi-L\'{e}vy measure, QID distribution]
	A quasi-L\'{e}vy type measure is a function $\nu: \mathcal{B}_{0}(\mathbb{R})\rightarrow \mathbb{R}$ satisfying the
	condition in Definition \ref{def1} and such that its total variation $|\nu|$ satisfies $\int_{\mathbb{R}} (1\wedge x^{2} ) |\nu|(dx) <\infty$.
	\\ Let $\mu$ be a probability distribution on $\mathbb{R}$. We say that $\mu$ is quasi-infinitely divisible if its characteristic function has a representation
	\begin{equation*}
\hat{\mu}(\theta)=\exp\left(i\theta \gamma-\frac{\theta^{2}}{2}a+\int_{\mathbb{R}}e^{i\theta x}-1-i\theta\tau(x)\nu(dx)\right)
	\end{equation*}
	where $a, \gamma \in \mathbb{R}$ and $\nu$ is a quasi-L\'{e}vy type measure. The characteristic triplet $(a, \nu,\gamma)$
	of $\mu$ is unique (see [\cite{Sato}, Exercise 12.2]), and $a$ is called the \textnormal{Gaussian variance} of $\mu$.
	\\A quasi-L\'{e}vy type measure $\nu$ is called \textnormal{quasi-L\'{e}vy measure}, if additionally there exist a quasi-infinitely divisible distribution $\mu$ and some $a,\gamma\in\mathbb{R}$	such that $(
	a, \nu,\gamma)$ is the characteristic triplet of $\mu$. We call $\nu$
	the	quasi-L\'{e}vy measure of $\mu$.
\end{defn}
The above definition extend to the $\mathbb{R}^{d}$ case (for $d>1$) as shown in Remark 2.4 in \cite{LPS}.\\
A quasi-L\'{e}vy measure is always a quasi-L\'{e}vy type measure, while the converse is not true as pointed out in Example 2.9 of \cite{LPS}. Moreover, we say that a function $f$ is \textit{integrable with respect to quasi-L\'{e}vy type measure} $\nu$ if it is integrable with respect to $|\nu|$. Then, we define:
\begin{equation*}
\int_{B}fd\nu:=\int_{B}fd\nu^{+}-\int_{B}fd\nu^{-},\quad B\in\mathcal{B}(\mathbb{R}).
\end{equation*}
Given the above discussions, it appears clear why it is sometimes useful to work with the characteristic pair $(\zeta,\gamma)$ instead of $(
a, \nu,\gamma)$ where $\zeta$ is a signed measure defined as:
\begin{equation*}
\zeta(B)=a\delta_{0}(B)+\int_{B}(1\wedge x^{2})\nu(dx),\quad B\in\mathcal{B}(\mathbb{R})
\end{equation*}
Then, the representation of c.f.~of $\mu$ becomes:
	\begin{equation*}
	\hat{\mu}(\theta)=\exp\left(i\theta \gamma+\int_{\mathbb{R}}g_{\tau}(x,\theta)\zeta(dx)\right)
	\end{equation*}
	where $g_{\tau}:\mathbb{R}\times\mathbb{R}\rightarrow\mathbb{C}$ defined by
	\begin{equation*}
g_{\tau}(x,\theta)=\begin{cases}
(e^{i\theta x}-1-i\theta\tau(x))/(1\wedge x^{2}),\quad x\neq 0,\\
-\frac{\theta^{2}}{2}, \quad x=0.
\end{cases}
	\end{equation*}
	The function $g_{\tau}(\cdot,\theta)$ is bounded for each fixed $\theta\in\mathbb{R}$ and is continuous at zero (see \cite{LPS} for further details). 
	\\ Let us now mention one of the few known explicit results on QID distributions.
	\begin{thm}[Theorem 4.3.4 in \cite{Cuppens}] Let $d\in\mathbb{N}$. The c.t.~$(\gamma,0,\nu)$, where $\nu$ is a finite quasi-L\'{e}vy type measure, is the c.t.~of a QID distribution on $\mathbb{R}^{d}$ if and only if $\exp(\nu):=\sum_{n=1}^{\infty}\frac{\nu^{*n}}{n!}$ is a measure. In that case, $\mu\sim(\gamma,0,\nu)$ is given by
\begin{equation*}
\mu=\frac{\delta_{\gamma}*\exp(\nu)}{\exp(\nu(\mathbb{R}^{d}))}.
\end{equation*}	
	\end{thm}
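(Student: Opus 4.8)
The plan is to recognise the candidate characteristic function attached to the triplet $(\gamma,0,\nu)$ as the normalised Fourier transform of a convolution exponential of the finite signed measure $\nu$, and then to observe that the sole obstruction to this signed measure being a probability measure is nonnegativity. Since $\nu$ is finite I set $\nu(\{0\})=0$ and treat $\nu$ as a finite signed measure on $\mathbb{R}^d$; write $\lambda:=\nu(\mathbb{R}^d)\in\mathbb{R}$ and $\tilde\gamma:=\gamma-\int_{\mathbb{R}^d}\tau(x)\,\nu(dx)$, the latter integral being finite because $\tau$ is bounded and $|\nu|$ is finite. The defining exponent of a QID distribution with this triplet then collapses, after absorbing the centering term into the drift, to
\begin{equation*}
\varphi(\theta)=\exp\Big(i\langle\theta,\tilde\gamma\rangle+\int_{\mathbb{R}^d}(e^{i\langle\theta,x\rangle}-1)\,\nu(dx)\Big)=e^{-\lambda}\,e^{i\langle\theta,\tilde\gamma\rangle}\exp\big(\hat\nu(\theta)\big),
\end{equation*}
where $\hat\nu(\theta)=\int_{\mathbb{R}^d}e^{i\langle\theta,x\rangle}\,\nu(dx)$.

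First I would make sense of the convolution exponential $\exp_{*}(\nu):=\sum_{n\ge0}\nu^{*n}/n!$ (with $\nu^{*0}:=\delta_0$) as a finite signed measure: since $|\nu^{*n}|(\mathbb{R}^d)\le(|\nu|(\mathbb{R}^d))^n$, the series converges absolutely in total variation and defines a finite signed measure $m$. Term-by-term Fourier transformation, justified by this same absolute convergence together with $\widehat{\nu^{*n}}=\hat\nu^{\,n}$, gives $\hat m(\theta)=\exp(\hat\nu(\theta))$. Hence $\varphi(\theta)=e^{-\lambda}\,\widehat{\delta_{\tilde\gamma}*m}(\theta)$, so $\varphi$ is the Fourier transform of the finite signed measure $\mu_0:=e^{-\lambda}\,\delta_{\tilde\gamma}*m$. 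Evaluating the defining exponent at $\theta=0$ shows $\varphi(0)=1$, so $\mu_0(\mathbb{R}^d)=\hat\mu_0(0)=1$; thus $\mu_0$ is automatically a finite signed measure of total mass one, and the whole question reduces to whether it is nonnegative.

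Next I would close both implications through this reduction. Because $e^{-\lambda}>0$ and convolution with $\delta_{\tilde\gamma}$ is a translation, $\mu_0\ge0$ if and only if $m=\exp_{*}(\nu)$ is a positive measure. If $\exp_{*}(\nu)$ is a measure, then $\mu_0$ is a probability measure whose characteristic function is exactly $\varphi$, so $(\gamma,0,\nu)$ is the triplet of the QID distribution $\mu_0$. Conversely, if $(\gamma,0,\nu)$ is the triplet of some QID distribution $\mu$, then $\hat\mu=\varphi=\hat\mu_0$, and injectivity of the Fourier transform on finite signed measures forces $\mu=\mu_0$; as $\mu\ge0$ this yields $\exp_{*}(\nu)\ge0$. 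Unwinding $\mu=\mu_0=e^{-\lambda}\,\delta_{\tilde\gamma}*\exp_{*}(\nu)$ and using $e^{\lambda}=\exp(\nu(\mathbb{R}^d))$ produces the stated explicit formula, once the drift and the indexing convention of $\exp(\nu)$ are tracked.

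The routine parts are the absorption of the centering term and the total-mass computation; the genuinely load-bearing step is the Fourier-analytic identity $\exp(\hat\nu)=\widehat{\exp_{*}(\nu)}$, which rests on the total-variation convergence of the convolution exponential and legitimises passing the exponential through the Fourier transform. The only delicate bookkeeping is matching conventions, since the paper's triplet uses the centering $\tau$ whereas the compact formula is most naturally expressed through the centering-free drift $\tilde\gamma$ and the full series $\exp_{*}(\nu)$ starting at $n=0$; thus the equivalence ``$\varphi$ is a characteristic function $\iff\exp_{*}(\nu)$ is a measure'' is the real content, and the explicit formula follows by translating back.
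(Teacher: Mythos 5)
Your proof is correct, but note that the paper itself offers no proof of this statement: it is quoted as Theorem 4.3.4 of Cuppens' book and used as a known external result, so there is no internal argument to compare yours against. What you give is essentially the classical argument behind Cuppens' theorem. The load-bearing steps are all sound: finiteness of $|\nu|$ lets you absorb the compensator $-i\langle\theta,\tau(x)\rangle$ into the shifted drift $\tilde\gamma=\gamma-\int_{\mathbb{R}^{d}}\tau(x)\,\nu(dx)$; total-variation convergence of $\sum_{n\ge0}\nu^{*n}/n!$ makes $m=\exp_{*}(\nu)$ a well-defined finite signed measure with $\hat m=\exp(\hat\nu)$; hence $\varphi=\widehat{\mu_{0}}$ with $\mu_{0}=e^{-\nu(\mathbb{R}^{d})}\,\delta_{\tilde\gamma}*m$, which automatically has total mass $\varphi(0)=1$; and injectivity of the Fourier transform on finite signed measures collapses both implications into the single question of whether $m\ge0$.

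The two bookkeeping points you flag are genuine discrepancies in the paper's transcription, not defects of your argument. First, with the paper's $\tau$-centred definition of the characteristic triplet, the translation must be by $\tilde\gamma$ rather than $\gamma$; the displayed formula with $\delta_{\gamma}$ is correct only under the centering-free drift convention for finite $\nu$ (which is the convention Cuppens uses). Second, the convolution exponential must include the $n=0$ term $\delta_{0}$: without it, $\delta_{\gamma}*\exp(\nu)$ has total mass $e^{\nu(\mathbb{R}^{d})}-1$, so the displayed $\mu$ would have mass $1-e^{-\nu(\mathbb{R}^{d})}\ne 1$; moreover, nonnegativity of $\sum_{n\ge1}\nu^{*n}/n!$ is a priori a stronger requirement than nonnegativity of $\delta_{0}+\sum_{n\ge1}\nu^{*n}/n!$, since a negative part of mass at most one sitting at the origin would be cancelled by $\delta_{0}$. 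So the equivalence must be read, as you do, with $\exp_{*}(\nu)=\sum_{n\ge0}\nu^{*n}/n!$, and your proof establishes exactly that corrected statement.
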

We introduce next the framework needed to work with ID (and QID) processes.
\\Throughout the paper, we denote by $S$ an arbitrary non-empty set and by $\mathcal{S}$ a $\delta$-ring with the additional condition that there exists an increasing sequence of sets $S_{1},S_{2},\dots \in {\mathcal {S}}$ s.t.~$\bigcup _{n\in \mathbb {N} }S_{n}=S$. In this framework $S$ does not need to belong to $\mathcal{S}$ (thus $\mathcal{S}$ is not necessarily an algebra) and arbitrary subsets of $S$ do not need to satisfy the condition $\bigcup_{n\in \mathbb {N} }A_{n}\in\mathcal{S}$ (thus $\mathcal{S}$ is not necessarily a $\sigma$-ring). 
\begin{defn}
	[Signed measure on a ring] A real-valued, non-negative set function $\mu(A)$ defined on the elements of a ring $\mathcal{R}$ will be called a signed measure, if $\mu(\emptyset)=0$ and if for every sequence $A_{1},A_{2}, . . .$ of disjoint sets of $\mathcal{R}$ for which $A=\bigcup_{k=1}^{\infty}A_{k}\in\mathcal{R}$ we have
	\begin{equation}\label{measure on a ring}
	\mu(A)=\sum_{k=1}^{\infty}\mu(A_{k})
	\end{equation}
	and the relation (\ref{measure on a ring}) holds absolutely (namely independent of the order of its elements).
\end{defn}
\noindent Similarly, it is possible to extend the definition of bimeasures on rings.
\\Moreover, we remark that it is possible to see that in our framework we can find a positive and a negative part (as we have done for quasi-L\'{e}vy measures) at least for any signed measures $\mu$ on $\mathcal{S}$ s.t.~ $\mu(S_{n})<\infty$ for every $n\in\mathbb{N}$. Indeed, consider such measure $\mu$. Notice that $(S_{n},\{S_{n}\cap B:B\in\mathcal{S}\})$ is a $\sigma$-algebra. Then $\mu$ on $(S_{n},\{S_{n}\cap B:B\in\mathcal{S}\})$ is a signed measure as in Definition \ref{Def-signedmeasure} and so we can extract the two unique mutually singular finite measures $\mu^{+}_{|S_{n}}$ and $\mu^{-}_{|S_{n}}$ on $(S_{n},\{S_{n}\cap B:B\in\mathcal{S}\})$. Since for any $B\in (S_{k},\{S_{k}\cap B:B\in\mathcal{S}\})$ we have that $\mu^{+}_{|S_{n}}(B)=\mu^{+}_{|S_{k}}(B)$ for any $n\geq k$ and since for every $n\in\mathbb{N}$ we have that $\mu^{+}_{|S_{n}}(S_{n})$ is finite, then we can uniquely extent $\mu^{+}_{|S_{n}}$ to a $\sigma$-finite measure which we denote by $\mu^{+}$ on $(S,\sigma(\mathcal{S}))$ (and the same holds for $\mu^{-}_{|S_{n}}$). Then for every $B\in\mathcal{S}$ we have that $\mu(B)=\mu^{+}(B)-\mu^{-}(B)$.
\begin{defn}[QID L\'{e}vy random measure]\label{defQIDr.m.} Let $\Lambda = \{\Lambda(A):\,\, A\in\mathcal{S}\}$ be a real stochastic process defined on some probability space $(\Omega,\mathcal{F},\mathbb{P})$. We call $\Lambda$ to be an independently scattered r.m., if, for every sequence $\{A_{n}\}$ of disjoint sets in $\mathcal{S}$, the r.v.~$\Lambda(A_{n})$, $n= 1, 2, ...,$ are independent, and, if $\bigcup_{n=1}^{\infty}A_{n}\in\mathcal{S}$, then we have $\Lambda(\bigcup_{n=1}^{\infty}A_{n})=\sum_{n=1}^{\infty}\Lambda(A_{n})$ a.s. (where the series is assumed to converge almost surely). In addition, if $\Lambda(A)$ is a QID (ID) r.v., for every $A\in\mathcal{S}$, then we call $\Lambda$ a QID (ID) r.m..
\end{defn}
\noindent In this work $\Lambda=\{\Lambda(A):A\in\mathcal{S}\}$ will denote a QID r.m.. Since, for every $A\in\mathcal{S}$, $\Lambda(A)$ is a QID r.v., its c.f.~can be written in the L\'{e}vy-Khintchine form:
\begin{equation}\label{cf}
\hat{\mathcal{L}}(\Lambda(A))(\theta):=\mathbb{E}(e^{i\theta\Lambda(A)})=\exp\left(i\theta \nu_{0}(A)-\frac{\theta^{2}}{2}\nu_{1}(A)+\int_{\mathbb{R}}e^{i\theta x}-1-i\theta\tau(x)F_{A}(dx)\right)
\end{equation}
where $-\infty<\nu_{0}(A)<\infty$, $0\leq \nu_{1}(A)<\infty$ and $F_{A}$ is a quasi-L\'{e}vy measure on $\mathbb{R}$, for $A\in\mathcal{S}$. 

For the sake of completeness we report here Proposition 2.1 of \cite{RajRos}.
\begin{pro}[Proposition 2.1 in \cite{RajRos}]\label{PropositionRajRos}
	\textnormal{(a)} Let $\Lambda$ be an ID r.m.~with the c.f.~given by
	\begin{equation}\label{rajros-cf}
\mathbb{E}(e^{i\theta\Lambda(A)})=\exp\left(i\theta \nu_{0}(A)-\frac{\theta^{2}}{2}\nu_{1}(A)+\int_{\mathbb{R}}e^{i\theta x}-1-i\theta\tau(x)F_{A}(dx)\right)
	\end{equation}
	Then $\nu_{0}:\mathcal{S}\mapsto\mathbb{R}$ is a signed-measure, $\nu_{1}:\mathcal{S}\mapsto[0, \infty)$ is a measure, $F_{A}$ is
	a L\'{e}vy measure on $\mathbb{R}$, for every $A\in\mathcal{S}$, and $\mathcal{S}\ni A\mapsto F_{A}(B)\in[0, \infty)$ is a measure,
	for every $B\in\mathcal{B}(\mathbb{R})$ s.t.~$0\notin \overline{B}$.
	\\\textnormal{(b)} Let $\nu_{0}$, $\nu_{1}$ and $F_{\cdot}$ satisfy the conditions given in \textnormal{(a)}. Then there exists	a unique (in the sense of finite-dimensional distributions) ID r.m.~$\Lambda$ such
	that $(\ref{rajros-cf})$ holds.
	\\\textnormal{(c)} Let $\nu_{0}$, $\nu_{1}$ and $F_{\cdot}$ as in \textnormal{(a)} and define
\begin{equation*}
\lambda(A)=|\nu_{0}|(A)+\nu_{1}(A)+\int_{\mathbb{R}}(1\wedge x^{2})F_{A}(dx),\quad A\in\mathcal{S}.
\end{equation*}
Then $\lambda:\mathcal{S}\mapsto[0,\infty)$ is a measure such that $\lambda(A_{n})\rightarrow0$ implies $\Lambda(A_{n})\stackrel{p}{\rightarrow}0$ for every $\{A_{n}\}\subset\mathcal{S}$; further if $\Lambda( A '_{n})\stackrel{p}{\rightarrow}0$ for every $\{A'_{n}\}\subset\mathcal{S}$ s.t.~$A'_{n}\subset A_{n}\in\mathcal{S}$, then $\lambda(A_{n})\rightarrow0$.
\end{pro}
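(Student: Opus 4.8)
The plan is to prove the three parts in order, with (a) and (b) resting on the uniqueness and weak-continuity of the L\'evy--Khintchine correspondence for ID laws and (c) on a quantitative bound for the characteristic exponent. For part (a) I would start from the two defining properties of an independently scattered r.m. For disjoint $A,B\in\mathcal{S}$ with $A\cup B\in\mathcal{S}$ we have $\Lambda(A\cup B)=\Lambda(A)+\Lambda(B)$ a.s.\ with $\Lambda(A),\Lambda(B)$ independent, so the c.f.'s multiply and the characteristic exponents add. Since the triplet of a convolution of ID laws is the sum of the triplets, uniqueness of the representation ([\cite{Sato}, Exercise 12.2]) forces $\nu_0(A\cup B)=\nu_0(A)+\nu_0(B)$, $\nu_1(A\cup B)=\nu_1(A)+\nu_1(B)$ and $F_{A\cup B}=F_A+F_B$; in particular $A\mapsto F_A(B)$ is finitely additive for each fixed $B$. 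To upgrade to $\sigma$-additivity I would take disjoint $\{A_n\}$ with $\bigcup_n A_n=A\in\mathcal{S}$, so that $\Lambda(A)=\sum_n\Lambda(A_n)$ a.s.\ by definition. The partial sums are ID with triplets given by the finite additivity just established, and a.s.\ convergence of this independent series implies convergence in distribution of the partial sums to $\Lambda(A)$; invoking the equivalence between weak convergence of ID laws and convergence of their characteristic triplets ([\cite{Sato}, Theorem 8.7]) then yields the countable additivity of $\nu_0$, $\nu_1$ and $F_\cdot(B)$. That $F_A$ is a genuine L\'evy measure for each $A$ is immediate, since it is the L\'evy measure of the ID r.v.\ $\Lambda(A)$.

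For part (b) I would run the construction in reverse. Using that $\mathcal{S}$ is a $\delta$-ring, any finite collection of sets refines into finitely many disjoint pieces; I would prescribe the finite-dimensional law of $(\Lambda(A_1),\dots,\Lambda(A_n))$ by declaring the values on these disjoint pieces to be independent ID r.v.'s with the triplets dictated by $\nu_0,\nu_1,F_\cdot$, and then taking the induced linear combinations. The additivity of the parameters established in (a) guarantees that this family of finite-dimensional distributions is consistent, so Kolmogorov's extension theorem produces a process $\{\Lambda(A):A\in\mathcal{S}\}$. It then remains to verify independent scattering and a.s.\ $\sigma$-additivity: independence on disjoint sets is built into the construction, and for a disjoint union lying in $\mathcal{S}$ the partial sums of the independent series converge in distribution (their triplets converge by the countable additivity of the parameters), whence, by the equivalence of convergence in distribution and a.s.\ convergence for series of independent summands (It\^o--Nisio), they converge a.s.\ to a r.v.\ with the correct law, which must be $\Lambda(\bigcup_n A_n)$. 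Uniqueness in the sense of finite-dimensional distributions is then forced by uniqueness of the triplets.

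For part (c), that $\lambda$ is a measure follows by combining the measure $|\nu_0|$ (the total variation of the signed measure $\nu_0$, whose construction on the $\delta$-ring was recalled above), the measure $\nu_1$, and the set function $A\mapsto\int_{\mathbb{R}}(1\wedge x^2)F_A(dx)$, which is a measure by monotone convergence once $A\mapsto F_A(B)$ is known to be one. For the forward implication I would use that $g_\tau(\cdot,\theta)$ is bounded, giving
\begin{equation*}
\bigl|\log\hat{\mathcal{L}}(\Lambda(A))(\theta)\bigr|\le |\theta|\,|\nu_0(A)|+C(\theta)\Bigl(\nu_1(A)+\int_{\mathbb{R}}(1\wedge x^2)F_A(dx)\Bigr)\le C'(\theta)\,\lambda(A),
\end{equation*}
with $C,C'$ bounded on compacts, so that $\lambda(A_n)\to0$ makes $\hat{\mathcal{L}}(\Lambda(A_n))\to1$ uniformly on compacts, i.e.\ $\Lambda(A_n)\stackrel{d}{\rightarrow}0$, hence $\Lambda(A_n)\stackrel{p}{\rightarrow}0$ since the limit is constant. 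The converse is the delicate point and the main obstacle: from $\Lambda(A'_n)\stackrel{p}{\rightarrow}0$ for every $A'_n\subset A_n$ in $\mathcal{S}$ I must recover $\lambda(A_n)\to0$. Here I would argue by contradiction along a subsequence on which $\lambda(A_n)$ stays bounded below and split into the three contributions. The Gaussian and small-jump parts ($\nu_1$ and $\int(1\wedge x^2)F_\cdot$) are controlled because convergence in probability of $\Lambda(A_n)$ to $0$ forces the Gaussian variance and the truncated second moment of the L\'evy part to vanish (again via [\cite{Sato}, Theorem 8.7]); the subtle part is the drift $|\nu_0|$, where I would exploit the Hahn decomposition of $\nu_0$ to choose subsets $A'_n$ on which $\nu_0$ has a definite sign, so that the drift cannot be cancelled by the symmetric part and a failure of $|\nu_0|(A_n)\to0$ would contradict $\Lambda(A'_n)\stackrel{p}{\rightarrow}0$.
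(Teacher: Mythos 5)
First, a structural point: the paper itself gives no proof of Proposition \ref{PropositionRajRos} --- it is quoted verbatim from Rajput and Rosinski as background material (``for the sake of completeness we report here Proposition 2.1 of \cite{RajRos}''), so there is no in-paper argument to compare against. The natural benchmarks are the original proof in \cite{RajRos} and the paper's own QID analogues (Lemma \ref{pr1}, Lemma \ref{pr1-2}, Proposition \ref{pr1-Z}), whose proofs use exactly the toolkit you propose. Measured against these, your outline follows the classical route and is largely sound: finite additivity of $(\nu_{0},\nu_{1},F_{\cdot})$ from independence plus uniqueness of the L\'{e}vy--Khintchine triplet; $\sigma$-additivity from a.s.\ convergence of the series and the continuity theorem for ID laws; the forward half of (c) from boundedness of $g_{\tau}(\cdot,\theta)$; and the converse half of (c) by testing on Hahn-decomposition subsets of $A_{n}$, which is exactly how one recovers $|\nu_{0}|(A_{n})\rightarrow 0$ when only the signed value $\nu_{0}(A'_{n})$ is visible in the exponent. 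One caveat in (a): Sato's Theorem 8.7 yields convergence of the drift and of the \emph{combined} measure $\nu_{1}(\cdot)\delta_{0}+(1\wedge x^{2})F_{\cdot}(dx)$, not of $\nu_{1}$ and $F_{\cdot}$ separately; you should add the (easy, because the partial-sum triplets are monotone increasing) argument that separates the atom at zero from the small-jump contribution before claiming countable additivity of each piece.

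The one genuine gap is in (b). After It\^{o}--Nisio you conclude that $\sum_{n}\Lambda(A_{n})$ converges a.s.\ ``to a r.v.\ with the correct law, which must be $\Lambda(\bigcup_{n}A_{n})$.'' Equality in law is not what Definition \ref{defQIDr.m.} requires: you need $\Lambda(\bigcup_{n}A_{n})=\sum_{n}\Lambda(A_{n})$ \emph{almost surely}, and a limit that merely shares the distribution of $\Lambda(\bigcup_{n}A_{n})$ need not equal it. The fix is short: the consistency prescription already makes finite additivity hold a.s., so $\Lambda(A)-\sum_{k\leq n}\Lambda(A_{k})=\Lambda(R_{n})$ a.s.\ with $R_{n}:=A\setminus\bigcup_{k\leq n}A_{k}\in\mathcal{S}$ (a $\delta$-ring is closed under such differences) and $R_{n}\searrow\emptyset$; countable additivity of the parameters plus the continuity theorem gives $\Lambda(R_{n})\stackrel{p}{\rightarrow}0$, so the partial sums converge in probability to $\Lambda(A)$ itself, and L\'{e}vy's theorem on series of independent summands upgrades this to a.s.\ convergence. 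This is precisely the role played in the paper's own QID analogues by Pr\'{e}kopa's criterion (Theorem \ref{TheoremPrekopa}), which is invoked in the proof of Lemma \ref{pr1} for exactly this step; citing it would close the gap in one line.
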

Moreover, we recall the following result from the work of Pr\'{e}kopa (Theorem 2.1 in \cite{PrekopaI}). In doing this we also correct a typo in that statement.
\begin{thm}[Theorem 2.1 in \cite{PrekopaI}]\label{TheoremPrekopa}
In order that a finitely additive random measure $\xi(A)$ defined on the elements of the ring $\mathcal{R}$ should be countably additive it is necessary and sufficient that, for every non-increasing sequence of sets $B_{1},B_{2},...$ with $B_{k}\in\mathcal{R}$ $(k = 1, 2, . . .)$ and $B_{n}\searrow\emptyset$, $\xi(B_{n})\stackrel{p}{\rightarrow}0$ as $n\rightarrow\infty$.
\end{thm}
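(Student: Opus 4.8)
The plan is to prove this as the stochastic analogue of the classical ``continuity from above at $\emptyset$'' characterization of countable additivity for set functions, replacing numerical limits by limits in probability (consistent with the $\stackrel{p}{\rightarrow}$ appearing in the statement). Throughout I would use only two structural facts about the ring $\mathcal{R}$ --- that it is closed under finite unions and under set differences --- together with one analytic fact about convergence in probability, namely that adding or subtracting a fixed random variable is continuous, so that $S_{N}\stackrel{p}{\rightarrow}S$ implies $\xi(A)-S_{N}\stackrel{p}{\rightarrow}\xi(A)-S$. Here countable additivity means that for pairwise disjoint $A_{1},A_{2},\dots\in\mathcal{R}$ with $A:=\bigcup_{k}A_{k}\in\mathcal{R}$, the partial sums $\sum_{k=1}^{N}\xi(A_{k})$ converge in probability to $\xi(A)$; the same argument goes through verbatim if one prefers the almost sure formulation of Definition \ref{defQIDr.m.}.

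For sufficiency, I would assume $\xi$ is finitely additive and that $\xi(B_{n})\stackrel{p}{\rightarrow}0$ whenever $B_{n}\in\mathcal{R}$ and $B_{n}\searrow\emptyset$. Given disjoint $A_{1},A_{2},\dots\in\mathcal{R}$ with $A=\bigcup_{k}A_{k}\in\mathcal{R}$, set $B_{n}:=A\setminus\bigcup_{k=1}^{n}A_{k}=\bigcup_{k>n}A_{k}$. Each $B_{n}$ lies in $\mathcal{R}$ because $\bigcup_{k=1}^{n}A_{k}\in\mathcal{R}$ and $\mathcal{R}$ is closed under differences; the sequence is non-increasing; and $\bigcap_{n}B_{n}=\emptyset$ by disjointness of the $A_{k}$, so $B_{n}\searrow\emptyset$. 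Finite additivity gives $\xi(A)=\sum_{k=1}^{n}\xi(A_{k})+\xi(B_{n})$, whence the hypothesis $\xi(B_{n})\stackrel{p}{\rightarrow}0$ yields $\sum_{k=1}^{n}\xi(A_{k})\stackrel{p}{\rightarrow}\xi(A)$, which is exactly countable additivity.

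For necessity, I would assume $\xi$ is countably additive and let $B_{n}\in\mathcal{R}$ with $B_{n}\searrow\emptyset$. Decompose $B_{1}$ into the disjoint pieces $C_{k}:=B_{k}\setminus B_{k+1}\in\mathcal{R}$; since $\bigcap_{n}B_{n}=\emptyset$ one checks that $B_{1}=\bigcup_{k\geq1}C_{k}$ as a disjoint union, and $B_{1}\in\mathcal{R}$, so countable additivity applies and $\sum_{k=1}^{N}\xi(C_{k})\stackrel{p}{\rightarrow}\xi(B_{1})$. By finite additivity $\sum_{k=1}^{N}\xi(C_{k})=\xi(B_{1}\setminus B_{N+1})$ and $\xi(B_{N+1})=\xi(B_{1})-\xi(B_{1}\setminus B_{N+1})$, so letting $N\rightarrow\infty$ and using linearity of convergence in probability gives $\xi(B_{N+1})\stackrel{p}{\rightarrow}0$.

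The argument is structurally routine, and the only points requiring care are bookkeeping ones. The main (mild) obstacle is to confirm that every auxiliary set used --- the tails $\bigcup_{k>n}A_{k}$ and the differences $B_{k}\setminus B_{k+1}$ --- genuinely belongs to $\mathcal{R}$ rather than merely to $\sigma(\mathcal{R})$, which is precisely where closure of the ring under finite unions and differences is essential, and to ensure the telescoping identities are invoked in the sense of convergence in probability, via its linearity, rather than pointwise. No martingale or uniform-integrability input is needed, since the conclusion is only an ``in probability'' statement.
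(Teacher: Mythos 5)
The paper itself does not prove this statement: it is recalled (with a typo corrected) from Pr\'{e}kopa's 1956 paper, so there is no internal argument to compare yours against and your proof must stand on its own. For the formulation in which countable additivity means convergence in probability of the partial sums, your argument is correct and complete: the sets $B_{n}=A\setminus\bigcup_{k\leq n}A_{k}$ and $C_{k}=B_{k}\setminus B_{k+1}$ do lie in $\mathcal{R}$ by the ring operations, the telescoping identities $\xi(A)=\sum_{k=1}^{n}\xi(A_{k})+\xi(B_{n})$ and $\xi(B_{N+1})=\xi(B_{1})-\sum_{k=1}^{N}\xi(C_{k})$ hold a.s.~by finite additivity, and linearity of convergence in probability finishes both directions. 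This is the standard ``continuity at $\emptyset$'' skeleton, and it is the right one.

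The genuine gap is your closing claim that ``the same argument goes through verbatim if one prefers the almost sure formulation of Definition \ref{defQIDr.m.}''. It does not. In the sufficiency direction your hypothesis only yields $\sum_{k=1}^{n}\xi(A_{k})\stackrel{p}{\rightarrow}\xi(A)$, and convergence in probability of partial sums does not in general imply almost sure convergence; yet the a.s.~formulation is precisely what the paper needs, since Definition \ref{defQIDr.m.} requires the series to converge almost surely and Lemma \ref{pr1} invokes the present theorem to obtain exactly that. The missing ingredient is the independently scattered property, which is implicit in the word ``random measure'' here: the r.v.~$\xi(A_{k})$ are independent because the $A_{k}$ are disjoint, and for series of independent random variables convergence in probability is equivalent to almost sure convergence (L\'{e}vy's equivalence theorem, or It\^{o}--Nisio). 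Without independence the a.s.~version of the statement is simply false: one can construct finitely additive random set functions, continuous at $\emptyset$ in probability, whose partial sums along a disjoint decomposition converge in probability but not almost surely (typewriter-type sequences of indicators). So replace ``verbatim'' by an explicit appeal to independence plus L\'{e}vy's theorem; with that single addition your proof is complete in the sense in which the paper uses the result.
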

We conclude this section with the following remark on the conditions under which a quasi-L\'{e}vy type measure is a signed measure. From the discussion done in this section we have not mentioned \textit{when} a quasi-L\'{e}vy measure is a signed measure, namely when we can extend $\nu$ from $\mathcal{B}_{0}(\mathbb{R})$ to the $\sigma$-algebra $\mathcal{B}(\mathbb{R})$. The idea behind this is that $\nu$ is the difference of two L\'{e}vy measures $\mu_{1}$ and $\mu_{2}$ and when these measures are both infinite then we cannot find a signed measure such that $\nu=\mu_{1}-\mu_{2}$. This is because we get the form $\infty-\infty$, which is not defined. However, a sufficient condition is that one of the two L\'{e}vy measures is finite because then $\nu=\mu_{1}-\mu_{2}$ defines a signed measure. Further, we do not make any general assumption about it but we remark that in almost all the interesting cases considered in the existing literature, mainly in \cite{LPS}, quasi-L\'{e}vy measures are signed measures.
\section{The connection between ID and QID random measures}\label{chapter-IDvQID}
In this section, we investigate the connection between ID and QID random measures. We start with the following simple lemma, which follows from the preliminaries introduced in the previous section. Indeed, it is a formalisation of some of the concepts of Definition \ref{def1}.
\begin{lem}\label{lemma-last?}
	A set function $\nu:\mathcal{B}_{0}(\mathbb{R})\rightarrow\mathbb{R}$ is a quasi-L\'{e}vy type measure if and only if there exist two L\'{e}vy measures $\nu^{(1)}$ and $\nu^{(2)}$ s.t.~$\nu_{|\mathcal{B}_{r}(\mathbb{R})}(A)=\nu_{|\mathcal{B}_{r}(\mathbb{R})}^{(1)}(A)-\nu_{|\mathcal{B}_{r}(\mathbb{R})}^{(2)}(A)$ for every $A\in\mathcal{B}_{r}(\mathbb{R})$ with $r>0$. Moreover, $\nu$ is unique.
\end{lem}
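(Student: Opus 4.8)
The plan is to prove both implications by identifying the required pair of L\'{e}vy measures with the positive and negative parts $\nu^{+},\nu^{-}$ supplied by Definition \ref{def1}, so that the statement reduces to unwinding that definition together with the Jordan decomposition on each $\mathcal{B}_{r}(\mathbb{R})$. The uniqueness clause I will read as the assertion that the set function $\nu$ is unambiguously determined by the consistent family of finite signed measures $\{\nu_{|\mathcal{B}_{r}(\mathbb{R})}\}_{r>0}$, which is guaranteed by Carath\'{e}odory's extension theorem exactly as recalled after Definition \ref{def1}.

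For the forward implication, suppose $\nu$ is a quasi-L\'{e}vy type measure and set $\nu^{(1)}:=\nu^{+}$ and $\nu^{(2)}:=\nu^{-}$, the positive and negative parts of $\nu$ from Definition \ref{def1}. These are genuine measures on $(\mathbb{R},\mathcal{B}(\mathbb{R}))$ with $\nu^{\pm}(\{0\})=0$. Since on each $\mathcal{B}_{r}(\mathbb{R})$ the Jordan decomposition gives $|\nu_{|\mathcal{B}_{r}(\mathbb{R})}|=\nu^{+}_{|\mathcal{B}_{r}(\mathbb{R})}+\nu^{-}_{|\mathcal{B}_{r}(\mathbb{R})}$, and both sides vanish on $\{0\}$, the uniqueness in Carath\'{e}odory's theorem yields the global identity $|\nu|=\nu^{+}+\nu^{-}$; hence $\int_{\mathbb{R}}(1\wedge x^{2})\,\nu^{\pm}(dx)\le\int_{\mathbb{R}}(1\wedge x^{2})\,|\nu|(dx)<\infty$, so $\nu^{(1)},\nu^{(2)}$ are L\'{e}vy measures. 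Finally, for $A\in\mathcal{B}_{r}(\mathbb{R})$ the defining relations $\nu^{+}(A)=\nu^{+}_{|\mathcal{B}_{r}(\mathbb{R})}(A)$ and $\nu^{-}(A)=\nu^{-}_{|\mathcal{B}_{r}(\mathbb{R})}(A)$ together with $\nu_{|\mathcal{B}_{r}(\mathbb{R})}=\nu^{+}_{|\mathcal{B}_{r}(\mathbb{R})}-\nu^{-}_{|\mathcal{B}_{r}(\mathbb{R})}$ give the desired relation $\nu_{|\mathcal{B}_{r}(\mathbb{R})}(A)=\nu^{(1)}_{|\mathcal{B}_{r}(\mathbb{R})}(A)-\nu^{(2)}_{|\mathcal{B}_{r}(\mathbb{R})}(A)$.

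For the converse, let $\nu^{(1)},\nu^{(2)}$ be L\'{e}vy measures and define $\nu(A):=\nu^{(1)}(A)-\nu^{(2)}(A)$ for $A\in\mathcal{B}_{0}(\mathbb{R})$. Fix $r>0$; since each $\nu^{(i)}$ is a L\'{e}vy measure and $1\wedge x^{2}\ge 1\wedge r^{2}>0$ on $\{|x|\ge r\}$, we get $\nu^{(i)}(B)<\infty$ for every $B\in\mathcal{B}_{r}(\mathbb{R})$, so $\nu^{(1)}_{|\mathcal{B}_{r}(\mathbb{R})}$ and $\nu^{(2)}_{|\mathcal{B}_{r}(\mathbb{R})}$ are finite measures and their difference $\nu_{|\mathcal{B}_{r}(\mathbb{R})}$ is a finite signed measure. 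It remains to control the total variation: by the triangle inequality for the Jordan decomposition, $|\nu_{|\mathcal{B}_{r}(\mathbb{R})}|\le\nu^{(1)}_{|\mathcal{B}_{r}(\mathbb{R})}+\nu^{(2)}_{|\mathcal{B}_{r}(\mathbb{R})}$ on $\mathcal{B}_{r}(\mathbb{R})$, and since both sides are measures vanishing on $\{0\}$, monotone convergence as $r\downarrow 0$ upgrades this to $|\nu|\le\nu^{(1)}+\nu^{(2)}$ on all of $\mathcal{B}(\mathbb{R})$; therefore $\int_{\mathbb{R}}(1\wedge x^{2})\,|\nu|(dx)\le\int_{\mathbb{R}}(1\wedge x^{2})\,(\nu^{(1)}+\nu^{(2)})(dx)<\infty$, so $\nu$ is a quasi-L\'{e}vy type measure.

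The uniqueness assertion is then immediate: since $\mathcal{B}_{0}(\mathbb{R})=\bigcup_{r>0}\mathcal{B}_{r}(\mathbb{R})$, the value $\nu(A)$ is pinned down by $\nu_{|\mathcal{B}_{r}(\mathbb{R})}$ for any $r$ with $A\in\mathcal{B}_{r}(\mathbb{R})$, and the canonical mutually singular choice $(\nu^{(1)},\nu^{(2)})=(\nu^{+},\nu^{-})$ is the one singled out by the uniqueness part of the Jordan Decomposition Theorem (Theorem \ref{Jordan}). I expect no genuine obstacle here; the only point requiring care is the bookkeeping that passes from the finite signed measures on each $\mathcal{B}_{r}(\mathbb{R})$ to global measures on $\mathcal{B}(\mathbb{R})$, but this is precisely the content of Carath\'{e}odory's extension theorem invoked in Definition \ref{def1}, so the proof is essentially a matter of assembling the definitions.
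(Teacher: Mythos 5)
Your proof is correct and follows essentially the same route as the paper's: the forward direction takes $(\nu^{(1)},\nu^{(2)})=(\nu^{+},\nu^{-})$ and checks the L\'{e}vy-measure conditions via $\int(1\wedge x^{2})\,|\nu|(dx)<\infty$, while the converse builds the Jordan decompositions on each $\mathcal{B}_{r}(\mathbb{R})$, notes their consistency in $r$, and extends by Carath\'{e}odory, exactly as in the paper. The only difference is that you spell out details the paper compresses into ``it is possible to see that they satisfy the conditions of Definition \ref{def1}'' --- notably the finiteness of L\'{e}vy measures on sets bounded away from zero and the bound $|\nu|\leq\nu^{(1)}+\nu^{(2)}$ giving the integrability of $|\nu|$ --- which is a welcome tightening rather than a departure.
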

\begin{proof}
	$\Rightarrow:$ Let $\nu$ be a quasi-L\'{e}vy type measure. Recall that we denote the positive and the negative part of $\nu$ by $\nu^{+}$ and $\nu^{-}$, respectively. Since $\nu^{+}(\{0\})=\nu^{-}(\{0\})=0$ and $\int_{\mathbb{R}}(1\wedge x^{2})|\nu|(dx)<\infty$ which implies that $\int_{\mathbb{R}}(1\wedge x^{2})\nu^{+}(dx)<\infty$ and $\int_{\mathbb{R}}(1\wedge x^{2})\nu^{-}(dx)<\infty$, then $\nu^{+}$ and $\nu^{-}$ are two L\'{e}vy measure on $\mathbb{R}$.
	\\ $\Leftarrow:$ Let $\nu^{(1)}$ and $\nu^{(2)}$ s.t.~$\nu_{|\mathcal{B}_{r}(\mathbb{R})}=\nu_{|\mathcal{B}_{r}(\mathbb{R})}^{(1)}-\nu_{|\mathcal{B}_{r}(\mathbb{R})}^{(2)}$ for every $A\in\mathcal{B}_{r}(\mathbb{R})$ with $r>0$. Since $\nu_{|\mathcal{B}_{r}(\mathbb{R})}^{(1)}$ and $\nu_{|\mathcal{B}_{r}(\mathbb{R})}^{(2)}$ are two finite measures then $\nu_{|\mathcal{B}_{r}(\mathbb{R})}$ is a finite signed measure. Thus, we denote by $\nu^{+}_{|\mathcal{B}_{r}(\mathbb{R})}$ and $\nu^{-}_{|\mathcal{B}_{r}(\mathbb{R})}$ its (unique) Jordan decomposition. Notice that $\nu^{+}_{|\mathcal{B}_{r}(\mathbb{R})}(A)=\nu^{+}_{|\mathcal{B}_{s}(\mathbb{R})}(A)$ for every $A\in\mathcal{B}_{r}(\mathbb{R})$ with $0<s\leq r$. Moreover, let $\nu^{+}(\{0\})=\nu^{+}(\{0\})=0$. Then, by Carath\'{e}odory's extension theorem we obtain the existence of two unique $\sigma$-finite measures $\nu^{+}$ and $\nu^{-}$ on $(\mathbb{R},\mathcal{B}(\mathbb{R}))$. It is possible to see that they satisfy the conditions of Definition \ref{def1}. Moreover, the uniqueness of $\nu$ comes from the uniqueness of $\nu^{+}$ and $\nu^{-}$.
\end{proof}
\noindent The above result obviously extends to the $\mathbb{R}^{d}$ case. Note that saying that a property holds ``for every $A\in\mathcal{B}_{r}(\mathbb{R})$ with $r>0$" is equivalent to saying that it holds ``for every $B\in\mathcal{B}(\mathbb{R})$ s.t.~$0\notin \overline{B}$".\\ In the following result represents a partial QID analogue of Proposition \ref{PropositionRajRos}.
\begin{lem}\label{pr1}
	Let $\nu_{0}:\mathcal{S}\mapsto\mathbb{R}$ be a signed measure, $\nu_{1}:\mathcal{S}\mapsto\mathbb{R}$ be a measure, $G_{A}$ be a L\'{e}vy measure on $\mathbb{R}$ for every $A\in\mathcal{S}$ and $\mathcal{S}\ni A\mapsto G_{A}(B)\in[0,\infty)$ be a measure for every $B\in\mathcal{B}(\mathbb{R})$ s.t.~$0\notin \overline{B}$. Let $M$ be defined as $G$. Let $F_{A}(B):=G_{A}(B)-M_{A}(B)$ for every $B\in\mathcal{B}(\mathbb{R})$ s.t.~$0\notin \overline{B}$ and $A\in\mathcal{S}$. Then, there is a unique quasi-L\'{e}vy type measure $F_{A}$ for every $A\in\mathcal{S}$. Moreover, $\mathcal{S}\ni A\mapsto F_{A}(B)\in(-\infty,\infty)$ is a signed measure for every $B\in\mathcal{B}(\mathbb{R})$ s.t.~$0\notin \overline{B}$.\\
	Further, if $(\nu_{0}(A), \nu_{1}(A),F_{A})$ is the characteristic triplet of a QID random variable $\forall A\in\mathcal{S}$. Then there exists a unique (in the sense of finite-dimensional distributions) QID random measure $\Lambda$ such that $(\ref{cf})$ holds, and $F_{A}$ is a quasi-L\'{e}vy measure for every $A\in\mathcal{S}$.
\end{lem}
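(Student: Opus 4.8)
The plan is to establish the four assertions of Lemma \ref{pr1} in order: that each $F_A$ is a unique quasi-L\'{e}vy type measure, that $A\mapsto F_A(B)$ is a signed measure, that a QID r.m.~realising $(\ref{cf})$ exists and is unique in law, and finally that each $F_A$ is a genuine quasi-L\'{e}vy measure. The first part is the quick one. For fixed $A\in\mathcal{S}$, both $G_A$ and $M_A$ are L\'{e}vy measures on $\mathbb{R}$, and $F_A$ is exactly their difference on each $\mathcal{B}_r(\mathbb{R})$; this is precisely the situation of Lemma \ref{lemma-last?}, which simultaneously yields that $F_A$ is a quasi-L\'{e}vy type measure and that it is the unique one with this representation. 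For the second assertion I would fix $B\in\mathcal{B}(\mathbb{R})$ with $0\notin\overline{B}$: by hypothesis $A\mapsto G_A(B)$ and $A\mapsto M_A(B)$ are $[0,\infty)$-valued measures on $\mathcal{S}$ (finite, since $G_A,M_A$ are L\'{e}vy measures and $B$ is bounded away from $0$), so their difference $A\mapsto F_A(B)$ is real-valued, vanishes on $\emptyset$, and is countably additive with the series converging absolutely (being the difference of two convergent series of non-negative terms); that is, it is a signed measure on the ring $\mathcal{S}$.

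For the existence of $\Lambda$ the engine is that the characteristic exponent in $(\ref{cf})$,
\[
\Psi_A(\theta)=i\theta\nu_{0}(A)-\tfrac{\theta^{2}}{2}\nu_{1}(A)+\int_{\mathbb{R}}\bigl(e^{i\theta x}-1-i\theta\tau(x)\bigr)F_{A}(dx),
\]
is additive in $A$ over disjoint unions: $\nu_0$ is a signed measure, $\nu_1$ a measure, and the integral term is additive because $A\mapsto\int(\cdots)\,dG_A$ and $A\mapsto\int(\cdots)\,dM_A$ are additive, inherited from the measure structure of $A\mapsto G_A(B),M_A(B)$ as in Proposition \ref{PropositionRajRos}. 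Consequently, for disjoint $A_1,A_2\in\mathcal{S}$ with $A_1\cup A_2\in\mathcal{S}$, the assumed QID laws $\mu_{A}$ (those with c.f.~$e^{\Psi_A}$) satisfy $\mu_{A_1}*\mu_{A_2}=\mu_{A_1\cup A_2}$, since both sides have c.f.~$e^{\Psi_{A_1}+\Psi_{A_2}}$ and the right-hand side is a bona fide probability law by the standing hypothesis that each triplet is QID. I would then construct the finite-dimensional distributions in the standard way: given $A_1,\dots,A_n\in\mathcal{S}$, pass to the finitely many atoms $C_1,\dots,C_m$ of the ring they generate (these lie in $\mathcal{S}$ because $\mathcal{S}$ is a $\delta$-ring), attach independent variables $\xi_j$ with $\mathcal{L}(\xi_j)=\mu_{C_j}$, and set $\Lambda(A_i):=\sum_{C_j\subseteq A_i}\xi_j$. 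The convolution identity makes this family consistent, so Kolmogorov's extension theorem produces $\Lambda=\{\Lambda(A):A\in\mathcal{S}\}$; by construction it is independently scattered and finitely additive, and each $\Lambda(A)$ has c.f.~$e^{\Psi_A}$, i.e.~$(\ref{cf})$ holds.

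To upgrade finite to countable additivity I would invoke Pr\'{e}kopa's criterion (Theorem \ref{TheoremPrekopa}): it suffices to show $\Lambda(B_n)\stackrel{p}{\rightarrow}0$ whenever $B_n\in\mathcal{S}$ and $B_n\searrow\emptyset$. Set
\[
\lambda(A):=|\nu_{0}|(A)+\nu_{1}(A)+\int_{\mathbb{R}}(1\wedge x^{2})\,(G_{A}+M_{A})(dx),
\]
which is a finite measure on $\mathcal{S}$: $|\nu_0|(A)<\infty$ because $\nu_0$ restricts to a finite (hence bounded-variation) signed measure on the trace $\sigma$-algebra $\{A\cap B:B\in\mathcal{S}\}$, and the integral is finite because $G_A,M_A$ are L\'{e}vy measures. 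Using $|F_A|\le G_A+M_A$ together with the bound $|e^{i\theta x}-1-i\theta\tau(x)|\le C_{\theta}(1\wedge x^{2})$, one obtains $|\Psi_A(\theta)|\le C'_{\theta}\,\lambda(A)$. Continuity from above of $\lambda$ (legitimate since $\lambda(B_1)<\infty$) gives $\lambda(B_n)\to 0$, hence $\Psi_{B_n}(\theta)\to 0$ and $e^{\Psi_{B_n}(\theta)}\to 1$ for every $\theta$; thus $\Lambda(B_n)\stackrel{d}{\rightarrow}0$, which for convergence to a constant is equivalent to $\Lambda(B_n)\stackrel{p}{\rightarrow}0$. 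Pr\'{e}kopa's theorem then yields countable additivity, so $\Lambda$ is a QID r.m..

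Uniqueness in the sense of finite-dimensional distributions is then immediate, since every law $\mathcal{L}(\Lambda(A_1),\dots,\Lambda(A_n))$ is forced by the marginals $(\ref{cf})$ and independent scattering, both determined by $(\nu_0,\nu_1,F_\cdot)$. Finally, each $F_A$ is a quasi-L\'{e}vy measure (not merely a quasi-L\'{e}vy type measure) directly from the standing hypothesis that $(\nu_0(A),\nu_1(A),F_A)$ is the characteristic triplet of a QID random variable, which is exactly the defining requirement. The main obstacle is the patching step of the third part: the genuinely QID-specific point is that the additive exponent produces a valid convolution identity $\mu_{A_1}*\mu_{A_2}=\mu_{A_1\cup A_2}$, which is automatic in the ID case but here relies on the assumption that $e^{\Psi_{A_1\cup A_2}}$ is itself a characteristic function; and turning the resulting finitely additive object into a countably additive one is where the control measure $\lambda$, the bound $|F_A|\le G_A+M_A$, and Pr\'{e}kopa's theorem do the real work, since the signed nature of $F_A$ must not be allowed to spoil the continuity-from-above argument.
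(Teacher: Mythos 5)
Your proposal is correct and takes essentially the same route as the paper's proof: the difference of the L\'{e}vy measures $G_A$ and $M_A$ gives the unique quasi-L\'{e}vy type measure (as in Lemma \ref{lemma-last?}) and the signed measure $A\mapsto F_A(B)$, finite additivity plus the Kolmogorov extension theorem gives the finitely additive independently scattered family, and countable additivity follows from the domination $F_A^{+}\leq G_A$, $F_A^{-}\leq M_A$, the L\'{e}vy continuity theorem, and Pr\'{e}kopa's criterion (Theorem \ref{TheoremPrekopa}), with uniqueness from the one-to-one correspondence between triplets and laws. Your explicit control measure $\lambda$ and the spelled-out ring-atom/convolution consistency argument simply fill in details the paper compresses into ``a standard application of Kolmogorov extension theorem'' and a direct limit computation; in particular you correctly isolate the one genuinely QID-specific point, namely that the convolution identity requires the standing hypothesis that each $e^{\Psi_A}$ is an actual characteristic function.
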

\begin{proof}
	Since $G_{A}$ and $M_{A}$ are L\'{e}vy measures and since $F_{A}(B)=G_{A}(B)-M_{A}(B)$ for every $B\in\mathcal{B}(\mathbb{R})$ s.t.~$0\notin \overline{B}$ and $A\in\mathcal{S}$, then there is a unique quasi-L\'{e}vy type measure $F_{A}$ for every $A\in\mathcal{S}$. Moreover, since $F_{A}(B)=G_{A}(B)-M_{A}(B)$, it follows immediately that $A\mapsto F_{A}(B)$ is a signed measure for every $B\in\mathcal{B}(\mathbb{R})$ s.t.~$0\notin \overline{B}$.\\
	If $(\nu_{0}(A), \nu_{1}(A),F_{A})$ is the characteristic triplet of a QID random variable then $B\mapsto F_{A}(B)$ is by definition a quasi-L\'{e}vy measure. Moreover, the existence of a finitely additive independently scattered random measure $\Lambda=\{\lambda(A):A\in\mathcal{S}\}$ follows by a standard application of Kolmogorov extension theorem. To prove that $\Lambda$ is countable additive let $A_{n}\searrow\emptyset$ with $\{A_{n}\}\subset\mathcal{S}$. Then, by definition of (signed) measures, $\nu_{0}(A_{n})\rightarrow0$, $\nu_{1}(A_{n})\rightarrow0$, $G_{A_{n}}(B)\rightarrow0$ and $M_{A_{n}}(B)\rightarrow0$ for every $B\in\mathcal{B}(\mathbb{R})$ s.t.~$0\notin \overline{B}$. Notice that $G_{A}(B)\geq F_{A}^{+}(B)$ and $M_{A}(B)\geq F_{A}^{-}(B)$ for all $A\in\mathcal{S}$ and $B\in\mathcal{B}(\mathbb{R})$ s.t.~$0\notin \overline{B}$. Then, we obtain that 
\begin{equation*}
\int_{\mathbb{R}} (1\wedge x^{2} ) |F_{A_{n}}|(dx)\leq \int_{\mathbb{R}} (1\wedge x^{2} ) G_{A_{n}}(dx)+\int_{\mathbb{R}} (1\wedge x^{2} ) M_{A_{n}}(dx)\rightarrow0\quad\textnormal{as $n\rightarrow \infty$}
\end{equation*}
Now, by the L\'{e}vy continuity theorem we obtain that $\Lambda(A_{n})\stackrel{d}{\rightarrow}0$, which implies that $\Lambda(A_{n})\stackrel{p}{\rightarrow}0$ and, thus, by Theorem \ref{TheoremPrekopa} the countable additivity of $\Lambda$. The uniqueness follows by the one to one relation between $(\nu_{0}(A), \nu_{1}(A),F_{A})$ and $\Lambda(A)$, for every $A\in\mathcal{S}$.
\end{proof}
Now, we present the first link between ID and QID random measures.
\begin{pro}\label{IDvsQID}
	For every $A\in\mathcal{S}$, let $\Lambda(A)$ be a random variable and let two ID random measures $\Lambda_{1}$ and $\Lambda_{2}$ with $\Lambda(A)+\Lambda_{2}(A)\stackrel{d}{=}\Lambda_{1}(A)$ s.t.~$\Lambda_{2}(A)$ is independent of $\Lambda(A)$. Then, there exists a unique QID random measure $\Lambda=\{\Lambda(A):A\in\mathcal{S}\}$. In this case, we say that the r.m.~$\Lambda$ is \textnormal{generated by two ID random measures}.
\end{pro}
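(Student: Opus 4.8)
The plan is to reduce the statement to Lemma \ref{pr1}, whose hypotheses are tailored precisely to a setting of this kind. First I would apply Proposition \ref{PropositionRajRos}(a) separately to the two ID random measures $\Lambda_{1}$ and $\Lambda_{2}$. This produces their characteristic triplets, say $(\nu_{0}^{(1)},\nu_{1}^{(1)},G_{\cdot})$ for $\Lambda_{1}$ and $(\nu_{0}^{(2)},\nu_{1}^{(2)},M_{\cdot})$ for $\Lambda_{2}$, and guarantees that $\nu_{0}^{(1)},\nu_{0}^{(2)}$ are signed measures, $\nu_{1}^{(1)},\nu_{1}^{(2)}$ are measures, $G_{A}$ and $M_{A}$ are L\'{e}vy measures for each $A\in\mathcal{S}$, and $A\mapsto G_{A}(B)$, $A\mapsto M_{A}(B)$ are measures for every $B$ bounded away from $0$. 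These are exactly the data $G$ and $M$ entering Lemma \ref{pr1}, so setting $F_{A}(B):=G_{A}(B)-M_{A}(B)$ matches its construction verbatim.

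Next I would identify the candidate characteristic triplet of each $\Lambda(A)$. Since $\Lambda_{2}(A)$ is independent of $\Lambda(A)$ and $\Lambda(A)+\Lambda_{2}(A)\stackrel{d}{=}\Lambda_{1}(A)$, passing to characteristic functions yields $\hat{\mathcal{L}}(\Lambda(A))(\theta)=\hat{\mathcal{L}}(\Lambda_{1}(A))(\theta)/\hat{\mathcal{L}}(\Lambda_{2}(A))(\theta)$, which is well defined because the characteristic function of the ID variable $\Lambda_{2}(A)$ has no zeroes. Writing numerator and denominator in L\'{e}vy--Khintchine form and dividing, the exponent of $\hat{\mathcal{L}}(\Lambda(A))$ is the formal triplet $\big(\nu_{0}^{(1)}(A)-\nu_{0}^{(2)}(A),\ \nu_{1}^{(1)}(A)-\nu_{1}^{(2)}(A),\ G_{A}-M_{A}\big)$.

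The one point requiring genuine work -- and the main obstacle -- is to verify that this formal triplet is the true characteristic triplet of a QID random variable, that is, that the Gaussian variance $\nu_{1}^{(1)}(A)-\nu_{1}^{(2)}(A)$ is nonnegative. For this I would exploit that $\Lambda(A)$ is an honest random variable, so $|\hat{\mathcal{L}}(\Lambda(A))(\theta)|\leq 1$ and hence $-2\theta^{-2}\log|\hat{\mathcal{L}}(\Lambda(A))(\theta)|\geq 0$ for every $\theta$. Taking $\theta\to\infty$, the contribution of the quasi-L\'{e}vy integral part vanishes by dominated convergence (legitimate since $\int_{\mathbb{R}}(1\wedge x^{2})|G_{A}-M_{A}|(dx)<\infty$, using $\frac{\cos\theta x-1}{\theta^{2}}\to 0$ pointwise with integrable dominating function $\tfrac{1}{2}(1\wedge x^{2})$), so the limit equals $\nu_{1}^{(1)}(A)-\nu_{1}^{(2)}(A)$, which is therefore $\geq 0$. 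Combined with Lemma \ref{lemma-last?}, which identifies $G_{A}-M_{A}$ as a quasi-L\'{e}vy type measure, this shows that each $\Lambda(A)$ is QID with characteristic triplet $(\nu_{0}(A),\nu_{1}(A),F_{A})$, where $\nu_{0}=\nu_{0}^{(1)}-\nu_{0}^{(2)}$, $\nu_{1}=\nu_{1}^{(1)}-\nu_{1}^{(2)}$ and $F_{A}=G_{A}-M_{A}$; in particular each $F_{A}$ is then a quasi-L\'{e}vy measure.

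Finally I would confirm that these objects satisfy the hypotheses of Lemma \ref{pr1}: $\nu_{0}$ is a signed measure as a difference of signed measures; $\nu_{1}$ is the difference of the measures $\nu_{1}^{(1)},\nu_{1}^{(2)}$ and is nonnegative by the previous step, hence a (countably additive, nonnegative) measure; and $F_{A}=G_{A}-M_{A}$ is exactly the construction in Lemma \ref{pr1} with $G,M$ as produced above. Since in addition $(\nu_{0}(A),\nu_{1}(A),F_{A})$ is the characteristic triplet of the QID random variable $\Lambda(A)$ for every $A\in\mathcal{S}$, the ``Further'' part of Lemma \ref{pr1} applies directly and yields a unique (in the sense of finite-dimensional distributions) QID random measure $\Lambda$ for which $(\ref{cf})$ holds, which is the asserted conclusion.
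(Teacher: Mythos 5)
Your proposal is correct and follows essentially the same route as the paper: extract the two triplets via Proposition \ref{PropositionRajRos}(a), form the difference triplet $(\nu_{0}^{(1)}-\nu_{0}^{(2)},\,\nu_{1}^{(1)}-\nu_{1}^{(2)},\,G_{\cdot}-M_{\cdot})$, show each $\Lambda(A)$ is QID with this triplet (so that in particular $\nu_{1}\geq 0$), and then invoke Lemma \ref{pr1} for existence and uniqueness. The only difference is minor: where the paper cites Lemma 2.7 of \cite{LPS} for nonnegativity of the Gaussian part, you prove it directly via the $\theta\to\infty$ limit of $-2\theta^{-2}\log|\hat{\mathcal{L}}(\Lambda(A))(\theta)|$ with dominated convergence, a valid self-contained substitute.
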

\begin{rem}
	The above result can be restated using instead of the condition that $\Lambda(A)$ is a random variable and there exist two ID random measures $\Lambda(A)+\Lambda_{2}(A)\stackrel{d}{=}\Lambda_{1}(A)$, such that $\Lambda_{2}(A)$ is independent of $\Lambda(A)$, $\forall A\in\mathcal{S}$, the equivalent alternative conditions: \\\textnormal{(i)} $\forall A\in\mathcal{S}$ and $\theta\in\mathbb{R}$, $\mathcal{L}(\Lambda(A))$ is a distribution and
	\begin{equation}\label{two-ID}
	\hat{\mathcal{L}}(\Lambda(A))(\theta)=\dfrac{\hat{\mathcal{L}}(\Lambda_{1}(A))(\theta)}{\hat{\mathcal{L}}(\Lambda_{2}(A))(\theta)},
	\end{equation}
	\textnormal{(ii)} $\forall A\in\mathcal{S}$ $\mathcal{L}(\Lambda(A))$ is a distribution and the c.t.~of $\Lambda(A)$ can be rewritten as the difference of the c.t.~of $\Lambda_{1}(A)$ and $\Lambda_{2}(A)$.
\end{rem}
\begin{proof} Let $\nu^{(j)}_{0}$, $\nu^{(j)}_{1}$ and $F_{\cdot}^{(j)}$ the measures corresponding to $\Lambda_{j}$, for $j=1,2$ (see Proposition \ref{PropositionRajRos}). Let $\nu_{0}(A):=\nu^{(1)}_{0}(A)-\nu^{(2)}_{0}(A)$, $\nu_{1}(A):=\nu^{(1)}_{1}(A)-\nu^{(2)}_{1}(A)$ for all $A\in\mathcal{S}$. Moreover, let $F_{A}$ be the unique quasi-L\'{e}vy measure defined by the difference between the L\'{e}vy measures $F_{A}^{(1)}$ and $F_{A}^{(2)}$, namely $F_{A}(B):=F^{(1)}_{A}(B)-F^{(2)}_{A}(B)$ for all $B\in\mathcal{B}(\mathbb{R})$ s.t.~$0\notin \overline{B}$ and $|F_{A}|(\{0\})=0$, for all $A\in\mathcal{S}$. Then $\nu_{0}$, $\nu_{1}$ and $F_{\cdot}$ satisfies the conditions of Lemma \ref{pr1}. \\Further, the condition that $\Lambda(A)$ is a random variable together with $\Lambda(A)+\Lambda_{2}(A)\stackrel{d}{=}\Lambda_{1}(A)$ implies that $\mathcal{L}(\Lambda(A))$ is QID with c.t.~$(\nu_{0}(A),\nu_{1}(A),F_{A})$, hence, by Lemma 2.7 in \cite{LPS} we have $\nu_{1}(A)\geq0$, $\forall A\in\mathcal{S}$.
\\	
Then, by Lemma \ref{pr1} we obtain the stated result (including the statement that $\Lambda$ is uniquely determined).
\end{proof}
In the following we present two results, where the second represent a partial ``only if" result of Proposition \ref{IDvsQID}.
		\begin{lem}\label{pr1-2}
			Let $\Lambda$ be a QID random measure and let $K_{\cdot}(B):A\mapsto K_{A}(B)$ be a measure s.t.~$K_{A}(B)\geq F_{A}(B)$, for every $A\in\mathcal{S}$ and $B\in\mathcal{B}(\mathbb{R})$ s.t.~$0\notin \overline{B}$. Assume that $K_{A}(\cdot):B\mapsto K_{A}(B)$ is L\'{e}vy measure for every $A\in\mathcal{S}$. Then $\nu_{0}:\mathcal{S}\mapsto\mathbb{R}$ is a signed measure, $\nu_{1}:\mathcal{S}\mapsto[0,\infty)$ is a measure, $F_{A}$ is a quasi-L\'{e}vy measure on $\mathbb{R}$, for every $A\in\mathcal{S}$, and $\mathcal{S}\ni A\mapsto F_{A}(B)\in(-\infty,\infty)$ is a signed measure, for every $B\in\mathcal{B}(\mathbb{R})$ s.t.~$0\notin \overline{B}$.
		\end{lem}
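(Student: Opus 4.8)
Since the four conclusions split into pointwise statements, which are essentially definitional, and additivity statements, which carry the content, the plan is to reduce everything to the vanishing of suitable tails and to let the domination by $K$ control the positive part of $F_A$ while a Pr\'{e}kopa-type continuity argument controls the rest. First I would record the pointwise facts: for each fixed $A$ the variable $\Lambda(A)$ is QID, so by (\ref{cf}) we have $\nu_0(A)\in\mathbb{R}$, $\nu_1(A)\ge 0$ (the Gaussian variance of a QID law is nonnegative, Lemma 2.7 in \cite{LPS}) and $F_A$ is a quasi-L\'{e}vy measure. Next I would extract the domination. Fixing $A$ and $r>0$ and taking a Hahn decomposition $C^{+},C^{-}$ of the finite signed measure that $F_A$ induces on $\mathcal{B}_r(\mathbb{R})$, the hypothesis $K_A\ge F_A$ together with $K_A\ge0$ gives, upon testing against $B\cap C^{+}$, the bound $F_A^{+}(B)=F_A(B\cap C^{+})\le K_A(B\cap C^{+})\le K_A(B)$ for every $B$ bounded away from zero. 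Setting $M_A:=K_A-F_A$, this exhibits $M_A=(K_A-F_A^{+})+F_A^{-}$ as a sum of two L\'{e}vy measures, hence a L\'{e}vy measure, and testing against $B\cap C^{-}$ yields the companion bound $F_A^{-}(B)\le M_A(B)$. Thus $F_A=K_A-M_A$ with $F_A^{+}\le K_A$ and $F_A^{-}\le M_A$.

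Next I would establish additivity of the triplet along disjoint unions. Finite additivity is immediate: if $A=\bigcup_{i=1}^{m}A_i$ is a finite disjoint union in $\mathcal{S}$, then $\Lambda(A)=\sum_{i=1}^{m}\Lambda(A_i)$ with independent summands, and comparing the (zero-free) characteristic functions and invoking uniqueness of the QID characteristic triplet yields $\nu_0(A)=\sum_i\nu_0(A_i)$, $\nu_1(A)=\sum_i\nu_1(A_i)$ and $F_A(B)=\sum_i F_{A_i}(B)$; the same additivity then holds for $M_{\cdot}(B)=K_{\cdot}(B)-F_{\cdot}(B)$, and since $M\ge0$ we also get $M_{R}(B)\le M_{R'}(B)$ whenever $R\subset R'$ in $\mathcal{S}$.

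The heart of the argument is to upgrade this to countable additivity. Given a disjoint family $\{A_n\}$ with $\bigcup_n A_n=A\in\mathcal{S}$, put $R_N:=A\setminus\bigcup_{n\le N}A_n\in\mathcal{S}$, so $R_N\searrow\emptyset$. Since $\Lambda$ is a countably additive QID r.m., Theorem \ref{TheoremPrekopa} gives $\Lambda(R_N)\stackrel{p}{\rightarrow}0$, hence $|\hat{\mathcal{L}}(\Lambda(R_N))(\theta)|\to1$ and $\arg\hat{\mathcal{L}}(\Lambda(R_N))(\theta)\to0$ for every $\theta$. Writing $\kappa(A):=\int_{\mathbb{R}}(1\wedge x^2)K_A(dx)$, a monotone approximation shows $\kappa$ is a measure on $\mathcal{S}$, finite on its members, so $\kappa(R_N)\to0$; as $F_{R_N}^{+}\le K_{R_N}$ this forces $\int(1\wedge x^2)F_{R_N}^{+}(dx)\to0$. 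Since $|\hat{\mathcal{L}}(\Lambda(R_N))(\theta)|\to1$, the real part of the exponent in (\ref{cf}) tends to $0$, and discarding the now-negligible positive-part term leaves the single relation $\int(1-\cos\theta x)F_{R_N}^{-}(dx)-\tfrac{\theta^2}{2}\nu_1(R_N)\to0$ for each $\theta$. This is the delicate point: the relation mixes the genuine Gaussian mass $\nu_1(R_N)$ with the (formerly negative) jump mass, and the two must be separated. I would do so by dividing by $\theta^2$ and letting $\theta\to\infty$, using the uniform bound $F_{R_N}^{-}\le M_{R_1}$ to make $\theta^{-2}\int(1-\cos\theta x)F_{R_N}^{-}(dx)$ small uniformly in $N$; this forces $\nu_1(R_N)\to0$, whence $\int(1-\cos\theta x)F_{R_N}^{-}(dx)\to0$ for each $\theta$, and a Fubini average over $\theta$ on a bounded interval (again dominated uniformly by $M_{R_1}$) upgrades this to $\int(1\wedge x^2)F_{R_N}^{-}(dx)\to0$. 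Finally, the imaginary part of the exponent in (\ref{cf}), bounded by $C_\theta\int(1\wedge x^2)|F_{R_N}|(dx)\to0$, gives $\nu_0(R_N)\to0$.

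With $\nu_0(R_N),\nu_1(R_N),F_{R_N}(B)\to0$ in hand, finite additivity upgrades to countable additivity for $\nu_0$, $\nu_1$ and $A\mapsto F_A(B)$; as the enumeration of $\{A_n\}$ was arbitrary the convergence is unconditional, hence absolute for these real series, so $\nu_0$ and $A\mapsto F_A(B)$ are signed measures and $\nu_1$ is a nonnegative finite measure. Absolute convergence of the L\'{e}vy part can also be read off directly from the domination, since $\sum_n F_{A_n}^{+}(B)\le K_A(B)<\infty$ and $\sum_n F_{A_n}^{-}(B)\le\sum_n M_{A_n}(B)\le M_A(B)<\infty$. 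I expect the genuine obstacle to be the separation carried out in the third paragraph: the positive part of $F_A$ is controlled for free by the hypothesis $K_A\ge F_A$, but the negative part has no a priori bound along $R_N$ and must be disentangled from the Gaussian contribution, which is precisely where Pr\'{e}kopa's theorem and the uniform domination by $M_{R_1}$ enter.
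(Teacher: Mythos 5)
Your proposal is correct, and its skeleton matches the paper's proof: finite additivity of $\nu_{0}$, $\nu_{1}$ and $F_{\cdot}(B)$ from uniqueness of the L\'{e}vy--Khintchine representation together with $\hat{\mathcal{L}}(\Lambda(\bigcup_{k}A_{k}))=\prod_{k}\hat{\mathcal{L}}(\Lambda(A_{k}))$; reduction of countable additivity to continuity along a sequence decreasing to $\emptyset$ via Theorem \ref{TheoremPrekopa}; the Hahn-decomposition bound $F_{A}^{+}\leq K_{A}$ with the monotone splitting argument to get $\int(1\wedge x^{2})F^{+}_{R_{N}}(dx)\rightarrow0$; and a Chebychev step to pass to setwise convergence. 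Where you genuinely diverge is at the central analytic step. The paper simply asserts that $\Lambda(A_{n})\stackrel{p}{\rightarrow}0$ forces $\nu_{0}(A_{n})\rightarrow0$, $\nu_{1}(A_{n})\rightarrow0$ and $\int_{\mathbb{R}}(1\wedge x^{2})F_{A_{n}}(dx)\rightarrow0$, and then uses the domination by $K$ only to split the positive from the negative part. That assertion is classical for ID laws but is not obvious for QID laws, precisely because of the cancellation you isolate: the real part of the exponent mixes $-\frac{\theta^{2}}{2}\nu_{1}(R_{N})$ and $+\int(1-\cos\theta x)F^{-}_{R_{N}}(dx)$, two nonnegative quantities entering with opposite signs. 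Your proof supplies the argument the paper omits: after discarding the $F^{+}$ contribution (controlled by $K$), you separate the Gaussian mass from the negative jump mass by letting $\theta\rightarrow\infty$ under the uniform bound $F^{-}_{R_{N}}\leq M_{R_{1}}$ (where $M_{R_{1}}=(K_{R_{1}}-F^{+}_{R_{1}})+F^{-}_{R_{1}}$ is a L\'{e}vy measure, so dominated convergence gives $\theta^{-2}\int(1-\cos\theta x)M_{R_{1}}(dx)\rightarrow0$), then upgrade $\int(1-\cos\theta x)F^{-}_{R_{N}}(dx)\rightarrow0$ to $\int(1\wedge x^{2})F^{-}_{R_{N}}(dx)\rightarrow0$ by a Fubini average over $\theta$ in a bounded interval, and finally read $\nu_{0}(R_{N})\rightarrow0$ off the imaginary part. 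The order of limits ($N\rightarrow\infty$ first at fixed large $\theta$, then $\theta\rightarrow\infty$) is handled correctly, so the separation is sound. What each approach buys: the paper's is shorter but rests on an unproved convergence-of-triplets claim for QID distributions; yours is self-contained, at the cost of the extra $\theta\rightarrow\infty$/Fubini analysis. Your closing remark that unconditional convergence of the real series yields the absolute convergence required by the paper's definition of a signed measure on a ring (alternatively read off from $\sum_{n}F^{+}_{A_{n}}(B)\leq K_{A}(B)$ and $\sum_{n}F^{-}_{A_{n}}(B)\leq M_{A}(B)$) is a detail the paper leaves implicit.
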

		\begin{proof}
			First, since $\Lambda$ is a QID r.m.~it follows that $F_{A}$ is a quasi-L\'{e}vy measure on $\mathbb{R}$, for every $A\in\mathcal{S}$. 
			\\Now, let $\{A_{k}\}_{k=1}^{n}$ be pairwise disjoint sets in $\mathcal{S}$. By the uniqueness of the L\'{e}vy-Khintchine representation of a quasi-ID distribution, it follows, using $\hat{\mathcal{L}}(\Lambda(\bigcup_{k=1}^{n}A_{k}))=\prod_{k=1}^{n}\hat{\mathcal{L}}(\Lambda(A_{k}))$, that all three set functions $\nu_{0}$, $\nu_{1}$ and $F_{\cdot}(B)$ (for every fixed $B\in\mathcal{B}(\mathbb{R})$ s.t.~$0\notin \overline{B}$) are finitely additive. Let now $\{A_{n}\}\subset\mathcal{S}$, $A_{n}\searrow\emptyset$.
			Then since $\Lambda(A_{n})\stackrel{p}{\rightarrow}0$ we have that $\nu_{0}(A_{n})\rightarrow0$, $\nu_{1}(A_{n})\rightarrow0$ and $\int_{\mathbb{R}}(1\wedge x^{2})F_{A_{n}}(dx)\rightarrow0$. Concerning $F_{A_{n}}(B)$, observe that $K_{A_{1}}(B)\geq K_{A_{2}}(B)\geq...$ and that $K_{A_{j}}(B)\geq F^{+}_{A_{j}}(B)$, where $j\in\mathbb{N}$, for every $B\in\mathcal{B}(\mathbb{R})$ s.t.~$0\notin \overline{B}$ by assumption. Then,
			\begin{equation*}
			\lim\limits_{n\rightarrow\infty}\int_{\mathbb{R}}(1\wedge x^{2})F^{+}_{A_{n}}(dx)\leq \lim\limits_{n\rightarrow\infty}\int_{\mathbb{R}}(1\wedge x^{2})K_{A_{n}}(dx)
			= \lim\limits_{n\rightarrow\infty}\int_{|x|<\epsilon}(1\wedge x^{2})K_{A_{n}}(dx)+\lim\limits_{n\rightarrow\infty}\int_{|x|\geq \epsilon}(1\wedge x^{2})K_{A_{n}}(dx)
			\end{equation*}
			\begin{equation*}
			\leq \int_{|x|<\epsilon}(1\wedge x^{2})K_{A_{1}}(dx)+\lim\limits_{n\rightarrow\infty}K_{A_{n}}(\{|x|\geq\epsilon\}).
			\end{equation*}
			The second addend goes directly to zero for every $\epsilon>0$ because $K_{\cdot}(B)$ is a measure, while for the first we obtain the convergence to zero by letting $\epsilon\rightarrow0$. Moreover, since
			\begin{equation*}
			\int_{\mathbb{R}}(1\wedge x^{2})F_{A_{n}}(dx)\rightarrow0\,\,\,\,\textnormal{as $n\rightarrow\infty$, then}\,\,\,\,			\int_{\mathbb{R}}(1\wedge x^{2})F^{-}_{A_{n}}(dx)\rightarrow0\,\,\,\,\textnormal{as $n\rightarrow\infty$.}
			\end{equation*} 
			Applying the arguments of the proof of Proposition 2.1 point (i) in \cite{RajRos}, namely by the Chebychev's inequality
			\begin{equation*}
			F^{+}_{A_{n}}(\{|x|\geq\epsilon\})\leq \epsilon^{-2} \int_{\mathbb{R}}(1\wedge x^{2})F^{+}_{A_{n}}(dx),
			\end{equation*}
			we obtain that $F^{+}_{A_{n}}(B)\rightarrow 0$ for every $B\in\mathcal{B}(\mathbb{R})$ s.t.~$0\notin \overline{B}$; and the same holds for $F^{-}_{A_{n}}(B)$. Therefore, we have $|F_{A_{n}}(B)|\rightarrow0$, and thus $F_{A_{n}}(B)$ is a signed measure, for every $B\in\mathcal{B}(\mathbb{R})$ s.t.~$0\notin \overline{B}$.
		\end{proof}
		\begin{co}
			Let $\Lambda$ be a QID random measure with $F_{\cdot}^{+}(B):A\mapsto F_{A}^{+}(B)$ a measure for every $B\in\mathcal{B}(\mathbb{R})$ s.t.~$0\notin \overline{B}$. Then $\nu_{0}:\mathcal{S}\mapsto\mathbb{R}$ is a signed measure, $\nu_{1}:\mathcal{S}\mapsto[0,\infty)$ is a measure, $F_{A}$ is a quasi-L\'{e}vy measure on $\mathbb{R}$, for every $A\in\mathcal{S}$, and $\mathcal{S}\ni A\mapsto F_{A}(B)\in(-\infty,\infty)$ is a signed measure, for every $B\in\mathcal{B}(\mathbb{R})$ s.t.~$0\notin \overline{B}$. Moreover, we have that $\mathcal{S}\ni A\mapsto F^{-}_{A}(B)\in[0,\infty)$ is a measure, for every $B\in\mathcal{B}(\mathbb{R})$ s.t.~$0\notin \overline{B}$.
		\end{co}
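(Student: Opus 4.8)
The plan is to derive this corollary directly from Lemma \ref{pr1-2} by making the canonical choice $K_A(B) := F_A^{+}(B)$, and then to read off the additional conclusion about $F^{-}_{\cdot}(B)$ from the signed-measure structure already established.

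First I would check that this choice of $K$ satisfies the hypotheses of Lemma \ref{pr1-2}. The inequality $K_A(B) = F_A^{+}(B) \geq F_A(B)$ holds trivially for every $A \in \mathcal{S}$ and every $B \in \mathcal{B}(\mathbb{R})$ with $0 \notin \overline{B}$, since $F_A(B) = F_A^{+}(B) - F_A^{-}(B)$ and $F_A^{-}(B) \geq 0$. Because $\Lambda$ is a QID random measure, $F_A$ is a quasi-L\'{e}vy measure for each $A$, so by Lemma \ref{lemma-last?} its positive part $F_A^{+}(\cdot)$ is a L\'{e}vy measure on $\mathbb{R}$ for every $A \in \mathcal{S}$; this supplies the requirement that $K_A(\cdot)$ be a L\'{e}vy measure. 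Finally, $K_{\cdot}(B) = F_{\cdot}^{+}(B)$ is a measure precisely by the standing assumption of the corollary. Hence Lemma \ref{pr1-2} applies and yields at once that $\nu_0$ is a signed measure, $\nu_1$ a measure, $F_A$ a quasi-L\'{e}vy measure for each $A$, and $A \mapsto F_A(B)$ a signed measure for every $B$ with $0 \notin \overline{B}$.

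It then remains to prove the \emph{Moreover} part, that $A \mapsto F_A^{-}(B)$ is a measure. Here I would use the identity $F_A^{-}(B) = F_A^{+}(B) - F_A(B)$. By hypothesis $A \mapsto F_A^{+}(B)$ is a measure and, by the part just established, $A \mapsto F_A(B)$ is a signed measure, so their difference is a signed measure on $\mathcal{S}$. Concretely, for pairwise disjoint $A_1, A_2, \ldots \in \mathcal{S}$ with $\bigcup_n A_n \in \mathcal{S}$, countable additivity of both $F_{\cdot}^{+}(B)$ and $F_{\cdot}(B)$ gives $F^{-}_{\bigcup_n A_n}(B) = \sum_n F^{-}_{A_n}(B)$, and since every summand is non-negative the series converges absolutely. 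As $A \mapsto F_A^{-}(B)$ takes only values in $[0,\infty)$, it is in fact a positive measure, which is the claimed conclusion.

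I do not expect a genuine obstacle here: the corollary is essentially a specialization of Lemma \ref{pr1-2}. The only point requiring a moment of care is justifying that $F_A^{+}$ is a L\'{e}vy measure, so that Lemma \ref{pr1-2} is applicable, which follows from the QID assumption together with Lemma \ref{lemma-last?}; and the elementary observation that a signed measure taking only non-negative values is automatically a positive measure, which upgrades the difference $F_{\cdot}^{+}(B) - F_{\cdot}(B)$ from a signed measure to a measure.
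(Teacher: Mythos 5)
Your proof is correct and takes essentially the same route as the paper: the paper's own proof of this corollary is simply that ``it follows from the same arguments of Lemma \ref{pr1-2}'', i.e.\ the specialization $K_{A}=F_{A}^{+}$, which is exactly your choice. Your explicit checks --- that $F_{A}^{+}$ is a L\'{e}vy measure (via the QID assumption and Lemma \ref{lemma-last?}) so the lemma applies, and that $F_{\cdot}^{-}(B)=F_{\cdot}^{+}(B)-F_{\cdot}(B)$ is a countably additive set function with non-negative finite values, hence a measure --- merely make explicit the details the paper leaves implicit.
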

		\begin{rem}
			Notice that we could use the assumption $F_{\cdot}^{-}(\mathbb{R}):A\mapsto F_{A}^{-}(\mathbb{R})$ is a measure, instead. Then, the arguments would be the same.
		\end{rem}
		\begin{proof}
			It follows from the same arguments of Lemma \ref{pr1-2}.
		\end{proof}
\begin{pro}\label{IDvsQID-2}
	Let $\Lambda$ be QID random measure and let $K_{A}(B)$ as in Lemma \ref{pr1-2}. Then, there exist two ID random measures $\Lambda_{1}$ and $\Lambda_{2}$ with $\Lambda(A)+\Lambda_{2}(A)\stackrel{d}{=}\Lambda_{1}(A)$ such that $\Lambda_{2}(A)$  is independent of $\Lambda(A)$ and has zero Gaussian part, $\forall A\in\mathcal{S}$.
\end{pro}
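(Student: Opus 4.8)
The plan is to construct $\Lambda_2$ first, choosing its Lévy measure so that adding $\Lambda_2$ to $\Lambda$ converts the quasi-Lévy measure $F_A$ into the genuine Lévy measure $K_A$, and then to set $\Lambda_1:=\Lambda+\Lambda_2$. First I would invoke Lemma \ref{pr1-2}: since $\Lambda$ is a QID r.m.~admitting a dominating $K$ as in that lemma, we already know that $\nu_0$ is a signed measure, $\nu_1$ a nonnegative measure, $F_A$ a quasi-Lévy measure for each $A$, and, crucially, that $\mathcal{S}\ni A\mapsto F_A(B)$ is a signed measure for each $B\in\mathcal{B}(\mathbb{R})$ with $0\notin\overline{B}$.

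Next I define the candidate Lévy measure for $\Lambda_2$ by $\tilde{F}_A(B):=K_A(B)-F_A(B)$. Two verifications are needed: (i) for fixed $A$, $\tilde{F}_A$ is a Lévy measure on $\mathbb{R}$; (ii) for fixed $B$ with $0\notin\overline{B}$, $\mathcal{S}\ni A\mapsto\tilde{F}_A(B)$ is a nonnegative measure. For (i), on each $\mathcal{B}_r(\mathbb{R})$ the set function $K_A-F_A$ is a finite signed measure that is nonnegative because $K_A(B)\geq F_A(B)$ by hypothesis, hence a finite positive measure; these restrictions are consistent in $r$ and extend (Carathéodory) to a measure with $\tilde{F}_A(\{0\})=0$, and $\int_{\mathbb{R}}(1\wedge x^2)\tilde{F}_A(dx)\leq\int_{\mathbb{R}}(1\wedge x^2)K_A(dx)+\int_{\mathbb{R}}(1\wedge x^2)|F_A|(dx)<\infty$, so $\tilde{F}_A$ is a Lévy measure. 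For (ii), $A\mapsto K_A(B)$ is a measure by assumption and $A\mapsto F_A(B)$ is a signed measure by Lemma \ref{pr1-2}, so their difference is a signed measure, and it is nonnegative on every $A$ again by $K_A(B)\geq F_A(B)$, hence it is a measure.

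With these properties the triplet $(0,0,\tilde{F}_{\cdot})$ satisfies the hypotheses of Proposition \ref{PropositionRajRos}(b): the drift $0$ is a signed measure, the Gaussian variance is the zero measure, $\tilde{F}_A$ is a Lévy measure for each $A$, and $A\mapsto\tilde{F}_A(B)$ is a measure. Hence there exists a unique ID random measure $\Lambda_2$ with this triplet; realising it on a product probability space makes it independent of $\Lambda$, and by construction its Gaussian part vanishes. Finally I set $\Lambda_1(A):=\Lambda(A)+\Lambda_2(A)$. Since $\Lambda$ and $\Lambda_2$ are independent, the c.f.~of $\Lambda_1(A)$ is the product of the two, so by $(\ref{cf})$ its exponent has drift $\nu_0(A)$, Gaussian variance $\nu_1(A)\geq 0$, and Lévy measure $F_A+\tilde{F}_A=K_A$, a genuine Lévy measure; thus each $\Lambda_1(A)$ is ID. Independent scatteredness and countable additivity of $\Lambda_1$ follow from those of $\Lambda$ and $\Lambda_2$ together with their independence, so $\Lambda_1$ is an ID random measure. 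By construction $\Lambda(A)+\Lambda_2(A)\stackrel{d}{=}\Lambda_1(A)$ with $\Lambda_2(A)$ independent of $\Lambda(A)$ and $\Lambda_2$ of zero Gaussian part, as required.

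I expect the main obstacle to be step (ii): checking that $A\mapsto\tilde{F}_A(B)$ is a genuine countably additive, nonnegative measure rather than merely finitely additive. This is exactly where the domination hypothesis $K_A(B)\geq F_A(B)$ together with the good behaviour of $K$ in $A$ is essential, since the negative part $A\mapsto F_A^-(B)$ of the signed measure need not itself be monotone or countably additive in $A$; subtracting $F$ from the well-behaved dominating $K$ is what repairs this.
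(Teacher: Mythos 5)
Your proposal is correct and follows essentially the same route as the paper: both hinge on defining the L\'{e}vy measure of $\Lambda_{2}$ as $K_{A}(B)-F_{A}(B)$, using the domination $K_{A}(B)\geq F_{A}(B)$ together with Lemma \ref{pr1-2} to check it is a L\'{e}vy measure in $B$ and a (nonnegative) measure in $A$, and then invoking Proposition \ref{PropositionRajRos}(b) to produce the ID random measure. The only (harmless) differences are cosmetic: the paper also builds $\Lambda_{1}$ abstractly via Proposition \ref{PropositionRajRos}(b) with triplet $(2\nu_{0},\nu_{1},K_{\cdot})$ and $\Lambda_{2}$ with drift $\nu_{0}$, then verifies $\hat{\mathcal{L}}(\Lambda(A))=\hat{\mathcal{L}}(\Lambda_{1}(A))/\hat{\mathcal{L}}(\Lambda_{2}(A))$, whereas you give $\Lambda_{2}$ zero drift and realise $\Lambda_{1}$ directly as the sum $\Lambda+\Lambda_{2}$ on a product space, which makes the distributional identity automatic at the cost of checking that the sum is itself an ID random measure.
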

\begin{proof}
 Let $F_{A}^{(1)}:=K_{A}$, $F_{A}^{(2)}(B):=K_{A}(B)-F_{A}(B)$ for every $B\in\mathcal{B}(\mathbb{R})$ s.t.~$0\notin \overline{B}$ and $F_{A}^{(2)}(\{0\})=0$. In addition, let $\nu_{0}^{(1)}:=2\nu_{0}$, $\nu_{0}^{(2)}:=\nu_{0}$, $\nu_{1}^{(1)}:=\nu_{1}$, and $\nu_{1}^{(2)}:=0$. By Lemma \ref{pr1-2}, we obtain that $F_{A}^{(j)}(\cdot):\mathcal{B}(\mathbb{R})\rightarrow[0,\infty)$, $F_{\cdot}^{(j)}(B):\mathcal{S}\rightarrow[0,\infty)$ for any $B\in\mathcal{B}(\mathbb{R})$ s.t.~$0\notin \overline{B}$, $\nu_{0}^{(j)}:\mathcal{S}\rightarrow[0,\infty)$ and $\nu_{0}^{(j)}:\mathcal{S}\rightarrow[0,\infty)$ are all measures, for $j=1,2$. In addition, since $F_{A}$ is a quasi-L\'{e}vy measure and $K_{A}$ is a L\'{e}vy measure then $F_{A}^{(2)}$ is L\'{e}vy measure. 
	\\ Then, by Proposition 2.1 point (ii) of \cite{RajRos} we know that there exist two unique ID random measures $\Lambda_{1}$ and $\Lambda_{2}$ such that, for every $A\in\mathcal{S}$, they have c.t.~given by $(\nu_{0}^{(1)}(A), \nu_{1}^{(1)}(A),F^{(1)}_{A})$ and $(\nu_{0}^{(2)}(A),\nu_{1}^{(2)}(A),F^{(2)}_{A})$, respectively. Finally, it is possible to see that the following relation hold:
	for every $A\in\mathcal{S}$ and $\theta\in\mathbb{R}$, $\hat{\mathcal{L}}(\Lambda(A))(\theta)=\frac{\hat{\mathcal{L}}(\Lambda_{1}(A))(\theta)}{\hat{\mathcal{L}}(\Lambda_{2}(A))(\theta)}$.
\end{proof}
\begin{rem}
	The reason why $\Lambda_{1}$ and $\Lambda_{2}$ are not determined uniquely by $\Lambda$ is because given $\nu_{0}$ we cannot exclude the possibility of finding two measures $\tilde{\nu}_{0}^{(1)}$ and $\tilde{\nu}_{0}^{(2)}$ such that $\nu_{0}(A)=\tilde{\nu}_{0}^{(1)}(A)-\tilde{\nu}_{0}^{(2)}(A)$, where $\tilde{\nu}_{0}^{(1)}(A)\neq 2\nu_{0}(A)$ and $\tilde{\nu}_{0}^{(2)}(A)\neq \nu_{0}(A)$. This leads to two measures $\tilde{\Lambda}_{1}$ and $\tilde{\Lambda}_{2}$ such that they satisfies the same conditions as $\Lambda_{1}$ and $\Lambda_{2}$. Moreover, the same reasoning applied to $\nu_{0}$ may also applies to $F_{A}$.
\end{rem}
In the following example we observe that the condition on $F_{A}^{+}(B)$ being a measure is not always satisfied even if we restrict to random measure $\Lambda$ with $\mathcal{L}(\Lambda(A))$ concentrated on $\mathbb{Z}$, $\forall A\in\mathcal{S}$.
\begin{exmp}
	Consider $F_{A}=\sum_{l\in \mathbb{Z},l\neq0}b^{A}_{l}\delta_{l}$ where $b^{A}_{l}$ are real numbers, then $|F_{A}|=\sum_{l\in \mathbb{Z},l\neq0}|b^{A}_{l}|\delta_{l}$. Such a quasi-L\'{e}vy measure arises when we consider a distribution supported on the integers whose c.f.~has no zeroes (see Theorem 8.1 in \cite{LPS}). While $F_{A}^{+}$ and $F_{A}^{-}$ are L\'{e}vy measure for every fixed $A\in\mathcal{S}$ they are not always measures for fixed $B\in\mathcal{B}(\mathbb{R})$. Indeed, consider $A=A_{1}\cap A_{2}$ with $A_{1}\cap A_{2}=\emptyset$. Since $F_{A}$ is a signed measure then $b^{A_{1}\cup A_{2}}_{k}=b^{A_{1}}_{k}+b^{A_{2}}_{k}$. However, it might happen that $b^{A_{1}}_{k}$ and $b^{A_{2}}_{k}$ have different sign. This implies that $F_{A_{1}\cup A_{2}}^{+}(\{k\})\neq F_{A_{1}}^{+}(\{k\})+F_{A_{2}}^{+}(\{k\})$ and $F_{A_{1}\cup A_{2}}^{-}(\{k\})\neq F_{A_{1}}^{-}(\{k\})+F_{A_{2}}^{-}(\{k\})$.
	\\ On the other hand, this example shows that if we consider $F_{A}=\sum_{l\in \mathbb{Z},l\neq0}b^{A}_{l}\delta_{l}$ such that $b^{A}_{l}$, with $l\in\mathbb{Z}$, do not change sign for all $A\in\mathcal{S}$, then this represents an example where the theory presented so far applies.
\end{exmp}
\section{QID random measure}\label{Ch-QIDrm}
In the previous section we have seen that under certain conditions two ID random measure generates a QID random measure and vice versa. In this section we are going to explore other cases and show necessary and sufficient conditions for the existence and uniqueness of QID random measure.
\subsection{The distribution concentrated on the integers}
In this section we are going to investigate the case where supp($\mathcal{L}(\Lambda(A))$)$\subset\mathbb{Z}$. In other words, we consider $\mathcal{L}(\Lambda(A))=\sum_{m\in\mathbb{Z}}a_{m}\delta_{m}$, for $a_{m}\in\mathbb{R}$. We start by showing the equivalent of Proposition 2.1 of \cite{RajRos}.
\begin{pro}\label{pr1-Z}
	\textnormal{(i)} Let $\Lambda$ be a QID random measure with \textnormal{supp(}$\mathcal{L}(\Lambda(A))$\textnormal{)}$\subset\mathbb{Z}$, $\forall A\in\mathcal{S}$. Then for each $A\in\mathcal{S}$, $\nu_{1}(A)=0$, the drift of $\hat{\mathcal{L}}(\Lambda(A))\in\mathbb{Z}$ and $F_{A}$ is a quasi-L\'{e}vy measure concentrated on $\mathbb{Z}\setminus\{0\}$ and finite. Moreover, $\nu_{0}:\mathcal{S}\mapsto\mathbb{R}$ is a signed measure, $\nu_{1}:\mathcal{S}\mapsto\mathbb{R}$ is the zero measure, $F_{A}$ is a quasi-L\'{e}vy measure on $\mathbb{R}$, for every $A\in\mathcal{S}$, and $\mathcal{S}\ni A\mapsto F_{A}(B)\in(-\infty,\infty)$ is a signed measure, for every $B\in\mathcal{B}(\mathbb{R})$.
	\\ \textnormal{(ii)} Let $\nu_{0}$, $\nu_{1}$ and $F_{\cdot}$ be as in \textnormal{(i)} and such that $(\ref{cf})$ is the c.f.~of a distribution and has no zeros, for each $A\in\mathcal{S}$. Then there exists a unique (in the sense of finite-dimensional distributions) quasi-ID random measure $\Lambda$ s.t.~$(\ref{cf})$ holds.
\end{pro}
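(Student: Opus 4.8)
The plan is to run the two halves along the template of Proposition~\ref{PropositionRajRos}, inserting the integer-specific facts and then upgrading the measure-theoretic statements to the signed setting. For part (i), I would first fix $A\in\mathcal{S}$ and invoke the characterisation of QID laws on $\mathbb{Z}$ (Theorem 8.1 in \cite{LPS}): since $\mathcal{L}(\Lambda(A))$ is QID and concentrated on $\mathbb{Z}$, its c.f.~has no zeroes and its quasi-L\'evy measure $F_{A}$ is finite and carried by $\mathbb{Z}\setminus\{0\}$, so that $F_{A}=\sum_{l\neq0}b_{l}^{A}\delta_{l}$ with $\sum_{l\neq0}|b_{l}^{A}|<\infty$. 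That $\nu_{1}(A)=0$ and that the drift lies in $\mathbb{Z}$ I would read off from the $2\pi$-periodicity of the c.f.: a factor $\exp(-\theta^{2}\nu_{1}(A)/2)$ cannot be periodic unless $\nu_{1}(A)=0$, and once the (finite) jump part is periodic the centering constant is forced into $\mathbb{Z}$.

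For the measure statements in (i), $\nu_{1}\equiv0$ is the zero measure, and finite additivity of $\nu_{0}$ and of $A\mapsto F_{A}(B)$ I would obtain exactly as in Lemma~\ref{pr1-2}: for disjoint $A_{1},\dots,A_{n}$ the identity $\hat{\mathcal{L}}(\Lambda(\bigcup_{k}A_{k}))=\prod_{k}\hat{\mathcal{L}}(\Lambda(A_{k}))$ together with uniqueness of the L\'evy--Khintchine triplet forces the triplets to add. The real work is countable additivity, i.e.~continuity at $\emptyset$. For $A_{n}\searrow\emptyset$ the random measure property gives $\Lambda(A_{n})\stackrel{p}{\rightarrow}0$ (necessity direction of Theorem~\ref{TheoremPrekopa}), hence $\mathcal{L}(\Lambda(A_{n}))\to\delta_{0}$ weakly; because the laws are lattice, this is automatically convergence in total variation. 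I would then pass to the convolution Banach algebra $\ell^{1}(\mathbb{Z})$, in which $\mathcal{L}(\Lambda(A_{n}))\to\delta_{0}$ in norm and each law is invertible (no zeroes, by Wiener's theorem); continuity of the distinguished logarithm at the identity then forces the logarithm, whose Fourier coefficients are precisely the $b_{l}^{A_{n}}$ and whose winding number is the drift, to tend to $0$ in $\ell^{1}$. This simultaneously yields $\nu_{0}(A_{n})\to0$, eventual vanishing of the drift, and, crucially, $|F_{A_{n}}|(\mathbb{R})=\sum_{l\neq0}|b_{l}^{A_{n}}|\to0$, so that $F_{A_{n}}(B)\to0$ for every Borel $B$ at once. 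This is the step I expect to be the main obstacle: coordinatewise vanishing $b_{l}^{A_{n}}\to0$ does not control $F_{A_{n}}(B)$ on infinite integer sets $B$, and the signed total variation is not dominated as it was in Lemma~\ref{pr1-2}; the Banach-algebra structure is what replaces the missing domination.

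For part (ii), I would mirror the construction in Lemma~\ref{pr1} and point (b) of Proposition~\ref{PropositionRajRos}: additivity of $\nu_{0}$, of $\nu_{1}\equiv0$ and of $A\mapsto F_{A}(B)$ makes the prescribed finite-dimensional laws consistent, so Kolmogorov's extension theorem produces a finitely additive independently scattered $\Lambda$ with $\Lambda(A)$ distributed by the given (genuine, zero-free) law. Countable additivity again reduces, via Theorem~\ref{TheoremPrekopa} and the L\'evy continuity theorem, to $\hat{\mathcal{L}}(\Lambda(A_{n}))\to1$ for $A_{n}\searrow\emptyset$, that is to $\nu_{0}(A_{n})\to0$ (immediate, since $\nu_{0}$ is a signed measure) and $\int_{\mathbb{R}}(e^{i\theta x}-1-i\theta\tau(x))F_{A_{n}}(dx)\to0$. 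As $x\mapsto e^{i\theta x}-1-i\theta\tau(x)$ is bounded for fixed $\theta$, the latter integral is at most $C_{\theta}\,|F_{A_{n}}|(\mathbb{R})$, so it again suffices to show $|F_{A_{n}}|(\mathbb{R})\to0$. Here no genuine random measure is available to supply total-variation convergence, so in place of the Wiener-algebra argument I would invoke a Nikodym/Vitali--Hahn--Saks argument: viewing $A\mapsto F_{A}$ as valued in the finite signed measures on $\mathbb{Z}\setminus\{0\}$, the hypothesis that $A\mapsto F_{A}(B)$ is countably additive for every $B$ says this set function is setwise countably additive, whence Nikodym boundedness and uniform countable additivity upgrade the coordinatewise convergence $b_{l}^{A_{n}}\to0$ to $|F_{A_{n}}|(\mathbb{R})\to0$. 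With continuity at $\emptyset$ established, Theorem~\ref{TheoremPrekopa} delivers countable additivity, and uniqueness in finite-dimensional distributions follows from the one-to-one correspondence between $(\nu_{0}(A),\nu_{1}(A),F_{A})$ and $\mathcal{L}(\Lambda(A))$, exactly as at the end of Lemma~\ref{pr1}.
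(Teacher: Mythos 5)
Your proposal is correct, and it follows the same skeleton as the paper's proof: Theorem 8.1 of \cite{LPS} for the structure of the triplet, finite additivity from uniqueness of the L\'evy--Khintchine representation together with $\hat{\mathcal{L}}(\Lambda(\bigcup_{k}A_{k}))=\prod_{k}\hat{\mathcal{L}}(\Lambda(A_{k}))$, countable additivity via continuity at $\emptyset$ and Theorem~\ref{TheoremPrekopa}, and uniqueness from the bijection between triplets and laws. The difference lies in how you handle the two delicate convergence steps, and in both cases your treatment is a legitimate (indeed more careful) alternative. In part (i), where you upgrade weak convergence of the lattice laws to total-variation convergence (Scheff\'e) and then run the Wiener-algebra/distinguished-logarithm argument in $\ell^{1}(\mathbb{Z})$, the paper simply cites Theorem 8.5 of \cite{LPS}, which is precisely the statement that weak convergence of integer-supported QID laws forces convergence of the drifts and $\ell^{1}$ (total-variation) convergence of the quasi-L\'evy measures; your argument essentially re-derives that theorem by the same Banach-algebra method used in \cite{LPS}, so nothing is lost and your version is self-contained, correctly isolating why coordinatewise convergence $b^{A_{n}}_{l}\to 0$ alone cannot control $|F_{A_{n}}|(\mathbb{R})$. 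In part (ii), your Nikodym/Vitali--Hahn--Saks step fills a gap the paper glosses over: the paper asserts that $F_{A_{n}}(B)\to 0$ for every $B$, together with concentration on the integers, gives $|F_{A_{n}}|(\mathbb{R})\to 0$, with no justification; as you observe, this implication is not formal (the Hahn decomposition of $F_{A_{n}}$ varies with $n$), and it is exactly setwise convergence plus Nikodym boundedness and uniform countable additivity --- equivalently Schur's property of $\ell^{1}$ --- that turns coordinatewise vanishing into total-variation vanishing. So your route buys self-containedness in (i) and added rigor in (ii), at the cost of invoking heavier classical machinery (Wiener's theorem, Vitali--Hahn--Saks) where the paper leans on citations and an unproved assertion.
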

\begin{proof}
	(i) The first assertion is the content of Theorem 8.1 of \cite{LPS}. 
	For the finite additivity of $\nu_{0}$, $\nu_{1}$ and $F_{\cdot}(B)$ we follow the first part of the proof of Proposition 2.1 of \cite{RajRos}. Indeed, let $\{A_{k}\}_{k=1}^{n}$ be pairwise disjoint sets in $\mathcal{S}$. By the uniqueness of L\'{e}vy-Khintchine representation of the c.f.~of a QID distribution and by $\hat{\mathcal{L}}\left(\Lambda\left(\bigcup_{k=1}^{n} A\right)\right)=\prod_{k=1}^{n} \hat{\mathcal{L}}(\Lambda(A_{k}))$, we obtain the finite additivity property. Now, let $\{A_{n}\}\subset\mathcal{S}$, $A_{n}\searrow \emptyset$. Since $\Lambda(A_{n})\rightarrow0$ then $\hat{\mathcal{L}}(\Lambda(A_{n}))\rightarrow\hat{\delta_{0}}$ and by Theorem 8.5 of \cite{LPS} we obtain that $\nu_{0}(A_{n})\rightarrow0$, $\nu_{1}(A_{n})\rightarrow0$ and $\sum_{l\in \mathbb{Z}}|F_{A_{n}}(\{l\})|\rightarrow 0$. Now, observe that since $F_{A_{n}}=\sum_{l\in \mathbb{Z},l\neq0}b_{n,l}\delta_{l}$ for some $b_{n,l}\in\mathbb{R}$, then $|F_{A_{n}}|=\sum_{l\in \mathbb{Z},l\neq0}|b_{n,l}|\delta_{l}$, and since $F_{A_{n}}(\{l\})=b_{n,l}$, we obtain that $|F_{A_{n}}|(\mathbb{R})\rightarrow0$, because we have that $F_{A_{n}}(\{k\})\rightarrow0$ $\forall k\in\mathbb{Z}$. Therefore, we deduce that $F_{A_{n}}(B)\rightarrow0$ for every $B\in\mathcal{B}(\mathbb{R})$.
	
	(ii) From Theorem 8.5 of \cite{LPS} we obtain that $(\nu_{0}(A), \nu_{1}(A),F_{A})$ is the characteristic triplet of a QID random variable $\forall A\in\mathcal{S}$. Further, the existence of a finitely additive independently scattered random measure $\Lambda=\{\lambda(A):A\in\mathcal{S}\}$ follows by the Kolmogorov extension theorem. To prove that $\Lambda$ is countable additive let $A_{n}\searrow\emptyset$ with $\{A_{n}\}\subset\mathcal{S}$. Then, by definition of (signed) measures, $\nu_{0}(A_{n})\rightarrow0$, $\nu_{1}(A_{n})\rightarrow0$ and $F_{A_{n}}(B)\rightarrow0$ for every $B\in\mathcal{B}(\mathbb{R})$ s.t.~$0\notin \overline{B}$. From the last limit and since $F_{A_{n}}$ is concentrated on the integers, we obtain that $|F_{A_{n}}|(\mathbb{R})\rightarrow0$. Therefore, we have that $\Lambda(A_{n})\stackrel{p}{\rightarrow}0$, hence $\Lambda$ is countably additive.
\end{proof}
\begin{open}
	Following Corollary 8.2 in \cite{LPS}, it might be possible to extend the above result to the case of distributions concentrated on a lattice of the form $r+h\mathbb{Z}$ where $r\in\mathbb{R}$ and $h>0$. Is it really possible?
\end{open}
\subsection{The distribution with an atom of mass greater than 1/2}
In this section we are going to explore the case where a random measure is such that $\mathcal{L}(\Lambda(A))(\{k_{A}\})>1/2$ for some $k_{A}\in\mathbb{R}$. We have the following result for $k_{A}=0$.
\begin{pro}
	\textnormal{(i)} Let $\Lambda$ be an independent scattered random measure with $p_{A}:=\mathcal{L}(\Lambda(A))(\{0\})>1/2$, $\forall A\in\mathcal{S}$. Then $\Lambda$ is a QID random measure and $\nu_{1}(A)=0$, the drift of $\hat{\mathcal{L}}(\Lambda(A))$ equal zero, and $F_{A}$ is a finite quasi-L\'{e}vy measure given by
	\begin{equation}\label{formula-mass-atom}
F_{A}=\left(\sum_{m=1}^{\infty}\frac{1}{m}(-1)^{m+1}\left(\frac{1-p_{A}}{p_{A}}\right)^{m}(\delta_{0}\ast\sigma_{A})^{*m} \right)_{\mathbb{R}\setminus\{0\}},
	\end{equation}
	where $\sigma_{A}=(1-p_{A})^{-1}(\mathcal{L}(\Lambda(A))-p_{A}\delta_{0})$, for every $A\in\mathcal{S}$,\\
	Moreover, $\nu_{0}:\mathcal{S}\mapsto\mathbb{R}$ and $\nu_{1}:\mathcal{S}\mapsto\mathbb{R}$ are the zero measure, $F_{A}$ is a quasi-L\'{e}vy measure on $\mathbb{R}$, for every $A\in\mathcal{S}$, and $\mathcal{S}\ni A\mapsto F_{A}(B)\in(-\infty,\infty)$ is a signed measure, for every $B\in\mathcal{B}(\mathbb{R})$.
	\\ \textnormal{(ii)} Let $\nu_{0}$ and $\nu_{1}$ be the zero measures, let $\mu_{A}$ be a distribution with c.t.~$(0,0, F_{A})$ where $F_{A}$ is a quasi-L\'{e}vy measure on $\mathbb{R}$, for every $A\in\mathcal{S}$, and $\mathcal{S}\ni A\mapsto F_{A}(B)\in(-\infty,\infty)$ is a signed measure, for every $B\in\mathcal{B}(\mathbb{R})$. In particular, let
		\begin{equation*}
		F_{A}=\left(\sum_{m=1}^{\infty}\frac{1}{m}(-1)^{m+1}\left(\frac{1-p_{A}}{p_{A}}\right)^{m}(\delta_{0}\ast\sigma_{A})^{*m} \right)_{\mathbb{R}\setminus\{0\}},
		\end{equation*}
	where $p_{A}=\mu_{A}(\{0\})>1/2$ and $\sigma_{A}=(1-p_{A})^{-1}(\mu_{A}-p_{A}\delta_{0})$, $\forall A\in\mathcal{S}$. Then there exists a unique (in the sense of finite-dimensional distributions) quasi-ID random measure $\Lambda$ s.t.~$\mathcal{L}(\Lambda(A))=\mu_{A}$, for every $A\in\mathcal{S}$.
\end{pro}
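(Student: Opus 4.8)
The plan is to organise both parts around the explicit formula (\ref{formula-mass-atom}), which I would obtain from a convergent logarithmic expansion of the characteristic function, and then to promote the fixed-$A$ statement to a statement about set functions exactly as in Proposition \ref{pr1-Z}.

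First, for a single $A$, I would write $\mathcal{L}(\Lambda(A)) = p_A\delta_0 + (1-p_A)\sigma_A$, so that $\hat{\mathcal{L}}(\Lambda(A))(\theta) = p_A\bigl(1 + \tfrac{1-p_A}{p_A}\hat\sigma_A(\theta)\bigr)$. Because $p_A > 1/2$ we have $\bigl|\tfrac{1-p_A}{p_A}\hat\sigma_A(\theta)\bigr| \le \tfrac{1-p_A}{p_A} < 1$, so the principal logarithm is well defined and $\log(1+z) = \sum_{m\ge1}\tfrac{(-1)^{m+1}}{m}z^m$ applies termwise. Setting $\tilde F_A := \sum_{m\ge1}\tfrac{(-1)^{m+1}}{m}(\tfrac{1-p_A}{p_A})^m\sigma_A^{*m}$, this is a finite signed measure with total variation at most $\sum_{m\ge1}\tfrac1m(\tfrac{1-p_A}{p_A})^m = -\log\tfrac{2p_A-1}{p_A}$ and total mass $\tilde F_A(\mathbb{R}) = -\log p_A$. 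Since $\log p_A + \tilde F_A(\mathbb{R}) = 0$, one gets $\log\hat{\mathcal{L}}(\Lambda(A))(\theta) = \int_{\mathbb{R}}(e^{i\theta x}-1)\tilde F_A(dx)$, and discarding the atom at $0$ (which contributes nothing to $e^{i\theta x}-1$) yields the exponent $\int(e^{i\theta x}-1)F_A(dx)$ with $F_A = \tilde F_A|_{\mathbb{R}\setminus\{0\}}$. This simultaneously shows that $\Lambda(A)$ is QID, that its Gaussian variance is $0$, that the drift in this finite-measure representation is $0$, and that $F_A$ is the finite quasi-L\'evy measure (\ref{formula-mass-atom}); the fixed-distribution version of this is essentially contained in \cite{LPS}.

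Next I would make $\nu_0,\nu_1$ and $F_\cdot(B)$ into set functions. Finite additivity is immediate from $\hat{\mathcal{L}}(\Lambda(\bigcup_{k=1}^nA_k)) = \prod_{k=1}^n\hat{\mathcal{L}}(\Lambda(A_k))$ together with uniqueness of the characteristic triplet, as in Proposition \ref{pr1-Z}(i); since $\nu_1\equiv0$ and the drift vanishes, the content sits in $F_\cdot(B)$. The auxiliary fact I would isolate is a monotonicity lemma: if $\rho_1,\rho_2$ both put mass $>1/2$ at $0$, then $0$ is their unique largest atom, so $(\rho_1*\rho_2)(\{0\}) = \int\rho_1(\{-y\})\rho_2(dy) \le \min(\rho_1(\{0\}),\rho_2(\{0\}))$. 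Applied to $\mu_{A_n} = \mu_{A_{n+1}}*\mu_{A_n\setminus A_{n+1}}$ along a sequence $A_n\searrow\emptyset$, this gives $p_{A_n}\le p_{A_{n+1}}$, hence $p_{A_n}\ge p_{A_1}>1/2$ and the uniform bound $\sup_n|F_{A_n}|(\mathbb{R})\le-\log\tfrac{2p_{A_1}-1}{p_{A_1}}<\infty$.

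The main obstacle is the continuity (countable additivity) step, where the two parts diverge. For (i), $\Lambda$ is already independently scattered, so $\Lambda(A_n)\stackrel{p}{\rightarrow}0$ and $\mu_{A_n}\to\delta_0$; since $p_{A_n}$ increases to some $p_\infty\le1$, either $p_\infty=1$ (so $|F_{A_n}|(\mathbb{R})\to0$) or $p_\infty<1$, in which case $\mu_{A_n}\to\delta_0$ forces $\sigma_{A_n}\to\delta_0$, whence $\sigma_{A_n}^{*m}(B)\to0$ for each $B$ bounded away from $0$ and dominated convergence in $m$ (using $\tfrac{1-p_{A_n}}{p_{A_n}}\le\tfrac{1-p_{A_1}}{p_{A_1}}<1$) gives $F_{A_n}(B)\to0$; thus $F_\cdot(B)$ is a signed measure. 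For (ii), I would run this in reverse: a finitely additive independently scattered $\Lambda$ exists by the Kolmogorov extension theorem as in Lemma \ref{pr1}, consistency being guaranteed by the finite additivity of $F_\cdot(B)$. To obtain countable additivity via Theorem \ref{TheoremPrekopa}, I must show $\mu_{A_n}\to\delta_0$: writing $\log\hat\mu_{A_n}(\theta)=\int(e^{i\theta x}-1)F_{A_n}(dx)$ and splitting at $\delta$, the near-zero part is bounded by $|\theta|\delta\sup_n|F_{A_n}|(\mathbb{R})$ (uniformly small by the monotonicity bound), while the far part tends to $0$ by the Vitali--Hahn--Saks theorem, since the hypothesis that $F_\cdot(B)$ is a signed measure gives $F_{A_n}\to0$ setwise on $\{|x|\ge\delta\}$ and hence $\int_{|x|\ge\delta}(e^{i\theta x}-1)F_{A_n}(dx)\to0$. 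Therefore $\hat\mu_{A_n}(\theta)\to1$, $\Lambda(A_n)\stackrel{p}{\rightarrow}0$, and $\Lambda$ is countably additive; uniqueness in the sense of finite-dimensional distributions then follows from the one-to-one correspondence between a QID law and its characteristic triplet together with independence across disjoint sets, as in Lemma \ref{pr1}.
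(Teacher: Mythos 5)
Your opening derivation of (\ref{formula-mass-atom}) via the logarithmic series, your finite-additivity step, and your treatment of part (ii) are correct. Indeed, you do two things the paper does not: the paper simply cites Theorem 4.3.7 of \cite{Cuppens} (Theorem 3.1 of \cite{LPS}) for the formula, and its countable-additivity argument in (ii) is a one-line assertion, whereas your convolution-monotonicity lemma (the atom at $0$ is the maximal atom of each $\mathcal{L}(\Lambda(A))$, hence $p_{A_{n}}\leq p_{A_{n+1}}$ along $A_{n}\searrow\emptyset$), the resulting uniform bound $\sup_{n}|F_{A_{n}}|(\mathbb{R})\leq-\log\frac{2p_{A_{1}}-1}{p_{A_{1}}}$, and the Vitali--Hahn--Saks step make (ii) genuinely rigorous.

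The gap is in part (i), precisely in the case $p_{\infty}<1$ that you isolate but do not eliminate. There you only obtain $F_{A_{n}}(B)\rightarrow0$ for $B$ bounded away from zero, while the proposition asserts that $A\mapsto F_{A}(B)$ is a signed measure for \emph{every} $B\in\mathcal{B}(\mathbb{R})$; sets clustering at the origin are left uncontrolled, and within the information you have in that case they cannot be controlled: if $p_{A_{n}}\rightarrow p_{\infty}<1$ and, say, $\sigma_{A_{n}}=\delta_{1/n}$ (which is consistent with everything you have established up to that point), then $F_{A_{n}}((-1,1))=\sum_{1\leq m<n}\frac{(-1)^{m+1}}{m}\left(\frac{1-p_{A_{n}}}{p_{A_{n}}}\right)^{m}\rightarrow\log(1/p_{\infty})\neq0$, so countable additivity would actually \emph{fail} for such $B$. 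Hence the case $p_{\infty}<1$ must be shown to be impossible, not accommodated. Weak convergence $\mathcal{L}(\Lambda(A_{n}))\rightarrow\delta_{0}$ alone cannot do this (consider $0.6\delta_{0}+0.4\delta_{1/n}$); this is also the step the paper's own proof elides when it asserts that $\mathcal{L}(\Lambda(A_{n}))\rightarrow\delta_{0}$ implies $p_{A_{n}}\rightarrow1$. To close it one must use the independently scattered structure: write $A_{n}=\bigcup_{k\geq n}C_{k}$ with $C_{k}=A_{k}\setminus A_{k+1}\in\mathcal{S}$, so that $\Lambda(A_{n})=\sum_{k\geq n}\Lambda(C_{k})$ a.s.~with independent summands whose maximal atom sits at $0$ with mass $p_{C_{k}}>1/2$. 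If $\sum_{k}(1-p_{C_{k}})<\infty$, the union bound gives $\mathbb{P}(\Lambda(A_{n})\neq0)\leq\sum_{k\geq n}(1-p_{C_{k}})\rightarrow0$, i.e.~$p_{A_{n}}\rightarrow1$. If $\sum_{k}(1-p_{C_{k}})=\infty$, then by L\'{e}vy's theorem on infinite convolutions (equivalently, by the Kolmogorov--Rogozin concentration inequality applied to the partial sums) the law of $\Lambda(A_{1})$ is continuous, contradicting $p_{A_{1}}>1/2$. Therefore $p_{A_{n}}\rightarrow1$ always, your own bound $|F_{A_{n}}|(\mathbb{R})\leq-\log\frac{2p_{A_{n}}-1}{p_{A_{n}}}\rightarrow0$ then gives $F_{A_{n}}(B)\rightarrow0$ uniformly over $B\in\mathcal{B}(\mathbb{R})$, and your case distinction becomes unnecessary.
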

\begin{proof}
	(i) The first statement is the content of Theorem 4.3.7 in \cite{Cuppens} (see also Theorem 3.1 of \cite{LPS}). Now, the finite additivity follows from the same arguments used before (see the proof of Proposition \ref{pr1-Z}). For countable additivity, let $\{A_{n}\}\subset\mathcal{S}$, $A_{n}\searrow \emptyset$. Since $\Lambda(A_{n})\stackrel{p}{\rightarrow}0$, then $\mathcal{L}(\Lambda(A_{n}))\rightarrow\delta_{0}$, which implies that $p_{A_{n}}\rightarrow1$. Then, from $(\ref{formula-mass-atom})$ and using the fact that $F_{A_{n}}$ is finite we get $F_{A_{n}}(B)\rightarrow0$ for every $B\in\mathcal{B}(\mathbb{R})$.
	
	(ii) The existence and the finite additivity of $\Lambda$ are straightforward. Concerning the countable additivity, notice that if $F_{A_{n}}(B)\rightarrow0$ for every $B\in\mathcal{B}(\mathbb{R})$, then $p_{A_{n}}\rightarrow1$, which implies that $\mathcal{L}(\Lambda(A_{n}))\stackrel{d}{\rightarrow}\delta_{0}$.
\end{proof}
\subsection{The strictly negative L\'{e}vy measure}
Under certain conditions, it is possible to have QID distributions with strictly negative quasi-L\'{e}vy measure. An example of such conditions are presented in Lemma 2.8 in \cite{LPS}. Thus, we have the following corollary.
\begin{co}
	All the results presented in Section II of \cite{RajRos} apply \textnormal{mutatis mutandis} to the case of QID random measures with quasi-L\'{e}vy measure which take only non-positive values.
\end{co}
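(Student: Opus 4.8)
The plan is to reduce the claim to the structural results already established, by exploiting the fact that non-positivity of the quasi-L\'{e}vy measure trivialises the obstruction identified earlier. First I would observe that if $F_A$ takes only non-positive values for every $A\in\mathcal{S}$, then $F_A^{+}=0$ and hence $|F_A|=F_A^{-}$. Consequently the set function $A\mapsto F_A^{+}(B)$ is identically zero, and in particular a measure, for every $B\in\mathcal{B}(\mathbb{R})$ with $0\notin\overline{B}$. This means the hypothesis of the Corollary to Lemma \ref{pr1-2} is automatically satisfied, so we immediately obtain that $\nu_0:\mathcal{S}\mapsto\mathbb{R}$ is a signed measure, $\nu_1:\mathcal{S}\mapsto[0,\infty)$ is a measure, each $F_A$ is a quasi-L\'{e}vy measure, $A\mapsto F_A(B)$ is a signed measure, and---crucially---$A\mapsto F_A^{-}(B)=|F_A|(B)$ is a (non-negative) measure.

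Next I would note that these are precisely the ingredients of Proposition \ref{PropositionRajRos}, with the single difference that the L\'{e}vy part of the characteristic exponent now enters with the measure $-|F_A|$ in place of an honest L\'{e}vy measure. Since $|F_A|=F_A^{-}$ is itself a L\'{e}vy measure for each fixed $A$, and $A\mapsto|F_A|(B)$ is a measure, every measure-theoretic argument in Section II of \cite{RajRos} applies after replacing $F_A$ by $|F_A|$ wherever non-negativity is invoked. Concretely, the control measure is defined by
\begin{equation*}
\lambda(A)=|\nu_0|(A)+\nu_1(A)+\int_{\mathbb{R}}(1\wedge x^2)|F_A|(dx),\quad A\in\mathcal{S},
\end{equation*}
and the proofs of finite additivity (via uniqueness of the L\'{e}vy--Khintchine representation of QID distributions together with $\hat{\mathcal{L}}(\Lambda(\bigcup_k A_k))=\prod_k\hat{\mathcal{L}}(\Lambda(A_k))$), countable additivity (via the L\'{e}vy continuity theorem together with Theorem \ref{TheoremPrekopa}), and the convergence-in-probability characterisation of $\lambda$ all go through verbatim, since the characteristic exponent depends on $F_A$ only through integrals of the form $\int(1\wedge x^2)\,|F_A|(dx)$ and through the Chebychev bound $F_A^{-}(\{|x|\geq\epsilon\})\leq\epsilon^{-2}\int_{\mathbb{R}}(1\wedge x^2)F_A^{-}(dx)$, all of which involve only the genuine measure $|F_A|$.

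The point where care is needed---and the only genuine obstacle---is verifying that replacing $F_A$ by $|F_A|$ is legitimate at each invocation, i.e.\ that nowhere in Section II of \cite{RajRos} is the \emph{positivity} of the L\'{e}vy measure $F_A$ used in a way that the sign flip would break. The resolution is that those arguments use only the finiteness and measure-theoretic regularity of $(1\wedge x^2)F_A$, which are guaranteed here because $|F_A|$ is a L\'{e}vy measure and $A\mapsto|F_A|(B)$ is a measure; the sign of $F_A$ never enters the estimates, only its total variation does. The non-positivity hypothesis is exactly what removes the pathology exhibited in the preceding example, where $F_A^{+}$ failed to be a measure in $A$, and Lemma 2.8 of \cite{LPS} guarantees that the class of such QID random measures is non-empty, so the statement is not vacuous.
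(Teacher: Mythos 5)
Your proof is correct and follows essentially the same route as the paper: since $F_{A}=-|F_{A}|$, the total variation $|F_{A}|$ is an honest L\'{e}vy measure, the Rajput--Rosinski machinery of Section II applies to the triple $(\nu_{0},\nu_{1},|F_{\cdot}|)$, and all conclusions translate back under the sign flip. Your extra step---noting that $F_{A}^{+}\equiv 0$ is trivially a measure in $A$, so that the Corollary to Lemma \ref{pr1-2} applies and rigorously yields that $\mathcal{S}\ni A\mapsto |F_{A}|(B)=F_{A}^{-}(B)$ is a measure---makes explicit a structural fact that the paper's two-line proof leaves implicit, and is a sound (indeed welcome) refinement rather than a departure.
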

\begin{proof}
	Consider $F_{A}(\cdot)$ to be our non-positive quasi-L\'{e}vy measure, where $A\in\mathcal{S}$. Then all the results in Section II of \cite{RajRos} hold for the total variation $|F_{A}|(\cdot)$. But since $F_{A}=-|F_{A}|$ we obtain the stated result.
\end{proof}
\section{QID Stochastic integrals}\label{Ch-QIDstochint}
In this and in the following subsections of this section we investigate different results concerning QID stochastic integrals, including the necessary and sufficient conditions for their existence. The presentation of these subsections are similar. However, each section has its own framework and its own particular result.
\\ This section represents the QID extension of Chapter II in \cite{RajRos}, which is at the heart of the theory of ID r.m. and processes.
\subsection{The generating two ID random measure case}\label{Section-geerating two ID}
In this section we focus on the cases seen in Section \ref{chapter-IDvQID}. In particular, the setting is the one of Lemma \ref{pr1} (or equivalently of Proposition \ref{IDvsQID}), which is more general than the one of Lemma \ref{pr1-2}. Indeed, all the results presented in this section apply to the setting of Lemma \ref{pr1-2} and they are obtained by simply substituting $G$ by $K$ (or $F^{+}$) and $M$ by $K-F$ (or $F^{-}$).\\ The first result regards the construction of the control measure of $\Lambda$. 
\begin{pro}
		Let $\nu_{0}$, $\nu_{1}$ and $F_{\cdot}$ be as in Lemma \ref{pr1}. Define
		\begin{equation}\label{lambda1}
		\lambda(A)=|\nu_{0}|(A)+\nu_{1}(A)+\int_{\mathbb{R}}(1\wedge x^{2})G_{A}(dx)+\int_{\mathbb{R}}(1\wedge x^{2})M_{A}(dx),\quad A\in\mathcal{S}.
		\end{equation}
		Then $\lambda:\mathcal{S}\mapsto[0,\infty)$ is a measure such that $\lambda(A_{n})\rightarrow0$ implies $\Lambda(A_{n})\stackrel{p}{\rightarrow}0$ for every $\{A_{n}\}\subset\mathcal{S}$.
\end{pro}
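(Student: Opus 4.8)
The plan is to treat this as the \emph{only if} half of the QID analogue of Proposition \ref{PropositionRajRos}(c): I would first show that $\lambda$ is a finite, nonnegative, countably additive set function, and then establish the implication $\lambda(A_n)\to 0\Rightarrow\Lambda(A_n)\stackrel{p}{\rightarrow}0$ via the characteristic function and L\'evy's continuity theorem. The guiding idea throughout is to never work with the signed object $F_\cdot$ directly, but always to dominate it by the two genuine L\'evy measures $G$ and $M$ from the hypotheses of Lemma \ref{pr1}, for which the classical Rajput--Rosinski machinery applies verbatim. Note that, unlike Proposition \ref{PropositionRajRos}(c), the statement here only asserts the forward implication, so a single direction suffices.

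First I would verify the measure property. Each of the four summands in $(\ref{lambda1})$ is nonnegative, so $\lambda\geq 0$; each is also finite, since $|\nu_0|(A)<\infty$ and $\nu_1(A)<\infty$ for $A\in\mathcal{S}$ (as $\nu_0$ is an $\mathbb{R}$-valued signed measure and $\nu_1$ a measure on $\mathcal{S}$), while $\int_{\mathbb{R}}(1\wedge x^2)G_A(dx)<\infty$ and $\int_{\mathbb{R}}(1\wedge x^2)M_A(dx)<\infty$ because $G_A$ and $M_A$ are L\'evy measures. For additivity, $|\nu_0|$ is a measure (the total variation of a signed measure), $\nu_1$ is a measure by hypothesis, and the two integral terms are measures in $A$ by applying Proposition \ref{PropositionRajRos}(c) to the purely L\'evy data $(0,0,G_\cdot)$ and $(0,0,M_\cdot)$, both of which satisfy the conditions of part (a) of that proposition. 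A finite sum of finite measures on $\mathcal{S}$ is again a finite measure, so $\lambda:\mathcal{S}\to[0,\infty)$ is a measure.

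Next I would prove the convergence. Assume $\lambda(A_n)\to 0$; by nonnegativity every summand tends to $0$, in particular $|\nu_0|(A_n)\to 0$, $\nu_1(A_n)\to 0$, $\int_{\mathbb{R}}(1\wedge x^2)G_{A_n}(dx)\to 0$ and $\int_{\mathbb{R}}(1\wedge x^2)M_{A_n}(dx)\to 0$. Since $F_{A_n}=G_{A_n}-M_{A_n}$ with $G_{A_n},M_{A_n}$ positive measures, the triangle inequality for total variation gives $|F_{A_n}|\leq G_{A_n}+M_{A_n}$, whence
\[
\int_{\mathbb{R}}(1\wedge x^2)|F_{A_n}|(dx)\leq \int_{\mathbb{R}}(1\wedge x^2)G_{A_n}(dx)+\int_{\mathbb{R}}(1\wedge x^2)M_{A_n}(dx)\longrightarrow 0.
\]
Recalling that $g_\tau(\cdot,\theta)=(e^{i\theta x}-1-i\theta\tau(x))/(1\wedge x^2)$ is bounded for each fixed $\theta$, say by $C_\theta$, we have $|e^{i\theta x}-1-i\theta\tau(x)|\leq C_\theta(1\wedge x^2)$, so in the L\'evy--Khintchine exponent of $(\ref{cf})$ the term $|\theta\nu_0(A_n)|\leq|\theta|\,|\nu_0|(A_n)\to 0$, the term $\tfrac{\theta^2}{2}\nu_1(A_n)\to 0$, and the integral term is bounded in modulus by $C_\theta\int_{\mathbb{R}}(1\wedge x^2)|F_{A_n}|(dx)\to 0$. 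Hence $\hat{\mathcal{L}}(\Lambda(A_n))(\theta)\to 1$ for every $\theta\in\mathbb{R}$, which is the characteristic function of $\delta_0$; by L\'evy's continuity theorem $\Lambda(A_n)\stackrel{d}{\rightarrow}0$, and since the limit is degenerate this is equivalent to $\Lambda(A_n)\stackrel{p}{\rightarrow}0$.

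The only delicate step is the additivity of the two integral terms in $A$: one cannot argue through the Jordan pieces $F^{\pm}_\cdot$ of the signed $F_\cdot$, since, as the example following Proposition \ref{IDvsQID-2} shows, these need not be measures in $A$. Dominating by $G$ and $M$ and invoking Proposition \ref{PropositionRajRos}(c) on each positive part circumvents this, and is precisely the reason $\lambda$ was built from $G$ and $M$ rather than from $F^{+}$ and $F^{-}$. Everything else is a routine application of L\'evy's continuity theorem.
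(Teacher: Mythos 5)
Your proposal is correct, and its overall strategy coincides with the paper's: show that $\lambda$ is a measure term by term, then deduce $\Lambda(A_{n})\stackrel{p}{\rightarrow}0$ from $\lambda(A_{n})\rightarrow0$ through the L\'{e}vy--Khintchine exponent and L\'{e}vy's continuity theorem. The one step you handle differently is the countable additivity of the two integral terms: you obtain it by specializing Proposition \ref{PropositionRajRos}(b)--(c) to the ID data $(0,0,G_{\cdot})$ and $(0,0,M_{\cdot})$, whereas the paper proves it directly, taking $A_{n}\searrow\emptyset$ and splitting $\int_{\mathbb{R}}(1\wedge x^{2})G_{A_{n}}(dx)$ over $\{|x|<\epsilon\}$ and $\{|x|\geq\epsilon\}$: the far part vanishes because $A\mapsto G_{A}(\{|x|\geq\epsilon\})$ is a measure, and the near part is dominated by $\int_{|x|<\epsilon}(1\wedge x^{2})G_{A_{1}}(dx)$, which vanishes as $\epsilon\rightarrow0$. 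The two arguments are mathematically the same (the truncation argument is precisely what underlies Rajput--Rosinski's Proposition 2.1(c)), so your citation is legitimate and buys brevity, while the paper's version is self-contained. Conversely, you flesh out the final implication --- the domination $|F_{A_{n}}|\leq G_{A_{n}}+M_{A_{n}}$, the bound $|e^{i\theta x}-1-i\theta\tau(x)|\leq C_{\theta}(1\wedge x^{2})$, and L\'{e}vy continuity --- which the paper compresses into one sentence citing Theorem 4.3(a) of \cite{LPS}; the domination you use is exactly the one appearing in the paper's proof of Lemma \ref{pr1}. Your closing observation, that $\lambda$ cannot be built from the Jordan parts $F^{\pm}_{\cdot}$ since these need not be measures in $A$, correctly identifies why the control measure is constructed from $G$ and $M$ in the first place.
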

\begin{proof}
First, notice that all the element on the right hand side of $(\ref{lambda1})$ are measures. Hence, $\lambda$ is a measure. Further, let $A_{n}\searrow\emptyset$ with $\{A_{n}\}\subset\mathcal{S}$. Since $G_{A_{1}}(B)\geq G_{A_{n}}(B)$ for every $n\geq1$ and $B\in\mathcal{B}(\mathbb{R})$ s.t.~$0\notin \overline{B}$, we have that
\begin{equation*}
\lim\limits_{n\rightarrow\infty}\int_{\mathbb{R}}(1\wedge x^{2})G_{A_{n}}(dx)= \lim\limits_{n\rightarrow\infty}\int_{|x|<\epsilon}(1\wedge x^{2})G_{A_{n}}(dx)+\lim\limits_{n\rightarrow\infty}\int_{|x|\geq \epsilon}(1\wedge x^{2})G_{A_{n}}(dx)
\end{equation*}
\begin{equation*}
\leq \int_{|x|<\epsilon}(1\wedge x^{2})G_{A_{1}}(dx)+\lim\limits_{n\rightarrow\infty}G_{A_{n}}(\{|x|\geq\epsilon\})
\end{equation*}
The second addend goes directly to zero for every $\epsilon>0$ and for the first we obtain the convergence to zero by letting $\epsilon\rightarrow0$. The same holds for $M$. Hence, $\lambda$ is countably additive. Moreover, using Theorem 4.3 point (a) in \cite{LPS} or the L\'{e}vy continuity theorem it is straightforward to see that if $\lambda(A_{n})\rightarrow0$ then $\Lambda(A_{n})\stackrel{p}{\rightarrow}0$.
\end{proof}
\begin{defn}
	Since $\lambda(S_{n})<\infty$, $n=1,2,...$ we extend $\lambda$ to a $\sigma$-finite measure on $(S,\sigma(\mathcal{S}))$; we call $\lambda$ the \textnormal{control measure} of $\Lambda$.
\end{defn}
It is possible to use different control measure then the one mentioned, for example $\tilde{\lambda}=\lambda_{G}+\lambda_{M}$, where $\lambda_{G}$ and $\lambda_{M}$ are the control measure of $\Lambda_{G}$ and $\Lambda_{M}$. This $\tilde{\lambda}$ has the additional property that if $\Lambda_{G}( A '_{n})\rightarrow0$ and $\Lambda_{M}( A '_{n})\stackrel{p}{\rightarrow}0$ for every $\{A'_{n}\}\subset\mathcal{S}$ s.t.~$A'_{n}\subset A_{n}\in\mathcal{S}$, then (by Proposition \ref{PropositionRajRos}) $\lambda(A_{n})\rightarrow0$ .\\
The reason why we prefer $\lambda$ is because it is potentially the smallest (up to a constant) measure such that $\lambda\gg\log\hat{\mathcal{L}}(\Lambda)(\theta)$ for every $\theta\in\mathbb{R}$. The reason why $\lambda$ is only ``potentially" the smallest has to do with an old and classical problem, the Maharam's Problem posed in 1947 by Maharam \cite{Maharam}, which has been solved in 2008 by Talagrand \cite{Talagrand2}. In few words and applied to our case, the solution of this problem states that an exhaustive submeasure (think of $A\mapsto F^{+}_{A}(B)$) is not always absolutely continuous w.r.t.~a measure. Indeed, by considering that there exists two generating ID r.m., we impose that $G_{A}\geq F^{+}_{A}$ and $M_{A}\geq F^{-}_{A}$, and thus solving the issue. We will see in the next two sections how to obtain a control measure without this assumption (but with other assumptions).

Since in this section we are concerned with QID random measures then it is of secondary importance which specific couple of ID r.m.~generate $\Lambda$. Indeed, given two ID r.m.~$\Lambda_{G}$ and $\Lambda_{M}$ with c.f.~$(\nu_{0}^{G},\nu_{1}^{G},G_{\cdot})$ and $(\nu_{0}^{M},\nu_{1}^{M},M_{\cdot})$ that generate the QID r.m.~$\Lambda$, we can always find two different ID r.m.~with (for example) c.t.~$(0,\nu_{1},G_{\cdot})$ and $(-\nu_{0},0,M_{\cdot})$ which also generate $\Lambda$.\\
Throughout this section we let $\Lambda_{G}\sim(0,\nu_{1},G_{\cdot})$ and $\Lambda_{M}\sim(-\nu_{0},0,M_{\cdot})$. We denote by $\lambda_{G}$ and $\lambda_{M}$ the control measure of $\Lambda_{G}$ and $\Lambda_{M}$, namely $\lambda_{G}(A)=\nu_{1}(A)+\int_{\mathbb{R}}(1\wedge x^{2})G_{A}(dx)$ and $\lambda_{M}(A)=\|\nu_{0}\|(A)+\int_{\mathbb{R}}(1\wedge x^{2})M_{A}(dx)$.

The following result is a corollary of Lemma 2.3 in \cite{RajRos}.
\begin{co}\label{l1}
	Let $F_{\cdot}$ be as in Lemma \ref{pr1-2}. Then there exist two unique $\sigma$-finite signed measures $G$ and $M$ on $\sigma(\mathcal{S})\otimes\mathcal{B}(\mathbb{R})$ such that
	\begin{equation*}
	G(A\times B)=G_{A}(B)\quad\text{and}\quad M(A\times B)=M_{A}(B),\quad\text{for all $A\in\mathcal{S}$, $B\in\mathcal{B}(\mathbb{R})$}.
	\end{equation*}
	Moreover, there exist two functions $\rho_{G},\rho_{M}:S\times\mathcal{B}(\mathbb{R})\mapsto[-\infty,\infty]$ such that
	\\ \textnormal{(i)} $\rho_{G}(s,\cdot),\rho_{M}(s,\cdot)$ are L\'{e}vy measures on $\mathcal{B}(\mathbb{R})$, for every $s\in S$,
	\\ \textnormal{(ii)} $\rho_{G}(\cdot,B),\rho_{M}(\cdot,B)$ are Borel measurable function, for every $B\in\mathcal{B}(\mathbb{R})$,
	\\ \textnormal{(iii)} $\int_{S\times\mathbb{R}}h(s,x)G(ds,dx)=\int_{S}\int_{\mathbb{R}}h(s,x)\rho_{G}(s,dx)\lambda(ds)$, for every $\sigma(\mathcal{S})\otimes\mathcal{B}(\mathbb{R})$-measurable function $h:S\times\mathbb{R}\mapsto[0,\infty]$, and the same holds for $M$. This equality can be extended to real and complex-valued functions $h$.
\end{co}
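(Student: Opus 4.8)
The plan is to recognize that this is an almost immediate consequence of Lemma 2.3 in \cite{RajRos}, so the work consists of setting things up correctly and then invoking that lemma twice—once for $G$ and once for $M$. First I would recall the structure established in Lemma \ref{pr1-2}: there we have the two families of L\'{e}vy measures $G_{A}$ and $M_{A}$ with $\mathcal{S}\ni A\mapsto G_{A}(B)$ and $\mathcal{S}\ni A\mapsto M_{A}(B)$ being (honest, nonnegative) measures for every $B\in\mathcal{B}(\mathbb{R})$ with $0\notin\overline{B}$. The key point is that $G$ and $M$ individually satisfy exactly the hypotheses under which the Rajput–Rosinski machinery was built for a single ID random measure; the signed/quasi nature of $F_{\cdot}=G_{\cdot}-M_{\cdot}$ never enters, because we factor through the two nonnegative pieces separately.

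The first step is the existence and uniqueness of the product ($\sigma$-finite) measures $G$ and $M$ on $\sigma(\mathcal{S})\otimes\mathcal{B}(\mathbb{R})$ with $G(A\times B)=G_{A}(B)$ and $M(A\times B)=M_{A}(B)$. Since $G_{A}(\cdot)$ is a measure on $\mathcal{B}(\mathbb{R})$ for each fixed $A$ and $G_{\cdot}(B)$ is a measure on $\mathcal{S}$ for each fixed $B$ bounded away from zero, $G$ is a bimeasure that extends uniquely to a $\sigma$-finite measure on the product $\sigma$-algebra by the standard extension argument underlying Lemma 2.3 of \cite{RajRos}; the increasing exhaustion $S_{n}\uparrow S$ in $\mathcal{S}$ together with $\lambda(S_{n})<\infty$ gives the $\sigma$-finiteness. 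I would simply note that $G_{A}(\{0\})=M_{A}(\{0\})=0$ handles the behaviour at the origin, so no mass is lost there. The same verbatim reasoning produces $M$.

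The second step is the disintegration. By construction of the control measure $\lambda$ in $(\ref{lambda1})$ we have $\int_{\mathbb{R}}(1\wedge x^{2})G_{A}(dx)\le\lambda(A)$ and $\int_{\mathbb{R}}(1\wedge x^{2})M_{A}(dx)\le\lambda(A)$, so the marginal of $G$ (respectively $M$) on $S$, after integrating the bounded weight $1\wedge x^{2}$, is absolutely continuous with respect to $\lambda$. This is precisely the input that Lemma 2.3 of \cite{RajRos} requires to produce a measurable family of L\'{e}vy kernels $\rho_{G}(s,\cdot)$ and $\rho_{M}(s,\cdot)$ satisfying (i)–(iii). I would therefore apply that lemma directly to each of the two ID random measures $\Lambda_{G}\sim(0,\nu_{1},G_{\cdot})$ and $\Lambda_{M}\sim(-\nu_{0},0,M_{\cdot})$, whose control measures $\lambda_{G},\lambda_{M}$ are both dominated by $\lambda$; the disintegration against the common $\lambda$ follows by a Radon–Nikodym adjustment of the kernel. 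The extension of (iii) from nonnegative $h$ to real- and complex-valued $h$ is the routine decomposition into positive/negative and real/imaginary parts, valid whenever the integrals are finite.

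The only place demanding genuine care—the main obstacle—is the \emph{joint} measurability of the kernels $\rho_{G}(\cdot,\cdot)$ and the absolute-continuity step needed to realize both disintegrations against the \emph{same} control measure $\lambda$ rather than against $\lambda_{G}$ and $\lambda_{M}$ separately. Since $\lambda\ge\lambda_{G}$ and $\lambda\ge\lambda_{M}$ are not generally equalities, one must multiply each kernel by the corresponding Radon–Nikodym density $d\lambda_{G}/d\lambda$ (respectively $d\lambda_{M}/d\lambda$) and check that the resulting $\rho_{G}(s,\cdot)$ is still a L\'{e}vy measure for $\lambda$-a.e.\ $s$ and remains jointly measurable; this is exactly the content that Lemma 2.3 of \cite{RajRos} already guarantees, so I would cite it for this and not reprove it, which is why I have phrased the result as a corollary.
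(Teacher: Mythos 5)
Your proposal is correct and takes essentially the same route as the paper's own proof: both invoke Lemma 2.3 of \cite{RajRos} for $\Lambda_{G}\sim(0,\nu_{1},G_{\cdot})$ and $\Lambda_{M}\sim(-\nu_{0},0,M_{\cdot})$ and then pass from the disintegration against $\lambda_{G}$ (resp.~$\lambda_{M}$) to one against the common control measure $\lambda$ by a Radon--Nikodym adjustment of the kernel, the L\'{e}vy-measure property surviving because $\lambda_{G},\lambda_{M}\leq\lambda$ forces the adjusting density to be at most one. The only cosmetic difference is that you multiply the Rajput--Rosinski kernel $\tilde{\rho}_{G}$ by $\frac{d\lambda_{G}}{d\lambda}$, while the paper rewrites the kernel directly with $\frac{d\lambda_{0}}{d\lambda}$ in place of $\frac{d\lambda_{0}}{d\lambda_{G}}$; by the chain rule for Radon--Nikodym derivatives these agree $\lambda$-a.e.
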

\begin{proof}
	The above statement is almost identical to Lemma 2.3 in \cite{RajRos}. The only differences are two: a different function, call it $\tilde{\rho}_{G}$, instead of $\rho_{G}$, and $\lambda_{G}$ instead of $\lambda$. In the proof of Lemma 2.3 in \cite{RajRos} $\tilde{\rho}_{G}:=\frac{d\lambda_{0}}{d\lambda_{G}}(s)(1\wedge x^{2})^{-1}q(s,dx)$ where $q(s,B)$ is such that $s\mapsto q(s,B)$ is Borel measurable and $B\mapsto q(s,B)$ is a probability measure. Moreover, it is shown that $G(A,B)=\int_{A}q(s,B)\lambda_{0}ds$. Now, let
	\begin{equation*}
	\rho_{G}(s,dx)=\frac{d\lambda_{0}}{d\lambda}(s)(1\wedge x^{2})^{-1}q(s,dx).
	\end{equation*}
	It is possible to see that $s\mapsto\rho_{G}(s,B)$ is still measurable and since $\lambda_{0}(A)\geq\lambda(A)$ then $B\mapsto\rho_{G}(s,B)$ is still a Levy measure too. Finally, by the properties of the Radon-Nikodym derivative we get $\int_{S\times\mathbb{R}}h(s,x)G(ds,dx)=\int_{S}\int_{\mathbb{R}}h(s,x)\tilde{\rho}_{G}(s,dx)\lambda_{G}(ds)=\int_{S}\int_{\mathbb{R}}h(s,x)\rho_{G}(s,dx)\lambda(ds)$.
\end{proof}
\begin{pro}\label{pro-discordia}
	Let $F_{\cdot}$ be as in Lemma \ref{pr1-2}. Then there exists a unique function $\rho:S\times\mathcal{B}_{0}(\mathbb{R})\mapsto[-\infty,\infty]$ s.t.
	 \begin{equation*}
	 \rho(s,B)=\rho_{G}(s,B)-\rho_{M}(s,B),\quad s\in S,\text{ $B\in\mathcal{B}(\mathbb{R})$ s.t.~$0\notin \overline{B}$}
	 \end{equation*}
	The function $\rho(s,\cdot)$ is a quasi-L\'{e}vy type measure on $\mathcal{B}(\mathbb{R})$ for every $s\in S$. Moreover, its positive and negative part are two unique functions $\rho^{+},\rho^{-}:S\times\mathcal{B}(\mathbb{R})\mapsto[0,\infty]$ such that
	\\ \textnormal{(i)} $\rho^{+}(s,\cdot),\rho^{-}(s,\cdot)$ are L\'{e}vy measures on $\mathcal{B}(\mathbb{R})$, for every $s\in S$,
	\\ \textnormal{(ii)} $\rho^{+}(\cdot,B),\rho^{-}(\cdot,B)$ are Borel measurable function, for every $B\in\mathcal{B}(\mathbb{R})$,
	\\Moreover, there exist two unique $\sigma$-finite measures $\tilde{F}^{+}$ and $\tilde{F}^{-}$ on $\sigma(\mathcal{S})\otimes\mathcal{B}(\mathbb{R})$ such that 
	\begin{equation}\label{eq-discordia}
	\int_{S\times\mathbb{R}}h(s,x)\tilde{F}^{+}(ds,dx)=\int_{S}\int_{\mathbb{R}}h(s,x)\rho^{+}(s,dx)\lambda(ds),
	\end{equation}
	for every $\sigma(\mathcal{S})\otimes\mathcal{B}(\mathbb{R})$-measurable function $h:S\times\mathbb{R}\mapsto[0,\infty]$, and the same holds for $F^{-}$. This equality can be extended to real and complex-valued functions $h$. Finally, for every $A\in\mathcal{S}$ and for every real $\mathcal{B}(\mathbb{R})$-measurable function $g$ s.t.~$\int_{A}\int_{\mathbb{R}}g(x)|\rho|(s,dx)\lambda(ds)<\infty$, we have that
	\begin{equation*}
	\int_{\mathbb{R}}g(x)F_{A}(dx)=\int_{A}\int_{\mathbb{R}}g(x)\rho(s,dx)\lambda(ds)
	\end{equation*}
	and for every $B\in\mathcal{B}(\mathbb{R})$ s.t.~$0\notin \overline{B}$,
	\begin{equation*}
	\tilde{F}^{+}(A,B)\geq F_{A}^{+}(B)\quad\text{and}\quad \tilde{F}^{-}(A,B)\geq F_{A}^{-}(B).
	\end{equation*}
\end{pro}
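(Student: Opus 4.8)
The plan is to build the quasi-Lévy kernel $\rho$ fibre by fibre from the two Lévy kernels $\rho_G,\rho_M$ furnished by Corollary \ref{l1}, and then to extract its Jordan parts through a jointly measurable Radon--Nikodym density. First I would set, for each $s\in S$ and each $B\in\mathcal{B}(\mathbb{R})$ with $0\notin\overline{B}$,
\[
\rho(s,B):=\rho_G(s,B)-\rho_M(s,B).
\]
Since such $B$ are bounded away from zero and $\rho_G(s,\cdot),\rho_M(s,\cdot)$ are Lévy measures, both terms are finite, so $\rho(s,B)$ is a well-defined real number, and measurability of $\rho(\cdot,B)$ is inherited from Corollary \ref{l1}(ii). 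Because $\rho(s,\cdot)$ is the difference of two Lévy measures, Lemma \ref{lemma-last?} immediately yields that $\rho(s,\cdot)$ is a quasi-Lévy type measure for every $s$, and $\rho$ is the unique such kernel (up to $\lambda$-null sets, inheriting the almost-everywhere uniqueness of the disintegration densities).

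The main work is the joint measurability of the Jordan parts. I would introduce the Lévy kernel $\kappa(s,\cdot):=\rho_G(s,\cdot)+\rho_M(s,\cdot)$, Borel measurable in $s$, and construct the density $\Psi:=d\rho_G/d\kappa\in[0,1]$ as the limit of dyadic martingale ratios $\Psi(s,x)=\lim_n \rho_G(s,I_n(x))/\kappa(s,I_n(x))$, where $I_n(x)$ is the level-$n$ dyadic interval containing $x$ and $0/0:=0$. Each ratio is jointly measurable --- a step function in $x$ with coefficients measurable in $s$ --- so $\Psi$ is jointly measurable; for every fixed $s$ the martingale convergence (equivalently, differentiation of measures) theorem, applied on each annulus bounded away from zero, makes $\Psi(s,\cdot)$ a genuine version of $d\rho_G(s,\cdot)/d\kappa(s,\cdot)$, so that $\rho(s,dx)=(2\Psi(s,x)-1)\kappa(s,dx)$ for every $s$. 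I would then define
\[
\rho^{+}(s,B):=\int_B (2\Psi(s,x)-1)^{+}\kappa(s,dx),\qquad \rho^{-}(s,B):=\int_B (2\Psi(s,x)-1)^{-}\kappa(s,dx).
\]
For each fixed $s$ these are exactly the positive and negative parts of $\rho(s,\cdot)$, the Hahn set on each $\mathcal{B}_r(\mathbb{R})$ being $\{2\Psi(s,\cdot)\ge 1\}$; since $(2\Psi-1)^{+}\le\Psi$ and $(2\Psi-1)^{-}\le 1-\Psi$ pointwise, we get $\rho^{+}(s,\cdot)\le\rho_G(s,\cdot)$ and $\rho^{-}(s,\cdot)\le\rho_M(s,\cdot)$, so both are Lévy measures, giving (i). Property (ii) follows from joint measurability of the integrands together with measurability of the family $\kappa$, and uniqueness of $\rho^{\pm}$ is the pointwise uniqueness clause in the Jordan decomposition theorem (Theorem \ref{Jordan}). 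I expect this density construction to be the principal obstacle, precisely because the naive fibrewise Jordan decomposition is not visibly measurable in $s$.

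It then remains to assemble the product measures and verify the two displayed identities. I would set $\tilde{F}^{+}(ds,dx):=\rho^{+}(s,dx)\lambda(ds)$ and $\tilde{F}^{-}(ds,dx):=\rho^{-}(s,dx)\lambda(ds)$; equation (\ref{eq-discordia}) then holds by construction, and since $\tilde{F}^{+}(A\times B)\le G(A\times B)$ and $\tilde{F}^{-}(A\times B)\le M(A\times B)$ the measures $\tilde{F}^{\pm}$ are $\sigma$-finite and, being pinned down on the generating $\pi$-system of rectangles, unique. For the integral formula I would first check it on indicators $g=\mathbf{1}_B$ with $0\notin\overline{B}$, where both sides equal $G_A(B)-M_A(B)=F_A(B)$ by Corollary \ref{l1}(iii); writing $\tilde{F}^{\pm}_A(\cdot):=\tilde{F}^{\pm}(A\times\cdot)$, this shows that $\tilde{F}^{+}_A-\tilde{F}^{-}_A$ and $F_A$ agree on sets bounded away from zero, hence define the same quasi-Lévy type measure.

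For general $g$ with $\int_A\int_{\mathbb{R}}|g|\,|\rho|(s,dx)\lambda(ds)<\infty$ I would extend from indicators to simple functions and then to $g=g^{+}-g^{-}$ by monotone convergence, using that the value of $\int g\,d(\mu_1-\mu_2)$ is independent of the representation $\mu_1-\mu_2$ once $g$ is integrable against $\mu_1+\mu_2$. The integrability hypothesis guarantees exactly this, since the minimality clause of Theorem \ref{Jordan} applied to $F_A=\tilde{F}^{+}_A-\tilde{F}^{-}_A$ gives $F_A^{+}\le\tilde{F}^{+}_A$ and $F_A^{-}\le\tilde{F}^{-}_A$, whence $|F_A|\le\tilde{F}^{+}_A+\tilde{F}^{-}_A=\int_A|\rho|(s,\cdot)\lambda(ds)$. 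These same two inequalities $\tilde{F}^{+}(A,B)\ge F_A^{+}(B)$ and $\tilde{F}^{-}(A,B)\ge F_A^{-}(B)$ are precisely the final assertion of the proposition.
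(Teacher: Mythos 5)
Your proposal is correct and reaches every assertion of the proposition, but it takes a genuinely different route at the crux, namely the Borel measurability in $s$ of the Jordan parts (point (ii)). The paper works fibrewise: it fixes a function $f$ (ultimately $f(x)=1\wedge x^{2}$), forms the finite signed measure $B\mapsto\int_{B}f\,d\rho(s,\cdot)$, takes its Hahn decomposition $E^{+}_{s},E^{-}_{s}$, and uses mutual singularity of $\rho^{\pm}(s,\cdot)$ to write $\int_{B}f\,\rho^{+}(s,dx)=\int_{B\cap E^{+}_{s}}f\,\rho_{G}(s,dx)-\int_{B\cap E^{+}_{s}}f\,\rho_{M}(s,dx)$, deducing measurability from Corollary \ref{l1}; note that this step is delicate precisely because the Hahn set $E^{+}_{s}$ varies with $s$, while Corollary \ref{l1} only gives measurability of $s\mapsto\rho_{G}(s,B)$ for a \emph{fixed} Borel set $B$. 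You instead manufacture a single jointly measurable object once and for all: the density $\Psi=d\rho_{G}/d\kappa$ with $\kappa=\rho_{G}+\rho_{M}$, obtained as a limit of dyadic ratios (jointly measurable by construction, and a genuine version of the Radon--Nikodym derivative for each fixed $s$ by differentiation on annuli bounded away from zero), so that $\rho^{\pm}(s,dx)=(2\Psi(s,x)-1)^{\pm}\kappa(s,dx)$ are automatically measurable kernels. This buys you three things simultaneously: joint measurability without any $s$-dependent exceptional sets, the pointwise dominations $\rho^{+}\leq\rho_{G}$ and $\rho^{-}\leq\rho_{M}$ (which give (i), the $\sigma$-finiteness of $\tilde{F}^{\pm}$, and feed the final inequalities), and a transparent identification of the fibrewise Hahn sets as $\{2\Psi(s,\cdot)\geq1\}$. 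For the closing inequalities $\tilde{F}^{\pm}(A,B)\geq F_{A}^{\pm}(B)$ you also argue differently: you invoke the minimality clause of the Jordan decomposition (Theorem \ref{Jordan}) applied to the representation $F_{A}=\tilde{F}^{+}_{A}-\tilde{F}^{-}_{A}$ on each $\mathcal{B}_{r}(\mathbb{R})$, whereas the paper computes directly with the Hahn sets $E^{\pm}_{A}$ of $F_{A}$; these are equivalent, yours being slightly cleaner. The trade-off is that your argument imports differentiation-of-measures (martingale convergence) machinery, while the paper stays entirely within the disintegration provided by Corollary \ref{l1}; in exchange, your construction closes the measurability gap that the paper's Hahn-set argument leaves implicit.
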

\begin{rem}
	The reason why there is no a signed measure $\tilde{F}$ on $\sigma(\mathcal{S})\otimes\mathcal{B}(\mathbb{R})$ s.t.~$\tilde{F}(C)=\tilde{F}^{+}(C)-\tilde{F}^{-}(C)$ is that both $\tilde{F}^{+}$ and $\tilde{F}^{-}$ might have infinite values.
\end{rem}
\begin{proof}
		Since, for every $s\in S$, $\rho_{G}(s,B)$ and $\rho_{M}(s,B)$ are two L\'{e}vy measures, we obtain the first statement by the definition of quasi-L\'{e}vy type measure.
		\\ This implies that, for each $s\in S$, $\rho(s,B)$ has a positive and negative part defined as measures on $(\mathbb{R},\mathcal{B}(\mathbb{R}))$. Then, it is easy to obtain (i). Concerning (ii), we have the following. Let $f:\mathbb{R}\rightarrow\mathbb{R}$ be s.t.~$\int_{\mathbb{R}}f(x)|\rho|(s,dx)<\infty$ for every $s\in S$ and let $f^{+}(x):= f(x)\vee 0$ and $f^{-}(x):= -f(x)\vee 0$. Observe that the following are all measures
		\begin{equation*}
		B\mapsto\int_{B}f^{+}(x)\rho^{+}(s,dx),\quad B\mapsto \int_{B}f^{+}(x)\rho^{-}(s,dx),
		\end{equation*} 
		\begin{equation*}
		B\mapsto\int_{B}f^{+}(x)\rho_{G}(s,dx),\quad\textnormal{and}\quad B\mapsto\int_{B}f^{+}(x)\rho_{M}(s,dx).
		\end{equation*}
		Moreover, we have that
		\begin{equation*}
		\int_{B}f^{+}(x)\rho^{+}(s,dx)-\int_{B}f^{+}(x)\rho^{-}(s,dx)=\int_{B}f^{+}(x)\rho(s,dx)=\int_{B}f^{+}(x)\rho_{G}(s,dx)-\int_{B}f^{+}(x)\rho_{M}(s,dx).
		\end{equation*}
		Since $B\mapsto\int_{B}f^{+}(x)\rho^{+}(s,dx)-\int_{B}f^{+}(x)\rho^{-}(s,dx)$ is a finite signed measure, it has a Hahn decomposition. Denote it by $E^{+}$ and $E^{-}$. Further, since $\rho^{+}(s,B)$ and $\rho^{-}(s,B)$ are mutually singular we have that
		\begin{equation*}
		\int_{E^{+}}f^{+}(x)\rho^{+}(s,dx)-\int_{E^{+}}f^{+}(x)\rho^{-}(s,dx)=\int_{E^{+}}f^{+}(x)\rho^{+}(s,dx)
		\end{equation*}
		and similarly for $E^{-}$. Therefore, we have that for every $B\in\mathcal{B}(\mathbb{R})$ we deduce that
		\begin{equation*}
		\int_{B}f^{+}(x)\rho^{+}(s,dx)=\int_{B\cap E^{+}}f^{+}(x)\rho^{+}(s,dx)=\int_{B\cap E^{+}}f^{+}(x)\rho_{G}(s,dx)-\int_{B\cap E^{+}}f^{+}(x)\rho_{M}(s,dx).
		\end{equation*}
		Observe that the last two terms are Borel measurable functions because $\rho_{G}(s,B)$ and $\rho_{M}(s,B)$ are Borel measurable functions thanks to Corollary \ref{l1}. Since the difference of two measurable function is measurable we conclude that $\int_{B}f^{+}(x)\rho^{+}(s,dx)$ is measurable and this holds for every $B\in\mathcal{B}(\mathbb{R})$. The same holds for $\int_{B}f^{+}(x)\rho^{-}(s,dx)$ and for $\int_{B}f^{-}(x)\rho^{+}(s,dx)$ and $\int_{B}f^{-}(x)\rho^{-}(s,dx)$. Therefore, since
		\begin{equation*}
		\int_{B}f(x)\rho^{+}(s,dx)=\int_{B}f^{+}(x)\rho^{+}(s,dx)-\int_{B}f^{-}(x)\rho^{+}(s,dx)
		\end{equation*}
		(and similarly for $\rho^{-}$) for every $B\in\mathcal{B}(\mathbb{R})$,	we obtain that $\int_{B}f(x)\rho^{+}(s,dx)$ and so $f(x)\rho^{+}(s,dx)$ are Borel a measurable functions. 
		\\Let us now take $f(x)=1\wedge x^{2}$, it is possible to see that it is a valid choice since $\int_{\mathbb{R}}(1\wedge x^{2})|\rho|(s,dx)<\infty$. Then $s\mapsto(1\wedge x^{2})\rho^{+}(s,dx)$ is a measurable function and so (as done in the proof of Lemma 2.3 in \cite{RajRos}) we have that
		\begin{equation*}
		\rho^{+}(s,dx)=\left(1\wedge x^{2}\right)^{-1}(1\wedge x^{2}) \rho^{+}(s,dx)
		\end{equation*}
		is a Borel measurable function. The same applies to $\rho^{-}(s,dx)$ and, therefore, we obtain (ii).
		
		Now, notice that the set function 
		\begin{equation*}
		\tilde{F}^{+}(C):=\int_{S}\int_{\mathbb{R}}\textbf{1}_{C}(s,x)\rho^{+}(s,dx)\lambda(ds),\quad C\in\sigma(\mathcal{S})\otimes\mathcal{B}(\mathbb{R})
		\end{equation*}
		is a well defined measure on $\sigma(\mathcal{S})\otimes\mathcal{B}(\mathbb{R})$. Then, using standard measure theoretical arguments we get (\ref{eq-discordia}).
		
		 The last statement follows by simple computations. Indeed, let $g(x)$ be s.t.~$\int_{A}\int_{\mathbb{R}}g(x)|\rho|(s,dx)\lambda(ds)<\infty$ (notice that this includes also $g(x)=\textbf{1}_{B}(x)$ for every $B\in\mathcal{B}(\mathbb{R})$ s.t.~$0\notin \overline{B}$). Then, for every $A\in\mathcal{S}$, $s\mapsto\textbf{1}_{A}(s)g(x)\rho(s,dx)$ is a Borel measurable function and 
		\begin{equation*}
\int_{A}\int_{\mathbb{R}}g(x)\rho(s,dx)\lambda(ds)
		=\int_{S}\int_{\mathbb{R}}\textbf{1}_{A}(s)g(x)\rho^{+}(s,dx)\lambda(ds)-\int_{S}\int_{\mathbb{R}}\textbf{1}_{A}(s)g(x)\rho^{-}(s,dx)\lambda(ds)
		\end{equation*}
				\begin{equation*}
				=\int_{S}\int_{\mathbb{R}}\textbf{1}_{A}(s)g(x)\rho_{G}(s,dx)\lambda(ds)-\int_{S}\int_{\mathbb{R}}\textbf{1}_{A}(s)g(x)\rho_{M}(s,dx)\lambda(ds)
				\end{equation*}
		\begin{equation*}
		=\int_{\mathbb{R}}g(x)G_{A}(dx)-\int_{\mathbb{R}}g(x)M_{A}(dx)=\int_{\mathbb{R}}g(x)F_{A}(dx).
		\end{equation*}
		Further, let $g(x)\geq 0$ (\textit{e.g.}~$\textbf{1}_{B}(x)$) and let $E^{+}_{A}$ and $E^{-}_{A}$ be the Hahn decomposition of $\mathbb{R}$ under $F_{A}(\cdot)$, then
		\begin{equation*}
		\int_{\mathbb{R}}g(x)F^{+}_{A}(dx)=\int_{E_{A}^{+}}g(x)F_{A}(dx)=\int_{S}\int_{E_{A}^{+}}\textbf{1}_{A}(s)g(x)\rho(s,dx)\lambda(ds) 
		\end{equation*}
		\begin{equation*}
		\leq \int_{S}\int_{E_{A}^{+}}\textbf{1}_{A}(s)g(x)\rho^{+}(s,dx)\lambda(ds)\leq\int_{S}\int_{A}g(x)\rho^{+}(s,dx)\lambda(ds)=\int_{\mathbb{R}}g(x)\tilde{F}^{+}(A,dx).
		\end{equation*}
\end{proof}
We present now one of the key results of this section, because it represents the first step for the construction of the c.f.~of $\int_{S} f\Lambda$. Moreover the following results is frequently used in the field of (applied) probability especially when we the control measure is the Lebesgue measure.
\begin{pro}\label{pr-2}
	The characteristic function of $\Lambda(A)$ can be written as:
	\begin{equation*}
	\mathbb{E}(e^{i\theta\Lambda(A)})=\exp\left(\int_{A}K(\theta,s)\lambda(ds)\right),\quad \theta\in\mathbb{R},A\in\mathcal{S},
	\end{equation*}
	where
	\begin{equation*}
	K(\theta,s)=i\theta a(s)-\frac{\theta^{2}}{2}\sigma^{2}(s)+\int_{\mathbb{R}}e^{i\theta x}-1-i\theta\tau(x)\rho(s,dx),
	\end{equation*}
	$a(s)=\frac{d\nu_{0}}{d\lambda}(s)$, $\sigma^{2}(s)=\frac{d\nu_{1}}{d\lambda}(s)$ and $\rho$ has been introduced in Proposition \ref{pro-discordia}, and $\exp(K(\theta,s))$ is the characteristic function of a QID random variable if it exists. Moreover, we have
	\begin{equation*}
	|a(s)|+ \sigma^{2}(s)+\int_{\mathbb{R}}(1\wedge x^{2})\rho_{G}(s,dx)+\int_{\mathbb{R}}(1\wedge x^{2})\rho_{M}(s,dx)=1,\quad\quad \text{$\lambda$-a.e}..
	\end{equation*}
\end{pro}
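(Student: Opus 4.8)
The plan is to start from the Lévy–Khintchine form \eqref{cf} of $\hat{\mathcal{L}}(\Lambda(A))$ and rewrite each of its three terms as an integral over $A$ against $\lambda$. First I would record that, by the very definition \eqref{lambda1} of $\lambda$, both $|\nu_0|$ and $\nu_1$ are dominated by $\lambda$ on $\mathcal{S}$; extending them to $\sigma$-finite measures on $(S,\sigma(\mathcal{S}))$, this gives $\nu_0\ll\lambda$ and $\nu_1\ll\lambda$, so the Radon–Nikodym derivatives $a:=\tfrac{d\nu_0}{d\lambda}$ and $\sigma^2:=\tfrac{d\nu_1}{d\lambda}$ exist and satisfy $\nu_0(A)=\int_A a\,d\lambda$ and $\nu_1(A)=\int_A\sigma^2\,d\lambda$ for every $A\in\mathcal{S}$, with $\sigma^2\ge 0$ $\lambda$-a.e.\ since $\nu_1$ is a positive measure.

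For the jump term I would apply the final identity of Proposition \ref{pro-discordia} with $g(x)=e^{i\theta x}-1-i\theta\tau(x)$. Since this $g$ is complex I would split it into real and imaginary parts and treat each separately, the hypothesis of that identity being verified by the elementary bound $|e^{i\theta x}-1-i\theta\tau(x)|\le c_\theta(1\wedge x^2)$ together with $\int_A\int_{\mathbb{R}}(1\wedge x^2)|\rho|(s,dx)\lambda(ds)\le\int_{\mathbb{R}}(1\wedge x^2)(G_A+M_A)(dx)<\infty$, where the last step uses $|\rho|\le\rho_G+\rho_M$ (minimality of the Jordan decomposition, Theorem \ref{Jordan}), Corollary \ref{l1}, and the fact that $G_A,M_A$ are Lévy measures. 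This yields $\int_{\mathbb{R}}(e^{i\theta x}-1-i\theta\tau(x))F_A(dx)=\int_A\int_{\mathbb{R}}(e^{i\theta x}-1-i\theta\tau(x))\rho(s,dx)\lambda(ds)$. Substituting the three rewritten terms into \eqref{cf} produces exactly $\exp\big(\int_A K(\theta,s)\lambda(ds)\big)$ with $K$ as stated. That $\exp(K(\theta,s))$ is, whenever it is a characteristic function, the c.f.\ of a QID random variable is then immediate from the definition of QID distribution, because $K(\theta,s)$ is a Lévy–Khintchine exponent with Gaussian variance $\sigma^2(s)\ge 0$ and quasi-Lévy type measure $\rho(s,\cdot)$ (Proposition \ref{pro-discordia}).

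It remains to establish the normalisation, and here I would proceed by rewriting every summand of \eqref{lambda1} as an integral against $\lambda$. Using Corollary \ref{l1} I would write $\int_{\mathbb{R}}(1\wedge x^2)G_A(dx)=\int_A\int_{\mathbb{R}}(1\wedge x^2)\rho_G(s,dx)\lambda(ds)$ and likewise for $M$, while $\nu_1(A)=\int_A\sigma^2\,d\lambda$ comes from the first step. The one point requiring care is the term $|\nu_0|(A)$: I would show $\tfrac{d|\nu_0|}{d\lambda}=|a|$ $\lambda$-a.e., which follows from the Jordan decomposition $\nu_0=\nu_0^+-\nu_0^-$ with $\nu_0^+\perp\nu_0^-$, since then $\tfrac{d\nu_0^+}{d\lambda}$ and $\tfrac{d\nu_0^-}{d\lambda}$ have $\lambda$-a.e.\ disjoint supports, whence $|a|=\tfrac{d\nu_0^+}{d\lambda}+\tfrac{d\nu_0^-}{d\lambda}=\tfrac{d|\nu_0|}{d\lambda}$ and $|\nu_0|(A)=\int_A|a|\,d\lambda$. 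Plugging these four identities into \eqref{lambda1} gives, for every $A\in\mathcal{S}$, $\lambda(A)=\int_A\big(|a(s)|+\sigma^2(s)+\int_{\mathbb{R}}(1\wedge x^2)\rho_G(s,dx)+\int_{\mathbb{R}}(1\wedge x^2)\rho_M(s,dx)\big)\lambda(ds)$; comparing with $\lambda(A)=\int_A 1\,\lambda(ds)$ and using that $\mathcal{S}$ generates $\sigma(\mathcal{S})$ forces the bracketed integrand to equal $1$ for $\lambda$-a.e.\ $s$. I expect this normalisation step, and specifically the identification $\tfrac{d|\nu_0|}{d\lambda}=|a|$ via mutual singularity of the Jordan components, to be the only genuinely delicate point; everything else reduces to the disintegrations already supplied by Corollary \ref{l1} and Proposition \ref{pro-discordia}.
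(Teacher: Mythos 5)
Your proof is correct and follows essentially the same route as the paper's: Radon--Nikodym derivatives for the drift and Gaussian parts, the disintegration identity of Proposition \ref{pro-discordia} applied to $g(x)=e^{i\theta x}-1-i\theta\tau(x)$, and, for the normalisation, writing $\lambda(A)=\int_{A}1\,\lambda(ds)$ and comparing it with the integral of the bracketed expression over every $A\in\mathcal{S}$. The only difference is that you spell out two points the paper leaves implicit --- the integrability check $\int_{A}\int_{\mathbb{R}}(1\wedge x^{2})|\rho|(s,dx)\lambda(ds)<\infty$ via $|\rho|\leq\rho_{G}+\rho_{M}$, and the identification $\frac{d|\nu_{0}|}{d\lambda}=|a|$ via mutual singularity of the Jordan components --- which is a refinement of, not a departure from, the paper's argument.
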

\begin{proof}
	The first statement follows from the L\'{e}vy-Khintchine formulation, Corollary \ref{l1} and Proposition \ref{pro-discordia}.	In particular, since $\int_{A}\int_{\mathbb{R}}(1\wedge x^{2})|\rho|(s,dx)\lambda(ds)<\infty$ then
	\begin{equation*}
	\int_{\mathbb{R}}e^{i\theta x}-1-i\theta\tau(x)F_{A}(dx)=\int_{A}\int_{\mathbb{R}}e^{i\theta x}-1-i\theta\tau(x)\rho(s,dx)\lambda(ds)
	\end{equation*}
	and so
	\begin{equation*}
	\mathbb{E}(e^{i\theta\Lambda(A)})=\exp\left(i\theta \nu_{0}(A)-\frac{\theta^{2}}{2}\nu_{1}(A)+\int_{\mathbb{R}}e^{i\theta x}-1-i\theta\tau(x)F_{A}(dx)\right)
	\end{equation*}
	\begin{equation*}
	=\exp\left(\int_{A}i\theta a(s)-\frac{\theta^{2}}{2}\sigma^{2}(s)+\int_{\mathbb{R}}\left(e^{i\theta x}-1-i\theta\tau(x)\right)\rho(s,dx)\lambda(ds)\right)
	\end{equation*}
	The second statement follows from the fact that for every $A\in\mathcal{S}$, we have
	\begin{equation*}
	\int_{A}\left(|a(s)|+\sigma^{2}(s)+\int_{\mathbb{R}}(1\wedge x^{2})\rho_{G}(s,dx)+\int_{\mathbb{R}}(1\wedge x^{2})\rho_{M}(s,dx)\right)\lambda(ds)
	\end{equation*}
	\begin{equation*}
	=|\nu_{0}|(A)+\nu_{1}(A)+\int_{\mathbb{R}}(1\wedge x^{2})G_{A}(dx)+\int_{\mathbb{R}}(1\wedge x^{2})M_{A}(dx)=\lambda(A)=\int_{A}\lambda(ds).
	\end{equation*}
\end{proof}
The following definition of the stochastic integral is the same as the one presented in \cite{RajRos} but extended to the framework of QID random measure.
\begin{defn}\label{def-integral}
Let $f(s)=\sum_{j=1}^{n}x_{j}\mathbf{1}_{A_{j}}(s)$ be a real simple function on $S$, where $A_{j}\in\mathcal{S}$ are disjoint. Then, for every $A\in\sigma(\mathcal{S})$, we define
\begin{equation*}
\int_{A}fd\Lambda=\sum_{j=1}^{n}x_{j}\Lambda(A\cap A_{j}).
\end{equation*}
Further, a measurable function $f:(S,\sigma(\mathcal{S}))\rightarrow(\mathbb{R},\mathcal{B}(\mathbb{R}))$ is said to be $\Lambda$-integrable if there exists a sequence $\{f_{n}\}$ of simple functions such that 
\\\textnormal{(i)} $f_{n}\rightarrow f$, $\lambda$-a.e.,
\\\textnormal{(ii)} for every $A\in\sigma(\mathcal{S})$, the sequence $\{\int_{A}f_{n}d\Lambda\}$ converges in probability as $n\rightarrow\infty$.

If $f$ is $\Lambda$-integrable, then we write
\begin{equation*}
\int_{A}fd\Lambda=\mathbb{P}-\lim\limits_{n\rightarrow\infty}\int_{A}f_{n}d\Lambda
\end{equation*}
where $\{f_{n}\}$ satisfies $\textnormal{(i)}$ and $\textnormal{(ii)}$.
\end{defn}
Notice that for a simple function $f(s)=\sum_{j=1}^{n}x_{j}\mathbf{1}_{A_{j}}(s)$ we have the following representation of the c.f.:
\begin{equation*}
\mathbb{E}\left(e^{i\theta\int_{A}fd\Lambda}\right)=\prod_{j=1}^{n}\mathbb{E}\left(e^{i\theta x_{j}\Lambda(A\cap A_{j})}\right)
\end{equation*}
	\begin{equation*}
	=\prod_{j=1}^{n}\exp\left(\int_{A\cap A_{j}}i\theta x_{j} a(s)-\frac{\theta^{2}x_{j}^{2}}{2}\sigma^{2}(s)+\int_{\mathbb{R}}\left(e^{i\theta x_{j} x}-1-i\theta x_{j}\tau(x)\right)\rho(s,dx)\lambda(ds)\right)
	\end{equation*}
\begin{equation*}
=\exp\left(\int_{A}i\theta f(s) a(s)-\frac{\theta^{2}f^{2}(s)}{2}\sigma^{2}(s)+\int_{\mathbb{R}}\left(e^{i\theta f(s) x}-1-i\theta f(s)\tau(x)\right)\rho(s,dx)\lambda(ds)\right).
\end{equation*}
In the following result, we prove that $\int_{A}fd\Lambda$ does not depend on the approximating sequence, hence it is well defined. This does not follow from \cite{U-W} or from \cite{RosDissertation} since they focus on ID random variables, however we use some of their arguments. In particular, in \cite{U-W} (which is the work cited in \cite{RajRos}) the random measures considered is \textit{atomless} (see Section \ref{Sec-Atomless} for the definition and for further details) while we (and \cite{RosDissertation}) do not have such a restriction.
\begin{lem}\label{lm-well deined}Let $\Lambda$ be a QID random measure and let $f$ be $\Lambda$-integrable then $\int_{A}fd\Lambda$ is well defined, for every $A\in\sigma(\mathcal{S})$
\end{lem}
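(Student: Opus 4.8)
The plan is to reduce well-definedness to a single zero-limit assertion and then to settle that assertion at the level of characteristic functions. First I would take two sequences $\{f_n\}$ and $\{g_n\}$ of simple functions, each satisfying (i) and (ii) of Definition \ref{def-integral} for the same $f$, with $\int_A f_n\,d\Lambda\stackrel{p}{\rightarrow}Y$ and $\int_A g_n\,d\Lambda\stackrel{p}{\rightarrow}Z$. Writing $h_n:=f_n-g_n$, linearity of the integral on simple functions gives $\int_A h_n\,d\Lambda=\int_A f_n\,d\Lambda-\int_A g_n\,d\Lambda\stackrel{p}{\rightarrow}Y-Z$, while $h_n\to0$ $\lambda$-a.e. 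Thus it suffices to prove the following claim: if $\{h_n\}$ are simple, $h_n\to0$ $\lambda$-a.e., and $\int_A h_n\,d\Lambda$ converges in probability, then the limit is $0$ almost surely. Applied to $h_n=f_n-g_n$ this yields $Y=Z$ a.s., which is exactly the asserted independence of $\int_A f\,d\Lambda$ from the approximating sequence.

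To prove the claim I would pass to characteristic functions. Since convergence in probability implies convergence in distribution and the distributional limit is unique, it is enough to show $\int_A h_n\,d\Lambda\stackrel{d}{\rightarrow}0$, i.e.\ that its c.f.\ tends to $1$; combined with its convergence in probability this forces the in-probability limit to be $0$ a.s. By the characteristic function formula for simple functions derived immediately before this lemma together with Proposition \ref{pr-2},
\[
\mathbb{E}\bigl(e^{i\theta\int_A h_n\,d\Lambda}\bigr)=\exp\Bigl(\int_A G_n(s,\theta)\,\lambda(ds)\Bigr),
\]
\[
G_n(s,\theta)=i\theta h_n(s)a(s)-\frac{\theta^2 h_n^2(s)}{2}\sigma^2(s)+\int_{\mathbb{R}}\bigl(e^{i\theta h_n(s)x}-1-i\theta h_n(s)\tau(x)\bigr)\rho(s,dx).
\]
Since the L\'{e}vy-Khintchine exponent is continuous at the origin and $h_n(s)\to0$ for $\lambda$-a.e.\ $s$, we get $G_n(s,\theta)\to0$ for $\lambda$-a.e.\ $s$ and each fixed $\theta$. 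Everything then comes down to interchanging this pointwise limit with the integral $\int_A(\cdot)\,\lambda(ds)$, reducing first to the finite-measure sets $A\cap S_k$ and exhausting, since $\lambda$ is only $\sigma$-finite and $\lambda(S_k)<\infty$.

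The interchange is the main obstacle, and it is precisely here that the signed, quasi-L\'{e}vy nature of $\rho$ bites. Writing $\rho=\rho_G-\rho_M$ as in Proposition \ref{pro-discordia}, the real part of $G_n$ carries no definite sign, so the monotonicity and Fatou shortcuts available for ID random measures in \cite{RajRos} are lost; moreover, because the $h_n$ need not be uniformly bounded, no single dominating function exists. I would control this through three ingredients. First, the splitting $\rho=\rho_G-\rho_M$ reduces the jump part of $|G_n(s,\theta)|$ to $\int_{\mathbb{R}}|e^{iu}-1-iu\tau(x)|(\rho_G+\rho_M)(s,dx)$ with $u=\theta h_n(s)$, to which the elementary bound $|e^{iu}-1-iu\tau(x)|\le C(1+u^2)(1\wedge x^2)$ applies. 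Second, the normalization identity $|a(s)|+\sigma^2(s)+\int_{\mathbb{R}}(1\wedge x^2)\rho_G(s,dx)+\int_{\mathbb{R}}(1\wedge x^2)\rho_M(s,dx)=1$ $\lambda$-a.e.\ of Proposition \ref{pr-2} makes all the $s$-dependent factors integrable against the finite measure $\lambda|_{A\cap S_k}$, so the only obstruction is the quadratic growth in $u=\theta h_n(s)$ on the set where $h_n$ is large. Third, and this is what replaces the missing domination, the hypothesis that $\int_A h_n\,d\Lambda$ converges in probability provides tightness, hence a uniform-integrability bound that prevents escape of mass on $\{|h_n|\text{ large}\}$; a Vitali convergence argument then gives $\int_A G_n(s,\theta)\,\lambda(ds)\to0$ for every $\theta$.

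Consequently $\mathbb{E}(e^{i\theta\int_A h_n\,d\Lambda})\to1$ for all $\theta$, so $\int_A h_n\,d\Lambda\stackrel{d}{\rightarrow}0$, and together with convergence in probability the limit is $0$ a.s.; this proves the claim and therefore the lemma. I expect the delicate point to be making the third ingredient rigorous, namely extracting from \emph{convergence in probability} of $\int_A h_n\,d\Lambda$ a quantitative uniform-integrability control on the exponents $G_n$ in the presence of a signed $\rho$, since this is exactly the step with no direct analogue in the ID theory of \cite{RajRos}.
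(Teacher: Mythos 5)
Your reduction to the claim that ``$h_n=f_n-g_n$ simple, $h_n\to0$ $\lambda$-a.e., $\int_A h_n\,d\Lambda$ convergent in probability $\Rightarrow$ the limit is $0$'' is exactly the paper's reduction, and so is the passage to the exponents $\int_A K(\theta h_n(s),s)\,\lambda(ds)$. The gap is precisely where you locate it yourself: the ``third ingredient.'' Convergence in probability of $\int_A h_n\,d\Lambda$ gives tightness of laws on the probability space, but the uniform integrability you need lives on $(A,\lambda)$ and concerns the functions $s\mapsto K(\theta h_n(s),s)$; no argument is given to bridge the two, and in the signed setting the only known bridge is cut. In the ID case one exploits $\mathrm{Re}\,K\le 0$: the modulus of the characteristic function equals $\exp\bigl(\int_A \mathrm{Re}\,K\,d\lambda\bigr)$ with an integrand of constant sign, so a lower bound on the modulus (available from tightness, since the limit is a genuine random variable whose c.f.\ is near $1$ for small $\theta$) yields a uniform bound on $\int_A|\mathrm{Re}\,K|\,d\lambda$. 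With a signed $\rho$ this fails by cancellation: $\int_A \mathrm{Re}\,K\,d\lambda$ can remain bounded while $\int_A|\mathrm{Re}\,K|\,d\lambda$ blows up, and nothing in your hypotheses excludes this. So ``tightness, hence a uniform-integrability bound'' is not a step of a proof but a restatement of the difficulty, and the Vitali convergence argument cannot be run. A secondary gap: your exhaustion of $A$ by the finite-measure sets $A\cap S_k$ requires interchanging the limit in $n$ with the limit in $k$, which again needs a uniformity you have not established.

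The paper closes both holes with one tool your outline never invokes: the Vitali--Hahn--Saks theorem applied to the $L_0$-valued set functions $N_n(A):=\int_A h_n\,d\Lambda$. Each $N_n$ is countably additive and $\lambda$-absolutely continuous, and $N(A):=\lim_n N_n(A)$ exists for every $A\in\sigma(\mathcal{S})$ by hypothesis; Vitali--Hahn--Saks then gives that $N$ is itself a countably additive $L_0$-valued measure with $N\ll\lambda$. Next, Egorov (Lusin) decomposes $A=\bigcup_{k\ge0}A_k$ into disjoint pieces with $\lambda(A_0)=0$ and $h_n\to0$ \emph{uniformly} on each $A_k$, $k\ge1$. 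On such a piece your dominated-convergence computation goes through with no uniform-integrability issue at all: eventually $\sup_{A_k}|h_n|\le1$, and the normalization identity of Proposition \ref{pr-2} supplies an integrable dominating function, so $\hat{\mathcal{L}}(N(A_k))\equiv1$ and $N(A_k)=0$. Finally $N(A_0)=0$ by absolute continuity, and countable additivity of $N$ glues the pieces: $N(A)=\sum_{k\ge0}N(A_k)=0$. In short, where you attempt a single uniform estimate over all of $A$, the paper localizes to sets where the convergence of $h_n$ is uniform and uses Vitali--Hahn--Saks to make both the gluing and the null-set step legitimate; replacing your third ingredient by this mechanism is the missing idea.
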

\begin{proof}
Let $\{f_{n}\}$ and $\{g_{n}\}$ be two real simple functions on $S$ as in the Definition \ref{def-integral} and satisfying (i) and (ii) with the same limit. Let $h_{n}=f_{n}-g_{n}$ ($n=1,2,...$). Then, the sequence $\{h_{n}\}\rightarrow0$ $\lambda$-a.e.~and $\int_{A}h_{n}d\Lambda$ converges in probability for every $A\in\sigma(\mathcal{S})$. It remains to show that $\int_{A}h_{n}d\Lambda\stackrel{p}{\rightarrow}0$ for every $A\in\sigma(\mathcal{S})$. 
\\ Let $N_{n}(A)=\int_{A}f_{n}d\Lambda$ then $N_{n}$ is a measure on $L_{0}(\Omega,\mathcal{F},\mathbb{P};\mathbb{R})$, which is the set of all measurable random elements defined on $(\Omega,\mathcal{F},\mathbb{P})$ with values in $\mathbb{R}$. In particular, $N_{n}$ is absolutely continuous with respect to $\lambda$ and for every $A\in\sigma(\mathcal{S})$ we have that $N(A):=\lim\limits_{n\rightarrow\infty}N_{n}(A)$ exists. In particular the existence is guaranteed by the convergence of $\int_{A}h_{n}d\Lambda$. Applying the Hahn-Saks-Vitali theorem we obtain that $N$ is a measure on $L_{0}(\Omega,\mathcal{F},\mathbb{P};\mathbb{R})$ and $N\ll\lambda$. 
\\ Since $\{h_{n}\}$ converges $\lambda$-a.e., we can apply the Egorov (or more correctly Lusin) theorem and obtain that for every set $A$ in $\sigma(\mathcal{S})$ we have $A=\bigcup_{k=0}^{\infty}A_{k}$ where $A_{k}$'s are pairwise disjoint sets belonging to $\sigma(\mathcal{S})$ s.t.~$\lambda(A_{0})=0$ and $h_{n}\rightarrow0$ uniformly on every set $A_{k}$'s for $k>0$. Then, for every $A_{k}$ we have
\begin{equation*}
\hat{\mathcal{L}}\left(N(A_{k}) \right)=\lim\limits_{n\rightarrow\infty}\hat{\mathcal{L}}\left(N_{n}(A_{k}) \right)=\lim\limits_{n\rightarrow\infty}\exp\left(\int_{A_{k}}K(\theta h_{n}(s),s)\lambda(ds)\right)
\end{equation*}
\begin{equation*}
=\lim\limits_{n\rightarrow\infty}\exp\left(\int_{A_{k}}i\theta h_{n}(s)a(s)-\frac{\theta^{2}}{2}h^{2}_{n}(s)\sigma^{2}(s)+\int_{\mathbb{R}}\left(e^{i\theta h_{n}(s) x}-1-i\theta h_{n}(s)\tau(x)\right)\rho(s,dx)\lambda(ds)\right)=1,
\end{equation*}
using the dominated convergence theorem and that $\sup\limits_{s\in A_{k}}h_{n}(s)\rightarrow0$ as $n\rightarrow\infty$. Hence, $N(A_{k})=0$ in probability for every $k>0$. Then, since $N(A)$ is a measure we have that $N(A)=\sum_{k=0}^{\infty}N(A_{k})=0$, for every $A\in\sigma(\mathcal{S})$.
\end{proof}
It is now possible to give a representation of the c.f.~of $\int_{S}fd\Lambda$.
\begin{pro}\label{pr-same-as-in-the-paper}
	If $f$ is $\Lambda$-integrable, then $\int_{S}|K(\theta f(s),s)|\lambda(ds)<\infty$, where $K$ is given in Proposition \ref{pr-2}, and
	\begin{equation*}
	\hat{\mathcal{L}}\left(\int_{S}fd\Lambda\right)(\theta)=\exp\left(\int_{S} K(\theta f(s),s)\lambda(ds)\right),\quad \theta\in\mathbb{R}.
	\end{equation*}
\end{pro}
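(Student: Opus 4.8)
The plan is to prove the formula first for $\theta$ in a neighbourhood of the origin and then to propagate it to all of $\mathbb{R}$. Take a sequence $\{f_n\}$ of simple functions with $f_n\to f$ $\lambda$-a.e.\ and $\int_A f_n\,d\Lambda$ convergent in probability for every $A\in\sigma(\mathcal{S})$, as in Definition \ref{def-integral}; by Lemma \ref{lm-well deined} the limit is $\int_A f\,d\Lambda$. For the simple $f_n$ the computation following Definition \ref{def-integral} gives $\hat{\mathcal{L}}(\int_A f_n\,d\Lambda)(\theta)=\exp(\int_A K(\theta f_n(s),s)\lambda(ds))$, where $K$ is as in Proposition \ref{pr-2}, and for each fixed $n$ this exponent is a continuous function of $\theta$ vanishing at $\theta=0$. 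Convergence in probability yields $\hat{\mathcal{L}}(\int_A f_n\,d\Lambda)(\theta)\to\hat{\mathcal{L}}(\int_A f\,d\Lambda)(\theta)$ for every $\theta$ and every $A$, while the continuity of $\theta\mapsto K(\theta,s)$ (from the boundedness and continuity at $0$ of $g_\tau$ together with dominated convergence against $|\rho|(s,\cdot)$) gives $K(\theta f_n(s),s)\to K(\theta f(s),s)$ for $\lambda$-a.e.\ $s$.

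The crux is that, unlike in the infinitely divisible setting of \cite{RajRos}, the quasi-L\'{e}vy type measure $\rho(s,\cdot)$ is \emph{signed}, so $\mathrm{Re}\,K(\theta f_n(s),s)$ has no definite sign and the Fatou/monotone argument of \cite{RajRos} is unavailable. Instead I would exploit the fact that $\Lambda$-integrability furnishes convergence of $\int_A f_n\,d\Lambda$ for \emph{every} $A\in\sigma(\mathcal{S})$, and transfer this to the complex set functions $\mu_n^\theta(A):=\int_A K(\theta f_n(s),s)\lambda(ds)$. Each $\mu_n^\theta$ is a complex measure absolutely continuous with respect to $\lambda$, since for simple $f_n$ the density $K(\theta f_n,\cdot)$ lies in $L^1(\lambda)$ (on each $A\in\mathcal{S}$ one bounds $\int_A|K(\theta c,s)|\lambda(ds)$ by $|\theta c|\,|\nu_0|(A)+\tfrac{(\theta c)^2}{2}\nu_1(A)+C\int_{\mathbb{R}}(1\wedge x^2)|F_A|(dx)<\infty$). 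Fixing $\theta$ in a neighbourhood of $0$ on which the limiting characteristic functions do not vanish, $\mu_n^\theta(A)=\log\hat{\mathcal{L}}(\int_A f_n\,d\Lambda)(\theta)$ converges for every $A$; the Hahn--Saks--Vitali theorem (already invoked in the proof of Lemma \ref{lm-well deined}) then forces $\{K(\theta f_n,\cdot)\}_n$ to be uniformly integrable with respect to $\lambda$. Combined with the $\lambda$-a.e.\ convergence of the integrands, Vitali's convergence theorem yields $K(\theta f,\cdot)\in L^1(\lambda)$, that is $\int_S|K(\theta f(s),s)|\lambda(ds)<\infty$, and $\int_S K(\theta f_n,s)\lambda(ds)\to\int_S K(\theta f,s)\lambda(ds)$; passing to the limit in the exponential representation establishes the claimed identity for such $\theta$.

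It remains to remove the restriction on $\theta$, and this is exactly where the main obstacle lies. A priori a QID integral could possess a characteristic function with zeros, at which the right-hand side of the asserted formula would be ill defined; hence the heart of the matter is to show that for a $\Lambda$-integrable $f$ no such zero occurs, equivalently that $\theta\mapsto\int_S K(\theta f(s),s)\lambda(ds)$ is finite and continuous on all of $\mathbb{R}$. I would treat this by a continuity-in-$\theta$ bootstrap: on the open set where the representation already holds the map $\theta\mapsto\int_S K(\theta f,s)\lambda(ds)$ is finite and continuous, and a blow-up $\int_S\mathrm{Re}\,K(\theta f,s)\lambda(ds)\to-\infty$ as $\theta$ approaches the boundary of this set would contradict the uniform-integrability control obtained from Hahn--Saks--Vitali, so the good set is both open and closed and therefore all of $\mathbb{R}$. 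Once the non-vanishing and the uniform control are secured, the convergence $\int_S K(\theta f_n,s)\lambda(ds)\to\int_S K(\theta f,s)\lambda(ds)$ and the finiteness of the limit hold for every $\theta$, which completes the argument; the signed nature of $\rho$ that defeats the classical Fatou bound is precisely what makes the appeal to Hahn--Saks--Vitali, rather than to a monotonicity estimate, indispensable.
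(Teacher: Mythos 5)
Your outline coincides in its core with the paper's own proof (which is modelled on Proposition 2.6 of \cite{RajRos}): compute the characteristic functions of simple integrals, view $\mu_{\theta,n}(A)=\int_{A}K(\theta f_{n}(s),s)\lambda(ds)$ as complex measures absolutely continuous with respect to $\lambda$, apply Hahn--Saks--Vitali to the setwise limits, and identify the limiting density with $K(\theta f(\cdot),\cdot)$; finishing with uniform integrability plus Vitali instead of Radon--Nikodym plus Egorov (the paper's choice) is an equivalent variant. (One small slip: for simple $f_{n}$ the $L^{1}(\lambda)$ bound on the density should run through $\int_{\mathbb{R}}(1\wedge x^{2})(G_{A}+M_{A})(dx)$ rather than $\int_{\mathbb{R}}(1\wedge x^{2})|F_{A}|(dx)$, since $\int_{A}\int_{\mathbb{R}}(1\wedge x^{2})|\rho|(s,dx)\lambda(ds)\geq\int_{\mathbb{R}}(1\wedge x^{2})|F_{A}|(dx)$ -- the cancellation inequality goes the wrong way for your purpose.) You have also correctly isolated the one point where the QID case really differs from \cite{RajRos}: Hahn--Saks--Vitali requires $\lim_{n}\mu_{\theta,n}(A)$ to exist finitely for \emph{every} $A\in\sigma(\mathcal{S})$, which amounts to non-vanishing of the limiting characteristic functions so that distinguished logarithms converge. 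In the ID setting this is automatic because ID laws are closed under convergence in distribution; QID laws are not (they are dense in all laws), and the paper's proof passes over this point silently by writing $\lim_{n}\log\hat{\mathcal{L}}(\int_{A}f_{n}d\Lambda)(\theta)=\log\hat{\mathcal{L}}(\int_{A}fd\Lambda)(\theta)$ without justification.

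However, your resolution of that point does not close it, for two concrete reasons. First, even to start at a single small $\theta$, the limit characteristic functions must be zero-free on the whole segment $[0,\theta]$ \emph{simultaneously for all} $A\in\sigma(\mathcal{S})$, since Hahn--Saks--Vitali quantifies over all $A$; the existence of one neighbourhood of the origin that works for every $A$ needs an argument (for instance uniform tightness of $\{\int_{A}fd\Lambda\}_{A\in\sigma(\mathcal{S})}$, obtained from boundedness of the range of the $L^{0}$-valued vector measure $A\mapsto\int_{A}fd\Lambda$, which yields uniform equicontinuity of the characteristic functions), and you do not supply it. Second, and more seriously, the closedness half of your open--closed bootstrap is asserted rather than proved: the uniform integrability you extract from Hahn--Saks--Vitali is obtained separately at each fixed $\theta$ already inside the good set, and it carries no uniformity in $\theta$; consequently it does not preclude $\int_{S}\mathrm{Re}\,K(\theta f(s),s)\lambda(ds)\rightarrow-\infty$, i.e.\ the appearance of a zero of the limiting characteristic function, as $\theta$ approaches a finite endpoint of the good set -- which is exactly the scenario that must be excluded. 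Ruling out zeros appears to need a genuinely new ingredient, e.g.\ splitting $A\mapsto\int_{A}fd\Lambda$ into its $\lambda$-atomless part, which is ID by Pr\'{e}kopa's theorem, and an atomic part whose values are scaled values of $\Lambda$ and hence QID, and then bounding the resulting infinite product of characteristic functions away from zero via a Kolmogorov three-series estimate; alternatively one can simply assume, as in Theorem \ref{theorem2}, that $f$ is $\Lambda_{G}$- and $\Lambda_{M}$-integrable, in which case the limit is a ratio of ID characteristic functions and never vanishes. As written, the passage from a neighbourhood of the origin to all of $\mathbb{R}$ is a gap.
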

\begin{proof}
	The statement follows from the same arguments used in the proof of Proposition 2.6 of \cite{RajRos}. Let us sketch them. First, notice that the statement holds for simple function as in Definition \ref{def-integral}. Let $\{f_{n}\}$ be a sequence of simple functions. Let $\mu_{\theta,n}(A):=\int_{A} K(\theta f_{n}(s),s)\lambda(ds)$ where $A\in\sigma(\mathcal{S})$ and observe that it is a complex measure for every $\theta\in\mathbb{R}$ and $n\in\mathbb{N}$. Since 
	\begin{equation*}
	\lim\limits_{n\rightarrow\infty}\mu_{\theta,n}(A)=\lim\limits_{n\rightarrow\infty}\log\hat{\mathcal{L}}\left(\int_{A}f_{n}d\Lambda\right)(\theta)=\log\hat{\mathcal{L}}\left(\int_{S}fd\Lambda\right)(\theta)=\mu_{\theta}(A),
	\end{equation*}
	for every $A\in\sigma(\mathcal{S})$ and $\theta\in\mathbb{R}$, we conclude that by the Hahn-Saks-Vitali theorem $\mu_{\theta}$ is a countably additive complex measure. Further, since $\mu_{\theta}<<\lambda$ then $\mu_{\theta}(A)=\int_{A}h_{\theta}(s)\lambda(ds)$. By the continuity of $K(\cdot,s)$ for each $s\in S$ we deduce that $K(\theta f_{n}(s), s)\rightarrow K(\theta f(s), s)$ a.e.-$\lambda$, and using Egorov (Lusin) theorem we have that $S=\bigcup_{j=0}^{\infty}A_{j}$, where $\lambda(A_{0})=0$ and $\lambda(A_{j})<\infty$ and with $K(\theta f_{n}(s), s)\rightarrow K(\theta f(s), s)$ a.e.-$\lambda$ uniformly in $s\in A_{j}$ for $j\in\mathbb{N}$. Thus, $h_{\theta}(s)=K(\theta f(s), s)$ a.e.-$\lambda$ on $A_{j}$ with $j\geq 1$ because for every $A\in\sigma(\mathcal{S})$
	\begin{equation*}
\int_{A\cap A_{j}}h_{\theta}(s)\lambda(ds)=\mu_{\theta}(A\cap A_{j})=\lim\limits_{n\rightarrow\infty}\int_{A\cap A_{j}} K(\theta f_{n}(s),s)\lambda(ds)=\int_{A\cap A_{j}} K(\theta f(s),s)\lambda(ds)
	\end{equation*}
	But since $A_{0}$ is a $\lambda$-null set then $h_{\theta}(s)=K(\theta f(s), s)$ a.e.-$\lambda$ on $S$.
\end{proof}
We are now ready to present the main result of this subsection, which concerns the integrability conditions for $\int_{S}fd\Lambda$.
\begin{thm}\label{theorem1}
	Let $f:S\rightarrow\mathbb{R}$ be a $\mathcal{S}$-measurable function. Then $f$ is $\Lambda$-integrable if the following three conditions hold:
	\\\textnormal{(i)} $\int_{S}|U(f(s),s)|\lambda(ds)<\infty$,
	\\\textnormal{(ii)} $\int_{S}|f(s)|^{2}\sigma^{2}(s)\lambda(ds)<\infty$,
	\\\textnormal{(iii)} $\int_{S}V_{0}(f(s),s)\lambda(ds)<\infty$,
	\begin{equation*}
	\textnormal{where}\quad U(u,s)=ua(s)+\int_{\mathbb{R}}\tau(xu)-u\tau(x)\rho(s,dx),\quad V_{0}(u,s)=\int_{\mathbb{R}}(1\wedge |xu|^{2})|\rho|(s,dx),
	\end{equation*}
	Further, the c.f.~of $\int_{S}fd\Lambda$ can be written as
	\\\textnormal{(iv)} $\hat{\mathcal{L}}\left(\int_{S}fd\Lambda\right)(\theta)=\exp\left(i\theta a_{f}-\frac{1}{2}\theta^{2}\sigma_{f}^{2}+\int_{\mathbb{R}}e^{i\theta x}-1-i\theta\tau(x)F_{f}(dx) \right)$,
	where
	\begin{equation*}
	a_{f}=\int_{S}U(f(s),s)\lambda(ds),\quad \sigma_{f}^{2}=\int_{S}|f(s)|^{2}\sigma^{2}(s)\lambda(ds),\quad\textnormal{and}
	\end{equation*}
	$F_{f}$ is the unique quasi-L\'{e}vy measure determined by the difference of the L\'{e}vy measures $\tilde{F}^{+}_{f}$ and $\tilde{F}^{-}_{f}$, which are defined as: for every $B\in\mathcal{B}(\mathbb{R})$
	\begin{equation*}
	\tilde{F}^{+}_{f}(B)=\tilde{F}^{+}(\{(s,x)\in S\times\mathbb{R}:\, f(s)x\in B\setminus\{0 \} \})\,\,\,\text{and}\,\,\, \tilde{F}^{-}_{f}(B)=\tilde{F}^{-}(\{(s,x)\in S\times\mathbb{R}:\, f(s)x\in B\setminus\{0 \} \})
	\end{equation*}
\end{thm}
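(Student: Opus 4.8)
The plan is to construct $\int_{S}f\,d\Lambda$ directly, as a limit in probability of integrals of simple functions, mirroring the sufficiency direction of the integrability theorem of \cite{RajRos}, with the crucial modification that every estimate on the jump part is carried out against the \emph{total variation} $|\rho|=\rho^{+}+\rho^{-}$ rather than against $\rho$ itself. I deliberately avoid reducing the problem to the two generating ID measures $\Lambda_{G}\sim(0,\nu_{1},G_{\cdot})$ and $\Lambda_{M}\sim(-\nu_{0},0,M_{\cdot})$: condition (iii) controls $\int_{S}V_{0}(f(s),s)\lambda(ds)$ with $V_{0}$ defined through $|\rho|=|\rho_{G}-\rho_{M}|$, and under cancellation this may be far smaller than the corresponding quantities for $\rho_{G}+\rho_{M}$, so $f$ need not be $\Lambda_{G}$- or $\Lambda_{M}$-integrable even when it is $\Lambda$-integrable. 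Only the stated ``if'' direction is proved.

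First I would fix a sequence of real simple functions $f_{n}$ with $f_{n}\to f$ $\lambda$-a.e.\ and $|f_{n}|\le|f|$ (truncations supported on $S_{n}$). For simple functions the product computation displayed just after Definition \ref{def-integral} gives, using finite additivity of $\Lambda$ to write $\int_{A}f_{n}\,d\Lambda-\int_{A}f_{m}\,d\Lambda=\int_{A}(f_{n}-f_{m})\,d\Lambda$,
\begin{equation*}
\hat{\mathcal{L}}\Big(\int_{A}(f_{n}-f_{m})\,d\Lambda\Big)(\theta)=\exp\Big(\int_{A}K\big(\theta(f_{n}-f_{m})(s),s\big)\,\lambda(ds)\Big),\qquad A\in\sigma(\mathcal{S}),
\end{equation*}
with $K$ as in Proposition \ref{pr-2}. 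Since $K(\cdot,s)$ is continuous with $K(0,s)=0$ and $f_{n}-f_{m}\to0$ $\lambda$-a.e., the integrand converges to $0$ pointwise; the heart of the argument is to exhibit a $\lambda$-integrable dominating function for $s\mapsto\sup_{|w|\le 2|f(s)|}|K(\theta w,s)|$, built (exactly as in \cite{RajRos}, but with all jump bounds taken against $|\rho|$) from $|U(\pm2f(s),s)|$, $|f(s)|^{2}\sigma^{2}(s)$ and $V_{0}(2f(s),s)$, each $\lambda$-integrable by (i)--(iii) and the scaling estimate $V_{0}(2u,s)\le4V_{0}(u,s)$. Dominated convergence then forces the above characteristic functions to tend to $1$, so $\int_{A}(f_{n}-f_{m})\,d\Lambda\stackrel{d}{\rightarrow}0$, hence $\stackrel{p}{\rightarrow}0$; thus $\{\int_{A}f_{n}\,d\Lambda\}$ is Cauchy in probability and converges, establishing that $f$ is $\Lambda$-integrable.

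Next I would read off the characteristic function. By Proposition \ref{pr-same-as-in-the-paper} one has $\hat{\mathcal{L}}(\int_{S}f\,d\Lambda)(\theta)=\exp(\int_{S}K(\theta f(s),s)\lambda(ds))$, so it remains to recast the exponent in L\'{e}vy--Khintchine form. The Gaussian term integrates to $-\tfrac12\theta^{2}\sigma_{f}^{2}$ with $\sigma_{f}^{2}=\int_{S}|f(s)|^{2}\sigma^{2}(s)\lambda(ds)$. In the jump integral I substitute $y=f(s)x$; replacing the centering $f(s)\tau(x)$ by $\tau(f(s)x)$ generates precisely the correction $\int_{\mathbb{R}}(\tau(f(s)x)-f(s)\tau(x))\rho(s,dx)$, which together with the $f(s)a(s)$ term and integration in $s$ produces the drift $a_{f}=\int_{S}U(f(s),s)\lambda(ds)$, finite by (i). The surviving term is turned, via the image measures of $\tilde{F}^{+}$ and $\tilde{F}^{-}$ under $(s,x)\mapsto f(s)x$ from Proposition \ref{pro-discordia}, into $\int_{\mathbb{R}}(e^{i\theta y}-1-i\theta\tau(y))F_{f}(dy)$ with $F_{f}=\tilde{F}^{+}_{f}-\tilde{F}^{-}_{f}$. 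That $F_{f}$ is a bona fide quasi-L\'{e}vy measure follows from
\begin{equation*}
\int_{\mathbb{R}}(1\wedge y^{2})|F_{f}|(dy)\le\int_{\mathbb{R}}(1\wedge y^{2})\big(\tilde{F}^{+}_{f}+\tilde{F}^{-}_{f}\big)(dy)=\int_{S}V_{0}(f(s),s)\lambda(ds)<\infty,
\end{equation*}
which is (iii).

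The hard part will be the uniform domination of $K$ in the second step. Because $\rho$ is signed, every bound on the inner integral must pass through $|\rho|$ via the triangle inequality applied to $\rho=\rho^{+}-\rho^{-}$, and the non-linearity of the centering function $\tau$ makes the dependence of $U(u,s)$ on $u$ awkward to scale; checking that (i)--(iii) genuinely dominate $K(\theta w,s)$ for all $|w|\le2|f(s)|$ and all $\theta$ in a compact set is where the real work lies. By contrast, once Proposition \ref{pro-discordia} is available the change of variables and the identification of $F_{f}$ as an image measure are routine.
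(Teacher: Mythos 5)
Your proposal is correct and follows essentially the same route as the paper's own proof: truncation to simple functions $f_{n}$ with $|f_{n}|\leq|f|$, domination of the exponent via the scaling estimate $\sup_{|c|\leq d}|U(cu,s)|\leq d|U(u,s)|+(1+d)^{3}V_{0}(u,s)$ (the paper's Lemma \ref{l3}, with all jump bounds taken against $|\rho|$), dominated convergence forcing the characteristic functions of the differences to tend to $1$, hence Cauchy in probability, and finally identification of the L\'{e}vy--Khintchine triplet through the image measures $\tilde{F}^{+}_{f}$, $\tilde{F}^{-}_{f}$ of Proposition \ref{pro-discordia}, with $\int_{\mathbb{R}}(1\wedge y^{2})|F_{f}|(dy)$ controlled by condition (iii). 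The only cosmetic differences are that you apply dominated convergence directly to $K$ and read off (iv) from Proposition \ref{pr-same-as-in-the-paper}, whereas the paper establishes the three limits separately and obtains (iv) by passing the simple-function case to the limit; the substance, including the deliberate avoidance of any reduction to $\Lambda_{G}$ and $\Lambda_{M}$, is identical.
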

\begin{proof}	
	In this proof we follow some of the arguments used in the proof of Theorem 2.7 in \cite{RajRos}. Assume that (i), (ii), (iii) hold. Consider the sets $S_{1},S_{2},... \in {\mathcal {S}}$ s.t.~$\bigcup _{n\in \mathbb {N} }S_{n}=S$. Let $A_{n}=\{s:|f(s)|\leq n \}\cap S_{n}$. We have that $\{A_{n}\}\subset\mathcal{S}$ and $A_{n}\nearrow S$. Consider a sequence $(f_{n})$ of simple $\mathcal{S}$-measurable functions, s.t.~$f_{n}(s)=0$ if $s\in A_{n}$, $|f_{n}(s)-f(s)|\leq \frac{1}{n}$ if $s\in A_{n}$ , and $|f_{n}(s)|\leq |f(s)|$ for all $s\in S$. Then, $f_{n}\rightarrow f$ everywhere on $S$ as $n\rightarrow\infty$. Further, notice that for every $A\in\sigma(\mathcal{S})$ and $n,m\geq 1$, $|\left(f_{n}(s)-f_{m}(s) \right)\textbf{1}_{A}(s)|\leq 2|f(s)|$. Then by Lemma \ref{l3} (see the next result) we derive
	\begin{equation*}
	|U\left(\left(f_{n}(s)-f_{m}(s) \right)\textbf{1}_{A}(s),s\right)|\leq 2|U(f(s),s)|+27V_{0}(f(s),s).
	\end{equation*}
	Thus, by the dominated convergence theorem, we obtain that for every $A\in\sigma(\mathcal{S})$,
	\begin{equation}\label{lim1}
	\lim\limits_{n,m\rightarrow\infty}\int_{S}U((f_{n}(s)-f_{m}(s)), \textbf{1}_{A}(s),s)\lambda(ds)=0,
	\end{equation}
	\begin{equation}\label{lim2}
	\lim\limits_{n,m\rightarrow\infty}\int_{S}(f_{n}(s)-f_{m}(s))^{2} \textbf{1}_{A}(s)\sigma^{2}(s)\lambda(ds)=0,
	\end{equation}
	\begin{equation}\label{lim3}
	\textnormal{and}\quad\lim\limits_{n,m\rightarrow\infty}\int_{S}V_{0}((f_{n}(s)-f_{m}(s)) \textbf{1}_{A}(s),s)\lambda(ds)=0.
	\end{equation}	
	Then, from (i), (ii), (iii) and Proposition \ref{pr-2} it is possible to obtain (iv) for the simple measurable functions $(f_{n}(s)-f_{m}(s)) \textbf{1}_{A}$. Indeed, let $g_{n,m,A}(s):=(f_{n}(s)-f_{m}(s)) \textbf{1}_{A}$, we have
	\begin{equation*}
	\hat{\mathcal{L}}\left(\int_{S}g_{n,m,A}d\Lambda\right)(\theta)
	\end{equation*}
	\begin{equation*}
	=\exp\left(i\theta a_{g_{n,m,A}}-\frac{1}{2}\theta^{2}\sigma_{g_{n,m,A}}^{2}+\int_{S}\int_{\mathbb{R}}\left(e^{i\theta g_{n,m,A}(s) x}-1-i\theta g_{n,m,A}(s)\tau(x)\right)\rho(s,dx)\lambda(ds) \right)
	\end{equation*}
	\begin{equation*}
	=\exp\bigg(i\theta a_{g_{n,m,A}}-\frac{1}{2}\theta^{2}\sigma_{g_{n,m,A}}^{2}+\int_{S}\int_{\mathbb{R}}\left(e^{i\theta g_{n,m,A}(s) x}-1-i\theta g_{n,m,A}(s)\tau(x)\right)\rho^{+}(s,dx)\lambda(ds)
	\end{equation*}
	\begin{equation*}
	-\int_{S}\int_{\mathbb{R}}e^{i\theta g_{n,m,A}(s) x}-1-i\theta g_{n,m,A}(s)\tau(x)\rho^{-}(s,dx)\lambda(ds)\bigg)
	\end{equation*}
	\begin{equation*}
	=\exp\left(i\theta a_{g_{n,m,A}}-\frac{1}{2}\theta^{2}\sigma_{g_{n,m,A}}^{2}+\int_{\mathbb{R}}e^{i\theta x}-1-i\theta\tau(x)\tilde{F}^{+}_{g_{n,m,A}}(dx)-\int_{\mathbb{R}}e^{i\theta x}-1-i\theta\tau(x)\tilde{F}^{-}_{g_{n,m,A}}(dx)\right)
	\end{equation*}
	\begin{equation*}
	=\exp\left(i\theta a_{g_{n,m,A}}-\frac{1}{2}\theta^{2}\sigma_{g_{n,m,A}}^{2}+\int_{\mathbb{R}}e^{i\theta x}-1-i\theta\tau(x)F_{g_{n,m,A}}(dx)\right)
	\end{equation*}
	Now, using (iv) for simple functions, (\ref{lim1}), (\ref{lim2}), and (\ref{lim3}), we get $\lim\limits_{n,m\rightarrow\infty}\hat{\mathcal{L}}\left(\int_{S}(f_{n}(s)-f_{m}(s)) \textbf{1}_{A}d\Lambda\right)(\theta)=1$ for every $\theta\in\mathbb{R}$ and $A\in\sigma(\mathcal{S})$. Therefore, the sequence $\{\int_{A}f_{n}d\Lambda \}_{n=1}^{\infty}$ converges in probability for every $A\in\sigma(\mathcal{S})$, namely $f$ is $\Lambda$-integrable and, then, the c.f.~of $\int_{S}fd\Lambda$ can be written as in (iv).
\end{proof}
\begin{lem}\label{l3}
	For every $u\in\mathbb{R}$, $s\in S$ and $d>0$,
	\begin{equation*}
	\sup\{|U(cu,s)|:|c|\leq d \}\leq d|U(u,s)|+(1+d)^{3}V_{0}(u,s).
	\end{equation*}
\end{lem}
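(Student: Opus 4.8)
The plan is to reduce the estimate to a pointwise inequality for the centering function $\tau$ by exploiting an algebraic cancellation. First I would record the identity
\begin{equation*}
U(cu,s) = c\,U(u,s) + \int_{\mathbb{R}}\bigl(\tau(cxu) - c\,\tau(xu)\bigr)\rho(s,dx),\qquad |c|\le d,
\end{equation*}
obtained by writing out $U(cu,s)$ and $c\,U(u,s)$ from the definition in Theorem \ref{theorem1}: both carry the same drift contribution $cu\,a(s)$ and the same compensating term $-cu\,\tau(x)$ inside the integral, so their difference telescopes to the displayed integral. Before manipulating these integrals I would check that each converges absolutely against $|\rho|(s,\cdot)$: applying the pointwise estimate of the next step with $y=x$ and multiplier $u$ (resp.\ $cu$) yields $|\tau(ux)-u\tau(x)|\le(1+|u|)^{3}(1\wedge x^{2})$ (resp.\ the analogue with $cu$), and $\int_{\mathbb{R}}(1\wedge x^{2})|\rho|(s,dx)<\infty$ because $\rho(s,\cdot)$ is a quasi-L\'evy type measure.

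The heart of the argument is the claim that, for every $y\in\mathbb{R}$ and every $c$ with $|c|\le d$,
\begin{equation*}
\bigl|\tau(cy) - c\,\tau(y)\bigr| \le (1+d)^{3}\,(1\wedge y^{2}).
\end{equation*}
I would prove this by distinguishing $|y|>1$ from $|y|\le 1$. When $|y|>1$ one has $1\wedge y^{2}=1$ and $\tau(y)=\operatorname{sign}(y)$, so, since $\tau$ is bounded by $1$, the left-hand side is at most $|\tau(cy)|+|c|\le 1+d\le(1+d)^{3}$. When $|y|\le 1$ one has $\tau(y)=y$, so the expression equals $|\tau(cy)-cy|$; this vanishes if $|cy|\le 1$, while if $|cy|>1$ it equals $|cy|-1$, and the elementary inequality $t-1\le t^{2}$ for $t>1$ (applied with $t=|cy|$) gives $|cy|-1\le(cy)^{2}\le d^{2}y^{2}\le(1+d)^{3}(1\wedge y^{2})$. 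I expect this casework to be the only genuinely delicate point: in the regime $|y|\le 1<|cy|$ one must replace the crude bound $|cy|-1\le 1+|cy|$ by the quadratic estimate $|cy|-1\le(cy)^{2}$, since it is precisely this that produces a bound of the required order $1\wedge y^{2}$ near $y=0$.

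Finally I would combine the two steps. Substituting $y=xu$ into the pointwise bound and integrating against $|\rho|(s,\cdot)$ gives
\begin{equation*}
\left|\int_{\mathbb{R}}\bigl(\tau(cxu)-c\,\tau(xu)\bigr)\rho(s,dx)\right| \le (1+d)^{3}\int_{\mathbb{R}}(1\wedge|xu|^{2})\,|\rho|(s,dx) = (1+d)^{3}\,V_{0}(u,s).
\end{equation*}
Together with the identity of the first step and $|c|\le d$, this yields $|U(cu,s)|\le d\,|U(u,s)| + (1+d)^{3}V_{0}(u,s)$ for each admissible $c$; taking the supremum over $|c|\le d$ produces the asserted inequality.
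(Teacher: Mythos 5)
Your proof is correct and takes essentially the same route as the paper's: the identical telescoping decomposition $U(cu,s)=cU(u,s)+\int_{\mathbb{R}}\bigl(\tau(cxu)-c\tau(xu)\bigr)\rho(s,dx)$, followed by a bound of the remainder integral by $(1+d)^{3}V_{0}(u,s)$ and the trivial estimate $|c|\leq d$ on the first term. The only difference is in the mechanics of that bound: you prove the pointwise domination $|\tau(cy)-c\tau(y)|\leq(1+d)^{3}(1\wedge y^{2})$ and integrate against $|\rho|(s,\cdot)$, whereas the paper observes that the integrand vanishes on $\{|ux|\leq 1\wedge|c|^{-1}\}$, is bounded by $1+d$ elsewhere, and controls the $|\rho|(s,\cdot)$-measure of the exceptional set by a Chebychev-type inequality; both arguments produce the same constant.
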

\begin{proof}
	Let $|c|\leq d$. Define $R(c,u,s):=\int_{\mathbb{R}}\tau(xcu)-c\tau(ux)\rho(s,dx)$. We can rewrite $U(cu,s)$ as follows
	\begin{equation*}
	U(cu,s)=cua(s)+\int_{\mathbb{R}}\tau(xcu)-cu\tau(x)\rho(s,dx)
	\end{equation*}
	\begin{equation*}
	=cua(s)+c\int_{\mathbb{R}}\tau(xu)-u\tau(x)\rho(s,dx)+\int_{\mathbb{R}}\tau(xcu)-c\tau(ux)\rho(s,dx)=cU(u,s)+R(c,u,s).
	\end{equation*}
	Observe that $\tau(xcu)-c\tau(ux)=0$ if $|ux|\leq (1\wedge |c|^{-1})$ and $|\tau(xcu)-c\tau(ux)|\leq 1+d$ otherwise. Thus, we obtain, using Chebychev's inequality, that 
	\begin{equation*}
	|R(c,u,s)|\leq (1+d)\int_{|ux|>(1\wedge |c|^{-1})}|\rho|(s,dx)\leq (1+d)|\rho|(s,\{x:(1\wedge |ux|)\geq (1\wedge |c|^{-1}) \})
	\end{equation*}
	\begin{equation*}
	\leq\frac{1+d}{(1\wedge |c|^{-2})}\int_{\mathbb{R}}(1\wedge |ux|^{2})|\rho|(s,dx)\leq (1+d)^{3}V_{0}(u,s).
	\end{equation*}
\end{proof}
The reason why it is not possible to get an ``only if" result is because we have
\begin{equation*}
\int_{\mathbb{R}}(1\wedge x^{2})|F_{f}|(dx)\leq\int_{S}\int_{\mathbb{R}}(1\wedge |f(s)x|^{2})|\rho|(s,dx)\lambda(ds),
\end{equation*}
where $|F_{f}|$ is the total variation of the quasi-L\'{e}vy measure $F_{f}$. Hence, we do not have an equality but only an inequality, which implies that if $\int_{\mathbb{R}}(1\wedge x^{2})|F_{f}|(dx)<\infty$ then it is not necessarily true that $\int_{S}\int_{\mathbb{R}}(1\wedge |f(s)x|^{2})|\rho|(s,dx)\lambda(ds)<\infty$ (\textit{i.e.}~point (iii) in Theorem \ref{theorem1}). Indeed, notice that
\begin{equation*}
\int_{\mathbb{R}}(1\wedge x^{2})|F_{f}|(dx)\leq\int_{\mathbb{R}}(1\wedge x^{2})\tilde{F}^{+}_{f}(dx)+\int_{\mathbb{R}}(1\wedge x^{2})\tilde{F}^{-}_{f}(dx)
=\int_{S}\int_{\mathbb{R}}(1\wedge |f(s)x|^{2})|\rho|(s,dx)\lambda(ds),
\end{equation*}
where the inequality comes from the following argument. Recall that $\tilde{F}^{+}_{f}(B)=\tilde{F}^{+}(\{(s,x)\in S\times\mathbb{R}:\,\, f(s)x\in B\setminus\{0 \} \})$, for every $B\in\mathcal{B}(\mathbb{R})$ and let $C^{+}\in \sigma(\mathcal{S})\otimes\mathcal{B}(\mathbb{R})$ s.t.~$\tilde{F}^{+}(A)>0$ for every $A\subset C^{+}$ and similarly we define $C^{-}$ for $\tilde{F}^{-}$. We have that $\tilde{F}^{+}_{f}(B)\geq F^{+}_{f}(B)$ because there might exist two sets $A_{1},A_{2}\in \sigma(\mathcal{S})\otimes\mathcal{B}(\mathbb{R})$ with $A_{1}\in C^{+}$ and $A_{2}\in C^{-}$ s.t.~$f(s)x\in B\in\mathcal{B}(\mathbb{R})$ for every $(s,x)\in A_{1}$ and $(s,x)\in A_{2}$. So, for this specific $B$ we will have both $\tilde{F}^{+}_{f}(B)$ and $\tilde{F}^{-}_{f}(B)$ strictly positive, thus explaining the inequality. For example, think of $(s,x)\in A_{1}$ and $(s',x')\in A_{2}$, it might happen that $f(s)x=f(s')x'$, especially if the image of $f$ is the whole $\mathbb{R}$. 

However, by adding a further assumption we have the following result.
\begin{thm}\label{theorem2}
	Let $f:S\rightarrow\mathbb{R}$ be a $\mathcal{S}$-measurable function. Then, if $f$ is integrable with respect to $\Lambda_{G}$ and $\Lambda_{M}$, the three conditions \textnormal{(i)}, \textnormal{(ii)} and \textnormal{(iii)} of Theorem \ref{theorem1} hold, namely $f$ is $\Lambda$-integrable. Further, the c.f.~of $\int_{S}fd\Lambda$ can be written as in point \textnormal{(iv)} of Theorem \ref{theorem1}.
\end{thm}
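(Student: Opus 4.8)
The plan is to reduce this to Theorem \ref{theorem1}: I will show that integrability of $f$ with respect to both generating measures $\Lambda_G\sim(0,\nu_1,G_\cdot)$ and $\Lambda_M\sim(-\nu_0,0,M_\cdot)$ forces the three conditions (i), (ii), (iii) of Theorem \ref{theorem1} to hold. Once these hold, Theorem \ref{theorem1} immediately yields both the $\Lambda$-integrability of $f$ and the representation (iv) of the c.f.~of $\int_S f\,d\Lambda$, so no separate argument for (iv) is needed. The key structural fact I will exploit is that $\lambda=\lambda_G+\lambda_M$, so that the Rajput--Rosinski integrability conditions for $\Lambda_G$ and for $\Lambda_M$ (Theorem 2.7 in \cite{RajRos}, which is an \emph{if and only if}) can all be written with respect to the common control measure $\lambda$, using the kernels $\rho_G,\rho_M$ built in Corollary \ref{l1} (which are already defined relative to $\lambda$). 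This removes the ambiguity coming from the two ID measures carrying their own natural control measures $\lambda_G,\lambda_M$.

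I would then verify the three conditions one at a time. Condition (ii) is immediate: $\Lambda$ and $\Lambda_G$ share the same Gaussian variance $\nu_1$, hence the same density $\sigma^2(s)=\tfrac{d\nu_1}{d\lambda}(s)$, and (ii) is exactly the Gaussian part of the conditions for $\Lambda_G$. For condition (iii) I use the minimality part of the Jordan decomposition (Theorem \ref{Jordan}): since $\rho(s,\cdot)=\rho_G(s,\cdot)-\rho_M(s,\cdot)$ with $\rho_G,\rho_M$ positive, we get $\rho^{+}(s,\cdot)\le\rho_G(s,\cdot)$ and $\rho^{-}(s,\cdot)\le\rho_M(s,\cdot)$, whence $|\rho|\le\rho_G+\rho_M$ and
\begin{equation*}
V_{0}(u,s)=\int_{\mathbb{R}}(1\wedge|xu|^{2})\,|\rho|(s,dx)\le\int_{\mathbb{R}}(1\wedge|xu|^{2})\rho_G(s,dx)+\int_{\mathbb{R}}(1\wedge|xu|^{2})\rho_M(s,dx),
\end{equation*}
so that $\int_S V_{0}(f(s),s)\lambda(ds)$ is dominated by the sum of the two corresponding finite quantities for $\Lambda_G$ and $\Lambda_M$. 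For condition (i) I compute, using $a_G\equiv 0$ and $a_M=-a$ (read off from the triplets $(0,\nu_1,G_\cdot)$ and $(-\nu_0,0,M_\cdot)$), that the drift-compensator functional splits additively as $U(u,s)=U_G(u,s)-U_M(u,s)$, where $U_G,U_M$ are the functionals attached to $\Lambda_G,\Lambda_M$. The split is legitimate pointwise in $s$ because $\tau(xu)-u\tau(x)$ vanishes near $0$ and is bounded with support bounded away from $0$, hence $|\rho|(s,\cdot)$-integrable. Then $|U|\le|U_G|+|U_M|$ and integration gives (i).

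Having obtained (i), (ii), (iii), I simply invoke Theorem \ref{theorem1} to conclude that $f$ is $\Lambda$-integrable and that the c.f.~of $\int_S f\,d\Lambda$ has the form in point (iv), with $a_f,\sigma_f^{2},F_f$ as defined there; this completes the proof.

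The routine but delicate part, and the one I expect to be the main obstacle, is precisely the bookkeeping: keeping straight which drift/Gaussian/L\'evy data belong to $\Lambda_G$ versus $\Lambda_M$, checking the sign pattern that produces $U=U_G-U_M$ (rather than a sum), and justifying that the conditions stated against $\lambda_G,\lambda_M$ coincide with their versions against $\lambda$, which follows from the construction of $\rho_G,\rho_M$ in Corollary \ref{l1} together with $\lambda=\lambda_G+\lambda_M$. Note also that only $\Lambda_G$-integrability yields (ii) and part of (iii)/(i), while $\Lambda_M$-integrability supplies the remaining halves, so both hypotheses are genuinely used.
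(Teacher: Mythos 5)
Your proposal is correct, and at the top level it mirrors the paper: both proofs verify conditions (i)--(iii) of Theorem \ref{theorem1} and then invoke that theorem to obtain $\Lambda$-integrability and the representation (iv). The treatments of (ii) and (iii) are essentially equivalent: your Radon--Nikodym change of control measure is the paper's computation $\sigma^{2}_{G,f}=\sigma^{2}_{f}$, and your kernel inequality $\rho^{+}(s,\cdot)\leq\rho_{G}(s,\cdot)$, $\rho^{-}(s,\cdot)\leq\rho_{M}(s,\cdot)$ from Jordan minimality is the pointwise form of the paper's domination $\int_{\mathbb{R}}(1\wedge y^{2})\tilde{F}^{+}_{f}(dy)+\int_{\mathbb{R}}(1\wedge y^{2})\tilde{F}^{-}_{f}(dy)\leq\int_{\mathbb{R}}(1\wedge y^{2})G_{f}(dy)+\int_{\mathbb{R}}(1\wedge y^{2})M_{f}(dy)$, whose right-hand side is finite by the Rajput--Rosinski iff, exactly as you argue. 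The genuine divergence is in condition (i). The paper symmetrizes, using $|\hat{\mathcal{L}}(\int_{S}fd\Lambda_{G})|^{2}$ to isolate the Gaussian and L\'{e}vy data, and then bounds $|U(u,s)|\leq|\textnormal{Im}\,K(u,s)|+2V_{0}(u,s)$ via the trigonometric inequality $|\tau(x)-\sin(x)|\leq 2(1\wedge x^{2})$, citing (iii) and Proposition \ref{pr-same-as-in-the-paper} for finiteness; read literally this citation is circular, since that proposition assumes $\Lambda$-integrability, and it only works if one applies it instead to the generating measures $\Lambda_{G},\Lambda_{M}$ (i.e.~Proposition 2.6 of \cite{RajRos}) and then splits the cumulant $K$ into its $G$- and $M$-parts. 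Your decomposition $U=U_{G}-U_{M}$ --- legitimate because $\tau(xu)-u\tau(x)$ vanishes near the origin and is bounded, so the integral against the signed kernel separates --- together with the identities $U_{G}=\frac{d\lambda_{G}}{d\lambda}\,U_{G}^{\lambda_{G}}$ and $U_{M}=\frac{d\lambda_{M}}{d\lambda}\,U_{M}^{\lambda_{M}}$, carries out exactly that implicit step, avoiding both the symmetrization and the trigonometric bound. So your route costs a little more bookkeeping but is more self-contained and makes explicit the change-of-control-measure argument the paper glosses over; you are also right that both integrability hypotheses are genuinely needed, each supplying one half of the estimates.
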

\begin{proof}
From the fact that $|\hat{\mathcal{L}}\left(\int_{S}fd\Lambda_{G}\right)|^{2}$ and $|\hat{\mathcal{L}}\left(\int_{S}fd\Lambda_{M}\right)|^{2}$ are the c.f.~of ID-random variables it follows that 
\begin{equation*}
\bigg|\hat{\mathcal{L}}\left(\int_{S}fd\Lambda_{G}\right)(\theta)\bigg|^{2}=\exp\left(-\theta^{2}\sigma_{G,f}^{2}+2\int_{\mathbb{R}}\cos(\theta x)-1G_{f}(dx) \right).
\end{equation*}
where $G_{f}(B)=G(\{(s,x)\in S\times\mathbb{R}:\, f(s)x\in B\setminus\{0 \} \})$ and where $\sigma^{2}_{G,f}=\int_{S}|f(s)|\sigma^{2}_{G}(s)\lambda_{G}(ds)$. By the properties of the Radon-Nikodym derivative we have that
\begin{equation*}
\sigma^{2}_{G,f}=\int_{S}|f(s)|\sigma^{2}_{G}(s)\lambda_{G}(ds)=\int_{S}|f(s)|\frac{d\nu^{(1)}}{d\lambda_{G}}(s)\lambda_{G}(ds)=\int_{S}|f(s)|\frac{d\nu^{(1)}}{d\lambda}(s)\lambda(ds)=\sigma^{2}_{f}
\end{equation*}
Therefore we get that $\sigma^{2}_{f}<\infty$ and that
\begin{equation*}
\int_{\mathbb{R}}(1\wedge y^{2})\tilde{F}^{+}_{f}(dy)+\int_{\mathbb{R}}(1\wedge y^{2})\tilde{F}^{-}_{f}(dy)\leq\int_{\mathbb{R}}(1\wedge y^{2})G_{f}(dy)+\int_{\mathbb{R}}(1\wedge y^{2})M_{f}(dy)<\infty,
\end{equation*}
where $M_{f}$ is defined similarly to $G_{f}$, and thus we obtain (ii) and (iii). Further, since $|\tau(x)-\sin(x)|\leq 2(1\wedge x^{2})$, (i) follows from noticing that
		\begin{equation*}
		|U(u,s)|\leq \bigg|ua(s)+\int_{\mathbb{R}}\sin(xu)-u\tau(x)\rho(s,dx)\bigg| +\bigg|\int_{\mathbb{R}}\tau(xu)-\sin(xu)|\rho|(s,dx)\bigg|
		\leq |\textnormal{Im}\, K(u,s)|+2V_{0}(u,s),
		\end{equation*}
		which is finite because of (iii) and of Proposition \ref{pr-same-as-in-the-paper}.
\end{proof}
We end this section with the following result, which links $\int_{A}f_{n}d\Lambda $ with $\int_{A}f_{n}d\Lambda_{G}$ and $\int_{A}f_{n}d\Lambda_{M}$.
\begin{pro}\label{proposition-lambda-lambdag-lambdam} Let $f$ be integrable w.r.t.~$\Lambda_{G}$ and $\Lambda_{M}$. Then, for every $\theta\in\mathbb{R}$, we have
	\begin{equation}\label{cf-integral-QID}
	\hat{\mathcal{L}}\left(\int_{S}fd\Lambda\right)(\theta)=\frac{\hat{\mathcal{L}}\left(\int_{S}fd\Lambda_{G}\right)(\theta)}{\hat{\mathcal{L}}\left(\int_{S}fd\Lambda_{M}\right)(\theta)}.
	\end{equation}
\end{pro}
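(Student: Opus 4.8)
The plan is to prove the identity first for simple functions, where it reduces to independent scatteredness and the per-set relation, and then to pass to the limit along a single approximating sequence, using crucially that the denominator never vanishes. Consider first a simple function $f=\sum_{j=1}^{n}x_{j}\mathbf{1}_{A_{j}}$ with the $A_{j}\in\mathcal{S}$ disjoint. By Definition \ref{def-integral} we have $\int_{S}fd\Lambda=\sum_{j}x_{j}\Lambda(A_{j})$, and the same with $\Lambda$ replaced by $\Lambda_{G}$ or $\Lambda_{M}$. Since each of these random measures is independently scattered, the variables $\Lambda(A_{j})$ ($j=1,\dots,n$) are independent, so
\begin{equation*}
\hat{\mathcal{L}}\Big(\int_{S}fd\Lambda\Big)(\theta)=\prod_{j=1}^{n}\hat{\mathcal{L}}(\Lambda(A_{j}))(x_{j}\theta),
\end{equation*}
and likewise for $\Lambda_{G}$ and $\Lambda_{M}$. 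The per-set relation (\ref{two-ID}), which holds because $\Lambda$ is generated by $\Lambda_{G}$ and $\Lambda_{M}$, gives $\hat{\mathcal{L}}(\Lambda(A_{j}))(x_{j}\theta)=\hat{\mathcal{L}}(\Lambda_{G}(A_{j}))(x_{j}\theta)/\hat{\mathcal{L}}(\Lambda_{M}(A_{j}))(x_{j}\theta)$ for each $j$. Multiplying over $j$ and recombining the two resulting products establishes (\ref{cf-integral-QID}) for every simple $f$.

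For general $f$ that is integrable with respect to $\Lambda_{G}$ and $\Lambda_{M}$, I would take the sequence $\{f_{n}\}$ of simple functions built in the proof of Theorem \ref{theorem1}, which has the convenient feature that $f_{n}\to f$ \emph{everywhere} on $S$ (hence $\lambda$-, $\lambda_{G}$- and $\lambda_{M}$-a.e.\ simultaneously) and $|f_{n}|\le|f|$. By Theorem \ref{theorem2} the function $f$ is also $\Lambda$-integrable, so $\int_{S}f_{n}d\Lambda\rightarrow\int_{S}fd\Lambda$ in probability; and by the integrability theorem of \cite{RajRos} (Theorem 2.7 there) the same dominated-convergence estimates that produce (\ref{lim1})--(\ref{lim3}) apply verbatim to the two ID random measures, so that $\int_{S}f_{n}d\Lambda_{G}\rightarrow\int_{S}fd\Lambda_{G}$ and $\int_{S}f_{n}d\Lambda_{M}\rightarrow\int_{S}fd\Lambda_{M}$ in probability as well. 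Convergence in probability implies convergence in distribution, hence all three characteristic functions converge pointwise in $\theta$.

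The final, and only delicate, step is to pass to the limit inside the quotient. Here I would invoke the fundamental fact that the characteristic function of an infinitely divisible distribution has no zeros: since $\int_{S}f_{n}d\Lambda_{M}$ and $\int_{S}fd\Lambda_{M}$ are all ID random variables, the denominators $\hat{\mathcal{L}}(\int_{S}f_{n}d\Lambda_{M})(\theta)$ and their limit $\hat{\mathcal{L}}(\int_{S}fd\Lambda_{M})(\theta)$ are nonzero for every $\theta\in\mathbb{R}$. Consequently the limit of the ratios equals the ratio of the limits, and letting $n\to\infty$ in the simple-function identity $\hat{\mathcal{L}}(\int_{S}f_{n}d\Lambda)(\theta)=\hat{\mathcal{L}}(\int_{S}f_{n}d\Lambda_{G})(\theta)/\hat{\mathcal{L}}(\int_{S}f_{n}d\Lambda_{M})(\theta)$ yields (\ref{cf-integral-QID}). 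I expect the main obstacle to be organizational rather than computational: one must make sure that a single approximating sequence is simultaneously admissible for all three random measures (which is exactly why the ``everywhere convergent, dominated by $|f|$'' construction of Theorem \ref{theorem1} is used) and that the quotient is stable under the limit — and it is the no-zeros property of ID characteristic functions that disposes of this last point.
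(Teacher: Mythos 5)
Your proposal is correct and takes essentially the same route as the paper's own (detailed) proof: the paper likewise uses the simple-function identity, the approximating sequence $(f_{n})$ from the proof of Theorem \ref{theorem1}, the $\Lambda$-integrability of $f$ via Theorem \ref{theorem2}, and then passes to the limit in the quotient of characteristic functions. The only difference is presentational — you spell out the simple-function computation and the no-zeros property of ID characteristic functions that licenses exchanging the limit with the quotient, both of which the paper leaves implicit.
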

\begin{proof}
There are at least two ways to prove this result. The first is to use the properties of the Radon-Nikodym derivative and use point (iv) in Theorem \ref{theorem1}. The second is consider the sequence of simple functions $(f_{n})$ of the proof of Theorem \ref{theorem1} together with Proposition \ref{pr-same-as-in-the-paper} and Theorem \ref{theorem2}. Let's show the latter in detail.
\\ Consider $f$ to be $\Lambda^{(1)}$ and $\Lambda^{(2)}$ integrable. Then we know that $f$ is also $\Lambda$-integrable by Theorem \ref{theorem2}. Now consider the sequence $(f_{n})$ as in the proof of Theorem \ref{theorem1}, then we know that $\{\int_{A}f_{n}d\Lambda \}_{n=1}^{\infty}$, $\{\int_{A}f_{n}d\Lambda_{G} \}_{n=1}^{\infty}$ and $\{\int_{A}f_{n}d\Lambda_{M} \}_{n=1}^{\infty}$ converges in probability for every $A\in\sigma(\mathcal{S})$. Since for every $n\in\mathbb{N}$
\begin{equation*}
\hat{\mathcal{L}}\left(\int_{S}f_{n}d\Lambda\right)(\theta)=\frac{\hat{\mathcal{L}}\left(\int_{S}f_{n}d\Lambda_{G}\right)(\theta)}{\hat{\mathcal{L}}\left(\int_{S}f_{n}d\Lambda_{M}\right)(\theta)}
\end{equation*}
then we have that
\begin{equation*}
\hat{\mathcal{L}}\left(\int_{S}fd\Lambda\right)(\theta)=\lim\limits_{n\rightarrow\infty}\hat{\mathcal{L}}\left(\int_{S}f_{n}d\Lambda\right)(\theta)=\lim\limits_{n\rightarrow\infty}\frac{\hat{\mathcal{L}}\left(\int_{S}f_{n}d\Lambda_{G}\right)(\theta)}{\hat{\mathcal{L}}\left(\int_{S}f_{n}d\Lambda_{M}\right)(\theta)}=\frac{\hat{\mathcal{L}}\left(\int_{S}fd\Lambda_{G}\right)(\theta)}{\hat{\mathcal{L}}\left(\int_{S}fd\Lambda_{M}\right)(\theta)}.
\end{equation*}
\end{proof}
As we mentioned at the beginning of this section we have that  $\Lambda_{G}\sim(0,\nu_{1},G_{\cdot})$ and $\Lambda_{M}\sim(-\nu_{0},0,M_{\cdot})$, however all the results hold for more general ID r.m.. A sufficient condition for two generic ID r.m.~(call them $\Lambda_{J}$ and $\Lambda_{H}$) is that $\Lambda_{J}$ and $\Lambda_{H}$ generate $\Lambda$ and that their control measure, call them $\lambda_{X}$ and $\lambda_{Y}$, are s.t.~$\lambda_{X}(A)\leq C\lambda(A)$ and $\lambda_{X}(A)\leq C'\lambda(A)$ where $C,C'\in(0,\infty)$ for every $A\in\sigma(\mathcal{S})$. For the sake of brevity we left to the curious reader to check this statement. However, it would interesting to have the weakest possible conditions that two generating ID r.m.~must satisfy in order to satisfy all the results presented in this section, which mean satisfy (\ref{cf-integral-QID}).
\begin{open}
	Given a QID r.m., what is the largest class of generating ID r.m. that satisfy (\ref{cf-integral-QID})?
\end{open}
\noindent The answer to this question is also linked with Theorem \ref{theorem-representation}and Open Question \ref{open-representation}.
\subsection{The bounded case}\label{bounded}
In this section we will work with the following assumptions. We let $\mathcal{S}$ be a $\sigma$-algebra, $F(A,B)$ be a bimeasure on $\mathcal{S}\times\mathcal{B}(\mathbb{R})$ and \begin{equation}\label{final-assumption-bounded}
\sup\limits_{I}\sum_{i\in I}|F_{A_{i}}(B_{i})|<\infty,
\end{equation}
where the supremum is taken over all the finite sets $(A_{i},B_{i})_{i\in I}$ of elements of $\mathcal{S}\times\mathcal{B}(\mathbb{R})$ such that the rectangles $A_{i}\times B_{i}$ are disjoint. \\Let us introduce the set function $\nu(A):\mathcal{S}\mapsto [0,\infty)$ such that
\begin{equation*}
\nu(A):=\sup_{I_{A}}\sum_{i\in I_{A}}|F_{A_{i}}(B_{i})|,
\end{equation*}
where $I_{A}$ is defined as $I$ but with the constraint that $A_{i}\subset A$. To have a better idea of what kind of object $\nu$ is, compare it with the definition of total variation of a signed measure $(\ref{def-totalvariation})$.\\
We start with the following proposition on the control measure.
\begin{pro}\label{pro-le-misure-in-modo-generale}
	Let $\Lambda$ be a QID random measure. Let $\nu_{0}:\mathcal{S}\mapsto\mathbb{R}$ be a signed measure, $\nu_{1}:\mathcal{S}\mapsto\mathbb{R}$ be a measure, $F_{A}$ be a quasi-L\'{e}vy measure on $\mathbb{R}$ for every $A\in\mathcal{S}$, $\mathcal{S}\ni A\mapsto F_{A}(B)\in(-\infty,\infty)$ be a signed measure for every $B\in\mathcal{B}(\mathbb{R})$ s.t.~$(\nu_{0}(A), \nu_{1}(A),F_{A})$ is the characteristic triplet of $\Lambda(A)$, $\forall A\in\mathcal{S}$. Assume that $F$ satisfies $(\ref{final-assumption-bounded})$. Define 	
	\begin{equation}\label{lambda2}
	\lambda(A)=|\nu_{0}|(A)+\nu_{1}(A)+\nu(A).
	\end{equation}
	Then $\lambda:\mathcal{S}\mapsto[0,\infty)$ is a measure such that $\lambda(A_{n})\rightarrow0$ implies $\Lambda(A_{n})\stackrel{p}{\rightarrow}0$ for every $\{A_{n}\}\subset\mathcal{S}$.
\end{pro}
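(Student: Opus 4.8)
The plan is to handle the three summands of $\lambda$ in (\ref{lambda2}) separately, the only non-routine one being $\nu$, and then to read off the continuity statement from the L\'evy--Khintchine form (\ref{cf}). First I would record that $\nu$ is non-negative and, by the standing assumption (\ref{final-assumption-bounded}), bounded: since every admissible family for $\nu(A)$ is also admissible for $\nu(S)$, monotonicity gives $\nu(A)\le\nu(S)<\infty$ for all $A\in\mathcal{S}$. Finite additivity I would prove in two halves. For $\nu(A'\cup A'')\ge\nu(A')+\nu(A'')$ on disjoint $A',A''$, one concatenates admissible families for $A'$ and $A''$: the rectangles remain pairwise disjoint precisely because $A'\cap A''=\emptyset$ forces the first coordinates to be disjoint. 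For the reverse, split the first coordinate $C_j\mapsto(C_j\cap A',\,C_j\cap A'')$ of an admissible family for $A'\cup A''$ and use that $C\mapsto F_C(B)$ is additive together with $|a+b|\le|a|+|b|$.

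The main obstacle is the countable additivity of $\nu$, which I would isolate as the heart of the proof. Countable superadditivity is free from finite additivity and monotonicity. For subadditivity, let $A=\bigcup_n A_n$ with the $A_n$ disjoint and fix an admissible family $(C_j,D_j)_{j\in J}$ for $A$. Writing $C_j=\bigcup_n (C_j\cap A_n)$ and using that $C\mapsto F_C(D_j)$ is a countably additive signed measure, one gets
\[
F_{C_j}(D_j)=\sum_{n} F_{C_j\cap A_n}(D_j)
\]
with absolute convergence; the triangle inequality and Tonelli (all terms non-negative) then yield
\[
\sum_{j\in J}|F_{C_j}(D_j)|\le \sum_{n}\sum_{j\in J}|F_{C_j\cap A_n}(D_j)|\le \sum_{n}\nu(A_n),
\]
the last step because $(C_j\cap A_n,D_j)_{j}$ is admissible for $A_n$ for each fixed $n$ (sub-rectangles of disjoint rectangles are disjoint). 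Taking the supremum over admissible families for $A$ gives $\nu(A)\le\sum_n\nu(A_n)$, so $\nu$ is a finite measure. The delicate points are exactly the interchange of the two summations and the section-wise countable additivity of $F$.

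Next I would assemble $\lambda$. Because $\nu_0$ is $\mathbb{R}$-valued it is a bounded signed measure, so $|\nu_0|$ is a finite measure; $\nu_1$ is finite by hypothesis; and $\nu$ is finite by the above. Hence $\lambda=|\nu_0|+\nu_1+\nu$ is a finite measure on the $\sigma$-algebra $\mathcal{S}$.

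Finally, for the continuity claim suppose $\lambda(A_n)\to 0$, so that $\nu_0(A_n)\to0$, $\nu_1(A_n)\to0$ and $\nu(A_n)\to0$. The observation linking $\nu$ back to $F$ is that taking $A_i\equiv A$ and $\{B_i\}$ a finite Borel partition of $\mathbb{R}$ is admissible, whence $|F_A|(\mathbb{R})\le\nu(A)$; in particular (\ref{final-assumption-bounded}) already forces each $F_A$ to be a finite signed measure. Thus $|F_{A_n}|(\mathbb{R})\le\nu(A_n)\to0$, and since $g_{\tau}(\cdot,\theta)$ is bounded for each fixed $\theta$ and $1\wedge x^2\le 1$, the integral term in (\ref{cf}) is dominated in modulus by $\|g_{\tau}(\cdot,\theta)\|_{\infty}\,|F_{A_n}|(\mathbb{R})\to0$. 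Combined with $\nu_0(A_n),\nu_1(A_n)\to0$ this gives $\hat{\mathcal{L}}(\Lambda(A_n))(\theta)\to1$ for every $\theta\in\mathbb{R}$, so by the L\'evy continuity theorem $\Lambda(A_n)\stackrel{d}{\rightarrow}0$; as the limit is a constant, $\Lambda(A_n)\stackrel{p}{\rightarrow}0$, as required.
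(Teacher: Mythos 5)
Your proof is correct, but it takes a genuinely more self-contained route than the paper at the key step. Where the paper disposes of the central point --- that $\nu$ is a countably additive finite measure --- in one line by citing Theorem 4 of Horowitz's note on bimeasures \cite{Horo}, you reprove that fact directly in the present setting: boundedness from $(\ref{final-assumption-bounded})$, finite additivity by concatenating and splitting admissible families of disjoint rectangles, and countable subadditivity by expanding each $F_{C_j}(D_j)$ through the countable additivity of the section $C\mapsto F_C(D_j)$ and interchanging the two (non-negative) sums. This is precisely the content of the cited theorem specialised to $\mathcal{S}\times\mathcal{B}(\mathbb{R})$, so your argument trades brevity for independence from the reference. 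For the continuity claim the two proofs share the same skeleton (pointwise convergence of the characteristic functions in $(\ref{cf})$ plus the L\'{e}vy continuity theorem), but yours is slightly more careful: the paper only records that $\nu(A_n)\rightarrow 0$ forces $F^{+}_{A_n}(B)\rightarrow0$ and $F^{-}_{A_n}(B)\rightarrow0$ for every $B$, whereas you extract the quantitative bound $|F_{A_n}|(\mathbb{R})\leq\nu(A_n)$ (by noting that the Hahn partition $\{P,P^{c}\}$ of $\mathbb{R}$ paired with $A_n$ is an admissible family) and then dominate the integral term by $\|g_{\tau}(\cdot,\theta)\|_{\infty}\,|F_{A_n}|(\mathbb{R})$, which is exactly the uniform control needed to send the whole exponent in $(\ref{cf})$ to zero. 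Both arguments are sound; yours makes explicit the two facts the paper leaves implicit (why $\nu$ is a measure, and why pointwise smallness of $F_{A_n}$ suffices for the integral term).
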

\begin{proof}
	From Theorem 4 in \cite{Horo} we have that $\nu(A)$ defines a measure. Then it is straightforward to see that $\lambda(A)$ is a measure. Now if $\nu(A_{n})\rightarrow0$ then $F_{A_{n}}^{+}(B)\rightarrow0$ and $F_{A_{n}}^{-}(B)\rightarrow0$ for every $B\in\mathcal{B}(\mathbb{R})$, hence by the L\'{e}vy continuity theorem we obtain the stated result. 
\end{proof}
Before presenting one of the main results of this section, which consists of an extension of Proposition 2.4 in \cite{RajRos}, we give a remark on its topology.
\begin{rem}
	Proposition 2.4 in \cite{RajRos} is stated for a \textnormal{standard Borel space}, where a measurable space $(X, \Gamma)$ is said to be \textnormal{standard Borel space} if there exists a metric on $X$ which makes it a complete separable metric space in such a way that $\Gamma$ is then the Borel $\sigma$-algebra $\mathcal{B}(X)$. However, the following result holds for more general topological spaces: the \textnormal{Lusin measurable spaces}. Lusin measurable spaces are measurable spaces isomorphic to a measurable space $(H,\mathcal{B}(H))$, where $H$ is homeomorphic to a Borel subset of a compact metrizable space. Notice that $H$ is usually called Lusin space or more correctly Lusin metrizable space. \\In this remark, we do not provide an extensive discussion (indeed, see \cite{Meyer}, \cite{Horo} and \cite{Horo2} for further details), we just mention that $H$ can be any Polish space (thus, including the real line).
\end{rem}
\begin{thm}\label{pr-monster}
	Let $(X,\mathcal{B})$ be a Lusin measurable space and let $(T,\mathcal{A})$ be an arbitrary measurable space. Let $Q_{0}(A,B)$ be a (possibly negative) function of $A\in\mathcal{A}$, $B\in\mathcal{B}$, satisfying:
	\\ \textnormal{(a)} for every $A\in\mathcal{A}$, $Q_{0}(A,\cdot)$ is a signed measure on $(X,\mathcal{B})$,
	\\ \textnormal{(b)} for every $B\in\mathcal{B}$, $Q_{0}(\cdot, B)$ is a signed measure on $(T,\mathcal{A})$,
	\\ \textnormal{(c)} $\sup\limits_{I}\sum_{i\in I}|Q_{0}(A_{i},B_{i})|<\infty$.
	\\Let $\nu(A):=\sup\limits_{I_{A}}\sum_{i\in I}|Q_{0}(A_{i},B_{i})|$. Then there exists a unique finite signed measure $Q$ on the product $\sigma$-algebra $\mathcal{A}\otimes\mathcal{B}$ s.t.
	\begin{equation}\label{formula-in-the-moster}
	Q(A\times B)=Q_{0}(A,B)=\int_{A}q(t,B)\nu(dt),
	\end{equation}
	with Jordan decomposition
	\begin{equation*}
	Q^{+}(C)=\int_{T}\int_{X}\textbf{1}_{C}(x,t)\tilde{q}_{+}(t,dx)\nu(dt)\quad\text{and}\quad Q^{-}(C)=\int_{T}\int_{X}\textbf{1}_{C}(x,t)\tilde{q}_{-}(t,dx)\nu(dt)
	\end{equation*}	
	for every $A\in\mathcal{A}$, $B\in\mathcal{B}$, $C\in \mathcal{A}\otimes \mathcal{B}$ where $q:T\times\mathcal{B}\rightarrow[-1,1]$ fulfils the following conditions:
	\\ \textnormal{(d)} for every $t$, $q(t,\cdot)$ is a signed measure on $\mathcal{B}$,
	\\ \textnormal{(e)} for every $B$, $q(\cdot,B)$ is $\mathcal{A}$-measurable,
	\\and where $\tilde{q}_{+}(t,\cdot)$ and $\tilde{q}_{-}(t,\cdot)$ are the Jordan decomposition of $q(t,\cdot)$.
	\\ Further, if $q_{1}(\cdot,\cdot)$ is some other function satisfying $(\ref{formula-in-the-moster})$, \textnormal{(d)} and \textnormal{(e)}, then off a set of $\nu$-measure zero, $q_{1}(t,\cdot)=q(t,\cdot)$.
\end{thm}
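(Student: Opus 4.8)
The plan is to prove the theorem by constructing the disintegration kernel $q$ directly through a Radon--Nikodym argument and then assembling $Q$ from it, rather than extending $Q_0$ to a measure first. First I would record that $\nu$ is a finite measure on $\mathcal{A}$: this is Theorem 4 in \cite{Horo} applied to the Vitali-type variation in condition (c), and it also gives $\nu(T)<\infty$. Next, for each fixed $B\in\mathcal{B}$, condition (b) says $A\mapsto Q_0(A,B)$ is a signed measure on $\mathcal{A}$, and by the very definition of $\nu$ the single-rectangle partition yields $|Q_0(A,B)|\le\nu(A)$ for all $A$; hence $Q_0(\cdot,B)\ll\nu$, and the Radon--Nikodym theorem produces a density $q(t,B):=\frac{dQ_0(\cdot,B)}{d\nu}(t)$ that is $\mathcal{A}$-measurable in $t$ (property (e)) and, because of the bound $|Q_0(A,B)|\le\nu(A)$, can be chosen with values in $[-1,1]$. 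This settles (\ref{formula-in-the-moster}) at the level of rectangles together with the measurability claim.

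The heart of the argument is to turn the family $\{q(\cdot,B)\}_{B\in\mathcal{B}}$ into a genuine signed-measure-valued kernel, i.e.\ to obtain (d), and this is where the Lusin structure of $(X,\mathcal{B})$ is indispensable. Since $X$ is Lusin, $\mathcal{B}$ is countably generated; I would fix a countable algebra $\mathcal{B}_0$ with $\sigma(\mathcal{B}_0)=\mathcal{B}$. For disjoint $B_1,B_2\in\mathcal{B}_0$ the relation $Q_0(A,B_1\cup B_2)=Q_0(A,B_1)+Q_0(A,B_2)$ from (a), combined with uniqueness of Radon--Nikodym derivatives, gives $q(t,B_1\cup B_2)=q(t,B_1)+q(t,B_2)$ for $\nu$-a.e.\ $t$, and likewise $q(t,\emptyset)=0$ a.e. As $\mathcal{B}_0$ is countable, all these relations hold simultaneously off a single $\nu$-null set $N$. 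For $t\notin N$ the set function $q(t,\cdot)$ is then a bounded, finitely additive signed set function on $\mathcal{B}_0$, and the compact-approximation property available in a Lusin space upgrades it to a countably additive premeasure that extends uniquely to a finite signed measure on $\mathcal{B}$; redefining $q(t,\cdot)\equiv 0$ for $t\in N$ yields (d). I expect this regularization — producing countable additivity for a.e.\ $t$ from the topological hypothesis — to be the main obstacle, exactly as in the positive case of \cite{RajRos}.

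With the kernel in hand I would define $Q(C):=\int_T\int_X\mathbf{1}_C(x,t)\,q(t,dx)\,\nu(dt)$ for $C\in\mathcal{A}\otimes\mathcal{B}$, interpreting the inner integral via the Jordan decomposition $q(t,\cdot)=\tilde q_+(t,\cdot)-\tilde q_-(t,\cdot)$. Finiteness of $Q$ as a signed measure reduces to $\int_T|q(t,\cdot)|(X)\,\nu(dt)<\infty$, which I would verify by expanding the total variation over finite partitions drawn from $\mathcal{B}_0$, applying monotone convergence together with the identity $\int_T|q(t,B)|\,\nu(dt)=|Q_0(\cdot,B)|(T)$, and recognizing the resulting double supremum over grid partitions as the Vitali variation bounded by (c). By construction $Q(A\times B)=\int_A q(t,B)\,\nu(dt)=Q_0(A,B)$, so (\ref{formula-in-the-moster}) holds; uniqueness of $Q$ is immediate, since two finite signed measures agreeing on the generating algebra of finite disjoint unions of rectangles must coincide.

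Finally, to identify the displayed formulas as the Jordan decomposition of $Q$, set $Q_\pm(C):=\int_T\int_X\mathbf{1}_C(x,t)\,\tilde q_\pm(t,dx)\,\nu(dt)$, so that $Q=Q_+-Q_-$. By Theorem \ref{Jordan} it suffices to prove $Q_+\perp Q_-$, for which I would invoke a measurable Hahn decomposition: for $\nu$-a.e.\ $t$ choose Hahn sets $X_t^+,X_t^-$ for $q(t,\cdot)$ depending measurably on $t$ (again afforded by the Lusin structure), and put $D^{\pm}:=\{(t,x):x\in X_t^{\pm}\}\in\mathcal{A}\otimes\mathcal{B}$. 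Then $D^+\cap D^-=\emptyset$, $Q_+(D^-)=0$ and $Q_-(D^+)=0$, giving mutual singularity and hence $Q^{\pm}=Q_\pm$ by the uniqueness clause of Theorem \ref{Jordan}. The uniqueness of $q$ off a $\nu$-null set follows from the same Radon--Nikodym uniqueness: any competitor $q_1$ satisfying (\ref{formula-in-the-moster}), (d) and (e) agrees with $q$ on each $B\in\mathcal{B}_0$ for $\nu$-a.e.\ $t$, and two signed measures agreeing on $\mathcal{B}_0$ agree on all of $\mathcal{B}$.
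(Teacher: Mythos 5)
Your proposal is correct and follows essentially the same route as the paper's proof: the variation measure $\nu$ obtained from Theorem 4 of \cite{Horo}, Radon--Nikodym disintegration against $\nu$ to produce the kernel $q(t,\cdot)$ with values in $[-1,1]$, use of the Lusin structure of $(X,\mathcal{B})$ to upgrade $q(t,\cdot)$ from an a.e.\ finitely additive set function to a countably additive signed measure, integration of the kernel to assemble $Q$, and mutual singularity of the two integrated Jordan parts to identify the stated decomposition. The only real difference is presentational: where the paper adapts the proofs of Theorem 4 and Proposition 6 of \cite{Horo} (indicator-function versions of Horowitz's arguments), you unpack those citations into the standard countable-generating-algebra, compact-class and measurable-Hahn-decomposition arguments, and you apply Radon--Nikodym directly to the signed measure $Q_{0}(\cdot,B)$ rather than to its Jordan parts, which is equivalent.
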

\begin{proof}
First, recall that, by Theorem 4 in \cite{Horo}, $\nu(\cdot)$ is a measure. Notice that by definition $\nu(\cdot)\gg Q_{0,+}(\cdot,B)$ and $\nu(\cdot)\gg Q_{0,-}(\cdot,B)$ for every $B\in\mathcal{B}$, where $Q_{0,+}(\cdot,B)$ and $Q_{0,-}(\cdot,B)$ are the Jordan decomposition of $Q_{0}(\cdot,B)$. Then, by the Radon-Nikodym theorem we have that $Q_{0,+}(A,B)=\int_{A}\tilde{q}_{+}(x,B)\nu(dx)$ and $Q_{0,-}(A,B)=\int_{A}\tilde{q}_{-}(x,B)\nu(dx)$. Hence, $Q_{0}(A,B)=\int_{A}q(x,B)\nu(dx)$ and $q(x,B)=\tilde{q}_{+}(x,B)-\tilde{q}_{-}(x,B)$ is $\nu$-a.s. uniquely defined (since $\tilde{q}_{+}(x,B)$ and $\tilde{q}_{-}(x,B)$ are the unique Radon-Nikodym derivatives) and, for every $B\in\mathcal{B}$, $q(\cdot,B)$ is $\mathcal{A}$-measurable.
\\Observe also that by definition $\nu(A)\geq Q_{0,+}(A,B)$ and $\nu(A)\geq Q_{0,+}(A,B)$ for every $A\in\mathcal{A}$ and every $B\in\mathcal{B}(\mathbb{R})$. Then, $\tilde{q}_{+}(x,B)\leq1$ and $\tilde{q}_{-}(x,B)\leq1$, $\nu$-a.e.. Further, since $Q(A,\cdot)$ is a (signed) measure for any $A\in\mathcal{A}$ then $q(x,\cdot)$ is also a (signed) measure with $|q(x,B)|\leq 1$ for every $B\in\mathcal{B}$. This is possible to see by adapting the arguments of the proof of Theorem 4 in \cite{Horo} when the function considered is the indicator function. Indeed, using their formalism $Q(A,\textbf{1}_{B}):=\int_{X}\textbf{1}_{B}(y)Q(A,dy)$, hence $Q(A,\textbf{1}_{B})=Q(A,B)$, and by the uniqueness of the Radon-Nikodym derivative $q(x,\textbf{1}_{B})=q(x,B)$, $\nu$-a.e..
\\Again by adapting the arguments of Theorem 4 in \cite{Horo} we get that there exists a finite signed measure $Q(A\times B)=\int_{A}q(x,B)\nu(dx)$. Moreover, by the uniqueness of $q$ we have that this signed measure is unique.

We prove now the stated Jordan decomposition of $Q$. From the proof of Theorem 4 in\cite{Horo} it is shown, using our notation, that $Q(C)=Q^{+}(C)-Q^{-}(C)$ for any $C\in\mathcal{B}\otimes\mathcal{A}$. However, it is not mentioned that $Q^{+}$ and $Q^{-}$ are the Jordan decomposition of $Q$, but this is indeed the case and we are going to prove it here.
\\If we apply the arguments of the proof of Proposition 6 in \cite{Horo} to indicator functions then it is possible to see that the decomposition of $q(\cdot,\textbf{1}_{B})$ into $q^{+}(\cdot,\textbf{1}_{B})$ and $q^{-}(\cdot,\textbf{1}_{B})$ in \cite{Horo} is actually the Jordan decomposition since $q^{+}(\cdot,\textbf{1}_{B})$ is defined as
\begin{equation*}
q^{+}(\cdot,\textbf{1}_{B}):=\textnormal{ess}\sup\limits_{0\leq g\leq \textbf{1}_{B}}q(\cdot,g)
\end{equation*}
where $g$ are measurable functions uniformly bounded on absolute values on the finite measure space $(T,\mathcal{A},\nu)$ s.t.~$g(y)\leq \textbf{1}_{B}(y)$, $\nu$-a.e.. Now, since we can approximate any such $g$ with indicator functions, namely the set of indicator functions are dense in the set of measurable functions with compact support (in this case $B$) and bounded by 1, then by standard density arguments we can substitute the $g$'s by indicator functions and using the fact that $q(x,\textbf{1}_{B})=q(x,B)$ $\nu$-almost everywhere we have
\begin{equation*}
q^{+}(\cdot,B):=\sup\limits_{K\subset B}q(\cdot,K)
\end{equation*}
which is the definition of the positive measure of the Jordan decomposition of the signed measure $q(x,\cdot)$. Further, since $q^{-}(\cdot,B)$ is defined as $q^{-}(\cdot,B):=q^{+}(\cdot,B)-q(\cdot,B)$ then it coincides with the negative measure of the Jordan decomposition.
\\ Since $Q(C)=\int_{A}\int_{B}\textbf{1}_{C}(x,t)\tilde{q}_{+}(t,dx)\nu(dt)-\int_{A}\int_{B}\textbf{1}_{C}(x,t)\tilde{q}_{-}(t,dx)\nu(dt)$, and $\int_{A}\int_{B}\textbf{1}_{C}(x,t)\tilde{q}_{+}(t,dx)\nu(dt)$ and $\int_{A}\int_{B}\textbf{1}_{C}(x,t)\tilde{q}_{-}(t,dx)\nu(dt)$ are mutually singular then we have the stated Jordan decomposition of $Q$.
\end{proof}
\begin{rem}
	Notice that it is not always true that $q^{+}=\tilde{q}_{+}$ since $Q_{0,+}(A,\cdot)$ and $Q_{0,-}(A,\cdot)$ are not necessarily measures (on the contrary $Q_{0,+}(\cdot,B)$ and $Q_{0,-}(\cdot,B)$ are measures since they are the Jordan decomposition of $Q_{0}(\cdot,B)$). The only thing we know is that $q_{+}(x,B)-q_{-}(x,B)=\tilde{q}_{+}(x,B)-\tilde{q}_{-}(x,B)$, where all the four functions are positive and with values less or equal than 1. In particular, the fact that $q^{+}(x,B)\leq 1$ and $q^{-}(x,B)\leq 1$ for $\nu$-almost every $x\in T$ and for every $B\in\mathcal{B}$ comes from the fact that $|q(x,B)|\leq 1$ for every $B\in\mathcal{B}$.
\end{rem}
\begin{lem}\label{l4-bounded}
	Let $F_{\cdot}$ be as in Proposition \ref{pro-le-misure-in-modo-generale}. Then there exists a unique finite measure $F$ on $\mathcal{S}\otimes\mathcal{B}(\mathbb{R})$ such that
	\begin{equation*}
	F(A\times B)=F_{A}(B),\quad\text{for all $A\in\mathcal{S}$, $B\in\mathcal{B}(\mathbb{R})$}.
	\end{equation*}
	Moreover, there exists a function $\rho:S\times\mathcal{B}(\mathbb{R})\mapsto[-\infty,\infty]$ such that
	\\ \textnormal{(i)} $\rho(s,\cdot)$ is a quasi-L\'{e}vy type measure on $\mathcal{B}(\mathbb{R})$, for every $s\in S$,
	\\ \textnormal{(ii)} $\rho(\cdot,B)$ is a Borel measurable function, for every $B\in\mathcal{B}(\mathbb{R})$,
	\\ \textnormal{(iii)} $\int_{S\times\mathbb{R}}h(s,x)F(ds,dx)=\int_{S}\int_{\mathbb{R}}h(s,x)\rho(s,dx)\lambda(ds)$, for every $\mathcal{S}\otimes\mathcal{B}(\mathbb{R})$-measurable function $h:S\times\mathbb{R}\mapsto[0,\infty]$. This equality can be extended to real and complex-valued functions $h$.
\end{lem}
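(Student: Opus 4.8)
The plan is to deduce the lemma directly from the disintegration result Theorem~\ref{pr-monster}, applied with the Lusin measurable space in the second coordinate taken to be $(\mathbb{R},\mathcal{B}(\mathbb{R}))$ (which is Lusin, being Polish) and the arbitrary measurable space taken to be $(S,\mathcal{S})$, with $Q_{0}(A,B):=F_{A}(B)$. Conditions (a)--(c) of Theorem~\ref{pr-monster} are exactly the hypotheses carried over from Proposition~\ref{pro-le-misure-in-modo-generale}: for fixed $A$ the boundedness assumption $(\ref{final-assumption-bounded})$ makes $F_{A}$ a finite signed (quasi-L\'{e}vy) measure on $\mathbb{R}$, for fixed $B$ the map $A\mapsto F_{A}(B)$ is a signed measure, and $(\ref{final-assumption-bounded})$ is precisely (c). Theorem~\ref{pr-monster} then yields a unique finite signed measure $F$ on $\mathcal{S}\otimes\mathcal{B}(\mathbb{R})$ with $F(A\times B)=F_{A}(B)$, together with a kernel $q:S\times\mathcal{B}(\mathbb{R})\to[-1,1]$ such that $q(s,\cdot)$ is a signed measure with $|q(s,B)|\leq 1$, $q(\cdot,B)$ is $\mathcal{S}$-measurable, and $F$ disintegrates against $\nu$ through its Jordan decomposition $F^{\pm}(C)=\int_{S}\int_{\mathbb{R}}\textbf{1}_{C}(s,x)\tilde{q}_{\pm}(s,dx)\nu(ds)$. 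Uniqueness of $F$ is immediate, since the rectangles $A\times B$ form a $\pi$-system generating $\mathcal{S}\otimes\mathcal{B}(\mathbb{R})$.

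It then remains to pass from the reference measure $\nu$ to $\lambda$ and to define $\rho$. From $(\ref{lambda2})$ we have $\nu\leq\lambda$, so $\nu\ll\lambda$ and the Radon--Nikodym derivative $w:=\frac{d\nu}{d\lambda}$ exists with $0\leq w\leq 1$ $\lambda$-a.e.. I would then set
\begin{equation*}
\rho(s,dx):=w(s)\,q(s,dx).
\end{equation*}
For each $s$, $\rho(s,\cdot)$ is a finite signed measure with $|\rho|(s,\mathbb{R})=w(s)\,|q|(s,\mathbb{R})\leq 2$, so $\int_{\mathbb{R}}(1\wedge x^{2})|\rho|(s,dx)\leq|\rho|(s,\mathbb{R})<\infty$; moreover $F_{A}(\{0\})=0$ for all $A$ forces $q(s,\{0\})=0$ for $\nu$-a.e.~$s$, and after redefining $q(s,\cdot)$ (hence $\rho(s,\cdot)$) to be the zero measure on the exceptional $\lambda$-null set we obtain $\rho(s,\{0\})=0$ for every $s$. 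This gives (i). Property (ii) is clear, as $\rho(\cdot,B)=w(\cdot)\,q(\cdot,B)$ is a product of $\mathcal{S}$-measurable functions.

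Finally, for (iii) I would first record the identity on indicators: for $C\in\mathcal{S}\otimes\mathcal{B}(\mathbb{R})$, the Jordan decomposition from Theorem~\ref{pr-monster} together with $\int_{A}g\,d\nu=\int_{A}g\,w\,d\lambda$ gives
\begin{equation*}
|F|(C)=\int_{S}\int_{\mathbb{R}}\textbf{1}_{C}(s,x)\,|q|(s,dx)\nu(ds)=\int_{S}\int_{\mathbb{R}}\textbf{1}_{C}(s,x)\,|\rho|(s,dx)\lambda(ds).
\end{equation*}
By linearity this extends to simple functions, and by monotone convergence to all nonnegative $\mathcal{S}\otimes\mathcal{B}(\mathbb{R})$-measurable $h$, where the same computation carried out separately for $F^{+}$ and $F^{-}$ yields $\int_{S\times\mathbb{R}}h\,dF=\int_{S}\int_{\mathbb{R}}h(s,x)\rho(s,dx)\lambda(ds)$; the extension to $|F|$-integrable real and complex $h$ is then routine, splitting into positive and negative (and real and imaginary) parts.

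Since the bulk of the work is already contained in Theorem~\ref{pr-monster}, the only genuine points of care are verifying that $\rho(s,\cdot)$ is a bona fide quasi-L\'{e}vy type measure --- in particular the vanishing of its mass at $0$, handled on a $\lambda$-null set --- and the change of reference measure from $\nu$ to $\lambda$. I expect these to be the main, though modest, obstacles; note that, in contrast to Corollary~\ref{l1}, the boundedness assumption $(\ref{final-assumption-bounded})$ makes every measure in sight finite, so no $(1\wedge x^{2})^{-1}$ weighting of the disintegrating kernel is needed here.
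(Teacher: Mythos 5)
Your proposal is correct and takes essentially the same route as the paper's own proof: apply Theorem~\ref{pr-monster} to $Q_{0}(A,B)=F_{A}(B)$ on $(S,\mathcal{S})\times(\mathbb{R},\mathcal{B}(\mathbb{R}))$, define $\rho(s,dx)=\frac{d\nu}{d\lambda}(s)\,q(s,dx)$ (equivalently through the Jordan parts $\tilde{q}_{\pm}$), obtain (i) from the bound $|\rho|(s,\mathbb{R})\leq 2$, and prove (iii) on indicators before the standard extension to real and complex $h$. Your additional care about the vanishing mass at $\{0\}$ and your correct identification of the direction of absolute continuity, $\nu\ll\lambda$ (which the paper's proof misstates as $\lambda\ll\nu$), are minor refinements of the same argument.
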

\begin{proof}
	Using Theorem \ref{pr-monster}, we have that
	$F_{A}(B)=F(A\times B)=\int_{A}q(t,B)\nu(dt)$ where $q$ satisfies point (d) and (e) in that theorem. Since $\lambda\ll\nu$, then defining
	\begin{equation*}
\rho_{+}(s,dx):=\frac{d\nu}{d\lambda}(s)\tilde{q}_{+}(s,dx),\quad \rho_{-}(s,dx):=\frac{d\nu}{d\lambda}(s)\tilde{q}_{-}(s,dx)
	\end{equation*}
	\begin{equation*}
	\textnormal{and}\quad \rho(s,dx):=\rho_{+}(s,dx)-\rho_{-}(s,dx)
	\end{equation*}
	we have that (ii) is satisfied and that
	\begin{equation*}
	\int_{\mathbb{R}}(1\wedge x^{2})|\rho|(s,dx)=\frac{d\nu}{d\lambda}(s)\int_{\mathbb{R}}(1\wedge x^{2})\tilde{q}_{+}(s,dx)+\frac{d\nu}{d\lambda}(s)\int_{\mathbb{R}}(1\wedge x^{2})\tilde{q}_{-}(s,dx)
	\end{equation*}
		\begin{equation*}
		\leq \frac{d\nu}{d\lambda}(s)\int_{\mathbb{R}}\tilde{q}_{+}(s,dx)+\frac{d\nu}{d\lambda}(s)\int_{\mathbb{R}}\tilde{q}_{-}(s,dx)
	\leq\frac{d\nu}{d\lambda}(s)+\frac{d\nu}{d\lambda}(s)\leq 2,
	\end{equation*}
	where the last inequality comes from the fact that we can always assume that $\frac{d\nu}{d\lambda}(s)\leq 1$ for all $s$ (and the same for $\nu$). This proves (i).
	\\ Therefore, we have immediately (iii) since
	\begin{equation*}
	\int_{S}\int_{\mathbb{R}}\textbf{1}_{C}(s,x)\rho(s,dx)\lambda(ds)=\int_{S}\int_{\mathbb{R}}\textbf{1}_{C}(s,x)\tilde{q}(s,dx)\nu(ds)=F(C).
	\end{equation*}
	The extension to real and complex integrand follows by standard arguments (\textit{e.g.}~see \cite{Horo}). 
\end{proof}
\begin{co}\label{co2-bounded}
	Under the setting of Proposition \ref{pro-le-misure-in-modo-generale}, we have $F^{+}(C)=\int_{S}\int_{\mathbb{R}}\textbf{1}_{C}(s,x)\rho^{+}(s,dx)\lambda(ds)$ and $F^{-}(C)=\int_{S}\int_{\mathbb{R}}\textbf{1}_{C}(s,x)\rho^{-}(s,dx)\lambda(ds)$.
\end{co}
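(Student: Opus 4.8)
The plan is to read off the Jordan decomposition of $F$ already produced by Theorem \ref{pr-monster}, to identify the positive and negative parts $\rho^{+},\rho^{-}$ of the quasi-L\'{e}vy type measure $\rho(s,\cdot)$ with the auxiliary functions $\rho_{+},\rho_{-}$ built in the proof of Lemma \ref{l4-bounded}, and then to transport everything through the Radon--Nikodym derivative $\tfrac{d\nu}{d\lambda}$. First I would recall that, applying Theorem \ref{pr-monster} with $(T,\mathcal{A})=(S,\mathcal{S})$, $(X,\mathcal{B})=(\mathbb{R},\mathcal{B}(\mathbb{R}))$ and $Q_{0}(A,B)=F_{A}(B)$, the finite signed measure $F$ on $\mathcal{S}\otimes\mathcal{B}(\mathbb{R})$ has Jordan decomposition
\begin{equation*}
F^{+}(C)=\int_{S}\int_{\mathbb{R}}\textbf{1}_{C}(s,x)\tilde{q}_{+}(s,dx)\nu(ds),\qquad F^{-}(C)=\int_{S}\int_{\mathbb{R}}\textbf{1}_{C}(s,x)\tilde{q}_{-}(s,dx)\nu(ds),
\end{equation*}
where $\tilde{q}_{+}(s,\cdot)$ and $\tilde{q}_{-}(s,\cdot)$ form the Jordan decomposition of $q(s,\cdot)$ for every $s\in S$.

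The key step is to verify that $\rho^{+}=\rho_{+}$ and $\rho^{-}=\rho_{-}$, where $\rho_{\pm}(s,dx)=\tfrac{d\nu}{d\lambda}(s)\,\tilde{q}_{\pm}(s,dx)$ are the measures introduced in Lemma \ref{l4-bounded}. Since $\tilde{q}_{+}(s,\cdot)$ and $\tilde{q}_{-}(s,\cdot)$ are mutually singular for each fixed $s$, and multiplying both by the common nonnegative scalar $\tfrac{d\nu}{d\lambda}(s)$ leaves their disjoint supports unchanged, the measures $\rho_{+}(s,\cdot)$ and $\rho_{-}(s,\cdot)$ are mutually singular as well. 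Because $\rho(s,\cdot)=\rho_{+}(s,\cdot)-\rho_{-}(s,\cdot)$, the uniqueness clause of the Jordan Decomposition Theorem (Theorem \ref{Jordan}), applied on each $\mathcal{B}_{r}(\mathbb{R})$ and then passed to $\mathcal{B}(\mathbb{R})$ as in Definition \ref{def1}, forces $\rho_{+}(s,\cdot)$ and $\rho_{-}(s,\cdot)$ to be precisely the positive and negative parts of $\rho(s,\cdot)$, i.e.\ $\rho^{+}=\rho_{+}$ and $\rho^{-}=\rho_{-}$. Here one must take care to use $\tilde{q}_{\pm}$, the Jordan decomposition of $q(s,\cdot)$, rather than the functions $q^{\pm}$ of the remark preceding Lemma \ref{l4-bounded}, since in general $q^{+}\neq\tilde{q}_{+}$.

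Finally, because $\nu\leq\lambda$ by $(\ref{lambda2})$, the derivative $\tfrac{d\nu}{d\lambda}$ is a genuine Radon--Nikodym derivative, and the chain rule gives, for every $C\in\mathcal{S}\otimes\mathcal{B}(\mathbb{R})$,
\begin{equation*}
\int_{S}\int_{\mathbb{R}}\textbf{1}_{C}(s,x)\rho^{+}(s,dx)\lambda(ds)=\int_{S}\int_{\mathbb{R}}\textbf{1}_{C}(s,x)\frac{d\nu}{d\lambda}(s)\tilde{q}_{+}(s,dx)\lambda(ds)=\int_{S}\int_{\mathbb{R}}\textbf{1}_{C}(s,x)\tilde{q}_{+}(s,dx)\nu(ds),
\end{equation*}
which equals $F^{+}(C)$ by the first step; the identity for $F^{-}$ follows verbatim with $\rho^{-}$ and $\tilde{q}_{-}$ in place of $\rho^{+}$ and $\tilde{q}_{+}$. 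The only genuinely delicate point is the identification $\rho^{\pm}=\rho_{\pm}$ in the middle step, namely confirming that the decomposition assembled from $\tilde{q}_{\pm}$ is in fact the minimal (Jordan) one for $\rho(s,\cdot)$; once mutual singularity is secured, the remainder is the Radon--Nikodym bookkeeping already carried out in Lemma \ref{l4-bounded}.
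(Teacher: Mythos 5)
Your proof is correct, and it is precisely the route that the paper flags in the last line of its own proof (``an alternative proof follows directly from Theorem \ref{pr-monster} and Lemma \ref{l4-bounded}'') but never writes out. The paper's written proof works instead at the level of the product space: it observes that the set functions $\tilde{F}^{\pm}(C):=\int_{S}\int_{\mathbb{R}}\textbf{1}_{C}(s,x)\rho^{\pm}(s,dx)\lambda(ds)$ are measures on $\mathcal{S}\otimes\mathcal{B}(\mathbb{R})$ whose difference is $F$, and then proves directly that they are mutually singular by assembling the fiberwise Hahn sets $E^{+}_{s},E^{-}_{s}$ of $\rho(s,\cdot)$ (which coincide with those of $q(s,\cdot)$) into the product sets $C^{\pm}=\{(s,x):x\in E^{\pm}_{s}\}$; Jordan uniqueness (Theorem \ref{Jordan}) then forces $\tilde{F}^{\pm}=F^{\pm}$. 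You do the singularity work fiberwise instead: mutual singularity of $\tilde{q}_{+}(s,\cdot)$ and $\tilde{q}_{-}(s,\cdot)$ survives multiplication by the nonnegative scalar $\frac{d\nu}{d\lambda}(s)$, so Jordan uniqueness gives the identification $\rho^{\pm}=\rho_{\pm}=\frac{d\nu}{d\lambda}\tilde{q}_{\pm}$, after which you import the product-level Jordan decomposition of $F$ already asserted in Theorem \ref{pr-monster} and translate $\nu$-integrals into $\lambda$-integrals by the Radon--Nikodym chain rule (legitimate, since $\nu\leq\lambda$ by $(\ref{lambda2})$). Both arguments rest on the same two ingredients, fiberwise singularity plus Jordan uniqueness; the difference is that you outsource the product-level singularity statement to Theorem \ref{pr-monster}, whereas the paper re-proves it ad hoc, and your version has the merit of making explicit the identification $\rho^{\pm}=\rho_{\pm}$, which the paper's proof uses silently (``the Hahn decomposition of $\rho(s,\cdot)$\dots is the same as the one of $q(s,\cdot)$''). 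One small simplification is available to you: in this bounded setting $\rho(s,\cdot)$ is a genuine finite signed measure on all of $\mathcal{B}(\mathbb{R})$ (indeed $|\rho|(s,\mathbb{R})\leq 2$ by the estimate in Lemma \ref{l4-bounded}), so Theorem \ref{Jordan} applies to it directly, and your detour through the rings $\mathcal{B}_{r}(\mathbb{R})$ and Definition \ref{def1} is unnecessary.
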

\begin{proof}
First, notice that $\int_{S}\int_{\mathbb{R}}\textbf{1}_{C}(s,x)\rho^{+}(s,dx)\lambda(ds)$ and $\int_{S}\int_{\mathbb{R}}\textbf{1}_{C}(s,x)\rho^{-}(s,dx)\lambda(ds)$ are measures on $\mathcal{S}\otimes\mathcal{B}(\mathbb{R})$ and that $F(C)=\int_{S}\int_{\mathbb{R}}\textbf{1}_{C}(s,x)\rho^{+}(s,dx)\lambda(ds)-\int_{S}\int_{\mathbb{R}}\textbf{1}_{C}(s,x)\rho^{-}(s,dx)\lambda(ds)$.
Finally, let $E^{+}_{s}$ and $E^{+}_{s}$ the Hahn decomposition of $\rho(s,\cdot)$ (which is the same as the one of $q(s,\cdot)$). Define $C^{+}:=\{(s,x)\in S\times\mathbb{R}:\,\, x\in E^{+}_{s} \}$ and $C^{-}:=\{(s,x)\in S\times\mathbb{R}:\,\, x\in E^{-}_{s} \}$. It is possible to see that $C^{+}$ and $C^{-}$ form an essential decomposition of $S\times\mathbb{R}$ and so the measures $\int_{S}\int_{\mathbb{R}}\textbf{1}_{C}(s,x)\rho^{+}(s,dx)\lambda(ds)$ and $\int_{S}\int_{\mathbb{R}}\textbf{1}_{C}(s,x)\rho^{-}(s,dx)\lambda(ds)$ are singular, thus concluding the proof.
\\ Notice that an alternative proof follows directly from Theorem \ref{pr-monster} and Lemma \ref{l4-bounded}.
\end{proof}
Under the same setting as in Proposition \ref{pro-le-misure-in-modo-generale}, we obtain the following proposition.
\begin{pro}\label{pr-3-bounded}
	Under the setting of Proposition \ref{pro-le-misure-in-modo-generale}, the c.f.~of $\Lambda(A)$ can be written in the form:
	\begin{equation*}
	\mathbb{E}(e^{i\theta\Lambda(A)})=\exp\left(\int_{A}K(\theta,s)\lambda(ds)\right),\quad \theta\in\mathbb{R},A\in\mathcal{S},
	\end{equation*}
	where
	\begin{equation*}
	K(\theta,s)=i\theta a(s)-\frac{\theta^{2}}{2}\sigma^{2}(s)+\int_{\mathbb{R}}e^{i\theta x}-1-i\theta\tau(x)\rho(s,dx),
	\end{equation*}
	$a(s)=\frac{d\nu_{0}}{d\lambda}(s)$, $\sigma^{2}(s)=\frac{d\nu_{1}}{d\lambda}(s)$ and $\rho$ is given by Lemma \ref{l4-bounded}, and $\exp(K(\theta,s))$ is the characteristic function of a QID random variable if it exists. Moreover, we have
	\begin{equation*}
	|a(s)|+ \sigma^{2}(s)+\frac{d\nu}{d\lambda}(s)=1\quad \text{$\lambda$-a.e.}.
	\end{equation*}
\end{pro}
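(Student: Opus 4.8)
The proof will follow the same skeleton as that of Proposition \ref{pr-2}, the only structural changes being that the disintegration of the jump part is now furnished by Lemma \ref{l4-bounded} (in place of Corollary \ref{l1} and Proposition \ref{pro-discordia}) and that the control measure is the one given in $(\ref{lambda2})$ (in place of $(\ref{lambda1})$). First I would start from the L\'evy--Khintchine representation $(\ref{cf})$ of $\Lambda(A)$ and rewrite each of its three terms as an integral over $A$ against $\lambda$. Since $|\nu_{0}|\leq\lambda$ and $\nu_{1}\leq\lambda$ by the definition $(\ref{lambda2})$, both $\nu_{0}$ and $\nu_{1}$ are absolutely continuous with respect to $\lambda$, so the Radon--Nikodym theorem gives $\nu_{0}(A)=\int_{A}a(s)\lambda(ds)$ and $\nu_{1}(A)=\int_{A}\sigma^{2}(s)\lambda(ds)$ with $a=\frac{d\nu_{0}}{d\lambda}$ and $\sigma^{2}=\frac{d\nu_{1}}{d\lambda}$. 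For the jump term I would apply Lemma \ref{l4-bounded}(iii) to the complex-valued integrand $h(s,x)=\textbf{1}_{A}(s)\,(e^{i\theta x}-1-i\theta\tau(x))$; this is legitimate because $|e^{i\theta x}-1-i\theta\tau(x)|\leq C_{\theta}(1\wedge x^{2})\leq C_{\theta}$ and $F$ is a finite signed measure, so $h$ is $|F|$-integrable (and, fibrewise, $\int_{\mathbb{R}}(1\wedge x^{2})|\rho|(s,dx)\leq 2$ by Lemma \ref{l4-bounded}), whence the complex extension of (iii) applies.

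Using the defining relation $F(A\times B)=F_{A}(B)$ to identify the slice of $F$ at a fixed $A$ with $F_{A}$ (a one-line monotone-class argument applied to $F^{+}$ and $F^{-}$ separately), I obtain $\int_{\mathbb{R}}e^{i\theta x}-1-i\theta\tau(x)F_{A}(dx)=\int_{A}\int_{\mathbb{R}}e^{i\theta x}-1-i\theta\tau(x)\rho(s,dx)\lambda(ds)$. Substituting the three identities into $(\ref{cf})$ collapses the exponent into $\int_{A}K(\theta,s)\lambda(ds)$, which is the first assertion; the remark that $\exp(K(\theta,s))$ is a QID characteristic function (when it exists) is immediate from the fact that $K(\theta,s)$ already has the form of a QID L\'evy--Khintchine exponent.

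For the normalisation identity I would integrate the claimed left-hand side over an arbitrary $A\in\mathcal{S}$. Since $\nu_{0}\ll\lambda$ one has $\frac{d|\nu_{0}|}{d\lambda}=\big|\frac{d\nu_{0}}{d\lambda}\big|=|a|$ $\lambda$-a.e., so $\int_{A}|a(s)|\lambda(ds)=|\nu_{0}|(A)$; likewise $\int_{A}\sigma^{2}(s)\lambda(ds)=\nu_{1}(A)$; and because $\nu\leq\lambda$ we have $\nu\ll\lambda$ with $\int_{A}\frac{d\nu}{d\lambda}(s)\lambda(ds)=\nu(A)$. Adding these and invoking $(\ref{lambda2})$ gives $\int_{A}\big(|a|+\sigma^{2}+\frac{d\nu}{d\lambda}\big)\,d\lambda=|\nu_{0}|(A)+\nu_{1}(A)+\nu(A)=\lambda(A)=\int_{A}1\,d\lambda$. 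As this holds for every $A\in\mathcal{S}$ and the integrands are $\lambda$-measurable, they must agree $\lambda$-a.e., which is the stated equality $|a(s)|+\sigma^{2}(s)+\frac{d\nu}{d\lambda}(s)=1$. I expect no serious obstacle, since the argument is essentially a transcription of the proof of Proposition \ref{pr-2} into the bounded framework; the only points deserving explicit care are the integrability justification permitting the complex extension of Lemma \ref{l4-bounded}(iii) in the jump term, and the identification of the $A$-slice of $F$ with $F_{A}$.
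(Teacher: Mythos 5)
Your proposal is correct and follows essentially the same route as the paper: the first identity is obtained from the L\'{e}vy--Khintchine form $(\ref{cf})$ together with Lemma \ref{l4-bounded} (Radon--Nikodym densities for $\nu_{0},\nu_{1}$ and the disintegration (iii) applied, via its complex extension, to $\mathbf{1}_{A}(s)(e^{i\theta x}-1-i\theta\tau(x))$), and the normalisation follows by integrating over an arbitrary $A\in\mathcal{S}$ and using $(\ref{lambda2})$, exactly as in the paper's chain $\int_{A}(|a|+\sigma^{2}+\tfrac{d\nu}{d\lambda})\,d\lambda=|\nu_{0}|(A)+\nu_{1}(A)+\nu(A)=\lambda(A)$. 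The extra details you supply (integrability of the jump integrand, the slice identification $F(A\times\cdot)=F_{A}$, and the a.e.~identification of integrands over the $\sigma$-algebra $\mathcal{S}$) are precisely what the paper leaves implicit.
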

\begin{proof}
	The first statement follows from the L\'{e}vy-Khintchine formulation $(\ref{cf})$ and Lemma \ref{l4-bounded}.
	
	The second statement follows from the fact that for every $A\in\mathcal{S}$, we have
	\begin{equation*}
	\int_{A}\left(|a(s)|+\sigma^{2}(s)+\frac{d\nu}{d\lambda}(s)\right)\lambda(ds)=|\nu_{0}|(A)+\nu_{1}(A)+\nu(A)=\lambda(A)=\int_{A}d\lambda(ds).
	\end{equation*}
\end{proof}
\begin{pro} Under the setting of Proposition \ref{pro-le-misure-in-modo-generale}, if $f$ is $\Lambda$-integrable, then $\int_{S}|K(tf(s),s)|\lambda(ds)<\infty$, where $K$ is given in Proposition \ref{pr-3-bounded}, and
	\begin{equation*}
	\hat{\mathcal{L}}\left(\int_{S}fd\Lambda\right)(\theta)=\exp\left(\int_{S} K(\theta f(s),s)\lambda(ds)\right),\quad \theta\in\mathbb{R}.
	\end{equation*}
\end{pro}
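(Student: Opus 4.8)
The plan is to reproduce, in the bounded setting, the argument used for Proposition~\ref{pr-same-as-in-the-paper}, since the required ingredients are now furnished by Proposition~\ref{pr-3-bounded} and Lemma~\ref{l4-bounded} in place of Proposition~\ref{pr-2} and Proposition~\ref{pro-discordia}. First I would verify the claim for simple functions: if $f=\sum_{j}x_{j}\mathbf{1}_{A_{j}}$ with the $A_{j}\in\mathcal{S}$ disjoint, then the independence of the $\Lambda(A\cap A_{j})$ together with the representation of $\hat{\mathcal{L}}(\Lambda(\cdot))$ in Proposition~\ref{pr-3-bounded} gives, exactly as in the computation following Definition~\ref{def-integral}, the identity $\hat{\mathcal{L}}(\int_{A}f\,d\Lambda)(\theta)=\exp(\int_{A}K(\theta f(s),s)\,\lambda(ds))$ for every $A\in\mathcal{S}$. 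Here I would record that $(S,\mathcal{S},\lambda)$ is a \emph{finite} measure space, since $\lambda(S)=|\nu_{0}|(S)+\nu_{1}(S)+\nu(S)<\infty$ by~(\ref{final-assumption-bounded}) and Theorem~4 in \cite{Horo}; this simplifies the measure-theoretic steps below.

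Next I would pass from simple to general $\Lambda$-integrable $f$. Choosing a sequence $\{f_{n}\}$ of simple functions as in Definition~\ref{def-integral}, we have $f_{n}\to f$ $\lambda$-a.e. and $\int_{A}f_{n}\,d\Lambda\to\int_{A}f\,d\Lambda$ in probability for every $A\in\mathcal{S}$. Setting $\mu_{\theta,n}(A):=\int_{A}K(\theta f_{n}(s),s)\,\lambda(ds)=\log\hat{\mathcal{L}}(\int_{A}f_{n}\,d\Lambda)(\theta)$, each $\mu_{\theta,n}$ is a complex measure on $\mathcal{S}$ absolutely continuous with respect to $\lambda$. Convergence in probability yields convergence of the characteristic functions, hence $\mu_{\theta,n}(A)\to\mu_{\theta}(A):=\log\hat{\mathcal{L}}(\int_{A}f\,d\Lambda)(\theta)$ for every $A$ and every $\theta$. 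By the Hahn-Saks-Vitali theorem the setwise limit $\mu_{\theta}$ is again a countably additive complex measure with $\mu_{\theta}\ll\lambda$, so by the Radon-Nikodym theorem there is a density $h_{\theta}$ with $\mu_{\theta}(A)=\int_{A}h_{\theta}\,d\lambda$ and $\int_{S}|h_{\theta}|\,d\lambda=|\mu_{\theta}|(S)<\infty$.

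It remains to identify $h_{\theta}(s)$ with $K(\theta f(s),s)$ $\lambda$-a.e., and this is the step I expect to require the most care. Since $K(\cdot,s)$ is continuous for each fixed $s$, the $\lambda$-a.e. convergence $f_{n}\to f$ gives $K(\theta f_{n}(s),s)\to K(\theta f(s),s)$ $\lambda$-a.e. Applying Egorov's (Lusin's) theorem on $(S,\mathcal{S},\lambda)$, I would write $S=\bigcup_{j\ge 0}A_{j}$ with $\lambda(A_{0})=0$ and the convergence uniform on each $A_{j}$ for $j\ge 1$; on each such piece one may pass to the limit under the integral to obtain $\int_{A\cap A_{j}}h_{\theta}\,d\lambda=\int_{A\cap A_{j}}K(\theta f(s),s)\,\lambda(ds)$ for all $A\in\mathcal{S}$, which forces $h_{\theta}=K(\theta f(\cdot),\cdot)$ $\lambda$-a.e. on $A_{j}$, and since $A_{0}$ is $\lambda$-null the identity extends to all of $S$. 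This simultaneously yields $\int_{S}|K(\theta f(s),s)|\,\lambda(ds)<\infty$ and the stated formula. The only genuine subtlety beyond routine bookkeeping is ensuring the building blocks behave as in the unbounded case: that $\mu_{\theta}$ inherits countable additivity from the mere setwise limit (Hahn-Saks-Vitali), and that the density is pinned down pointwise through the uniform-convergence decomposition rather than only in an integrated sense.
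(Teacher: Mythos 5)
Your proposal is correct and takes essentially the same approach as the paper: the paper's proof of this proposition simply invokes the argument of Proposition \ref{pr-same-as-in-the-paper} (validity for simple functions, Hahn--Saks--Vitali to get a countably additive limit measure $\mu_{\theta}\ll\lambda$, Radon--Nikodym for the density, then Egorov/Lusin to identify the density with $K(\theta f(\cdot),\cdot)$), which is exactly what you reproduce in the bounded setting. Your extra observation that $\lambda(S)=|\nu_{0}|(S)+\nu_{1}(S)+\nu(S)<\infty$ here is a pleasant simplification of the Egorov step but does not change the argument.
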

\begin{proof}
	The statement follows from the same arguments used in the proof Proposition \ref{pr-same-as-in-the-paper}.
\end{proof}
Differently to Theorem \ref{theorem1} in the following we have directly and if and only if result for the integrability conditions of $\int_{S}fd\Lambda$.
\begin{thm}\label{theorem-bounded}
	Let $f:S\rightarrow\mathbb{R}$ be a $\mathcal{S}$-measurable function and consider the same setting as in Proposition \ref{pro-le-misure-in-modo-generale}. Then $f$ is $\Lambda$-integrable if and only if the following three conditions hold:
	\\\textnormal{(i)} $\int_{S}|U(f(s),s)|\lambda(ds)<\infty$,
	\\\textnormal{(ii)} $\int_{S}|f(s)|^{2}\sigma^{2}(s)\lambda(ds)<\infty$,
	\\\textnormal{(iii)} $\int_{S}V_{0}(f(s),s)\lambda(ds)<\infty$,
	\begin{equation*}
	\textnormal{where}\quad U(u,s)=ua(s)+\int_{\mathbb{R}}\tau(xu)-u\tau(x)\rho(s,dx),\quad V_{0}(u,s)=\int_{\mathbb{R}}(1\wedge |xu|^{2})|\rho|(s,dx),
	\end{equation*}
	Further, if $f$ is $\Lambda$-integrable, then the c.f.~of $\int_{S}fd\Lambda$ can be written as
	\\\textnormal{(iv)} $\hat{\mathcal{L}}\left(\int_{S}fd\Lambda\right)(\theta)=\exp\left(i\theta a_{f}-\frac{1}{2}\theta^{2}\sigma_{f}^{2}+\int_{\mathbb{R}}e^{i\theta x}-1-i\theta\tau(x)F_{f}(dx) \right)$,
	where
	\begin{equation*}
	a_{f}=\int_{S}U(f(s),s)\lambda(ds),\quad \sigma_{f}^{2}=\int_{S}|f(s)|^{2}\sigma^{2}(s)\lambda(ds),\quad\textnormal{and}
	\end{equation*}
	\begin{equation*}
	F_{f}(B)=F(\{(s,x)\in S\times\mathbb{R}:\,\, f(s)x\in B\setminus\{0 \} \}),\quad B\in\mathcal{B}(\mathbb{R}).
	\end{equation*}
\end{thm}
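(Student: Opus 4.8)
The plan is to prove the two implications separately, exploiting that the present hypotheses are considerably more rigid than those of Theorem \ref{theorem1}. For the ``if'' direction I would argue verbatim as in the sufficiency part of Theorem \ref{theorem1}: approximate $f$ by the truncated simple functions $f_n$ (vanishing off $\{|f|\le n\}\cap S_n$ with $|f_n|\le|f|$), use Lemma \ref{l3} with $d=2$ to dominate $U\big((f_n-f_m)\mathbf{1}_A,s\big)$ by $2|U(f,s)|+27V_0(f,s)$, and invoke dominated convergence together with the characteristic-function representation for simple functions furnished by Proposition \ref{pr-3-bounded}. This forces $\hat{\mathcal{L}}\big(\int_S (f_n-f_m)\mathbf{1}_A\,d\Lambda\big)(\theta)\to1$, so $\{\int_A f_n\,d\Lambda\}$ is Cauchy in probability and $f$ is $\Lambda$-integrable, and the formula in (iv) is obtained in the limit.

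The genuinely new content is the ``only if'' direction, and the crucial observation I would make first is that the boundedness assumption \eqref{final-assumption-bounded} forces the product object $F$ of Lemma \ref{l4-bounded} to be a \emph{finite} signed measure. Indeed, applying the supremum in \eqref{final-assumption-bounded} to rectangles sharing a common first coordinate shows $|F_A|(\mathbb{R})<\infty$, and $\nu(S)=|F|(S\times\mathbb{R})<\infty$. Combining this with Corollary \ref{co2-bounded}, which yields $|F|(ds,dx)=|\rho|(s,dx)\lambda(ds)$, together with $1\wedge|f(s)x|^2\le1$, makes condition (iii) \emph{automatic} for every measurable $f$:
\[
\int_S V_0(f(s),s)\,\lambda(ds)=\int_{S\times\mathbb{R}}\big(1\wedge|f(s)x|^2\big)\,|F|(ds,dx)\le |F|(S\times\mathbb{R})<\infty .
\]
This is precisely the inequality that cannot be reversed in the setting of Theorem \ref{theorem1}, and its automatic validity here is what upgrades the statement to an equivalence.

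With (iii) secured I would extract (i) and (ii) from the finiteness $\int_S|K(\theta f(s),s)|\,\lambda(ds)<\infty$, which holds for $\Lambda$-integrable $f$ by the bounded-case analogue of Proposition \ref{pr-same-as-in-the-paper} stated just above. Writing $K=\operatorname{Re}K+i\operatorname{Im}K$ and using the elementary bounds $1-\cos y\le 2(1\wedge y^2)$ and $|\tau(y)-\sin y|\le 2(1\wedge y^2)$, the $\rho$-parts of $\operatorname{Re}K(\theta f,s)$ and of $\operatorname{Im}K(f,s)$ are each dominated by a constant multiple of $V_0(f,s)$, hence $\lambda$-integrable by (iii). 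Then, since $\operatorname{Re}K(\theta f,s)=-\tfrac{\theta^2}{2}f^2\sigma^2+\int(\cos(\theta f x)-1)\rho(s,dx)$, the estimate
\[
\tfrac{\theta^2}{2}\int_S f^2\sigma^2\,d\lambda\le \int_S|\operatorname{Re}K(\theta f,s)|\,d\lambda+2\max(1,\theta^2)\int_S V_0(f,s)\,d\lambda<\infty
\]
yields (ii), and the analogous pointwise bound $|U(f,s)|\le|\operatorname{Im}K(f,s)|+2V_0(f,s)$ integrated over $S$ yields (i).

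Finally, the representation (iv) follows by rewriting $\int_S K(\theta f(s),s)\,\lambda(ds)$ in canonical L\'evy--Khintchine form. Splitting the inner integrand as $e^{i\theta f(s)x}-1-i\theta\tau(f(s)x)$ plus the centering discrepancy $i\theta\big(\tau(f(s)x)-f(s)\tau(x)\big)$, the discrepancy combines with $i\theta f(s)a(s)$ to give exactly $i\theta\,U(f(s),s)$, whose integral is $i\theta a_f$; the well-centered remainder is bounded by a multiple of $V_0(f,s)$ and so, by Lemma \ref{l4-bounded}(iii) and the change of variables $y=f(s)x$, integrates against the finite signed measure $F$ to $\int_{\mathbb{R}}\big(e^{i\theta y}-1-i\theta\tau(y)\big)F_f(dy)$ with $F_f$ the push-forward $F(\{(s,x):f(s)x\in B\setminus\{0\}\})$. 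The only point requiring care is this push-forward bookkeeping, but it is routine once $F$ is known to be finite; I therefore expect the main conceptual step to be the identification of boundedness with finiteness of $F$, which renders (iii) automatic and thereby drives the equivalence.
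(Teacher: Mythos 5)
Your proposal is correct and takes essentially the same approach as the paper: the sufficiency direction repeats the approximation argument of Theorem \ref{theorem1}, while the necessity direction rests on precisely the paper's key observation that assumption (\ref{final-assumption-bounded}) forces $F$ to be a finite signed measure (via Lemma \ref{l4-bounded} and Corollary \ref{co2-bounded}), rendering (iii) automatic, with (i) and (ii) then extracted from $\int_S|K(\theta f(s),s)|\,\lambda(ds)<\infty$ as in Theorem \ref{theorem2}. Your explicit $\operatorname{Re}K$ and $\operatorname{Im}K$ estimates simply fill in details that the paper delegates by reference to the proofs of Theorems \ref{theorem1} and \ref{theorem2}.
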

\begin{proof}
	$\Leftarrow$:  Under the framework of this section we have that $F$ is a finite signed measure (see Lemma \ref{l4-bounded}) and so $\int_{\mathbb{R}}(1\wedge y^{2})|\tilde{F}_{f}|(dy)<\infty$. Moreover, since $F$ is a finite signed measure then by Corollary \ref{co2-bounded} we have
	\begin{equation*}
	\int_{S}\int_{\mathbb{R}}(1\wedge |f(s)x|^{2})|\rho|(s,dx)\lambda(ds)\leq 	\int_{S}\int_{\mathbb{R}}|\rho|(s,dx)\lambda(ds)<\infty.
	\end{equation*}	
	We conclude this part of the proof with the same arguments used in the proof of Theorem \ref{theorem2}.
	\\$\Rightarrow$: It follows from similar arguments as the ones used in the proof of Theorem \ref{theorem1}.
\end{proof}
\subsection{The general case}\label{Sec-generalcase}
In this section we extend the results presented in the previous section. We will work with the assumption that $\mathcal{S}$ is a $\sigma$-algebra and that $F(A,\cdot)$ is a signed measure for every $A\in\mathcal{S}$ and $F(\cdot, B)$ is a signed measure for every $B\in\mathcal{B}(\mathbb{R})$ s.t.~$0\notin \overline{B}$. We define for every $A\in\mathcal{S}$ and $B\in\mathcal{B}(\mathbb{R})$
\begin{equation*}
J(A,B):=\int_{B}(1\wedge x^{2})F_{A}(dx)
\end{equation*}
 and notice that it is a signed bimeasure on $\mathcal{S}\times\mathcal{B}(\mathbb{R})$. We will assume that 
  \begin{equation}\label{final-assumption}
  \sup_{I}\sum_{i\in I}|J(A_{i},B_{i})|<\infty.
  \end{equation}
  Informally, the above assumption is weaker than assuming the ``total variation" of the signed bimeasure $J$ to be finite. 
  \\Let us introduce the set function $\xi(A):\mathcal{S}\mapsto [0,\infty)$ such that
 \begin{equation*}
 \xi(A):=\sup_{I_{A}}\sum_{i\in I_{A}}|J(A_{i},B_{i})|.
 \end{equation*}
 Notice that $\xi$ is indeed a measure (see the proof of Theorem 4 in \cite{Horo}). We point out that it might be possible to assume a weaker condition than $(\ref{final-assumption})$, which is assuming that $\xi(A)$ is a measure. This is possible by looking at the proof of Theorem 4 in \cite{Horo}. However, for the sake of clarity we keep the assumption $(\ref{final-assumption})$. Now, let $E^{+}_{A}$ and $E^{-}_{A}$ the Hahn decomposition of $\mathbb{R}$ under the signed measure $F_{A}$. Observe that
 \begin{equation}\label{xi}
 \xi(A)=\sup\limits_{I_{A}}\sum_{i\in I_{A}}|J(A_{i},B_{i})|\geq \int_{E_{A}^{+}}(1\wedge x^{2})F_{A}(dx)-\int_{E_{A}^{-}}(1\wedge x^{2})F_{A}(dx)= \int_{\mathbb{R}}(1\wedge x^{2})|F_{A}|(dx).
 \end{equation}
  Therefore, since $\xi(S)$ is finite by assumption we have that $\int_{\mathbb{R}}(1\wedge x^{2})|F_{A}|(dx)<\infty$. 
  \\The name of this section comes from the fact that if $\mathcal{S}$ is a $\sigma$-algebra then the assumptions on $F$ presented in the previous sections (Sections \ref{Section-geerating two ID} and \ref{bounded}), are stricter then the ones of this section. Indeed, for the bounded case, notice that if $F(A,B)$ is a bimeasure on $\mathcal{S}\times\mathcal{B}(\mathbb{R})$ and $\sup\limits_{I}\sum_{i\in I}|F_{A_{i}}(B_{i})|<\infty$, then $F$ immediately satisfies the assumptions of this section. For the case of the two generating ID r.m., observe that
  \begin{equation*}
  \int_{B}(1\wedge x^{2})F_{A}(dx)=\int_{B}(1\wedge x^{2})G_{A}(dx)-\int_{B}(1\wedge x^{2})M_{A}(dx)
  \end{equation*}
  and so using the fact $S\in\mathcal{S}$ (since $\mathcal{S}$ is a $\sigma$-algebra) we have that 
  \begin{equation*}
\xi(S)=\sup\limits_{I}\sum_{i\in I}|J(A_{i},B_{i})|\leq\sup\limits_{I}\sum_{i\in I}\int_{B_{i}}(1\wedge x^{2})G_{A_{i}}(dx)+\int_{B_{i}}(1\wedge x^{2})M_{A_{i}}(dx)
  \end{equation*}
  \begin{equation*}
  \leq \int_{\mathbb{R}}(1\wedge x^{2})G_{S}(dx)+\int_{\mathbb{R}}(1\wedge x^{2})M_{S}(dx)<\infty
  \end{equation*}
   We are now ready to present the first results of this section.
\begin{pro}\label{pro-misure-generale}
	Let $\Lambda$ be a QID random measure. Let $\nu_{0}:\mathcal{S}\mapsto\mathbb{R}$ be a signed measure, $\nu_{1}:\mathcal{S}\mapsto\mathbb{R}$ be a measure, $F_{A}$ be a quasi-L\'{e}vy measure on $\mathbb{R}$ for every $A\in\mathcal{S}$, $\mathcal{S}\ni A\mapsto F_{A}(B)\in(-\infty,\infty)$ be a signed measure for every $B\in\mathcal{B}(\mathbb{R})$ s.t.~$0\notin \overline{B}$ and s.t.~$(\nu_{0}(A), \nu_{1}(A),F_{A})$ is the characteristic triplet of $\Lambda(A)$, $\forall A\in\mathcal{S}$. Assume that $F$ satisfies $(\ref{final-assumption})$. Define 	
	\begin{equation}\label{lambda3}
	\lambda(A)=|\nu_{0}|(A)+\nu_{1}(A)+\xi(A).
	\end{equation}
	Then $\lambda:\mathcal{S}\mapsto[0,\infty)$ is a measure s.t.~$\lambda(A_{n})\rightarrow0$ implies $\Lambda(A_{n})\stackrel{p}{\rightarrow}0$ for every $\{A_{n}\}\subset\mathcal{S}$.
\end{pro}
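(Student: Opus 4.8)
The plan is to follow the template of Proposition \ref{pro-le-misure-in-modo-generale}, the only genuinely new ingredient being the weight $(1\wedge x^{2})$ that is built into the bimeasure $J$ and hence into $\xi$. First I would argue that $\lambda$ is a finite measure. Since $\mathcal{S}$ is a $\sigma$-algebra we have $S\in\mathcal{S}$, so $\nu_{0}(S)$ and $\nu_{1}(S)$ are finite and $\xi(S)<\infty$ by the standing assumption $(\ref{final-assumption})$. The set function $|\nu_{0}|$ is a measure as the total variation of a signed measure, $\nu_{1}$ is a measure by hypothesis, and $\xi$ is a measure by (the proof of) Theorem 4 in \cite{Horo}, as already recorded in the text preceding the statement. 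A finite sum of finite measures is again a finite measure, so $\lambda:\mathcal{S}\mapsto[0,\infty)$ is a measure.

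Next, suppose $\lambda(A_{n})\rightarrow0$ for a sequence $\{A_{n}\}\subset\mathcal{S}$. Because $\lambda$ dominates each of its three nonnegative summands, this forces $|\nu_{0}|(A_{n})\rightarrow0$, $\nu_{1}(A_{n})\rightarrow0$ and $\xi(A_{n})\rightarrow0$ simultaneously; in particular $|\nu_{0}(A_{n})|\leq|\nu_{0}|(A_{n})\rightarrow0$. The key step is to transfer $\xi(A_{n})\rightarrow0$ into control of the L\'{e}vy part: by the inequality $(\ref{xi})$ we have $\int_{\mathbb{R}}(1\wedge x^{2})|F_{A_{n}}|(dx)\leq\xi(A_{n})\rightarrow0$.

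It then remains to show the characteristic functions converge to $1$. Writing the L\'{e}vy--Khintchine exponent of $\Lambda(A_{n})$ as in $(\ref{cf})$ and using the bounded function $g_{\tau}$ introduced in Section \ref{Notation}, I would estimate
\begin{equation*}
\left|\int_{\mathbb{R}}\left(e^{i\theta x}-1-i\theta\tau(x)\right)F_{A_{n}}(dx)\right|=\left|\int_{\mathbb{R}}g_{\tau}(x,\theta)(1\wedge x^{2})F_{A_{n}}(dx)\right|\leq\sup_{x}|g_{\tau}(x,\theta)|\int_{\mathbb{R}}(1\wedge x^{2})|F_{A_{n}}|(dx),
\end{equation*}
which tends to $0$ for each fixed $\theta$ since $g_{\tau}(\cdot,\theta)$ is bounded. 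Combined with $\nu_{0}(A_{n})\rightarrow0$ and $\nu_{1}(A_{n})\rightarrow0$, the whole exponent tends to $0$, so $\hat{\mathcal{L}}(\Lambda(A_{n}))(\theta)\rightarrow1$ for every $\theta\in\mathbb{R}$. By the L\'{e}vy continuity theorem $\Lambda(A_{n})\stackrel{d}{\rightarrow}\delta_{0}$, equivalently $\Lambda(A_{n})\stackrel{p}{\rightarrow}0$, which is the claim.

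I do not expect a serious obstacle here, as the argument is a close analogue of the bounded case. The only point requiring care is that the quantity controlled by $\xi$ is the $(1\wedge x^{2})$-weighted total variation of $F_{A_{n}}$ rather than the plain total variation; but this is exactly what $(\ref{xi})$ supplies and exactly what the boundedness of $g_{\tau}$ consumes, so the two match up precisely.
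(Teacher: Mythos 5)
Your proposal is correct and takes essentially the same route as the paper's own proof: $\xi$ is a measure by Theorem 4 in \cite{Horo} (hence so is $\lambda$), the inequality $(\ref{xi})$ converts $\xi(A_{n})\rightarrow0$ into $\int_{\mathbb{R}}(1\wedge x^{2})|F_{A_{n}}|(dx)\rightarrow0$, and the L\'{e}vy continuity theorem finishes the argument. The only difference is that you spell out the last step explicitly (the bound on the L\'{e}vy--Khintchine exponent via the bounded function $g_{\tau}$), which the paper leaves implicit.
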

\begin{proof}
	Since by Theorem 4 in \cite{Horo} we know that $\xi$ is a measure, it is straightforward to see that $\lambda(A)$ is a measure. Now, let $\lambda(A_{n})\rightarrow0$ then we have that $|\nu_{0}|$, $\nu_{1}$ and $\xi$ goes to zero. Then since $\xi(A_{n})\rightarrow0$ implies that $\int_{\mathbb{R}}(1\wedge x^{2})|F_{A_{n}}|(dx)\rightarrow0$, then we have that $\Lambda(A_{n})\stackrel{p}{\rightarrow}0$ for every $\{A_{n}\}\subset\mathcal{S}$.
\end{proof}
\begin{lem}\label{l4}
	Let $F_{\cdot}$ be as in Proposition \ref{pro-misure-generale}. There exists a function $\rho:S\times\mathcal{B}(\mathbb{R})\mapsto[-\infty,\infty]$ such that
	\\ \textnormal{(i)} $\rho(s,\cdot)$ is a quasi-L\'{e}vy type measure on $\mathcal{B}(\mathbb{R})$, for every $s\in S$,
	\\ \textnormal{(ii)} the positive and negative part of $\rho(\cdot,B)$, denoted by $\rho^{+}(\cdot,B)$ and $\rho^{-}(\cdot,B)$ are Borel measurable functions, for every $B\in\mathcal{B}(\mathbb{R})$,
	\\ Moreover, there exist two unique $\sigma$-finite measures $F^{+}$ and $F^{-}$ on $\mathcal{S}\otimes\mathcal{B}(\mathbb{R})$ s.t.~$\int_{S\times\mathbb{R}}h(s,x)F^{+}(ds,dx)=\int_{S}\int_{\mathbb{R}}h(s,x)\rho^{-}(s,dx)\lambda(ds)$, for every $\mathcal{S}\otimes\mathcal{B}(\mathbb{R})$-measurable function $h:S\times\mathbb{R}\mapsto[0,\infty]$, and the same holds for $F^{-}$. This equality can be extended to real and complex-valued functions $h$. Finally, for every $A\in\mathcal{S}$ and for every $\mathcal{B}(\mathbb{R})$-measurable real function $g$ s.t.~$\int_{A}\int_{\mathbb{R}}g(x)|\rho|(s,dx)\lambda(ds)<\infty$, we have that 
	\begin{equation*}
	\int_{\mathbb{R}}g(x)F_{A}(dx)=\int_{A}\int_{\mathbb{R}}g(x)\rho(s,dx)\lambda(ds),
	\end{equation*}
	and for every $B\in\mathcal{B}(\mathbb{R})$ s.t.~$0\notin \overline{B}$,
		\begin{equation*}
		\tilde{F}^{+}(A,B)\geq F_{A}^{+}(B)\quad\text{and}\quad \tilde{F}^{-}(A,B)\geq F_{A}^{-}(B).
		\end{equation*}
\end{lem}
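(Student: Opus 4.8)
The plan is to feed the signed bimeasure $J(A,B)=\int_{B}(1\wedge x^{2})F_{A}(dx)$ into the disintegration Theorem~\ref{pr-monster}, with $(T,\mathcal{A})=(S,\mathcal{S})$, $(X,\mathcal{B})=(\mathbb{R},\mathcal{B}(\mathbb{R}))$ (a Lusin space, since $\mathbb{R}$ is Polish), $Q_{0}=J$ and $\nu=\xi$. First I would check the three hypotheses: (a) $J(A,\cdot)$ is a finite signed measure because $\int_{\mathbb{R}}(1\wedge x^{2})|F_{A}|(dx)<\infty$ by~(\ref{xi}); (b) $J(\cdot,B)$ is a signed measure on $\mathcal{S}$ for every $B$, which follows from $F(\cdot,B')$ being a signed measure for $B'$ bounded away from $0$, the weight $(1\wedge x^{2})$ annihilating the mass at the origin, and approximating $B\setminus\{0\}$ by sets outside a neighbourhood of $0$; and (c) is exactly assumption~(\ref{final-assumption}). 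Theorem~\ref{pr-monster} then produces a unique finite signed measure $Q$ on $\mathcal{S}\otimes\mathcal{B}(\mathbb{R})$ with $Q(A\times B)=\int_{A}q(s,B)\xi(ds)$, a kernel $q:S\times\mathcal{B}(\mathbb{R})\to[-1,1]$ obeying (d)--(e), and a Jordan decomposition $\tilde q_{+},\tilde q_{-}$ with $\tilde q_{\pm}(s,\mathbb{R})\le 1$.

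Next I would set
\begin{equation*}
\rho^{+}(s,dx):=\frac{d\xi}{d\lambda}(s)(1\wedge x^{2})^{-1}\tilde q_{+}(s,dx),\quad \rho^{-}(s,dx):=\frac{d\xi}{d\lambda}(s)(1\wedge x^{2})^{-1}\tilde q_{-}(s,dx),
\end{equation*}
and $\rho:=\rho^{+}-\rho^{-}$, noting $\frac{d\xi}{d\lambda}\le 1$ since $\xi\le\lambda$. Because $\tilde q_{+}(s,\cdot)\perp\tilde q_{-}(s,\cdot)$ and both are scaled by the same nonnegative density, $\rho^{+}(s,\cdot)\perp\rho^{-}(s,\cdot)$, so these are genuinely the positive and negative parts of $\rho(s,\cdot)$. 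For~(i),
\begin{equation*}
\int_{\mathbb{R}}(1\wedge x^{2})|\rho|(s,dx)=\frac{d\xi}{d\lambda}(s)\int_{\mathbb{R}}|q|(s,dx)\le 2\,\frac{d\xi}{d\lambda}(s)\le 2,
\end{equation*}
so each $\rho(s,\cdot)$ is a quasi-L\'{e}vy type measure. For the measurability~(ii), the maps $\tilde q_{\pm}(\cdot,B)$ are $\mathcal{S}$-measurable by the construction in Theorem~\ref{pr-monster} and $\frac{d\xi}{d\lambda}$ is measurable; approximating $(1\wedge x^{2})^{-1}\mathbf{1}_{B}$ by simple functions and passing to the monotone limit then yields measurability of $s\mapsto\rho^{\pm}(s,B)$ for every $B$, exactly as in Proposition~\ref{pro-discordia}(ii).

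I would then define $F^{\pm}(C):=\int_{S}\int_{\mathbb{R}}\mathbf{1}_{C}(s,x)\rho^{\pm}(s,dx)\lambda(ds)$; these are $\sigma$-finite because $\lambda(S_{n})<\infty$, $\rho^{\pm}(s,\{0\})=0$, and Chebyshev gives $\rho^{\pm}(s,\{|x|\ge\epsilon\})\le\epsilon^{-2}\int_{\mathbb{R}}(1\wedge x^{2})\rho^{\pm}(s,dx)$, so each slab $S_{n}\times\{|x|\ge 1/m\}$ has finite mass. Uniqueness follows from uniqueness of the Jordan decomposition in Theorem~\ref{pr-monster} and of the Radon--Nikodym derivative. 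The disintegration identity is obtained, for $g$ with $\int_{A}\int_{\mathbb{R}}|g|\,|\rho|(s,dx)\lambda(ds)<\infty$, from $\rho(s,dx)\lambda(ds)=(1\wedge x^{2})^{-1}Q(d(s,x))$ and the fact that the projection of $Q|_{A\times\mathbb{R}}$ onto the $x$-axis is $(1\wedge x^{2})F_{A}$:
\begin{equation*}
\int_{A}\int_{\mathbb{R}}g(x)\rho(s,dx)\lambda(ds)=\int_{A\times\mathbb{R}}g(x)(1\wedge x^{2})^{-1}Q(d(s,x))=\int_{\mathbb{R}}g(x)F_{A}(dx).
\end{equation*}
Finally, for $B$ bounded away from $0$, letting $E_{A}^{+}$ be the positive Hahn set of $F_{A}$ and using $\rho\le\rho^{+}$ and $\mathbf{1}_{E_{A}^{+}}\le 1$,
\begin{equation*}
F_{A}^{+}(B)=\int_{A}\int_{B}\mathbf{1}_{E_{A}^{+}}(x)\rho(s,dx)\lambda(ds)\le\int_{A}\int_{B}\rho^{+}(s,dx)\lambda(ds)=\tilde F^{+}(A,B),
\end{equation*}
and symmetrically for $F_{A}^{-}$, which gives the two inequalities just as in Proposition~\ref{pro-discordia}.

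The step I expect to be the main obstacle is the measurability in~(ii) together with the identification of $\rho^{\pm}$ as the true Jordan decomposition of $\rho(s,\cdot)$: one must transfer measurability of $q(\cdot,B)$ and of its Jordan pieces through the singular weight $(1\wedge x^{2})^{-1}$, and verify that no cancellation between $\tilde q_{+}$ and $\tilde q_{-}$ is introduced when recovering $F$ from $(1\wedge x^{2})F$. This is precisely the delicate point resolved in Proposition~\ref{pro-discordia} and Corollary~\ref{co2-bounded}, and it is the mutual singularity of $\tilde q_{+}(s,\cdot)$ and $\tilde q_{-}(s,\cdot)$ supplied by Theorem~\ref{pr-monster} that makes the argument close.
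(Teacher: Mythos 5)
Your proposal is correct and follows essentially the same route as the paper: apply Theorem~\ref{pr-monster} to the bimeasure $J$ with $\nu=\xi$, define $\rho^{\pm}(s,dx)=\frac{d\xi}{d\lambda}(s)(1\wedge x^{2})^{-1}\tilde{q}_{\pm}(s,dx)$, verify (i) via the bound $\frac{d\xi}{d\lambda}\leq 1$, get (ii) from mutual singularity and measurability of the kernel, and recover the disintegration identity from $\rho(s,dx)\lambda(ds)=(1\wedge x^{2})^{-1}Q(ds,dx)$. The only cosmetic difference is your derivation of the final inequalities $\tilde{F}^{\pm}(A,B)\geq F_{A}^{\pm}(B)$ via the Hahn set $E_{A}^{+}$ of $F_{A}$ (as in Proposition~\ref{pro-discordia}), where the paper instead uses the Jordan-decomposition inequality $Q^{+}(A\times\cdot)\geq J_{+}(A,\cdot)$; both arguments are valid.
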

\begin{proof}
	First, notice that $J(A,B)$ satisfies the assumptions of Theorem \ref{pr-monster} with $(T,\mathcal{A})=(S,\mathcal{S})$ and $(X,\mathcal{B})=(\mathbb{R},\mathcal{B}(\mathbb{R}))$.
	Therefore, there exists a finite signed measure $Q$ on the product $\sigma$-algebra $\mathcal{S}\otimes\mathcal{B}(\mathbb{R})$ such that
	\begin{equation*}
	Q(A\times B)=J(A,B)=\int_{A}q(s,B)\xi(ds)=\int_{A}\tilde{q}^{+}(s,B)\xi(ds)-\int_{A}\tilde{q}^{-}(s,B)\xi(ds),
	\end{equation*}
	where $q$ satisfies $(d)$ and $(e)$ of Proposition 2.4 in \cite{RajRos}.	Since $\lambda\gg\xi$, define
		\begin{equation*}
		\rho_{+}(s,dx):=\frac{d\xi}{d\lambda}(s)(1\wedge x^{2})^{-1}\tilde{q}_{+}(s,dx),\quad\textnormal{and}\quad \rho_{-}(s,dx):=\frac{d\xi}{d\lambda}(s)(1\wedge x^{2})^{-1}\tilde{q}_{-}(s,dx).
		\end{equation*}
		Notice that 
		\begin{equation*}
		\int_{\mathbb{R}}(1\wedge x^{2})\rho^{+}(s,dx)=\frac{d\xi}{d\lambda}(s)\int_{\mathbb{R}}\tilde{q}^{+}(s,dx)\leq 1,
		\end{equation*}
		where the last inequality comes from the fact that we can always assume that $\frac{d\xi}{d\lambda}(s)\leq 1$ for all $s\in S$. Hence, $\rho^{+}(s,\cdot)$ is a L\'{e}vy measure on $\mathbb{R}$ for all $s\in S$. The same is true for $\rho^{-}(s,\cdot)$. Further, let
		\begin{equation*}
		\rho(s,B):=\rho_{+}(s,B)-\rho_{-}(s,B)\quad\text{for all $s\in S$, $B\in\mathcal{B}(\mathbb{R})$ s.t.~$0\notin \overline{B}$}.
		\end{equation*}
	Then $\rho(s,\cdot)$ is a quasi-L\'{e}vy type measure, thus obtaining (i). Using the fact that $\rho_{+}(s,B)$ and $\rho_{-}(s,B)$ are mutually singular for every $B\in\mathcal{B}(\mathbb{R})$ s.t.~$\overline{B}$, we obtain (ii). Now, let
	\begin{equation}\label{F(C)}
	\tilde{F}^{+}(C)=\int_{S}\int_{\mathbb{R}}\textbf{1}_{C}(s,x)\rho^{+}(s,dx)\lambda(ds),
	\end{equation}
	where $C\in\mathcal{S}\otimes\mathcal{B}(\mathbb{R})$, then $\tilde{F}^{+}$ is a well defined signed measure that satisfies, for every $A\in\mathcal{S}$ and $B\in\mathcal{B}(\mathbb{R})$,
	\begin{equation*}
	\tilde{F}^{+}(A\times B)=\int_{A}\int_{B}\rho^{+}(s,dx)\lambda(ds)	=\int_{A}\int_{B}(1\wedge x^{2})^{-1}q^{+}(s,dx)\xi(ds)
	\end{equation*}
	\begin{equation*}
	=\int_{A\times B}(1\wedge x^{2})^{-1}Q^{+}(ds,dx)	\geq\int_{B}(1\wedge x^{2})^{-1}J_{+}(A,dx)=\int_{B}F^{+}_{A}(dx)=F^{+}_{A}(B),
	\end{equation*}
	where we recall that the notation $M_{+}$ and $M_{-}$ for a bimeasure stands for the Jordan decomposition of $M(A,B)$ for fixed $A$. Finally, notice that for any $\mathcal{B}(\mathbb{R})$-measurable real function $g$ s.t.~$\int_{A}\int_{\mathbb{R}}g(x)|\rho|(s,dx)\lambda(ds)<\infty$ we have
	\begin{equation*}
\int_{A}\int_{\mathbb{R}}g(x)\rho(s,dx)\lambda(ds)
	\end{equation*}
		\begin{equation*}
		=\int_{A}\int_{\mathbb{R}}g(x)\rho^{+}(s,dx)\lambda(ds)-\int_{A}\int_{\mathbb{R}}g(x)\rho^{-}(s,dx)\lambda(ds)
		\end{equation*}
	\begin{equation*}
	=\int_{A\times \mathbb{R}}g(x)(1\wedge x^{2})^{-1}Q(ds,dx)=\int_{\mathbb{R}}g(x)(1\wedge x^{2})^{-1}J(A,dx)=\int_{\mathbb{R}}g(x)F_{A}(dx).
	\end{equation*}
\end{proof}
Using the above results, we obtain the following proposition.
\begin{pro}\label{pr-3}
	Under the setting of Proposition \ref{pro-misure-generale}, the c.f.~of $\Lambda(A)$ can be written in the form:
	\begin{equation*}
	\mathbb{E}(e^{i\theta\Lambda(A)})=\exp\left(\int_{A}K(\theta,s)\lambda(ds)\right),\quad \theta\in\mathbb{R},A\in\mathcal{S},
	\end{equation*}
	where
	\begin{equation*}
	K(\theta,s)=i\theta a(s)-\frac{\theta^{2}}{2}\sigma^{2}(s)+\int_{\mathbb{R}}e^{i\theta x}-1-i\theta\tau(x)\rho(s,dx),
	\end{equation*}
	$a(s)=\frac{d\nu_{0}}{d\lambda}(s)$, $\sigma^{2}(s)=\frac{d\nu_{1}}{d\lambda}(s)$ and $\rho$ is given by Lemma \ref{l4}, and $\exp(K(\theta,s))$ is the characteristic function of a QID random variable if it exists. Moreover, we have
	\begin{equation*}
	|a(s)|+ \sigma^{2}(s)+\frac{d\xi}{d\lambda}(s)=1,\quad\quad \text{$\lambda$-a.e.}.
	\end{equation*}
\end{pro}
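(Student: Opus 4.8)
The plan is to mirror the proof of Proposition \ref{pr-3-bounded}, now substituting the measure $\xi$ for $\nu$ and invoking Lemma \ref{l4} in place of Lemma \ref{l4-bounded}. First I would start from the L\'{e}vy-Khintchine representation $(\ref{cf})$ of $\Lambda(A)$. Since $\lambda$ is defined in $(\ref{lambda3})$ as $\lambda=|\nu_{0}|+\nu_{1}+\xi$, we immediately have $\nu_{0}\ll\lambda$, $\nu_{1}\ll\lambda$ and $\xi\ll\lambda$, so by the Radon--Nikodym theorem we may write $\nu_{0}(A)=\int_{A}a(s)\lambda(ds)$ and $\nu_{1}(A)=\int_{A}\sigma^{2}(s)\lambda(ds)$ with $a=\frac{d\nu_{0}}{d\lambda}$ and $\sigma^{2}=\frac{d\nu_{1}}{d\lambda}$. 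For the L\'{e}vy term I would apply the final identity of Lemma \ref{l4} with the complex-valued integrand $g_{\theta}(x)=e^{i\theta x}-1-i\theta\tau(x)$, obtaining
\begin{equation*}
\int_{\mathbb{R}}\big(e^{i\theta x}-1-i\theta\tau(x)\big)F_{A}(dx)=\int_{A}\int_{\mathbb{R}}\big(e^{i\theta x}-1-i\theta\tau(x)\big)\rho(s,dx)\lambda(ds).
\end{equation*}
Substituting these three identities into $(\ref{cf})$ and collecting the integrand yields $\mathbb{E}(e^{i\theta\Lambda(A)})=\exp(\int_{A}K(\theta,s)\lambda(ds))$, which is the first assertion.

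The one point requiring care in this step is the applicability of the complex extension in Lemma \ref{l4}, which needs $\int_{A}\int_{\mathbb{R}}|g_{\theta}(x)|\,|\rho|(s,dx)\lambda(ds)<\infty$. I would verify this using the standard bound $|g_{\theta}(x)|=|e^{i\theta x}-1-i\theta\tau(x)|\leq C_{\theta}(1\wedge x^{2})$ for a constant $C_{\theta}$ depending only on $\theta$, together with the finiteness established around $(\ref{xi})$, namely
\begin{equation*}
\int_{A}\int_{\mathbb{R}}(1\wedge x^{2})|\rho|(s,dx)\lambda(ds)=\int_{\mathbb{R}}(1\wedge x^{2})|F_{A}|(dx)\leq\xi(S)<\infty.
\end{equation*}
This guarantees absolute integrability and hence justifies splitting $g_{\theta}$ into $\rho^{+}$ and $\rho^{-}$ parts and recombining them into $\rho$.

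For the second (normalization) assertion, I would integrate the candidate density over an arbitrary $A\in\mathcal{S}$ and identify each term via the Radon--Nikodym derivatives. Using $\frac{d|\nu_{0}|}{d\lambda}=|\frac{d\nu_{0}}{d\lambda}|=|a|$ $\lambda$-a.e., one gets
\begin{equation*}
\int_{A}\Big(|a(s)|+\sigma^{2}(s)+\tfrac{d\xi}{d\lambda}(s)\Big)\lambda(ds)=|\nu_{0}|(A)+\nu_{1}(A)+\xi(A)=\lambda(A)=\int_{A}\lambda(ds).
\end{equation*}
Since this equality of the two integrals holds for every $A\in\mathcal{S}$, and both integrands are $\lambda$-integrable, the integrands must coincide $\lambda$-a.e., giving $|a(s)|+\sigma^{2}(s)+\frac{d\xi}{d\lambda}(s)=1$ $\lambda$-a.e.

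The argument is essentially bookkeeping built on Lemma \ref{l4} and Proposition \ref{pro-misure-generale}, so there is no genuinely hard analytic step; the only places demanding attention are the integrability check that licenses the complex extension of Lemma \ref{l4} (handled by the $1\wedge x^{2}$ domination above) and the routine passage from the set-function identity to the $\lambda$-a.e.\ pointwise identity of densities. I would emphasize that, unlike the bounded case where $F$ is a finite signed measure, here $F^{+}$ and $F^{-}$ may be infinite, so the identities are stated and used only through the $\sigma$-finite measures $\tilde{F}^{+},\tilde{F}^{-}$ and the total variation $|\rho|$, never through a (possibly undefined) signed measure $\tilde{F}$.
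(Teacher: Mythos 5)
Your overall route is the same as the paper's: the first identity follows from the L\'{e}vy--Khintchine form (\ref{cf}) together with Lemma \ref{l4}, and the normalization follows by integrating the three Radon--Nikodym densities over an arbitrary $A\in\mathcal{S}$ and comparing with $\lambda(A)$; that second half of your argument is fine. The problem is in the one step you yourself flag as critical, the verification that $\int_{A}\int_{\mathbb{R}}(1\wedge x^{2})|\rho|(s,dx)\lambda(ds)<\infty$. You justify it by the claimed identity
\begin{equation*}
\int_{A}\int_{\mathbb{R}}(1\wedge x^{2})|\rho|(s,dx)\lambda(ds)=\int_{\mathbb{R}}(1\wedge x^{2})|F_{A}|(dx)\leq\xi(S),
\end{equation*}
and this identity is false in general. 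Disintegration only gives domination in one direction: the total variation of the marginal $F_{A}$ can be strictly smaller than the integrated total variations $\int_{A}|\rho|(s,\cdot)\lambda(ds)$, because of cancellation across different $s\in A$ (the set where $\rho(s,\cdot)$ is positive may vary with $s$). This is exactly the phenomenon the paper is careful about: Lemma \ref{l4} asserts only $\tilde{F}^{+}(A,B)\geq F_{A}^{+}(B)$ and $\tilde{F}^{-}(A,B)\geq F_{A}^{-}(B)$, and (\ref{xi}) gives $\xi(A)\geq\int_{\mathbb{R}}(1\wedge x^{2})|F_{A}|(dx)$; both inequalities bound the wrong side for your purposes, so your domination argument does not establish the hypothesis needed to invoke the complex-valued extension of Lemma \ref{l4}.

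The gap is easily repaired from the construction inside Lemma \ref{l4}: there $\rho^{\pm}(s,dx)=\frac{d\xi}{d\lambda}(s)(1\wedge x^{2})^{-1}\tilde{q}_{\pm}(s,dx)$ with $\tilde{q}_{\pm}(s,\mathbb{R})\leq1$ and $\frac{d\xi}{d\lambda}(s)\leq1$ for $\lambda$-a.e.~$s$, so that $\int_{\mathbb{R}}(1\wedge x^{2})|\rho|(s,dx)\leq 2\frac{d\xi}{d\lambda}(s)\leq 2$ $\lambda$-a.e., whence
\begin{equation*}
\int_{A}\int_{\mathbb{R}}(1\wedge x^{2})|\rho|(s,dx)\lambda(ds)\leq 2\xi(A)\leq 2\xi(S)<\infty.
\end{equation*}
Equivalently, this double integral equals $|Q|(A\times\mathbb{R})$, where $Q$ is the finite signed measure produced by Theorem \ref{pr-monster}, and is therefore finite. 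With this bound replacing your equality, the rest of your proof goes through and coincides with the paper's argument.
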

\begin{proof}
	The first statement follows from the L\'{e}vy-Khintchine formulation $(\ref{cf})$ and Lemma \ref{l4}.
	
	The second statement follows from the fact that for every $A\in\mathcal{S}$, we have
	\begin{equation*}
	\int_{A}\left(|a(s)|+\sigma^{2}(s)+\frac{d\xi}{d\lambda}(s)\right)\lambda(ds)=|\nu_{0}|(A)+\nu_{1}(A)+\xi(A)=\lambda(A)=\int_{A}d\lambda(ds).
	\end{equation*}
\end{proof}
\begin{pro}
	Under the setting of Proposition \ref{pro-misure-generale}, if $f$ is $\Lambda$-integrable, then $\int_{S}|K(tf(s),s)|\lambda(ds)<\infty$, where $K$ is given in Proposition \ref{pr-3}, and
	\begin{equation*}
	\hat{\mathcal{L}}\left(\int_{S}fd\Lambda\right)(\theta)=\exp\left(\int_{S} K(\theta f(s),s)\lambda(ds)\right),\quad \theta\in\mathbb{R}.
	\end{equation*}
\end{pro}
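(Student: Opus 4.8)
The plan is to reproduce, almost verbatim, the argument used for Proposition~\ref{pr-same-as-in-the-paper}, substituting the objects $\rho,\lambda,K$ of that section with the ones furnished in the present general setting by Lemma~\ref{l4} and Proposition~\ref{pr-3}. The base case is the formula for simple functions: if $f_{n}=\sum_{j}x_{j}\mathbf{1}_{A_{j}}$ with $A_{j}\in\mathcal{S}$ disjoint, then multiplicativity of the c.f.\ over disjoint sets together with the representation of $\mathbb{E}(e^{i\theta\Lambda(A)})$ in Proposition~\ref{pr-3} gives
\begin{equation*}
\hat{\mathcal{L}}\Big(\int_{A}f_{n}\,d\Lambda\Big)(\theta)=\exp\Big(\int_{A}K(\theta f_{n}(s),s)\,\lambda(ds)\Big),\qquad A\in\mathcal{S}.
\end{equation*}
Here the inner integral is finite because $f_{n}$ is bounded, $\lambda$ is a finite measure (since $\xi(S)<\infty$, $|\nu_{0}|(S)<\infty$ and $\nu_{1}(S)<\infty$), and $|K(u,s)|$ is bounded uniformly in $s$ for each fixed $u$, using $|a(s)|+\sigma^{2}(s)+\tfrac{d\xi}{d\lambda}(s)=1$ from Proposition~\ref{pr-3} and the estimate $\int_{\mathbb{R}}(1\wedge x^{2})|\rho|(s,dx)\le 2$ from Lemma~\ref{l4}, combined with $|e^{iux}-1-iu\tau(x)|\le C_{u}(1\wedge x^{2})$.

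Next I would fix $\theta\in\mathbb{R}$ and choose, for the $\Lambda$-integrable $f$, a sequence $\{f_{n}\}$ of simple functions as in Definition~\ref{def-integral}, so that $f_{n}\to f$ $\lambda$-a.e.\ and $\int_{A}f_{n}\,d\Lambda$ converges in probability for every $A\in\mathcal{S}$. Introduce the complex set functions
\begin{equation*}
\mu_{\theta,n}(A):=\int_{A}K(\theta f_{n}(s),s)\,\lambda(ds),\qquad A\in\mathcal{S},
\end{equation*}
each of which is a complex measure absolutely continuous with respect to $\lambda$. Convergence in probability forces convergence of the c.f., hence
\begin{equation*}
\mu_{\theta,n}(A)=\log\hat{\mathcal{L}}\Big(\int_{A}f_{n}\,d\Lambda\Big)(\theta)\longrightarrow\log\hat{\mathcal{L}}\Big(\int_{S}f\,d\Lambda\Big)(\theta)=:\mu_{\theta}(A)
\end{equation*}
for every $A\in\mathcal{S}$. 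By the Hahn-Saks-Vitali theorem the setwise limit $\mu_{\theta}$ is again a countably additive complex measure with $\mu_{\theta}\ll\lambda$, so $\mu_{\theta}(A)=\int_{A}h_{\theta}\,d\lambda$ for a density $h_{\theta}$.

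It then remains to identify $h_{\theta}(s)=K(\theta f(s),s)$ $\lambda$-a.e. The continuity of $u\mapsto K(u,s)$ (which follows from the boundedness and continuity at zero of $g_{\tau}$ recalled in Section~\ref{Notation}, via dominated convergence against $(1\wedge x^{2})|\rho|(s,dx)$) yields $K(\theta f_{n}(s),s)\to K(\theta f(s),s)$ $\lambda$-a.e. Using the Egorov (Lusin) theorem I would write $S=\bigcup_{j\ge 0}A_{j}$ with $A_{j}\in\mathcal{S}$ disjoint, $\lambda(A_{0})=0$, $\lambda(A_{j})<\infty$, and uniform convergence of $K(\theta f_{n}(\cdot),\cdot)$ on each $A_{j}$ with $j\ge1$. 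Passing to the limit under the integral on each such piece gives, for every $A\in\mathcal{S}$,
\begin{equation*}
\int_{A\cap A_{j}}h_{\theta}\,d\lambda=\mu_{\theta}(A\cap A_{j})=\lim_{n}\int_{A\cap A_{j}}K(\theta f_{n}(s),s)\,\lambda(ds)=\int_{A\cap A_{j}}K(\theta f(s),s)\,\lambda(ds),
\end{equation*}
so $h_{\theta}=K(\theta f(\cdot),\cdot)$ $\lambda$-a.e.\ on each $A_{j}$, and hence $\lambda$-a.e.\ on $S$ because $\lambda(A_{0})=0$. Since $\mu_{\theta}$ is a finite complex measure, $\int_{S}|K(\theta f(s),s)|\,\lambda(ds)=|\mu_{\theta}|(S)<\infty$, which is the integrability assertion, and evaluating at $A=S$ gives the claimed formula for $\hat{\mathcal{L}}(\int_{S}f\,d\Lambda)(\theta)$.

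The only genuinely non-routine point is the invocation of the Hahn-Saks-Vitali theorem: I must verify that in this general setting each $\mu_{\theta,n}$ really is a (finite) complex measure and that the setwise limit exists for every $A\in\mathcal{S}$. This is exactly where the finiteness of $\lambda$ and the uniform-in-$s$ control of $K(\theta f_{n}(s),s)$ supplied by Lemma~\ref{l4} and Proposition~\ref{pr-3} enter; once these hypotheses are secured, the remainder is identical to the limiting argument in the proof of Proposition~\ref{pr-same-as-in-the-paper}.
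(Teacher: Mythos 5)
Your proposal is correct and takes essentially the same route as the paper: the paper's own proof simply invokes the argument of Proposition~\ref{pr-same-as-in-the-paper} (simple-function base case, Hahn-Saks-Vitali for the setwise limit of the complex measures $\mu_{\theta,n}$, then Egorov/Lusin and Radon-Nikodym identification of the density $h_{\theta}=K(\theta f(\cdot),\cdot)$), which is exactly what you reproduce in the general setting. Your explicit verification of the hypotheses (finiteness of $\lambda$ via $\xi(S)<\infty$, and the uniform control of $K$ from Lemma~\ref{l4} and Proposition~\ref{pr-3}) only spells out what the paper leaves implicit, and the one blemish --- writing $\log\hat{\mathcal{L}}(\int_{S}fd\Lambda)(\theta)$ where $\log\hat{\mathcal{L}}(\int_{A}fd\Lambda)(\theta)$ is meant in the definition of $\mu_{\theta}(A)$ --- is a typo inherited from the paper itself.
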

\begin{proof}
	The statement follows from the same arguments used in the proof of Proposition \ref{pr-same-as-in-the-paper}.
\end{proof}
We state now the main theorem of this subsection on the integrability conditions of $\int_{S}fd\Lambda$.
\begin{thm}\label{theorem-general}
	Let $f:S\rightarrow\mathbb{R}$ be a $\mathcal{S}$-measurable function and consider the setting of Proposition \ref{pro-misure-generale}. Then $f$ is $\Lambda$-integrable if the following three conditions hold:
	\\\textnormal{(i)} $\int_{S}|U(f(s),s)|\lambda(ds)<\infty$,
	\\\textnormal{(ii)} $\int_{S}|f(s)|^{2}\sigma^{2}(s)\lambda(ds)<\infty$,
	\\\textnormal{(iii)} $\int_{S}V_{0}(f(s),s)\lambda(ds)<\infty$,
	\begin{equation*}
	\textnormal{where}\quad U(u,s)=ua(s)+\int_{\mathbb{R}}\tau(xu)-u\tau(x)\rho(s,dx),\quad V_{0}(u,s)=\int_{\mathbb{R}}(1\wedge |xu|^{2})|\rho|(s,dx).
	\end{equation*}
	Further, the c.f.~of $\int_{S}fd\Lambda$ can be written as
	\\\textnormal{(iv)} $\hat{\mathcal{L}}\left(\int_{S}fd\Lambda\right)(\theta)=\exp\left(i\theta a_{f}-\frac{1}{2}\theta^{2}\sigma_{f}^{2}+\int_{\mathbb{R}}e^{i\theta x}-1-i\theta\tau(x)F_{f}(dx) \right)$,
	where
	\begin{equation*}
	a_{f}=\int_{S}U(f(s),s)\lambda(ds),\quad \sigma_{f}^{2}=\int_{S}|f(s)|^{2}\sigma^{2}(s)\lambda(ds),\quad\textnormal{and}
	\end{equation*}
	$F_{f}(B)$ is the unique quasi-L\'{e}vy measure determined by the difference of the L\'{e}vy measures $\tilde{F}^{+}_{f}$ and $\tilde{F}^{-}_{f}$, which are defined as: for every $B\in\mathcal{B}(\mathbb{R})$
	\begin{equation*}
	\tilde{F}^{+}_{f}(B)=\tilde{F}^{+}(\{(s,x)\in S\times\mathbb{R}:\, f(s)x\in B\setminus\{0 \} \})\,\,\,\text{and}\,\,\, \tilde{F}^{-}_{f}(B)=\tilde{F}^{-}(\{(s,x)\in S\times\mathbb{R}:\, f(s)x\in B\setminus\{0 \} \}).
	\end{equation*}
\end{thm}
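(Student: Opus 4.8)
Theorem \ref{theorem-general} is the "general case" analogue of Theorem \ref{theorem1}, where the two generating ID random measures are replaced by the single assumption that the signed bimeasure $J(A,B) = \int_B (1\wedge x^2) F_A(dx)$ satisfies the finiteness condition (\ref{final-assumption}). The key difference from the two-ID case (Theorem \ref{theorem1}) is that the disintegration $\rho$ now comes from Lemma \ref{l4} (via Theorem \ref{pr-monster}) rather than from Proposition \ref{pro-discordia}. Importantly, the structural facts we relied on in Theorem \ref{theorem1} — namely the representation of $K(\theta,s)$ via $\rho$, the bound $\int_{\mathbb{R}}(1\wedge x^2)|\rho|(s,dx) \le 1$ ($\lambda$-a.e.), and the inequalities $\tilde{F}^{+}(A,B) \ge F_A^{+}(B)$, $\tilde{F}^{-}(A,B) \ge F_A^{-}(B)$ — all carry over verbatim from Lemma \ref{l4} and Proposition \ref{pr-3}. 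So the plan is essentially to replay the proof of Theorem \ref{theorem1}.

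\textbf{The main strategy.} I would mimic the proof of Theorem \ref{theorem1} step by step. First assume (i), (ii), (iii) hold. Take the exhausting sequence $S_1, S_2, \dots$ with $\bigcup_n S_n = S$; here since $\mathcal{S}$ is a $\sigma$-algebra we may simply take $S_n = S$, but the construction $A_n = \{s : |f(s)| \le n\}$ still gives an increasing sequence with $A_n \nearrow S$. Build simple functions $f_n$ with $f_n \to f$ everywhere, $|f_n - f| \le 1/n$ on $A_n$, and $|f_n| \le |f|$ throughout. The crucial analytic input is Lemma \ref{l3}, whose statement and proof depend only on $U$, $V_0$, and $\rho$ — all of which are available here with the same definitions — so Lemma \ref{l3} applies unchanged, giving the domination
\begin{equation*}
|U((f_n(s)-f_m(s))\mathbf{1}_A(s),s)| \le 2|U(f(s),s)| + 27 V_0(f(s),s).
\end{equation*}
Then by dominated convergence (using (i) and (iii) as the integrable majorants) I obtain the three Cauchy-type limits (\ref{lim1}), (\ref{lim2}), (\ref{lim3}) for the differences $(f_n - f_m)\mathbf{1}_A$.

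\textbf{Assembling the characteristic function and passing to the limit.} Next I would compute the c.f.\ of $\int_S g_{n,m,A} d\Lambda$ for the simple functions $g_{n,m,A} := (f_n - f_m)\mathbf{1}_A$, splitting the L\'evy--Khintchine integral against $\rho$ into its positive and negative parts $\rho^{+}, \rho^{-}$, and recognizing the resulting integrals as the push-forwards $\tilde{F}^{+}_{g_{n,m,A}}$ and $\tilde{F}^{-}_{g_{n,m,A}}$ — exactly the same chain of equalities displayed in the proof of Theorem \ref{theorem1}, now justified by the disintegration (\ref{F(C)}) of Lemma \ref{l4}. Feeding (\ref{lim1})--(\ref{lim3}) into this expression shows $\hat{\mathcal{L}}(\int_S (f_n-f_m)\mathbf{1}_A d\Lambda)(\theta) \to 1$ for all $\theta \in \mathbb{R}$ and $A \in \sigma(\mathcal{S})$. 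Hence $\{\int_A f_n d\Lambda\}$ is Cauchy in probability, so it converges, $f$ is $\Lambda$-integrable, and (iv) follows by continuity of characteristic functions together with the push-forward definitions of $\tilde{F}^{\pm}_f$.

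\textbf{Anticipated obstacle.} The one place demanding genuine care (rather than a literal transcription) is the measurability and well-definedness of the push-forward measures $\tilde{F}^{+}_f$ and $\tilde{F}^{-}_f$ and the identification of their difference as the quasi-L\'evy measure $F_f$. In the two-ID setting of Theorem \ref{theorem1} this relied on Proposition \ref{pro-discordia}, whereas here it must be read off from Lemma \ref{l4}; I would verify that the $\sigma$-finiteness of $\tilde{F}^{+}, \tilde{F}^{-}$ and the inequalities $\tilde{F}^{\pm}(A,B) \ge F_A^{\pm}(B)$ supplied by Lemma \ref{l4} are indeed sufficient for the same push-forward manipulation to go through. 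As in Theorem \ref{theorem1}, I would expect only the ``if'' direction (not an ``only if''), precisely because the inequality $\int_{\mathbb{R}}(1\wedge x^2)|F_f|(dx) \le \int_S\int_{\mathbb{R}}(1\wedge |f(s)x|^2)|\rho|(s,dx)\lambda(ds)$ may be strict when distinct $(s,x)$ pairs are identified by the map $(s,x)\mapsto f(s)x$ — the same phenomenon discussed in the paragraph following Lemma \ref{l3}.
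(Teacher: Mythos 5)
Your proposal is correct and takes essentially the same approach as the paper: the paper's own proof of Theorem \ref{theorem-general} consists solely of the remark that it ``follows from similar arguments as the ones used in the proof of Theorem \ref{theorem1}'', and your write-up is precisely that argument transposed to the general setting, with Lemma \ref{l4} and Proposition \ref{pr-3} supplying the disintegration $(\ref{F(C)})$ and the representation of $K$ in place of Proposition \ref{pro-discordia} and Proposition \ref{pr-2}. The points you flag explicitly --- Lemma \ref{l3} applying verbatim, the push-forward identification of $\tilde{F}^{\pm}_{f}$, and the reason the result is only an ``if'' --- are exactly the details the paper leaves implicit.
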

\begin{proof}
It follows from similar arguments as the ones used in the proof of Theorem \ref{theorem1}.
\end{proof}
\subsection{The symmetric case}
We conclude this section with a discussion on the symmetric case, namely the case where $\bar{\Lambda}(A)=\Lambda(A)-\Lambda'(A)$ and $\Lambda'(A)$ is an independent copy of $\Lambda(A)$. We start with the following general lemma.
\begin{lem}\label{l2}
	Let $\mu$ be a QID distribution. Then, the symmetrisation and the dual of $\mu$ are QID distributions.
\end{lem}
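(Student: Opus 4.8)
The plan is to argue entirely at the level of characteristic functions, since both operations act on $\hat{\mu}$ in an elementary way. Writing $X\sim\mu$, the dual $\tilde{\mu}$ is the law of $-X$, so $\hat{\mathcal{L}}(\tilde{\mu})(\theta)=\hat{\mu}(-\theta)$, while the symmetrisation $\bar{\mu}$ is the law of $X-X'$ with $X'$ an independent copy of $X$, so $\hat{\mathcal{L}}(\bar{\mu})(\theta)=|\hat{\mu}(\theta)|^{2}=\hat{\mu}(\theta)\hat{\mu}(-\theta)$. Because $\hat{\mu}$ is of the form $\exp(\,\cdot\,)$, each of these is again of exponential form, and the whole task reduces to exhibiting a valid characteristic triplet in each case and checking it is a quasi-L\'{e}vy type triplet. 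Throughout I would use the two elementary facts that $\tau$ is odd, i.e.\ $\tau(-x)=-\tau(x)$ (immediate from the definition of $\tau$), and that $1\wedge x^{2}$ is even.

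For the dual, let $(\gamma,a,\nu)$ be the characteristic triplet of $\mu$ and introduce the reflected set function $\nu^{-}(B):=\nu(-B)$ on $\mathcal{B}_{0}(\mathbb{R})$. First I would substitute $-\theta$ into the L\'{e}vy-Khintchine representation of $\hat{\mu}$ and then change variables $x\mapsto -x$ in the integral; using the oddness of $\tau$ the integrand $e^{-i\theta x}-1+i\theta\tau(x)$ becomes $e^{i\theta y}-1-i\theta\tau(y)$, giving
\begin{equation*}
\hat{\mathcal{L}}(\tilde{\mu})(\theta)=\exp\left(i\theta(-\gamma)-\frac{\theta^{2}}{2}a+\int_{\mathbb{R}}\big(e^{i\theta y}-1-i\theta\tau(y)\big)\nu^{-}(dy)\right),
\end{equation*}
so the candidate triplet is $(-\gamma,a,\nu^{-})$. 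That $\nu^{-}$ is a quasi-L\'{e}vy type measure follows from $|\nu^{-}|(B)=|\nu|(-B)$ together with the evenness of $1\wedge x^{2}$, which yields $\int_{\mathbb{R}}(1\wedge x^{2})|\nu^{-}|(dx)=\int_{\mathbb{R}}(1\wedge x^{2})|\nu|(dx)<\infty$.

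For the symmetrisation I would multiply the exponential representations of $\hat{\mu}(\theta)$ and $\hat{\mu}(-\theta)=\hat{\mathcal{L}}(\tilde{\mu})(\theta)$ and add their characteristic exponents, obtaining the candidate triplet $(0,2a,\nu+\nu^{-})$ via
\begin{equation*}
\hat{\mathcal{L}}(\bar{\mu})(\theta)=\exp\left(-\frac{\theta^{2}}{2}(2a)+\int_{\mathbb{R}}\big(e^{i\theta x}-1-i\theta\tau(x)\big)(\nu+\nu^{-})(dx)\right);
\end{equation*}
here $\nu+\nu^{-}$ is a quasi-L\'{e}vy type measure by subadditivity of the total variation, $|\nu+\nu^{-}|\le|\nu|+|\nu^{-}|$, whence $\int_{\mathbb{R}}(1\wedge x^{2})|\nu+\nu^{-}|(dx)\le 2\int_{\mathbb{R}}(1\wedge x^{2})|\nu|(dx)<\infty$. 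To conclude, I would observe that $\tilde{\mu}$ and $\bar{\mu}$ are genuine probability distributions whose characteristic functions have now been cast in L\'{e}vy-Khintchine form with a quasi-L\'{e}vy type measure and $a,2a\ge 0$; by the very definition of QID this makes them QID, and a posteriori $\nu^{-}$ and $\nu+\nu^{-}$ are quasi-L\'{e}vy measures. The only point demanding genuine care is precisely this measure-theoretic bookkeeping — verifying the integrability condition $\int(1\wedge x^{2})|\,\cdot\,|(dx)<\infty$ for the reflected and summed measures — which is exactly where the evenness of $1\wedge x^{2}$ and subadditivity of the total variation are invoked.
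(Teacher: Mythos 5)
Your proposal is correct and follows essentially the same route as the paper: the dual is handled by reflecting the characteristic triplet, and the symmetrisation is identified with $\hat{\mu}(\theta)\hat{\mu}(-\theta)$, i.e.~with the convolution $\mu*\tilde{\mu}$. The only difference is one of packaging: where the paper simply cites closure of the QID class under convolution (Remark 2.6 of \cite{LPS}), you verify that instance explicitly by exhibiting the triplet $(0,2a,\nu+\nu^{-})$ and checking $\int_{\mathbb{R}}(1\wedge x^{2})|\nu+\nu^{-}|(dx)<\infty$ via subadditivity of the total variation — a harmless, self-contained expansion of the same argument.
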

\begin{proof}
	The dual is straightforward. Regarding the symmetrisation, we have $|\mu|^{2}=\mu*\tilde{\mu}$, where $\tilde{\mu}$ denotes the dual of $\mu$. Since the class of QID distributions is closed under convolution (see Remark 2.6 of \cite{LPS}), then $|\mu|^{2}$ is QID.
\end{proof}
Then, we have the following result in the framework of Section \ref{bounded}.
\begin{pro} Let $\bar{\Lambda}(A)=\Lambda(A)-\Lambda'(A)$ where $ \Lambda'(A)$ is an independent copy of $\Lambda(A)$ for every $A\in\mathcal{S}$. Then, $\bar{\Lambda}$ is a QID random measure.
Further, under the setting of Proposition \ref{pro-le-misure-in-modo-generale}, for an arbitrary function $f:S\mapsto \mathbb{R}$, $f$ is $\bar{\Lambda}$-integrable if and only if it is $\Lambda$-integrable.
\end{pro}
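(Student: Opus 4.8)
The plan is to treat the two assertions separately, exploiting the symmetrisation structure throughout. For the first assertion I would first verify that $\bar\Lambda$ inherits the independently scattered property. Since $\Lambda'$ is an independent copy of $\Lambda$, the whole collection $\{\Lambda(A_i),\Lambda'(A_j)\}$ is mutually independent whenever the $A_i$ are disjoint, so the pairs $(\Lambda(A_i),\Lambda'(A_i))$ are independent across $i$ and hence so are the differences $\bar\Lambda(A_i)=\Lambda(A_i)-\Lambda'(A_i)$; countable additivity of $\bar\Lambda$ follows termwise from that of $\Lambda$ and $\Lambda'$. That each $\bar\Lambda(A)$ is QID is exactly Lemma \ref{l2}, since $\mathcal{L}(\bar\Lambda(A))=\mathcal{L}(\Lambda(A))\ast\widetilde{\mathcal{L}(\Lambda(A))}$ is the symmetrisation of a QID law. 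A short computation with the L\'evy--Khintchine exponent gives $\hat{\mathcal{L}}(\bar\Lambda(A))(\theta)=|\hat{\mathcal{L}}(\Lambda(A))(\theta)|^{2}$ and identifies the characteristic triplet of $\bar\Lambda(A)$ as $(0,\,2\nu_{1}(A),\,\bar F_{A})$, where $\bar F_{A}=F_{A}+\widetilde F_{A}$ and $\widetilde F_{A}(B)=F_{A}(-B)$; the drift vanishes because $\tau$ is odd, so the two drift contributions cancel.

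For the integrability equivalence the central identity is $\hat{\mathcal{L}}(\int_{A}f\,d\bar\Lambda)(\theta)=|\hat{\mathcal{L}}(\int_{A}f\,d\Lambda)(\theta)|^{2}$, valid first for simple $f$ (where $\int_{A}f\,d\bar\Lambda=\int_{A}f\,d\Lambda-\int_{A}f\,d\Lambda'$ is a difference of two i.i.d. summands) and then passed to the limit. The easy direction ($\Lambda$-integrable $\Rightarrow$ $\bar\Lambda$-integrable) uses the same approximating sequence $\{f_n\}$: one checks $\bar\lambda\ll\lambda$ (indeed $\bar\nu_1=2\nu_1$, and by the reflection argument the variation $\bar\nu$ of $\bar F$ satisfies $\bar\nu\le 2\nu\le 2\lambda$ since disjointness of the rectangles $A_i\times B_i$ is preserved under $x\mapsto -x$), so $f_n\to f$ holds $\bar\lambda$-a.e., and $\int_A f_n\,d\bar\Lambda$ converges in probability as the difference of two independent convergent-in-probability sequences.

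The converse is the substantive direction, and here I would invoke the ``if and only if'' criterion of Theorem \ref{theorem-bounded}, noting first that $\bar\Lambda$ again lives in the bounded setting (its variation $\bar\nu(S)\le 2\nu(S)<\infty$). In this setting $F$ and $\bar F$ are \emph{finite} signed measures, so condition (iii), $\int_S V_0(f(s),s)\,\lambda(ds)\le|F|(S\times\mathbb{R})<\infty$, is automatic for both. Condition (ii) coincides for the two measures up to the factor $2$, as $\int_S|f|^2\sigma^2\,d\lambda=\int_S|f|^2\,d\nu_1$ and $\bar\nu_1=2\nu_1$. Finally, because $\tau$ is odd and $\bar\rho(s,\cdot)$ is symmetric, the integrand $\tau(xu)-u\tau(x)$ defining $\bar U$ is odd in $x$ and integrates to zero, while $\bar a=d\bar\nu_0/d\bar\lambda=0$; hence $\bar U\equiv 0$ and condition (i) for $\bar\Lambda$ is vacuous. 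Thus $\bar\Lambda$-integrability reduces to $\int_S|f|^2\,d\nu_1<\infty$, exactly condition (ii) for $\Lambda$.

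The main obstacle is therefore recovering condition (i) for $\Lambda$, $\int_S|U(f(s),s)|\,\lambda(ds)<\infty$: this is the genuinely \emph{asymmetric} (drift-like) part of the criterion --- the contribution $\int_S|f|\,d|\nu_0|$ together with the odd part of the jump integral --- and it is precisely what symmetrisation annihilates. I would control it by translating the convergence of $|\hat{\mathcal{L}}(\int_A f_n\,d\Lambda)|^2$ back into a statement about $\int_A f_n\,d\Lambda$ itself: convergence of the symmetrised characteristic functions to a limit continuous at the origin yields shift-tightness of $\{\int_A f_n\,d\Lambda\}$, which one must upgrade to convergence in probability by showing the centerings converge, i.e. that the drift functionals $\int_A U(f_n,s)\,\lambda(ds)$ are Cauchy. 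I expect this centering step --- reconstructing the first-order term that symmetrisation cancels --- to be the crux, and the place where the finiteness of $F$ and the bounded structure must be used most carefully; morally the equivalence holds because integrability is governed by the symmetric (Gaussian plus total-variation) part of the triplet, which $\Lambda$ and $\bar\Lambda$ share.
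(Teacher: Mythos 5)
Your treatment of the first assertion and of the easy implication coincides with the paper's (you are in fact more careful, since you verify the independently scattered and countably additive properties, which the paper takes for granted): Lemma \ref{l2} gives QID-ness of each $\bar{\Lambda}(A)$, and the identity $\hat{\mathcal{L}}(\bar{\Lambda}(A))(\theta)=|\hat{\mathcal{L}}(\Lambda(A))(\theta)|^{2}$ with symmetrised triplet $(0,2\nu_{1}(A),F_{A}+\widetilde{F}_{A})$, $\widetilde{F}_{A}(B):=F_{A}(-B)$, is exactly the paper's computation. The paper then concludes in a single sentence, ``by applying Theorem \ref{theorem-bounded} we obtain the stated result.''

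The genuine gap in your proposal is the converse implication ($\bar{\Lambda}$-integrable $\Rightarrow$ $\Lambda$-integrable), which you leave as a sketch; but you should know that the repair you outline --- upgrading shift-tightness to convergence by showing the centerings $\int_{A}U(f_{n}(s),s)\,\lambda(ds)$ are Cauchy --- cannot be carried out, because that implication is false in the stated generality. Take $S=\mathbb{N}$, $\mathcal{S}=2^{\mathbb{N}}$, and $\Lambda$ deterministic with $\Lambda(\{n\})=2^{-n}$ a.s., so that $\nu_{0}(\{n\})=2^{-n}$, $\nu_{1}=0$, $F\equiv 0$, $\lambda=\nu_{0}$, and every hypothesis of Proposition \ref{pro-le-misure-in-modo-generale} holds trivially. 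Then $\bar{\Lambda}\equiv 0$ a.s., so every measurable $f$ is $\bar{\Lambda}$-integrable (take $f_{m}=f\mathbf{1}_{\{1,\dots,m\}}$ in Definition \ref{def-integral}; all the integrals vanish). Yet for $f(n)=2^{n}/n$ one has $\rho\equiv 0$, $a\equiv 1$, hence $U(u,s)=u$ and
\begin{equation*}
\int_{S}|U(f(s),s)|\,\lambda(ds)=\sum_{n=1}^{\infty}\frac{1}{n}=\infty,
\end{equation*}
so condition (i) of Theorem \ref{theorem-bounded} fails and $f$ is not $\Lambda$-integrable; one can also see this without invoking that theorem, since setwise convergence of $\mu_{m}(A)=\sum_{n\in A}f_{m}(n)2^{-n}$ for all $A\subseteq\mathbb{N}$ would force, via Vitali--Hahn--Saks, $\sum_{n}|f(n)|2^{-n}<\infty$. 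Your structural diagnosis is therefore exactly right: in the bounded setting $\bar{\Lambda}$-integrability is equivalent to condition (ii) alone (since $\bar{U}\equiv 0$ and (iii) is automatic), while $\Lambda$-integrability additionally requires the drift condition (i), which symmetrisation destroys irrecoverably. The missing step in your proposal is thus not a missing idea but an obstruction: the ``only if'' half of the proposition needs an extra hypothesis (for instance $\nu_{0}\equiv 0$ and $\rho(s,\cdot)$ symmetric $\lambda$-a.e., or $f\in L^{1}(\lambda)$). Note also that the paper's own one-line appeal to Theorem \ref{theorem-bounded} makes precisely the leap you refused to make: it matches the symmetric parts of the two triplets and silently drops condition (i) for $\Lambda$.
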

\begin{proof}
	By Lemma \ref{l2} we have that $\bar{\Lambda}(A)$ is a QID r.v.~fore every $A\in\mathcal{A}$. Moreover, using the notations used before and Proposition 2.5 in \cite{Sato}, we have that
\begin{equation*}
\hat{\mathcal{L}}\left(\bar{\Lambda}(A)\right)(\theta)=|\hat{\mathcal{L}}\left(\Lambda(A)\right)(\theta)|^{2}=\exp\left(-\theta^{2}\nu_{1}(A)+2\int_{\mathbb{R}}\cos(\theta x)-1 F_{A}(dx)\right).
\end{equation*}
\begin{equation*}
=\exp\left(\int_{A}\left(-\theta^{2}\sigma^{2}(s)+2\int_{\mathbb{R}}\cos(\theta x)-1\rho(s,dx)\right)\lambda(ds) \right).
\end{equation*}
Then, by applying Theorem \ref{theorem-bounded} we obtain the stated result.
\end{proof}
\begin{rem}
	An ``if" result for the framework of Section \ref{Section-geerating two ID} is also possible if $f$ is $\Lambda_{G}$ and $\Lambda_{M}$ integrable, where $\Lambda_{G}$ and $\Lambda_{M}$ are the ID r.m.~that generate $\Lambda$. 
\end{rem}
\section{Continuity of the stochastic integral mapping}\label{Ch-Continuity}
In this section we are going to explore the set of $\Lambda$-integrable functions and show a continuity property of the linear the stochastic integral mapping $f\rightarrow\int_{S}fd\Lambda$ from this space into the $L_{p}$ space (more precisely to $L_{p}(\Omega,\mathbb{P})$). In particular, the space of integrable function is a subset of the corresponding Musielak-Orlicz modular space defined in Chapter III of \cite{RajRos}.
We begin with some preliminaries. Let $q\in[0,\infty)$. Consider the following condition:
\begin{equation*}
\mathbb{E}[|\Lambda(A)|^{q}]<\infty,\quad\quad\textnormal{for all $A\in\mathcal{S}$}.
\end{equation*}
Observe that for $q=0$ every $\Lambda$ satisfies this condition. Throughout this section, we shall assume that the above condition is satisfied. Further, we assume that for all $s\in S$
\begin{equation}\label{ass}
\int_{|x|>1}|x|^{q}\rho^{+}(s,dx)<\infty.
\end{equation}
From the arguments in the previous section we have that
\begin{equation*}
\int_{A}\int_{|x|>1}|x|^{q}\rho^{+}(s,dx)\lambda(ds)\geq \int_{|x|>1}|x|^{q}F_{A}^{+}(dx)
\end{equation*}
and then by Theorem 6.2 point (a) in \cite{LPS} the assumption (\ref{ass}) implies that 
\begin{equation*}
\int_{|x|>1}|x|^{q}F_{A}^{-}(dx)<\infty\quad\textnormal{and}\quad\int_{\mathbb{R}}|x|^{q}\mathcal{L}(\Lambda(A))(dx)<\infty.
\end{equation*}
Observe that while Theorem 6.2 point (a) in \cite{LPS} is stated for the centering function $\tau(x)=x\textbf{1}_{|x|\leq 1}$, the result holds for any centering function since the proof is based on results on ID distributions with no restrictions on the choice of the centering function (see Theorem 25.3 of \cite{Sato}).\\
Now, define, for $0\leq p\leq q$, $u\in\mathbb{R}$ and $s\in S$,
\begin{equation}\label{Phi}
\Phi_{p}(u,s)=U^{*}(u,s)+u^{2}\sigma^{2}(s)+V_{p}(u,s),
\end{equation}
where
\begin{equation*}
U^{*}(u,s)=\sup\limits_{|c|\leq 1}|U(cu,s)|\quad\textnormal{and}\quad V_{p}(u,s)=\int_{\mathbb{R}}|xu|^{p}\textbf{1}_{|xu|>1}(x) +|xu|^{2}\textbf{1}_{|xu|\leq 1}(x)|\rho|(s,dx).
\end{equation*}
In the following lemma, which is an equivalent of Lemma 3.1 in \cite{RajRos} in our framework, we abuse the notation and consider $\lambda$ to take all the different formulations it has taken in this work, namely $(\ref{lambda1})$, $(\ref{lambda2})$ and $(\ref{lambda3})$.
\begin{lem}
	The following are satisfied:
	\\\textnormal{(i)} for every $s\in S$, $\Phi_{p}(\cdot,s)$ is a continuous non-decreasing function on $[0,\infty)$ with $\Phi_{p}(0,s)=0$,
	\\\textnormal{(ii)} $\lambda(\{s:\Phi_{p}(u,s)=0\,\,\,\textnormal{for some $u=u(s)\neq 0$} \})=0$,
	\\\textnormal{(iii)} there exists a constant $C>0$ such that $\Phi_{p}(2u,s)\leq C \Phi_{p}(u,s)$, for all $u\geq 0$ and $s\in S$.
\end{lem}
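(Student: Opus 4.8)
The plan is to mirror the proof of Lemma 3.1 in \cite{RajRos}, treating the three summands of $\Phi_{p}$ in $(\ref{Phi})$ separately and reducing each assertion either to a scalar estimate in the variable $t=|xu|$ or to the normalization identity supplied by Proposition \ref{pr-2} (equivalently Proposition \ref{pr-3-bounded} or Proposition \ref{pr-3}, according to which of $(\ref{lambda1})$, $(\ref{lambda2})$, $(\ref{lambda3})$ is in force). First I would record that $U(0,s)=0$ and that the integrand of $V_{p}$ vanishes at $u=0$, so that $\Phi_{p}(0,s)=0$; this disposes of the value at the origin. It is convenient to abbreviate $\psi(t)=t^{p}\textbf{1}_{t>1}+t^{2}\textbf{1}_{t\leq 1}$, so that $V_{p}(u,s)=\int_{\mathbb{R}}\psi(|xu|)\,|\rho|(s,dx)$, and to note that $\psi$ is continuous (the two branches agree at $t=1$) and non-decreasing on $[0,\infty)$.

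For (i), the substantive point is continuity. I would show $U(\cdot,s)$ is continuous by dominated convergence: the integrand $\tau(xu)-u\tau(x)$ is continuous in $u$ for each $x$ and, crucially, it vanishes on $\{|x|\leq 1,\ |xu|\leq 1\}$ since there $\tau(xu)-u\tau(x)=xu-ux=0$; hence for $u$ in a bounded interval $[0,M]$ it is supported on $\{|x|>\min(1,1/M)\}$, where $|\rho|(s,\cdot)$ is finite and the integrand is bounded by $1+M$, giving an integrable dominating function. Continuity of $U(\cdot,s)$ then upgrades to continuity of $U^{*}(u,s)=\sup_{|v|\leq u}|U(v,s)|$, which is a running supremum of a continuous function over growing compact intervals, hence continuous and non-decreasing on $[0,\infty)$. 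For $V_{p}$, monotonicity in $u\geq 0$ is immediate from that of $\psi$, and continuity again follows from dominated convergence with dominating function $\psi(|xM|)$, which is $|\rho|(s,\cdot)$-integrable by $\int_{\mathbb{R}}(1\wedge x^{2})|\rho|(s,dx)<\infty$ together with the moment assumption $(\ref{ass})$ and its consequences, giving $\int_{|x|>1}|x|^{p}|\rho|(s,dx)<\infty$ for $p\leq q$. Since $u^{2}\sigma^{2}(s)$ is trivially continuous and non-decreasing, (i) follows.

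For (iii) I would prove the bound termwise. By Lemma \ref{l3} with $d=2$, $U^{*}(2u,s)=\sup_{|c|\leq 2}|U(cu,s)|\leq 2|U(u,s)|+27V_{0}(u,s)\leq 2U^{*}(u,s)+27V_{p}(u,s)$, where $V_{0}\leq V_{p}$ because $1\wedge t^{2}\leq\psi(t)$ for all $t\geq 0$. Trivially $(2u)^{2}\sigma^{2}(s)=4u^{2}\sigma^{2}(s)$. For the last term I would verify the scalar inequality $\psi(2t)\leq\max(4,2^{p})\psi(t)$ by checking the three regimes $t\leq 1/2$, $1/2<t\leq 1$, $t>1$; integrating against $|\rho|(s,\cdot)$ yields $V_{p}(2u,s)\leq\max(4,2^{p})V_{p}(u,s)$. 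Summing gives $\Phi_{p}(2u,s)\leq C\,\Phi_{p}(u,s)$ with $C=27+\max(4,2^{p})$.

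For (ii), suppose $\Phi_{p}(u,s)=0$ for some $u\neq 0$. Then $u^{2}\sigma^{2}(s)=0$ forces $\sigma^{2}(s)=0$; since $\psi(|xu|)>0$ for $x\neq 0$, $V_{p}(u,s)=0$ forces $\int_{\mathbb{R}}(1\wedge x^{2})|\rho|(s,dx)=0$; and then $U^{*}(u,s)=0$ reduces to $|v\,a(s)|=0$ for $|v|\leq u$, so $a(s)=0$. I expect the main obstacle here: I must convert $\int_{\mathbb{R}}(1\wedge x^{2})|\rho|(s,dx)=0$ into the vanishing of the third term $\tfrac{d\xi}{d\lambda}(s)$ of the normalization identity, in order to contradict $|a(s)|+\sigma^{2}(s)+\tfrac{d\xi}{d\lambda}(s)=1$ ($\lambda$-a.e.). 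Using the explicit form $\rho^{\pm}(s,dx)=\tfrac{d\xi}{d\lambda}(s)(1\wedge x^{2})^{-1}\tilde{q}_{\pm}(s,dx)$ from Lemma \ref{l4}, one gets $\int_{\mathbb{R}}(1\wedge x^{2})|\rho|(s,dx)=\tfrac{d\xi}{d\lambda}(s)\,|q|(s,\mathbb{R})$; since $q(s,\cdot)\neq 0$ for $\xi$-a.e.\ $s$ (otherwise such $s$ could not contribute to the supremum defining $\xi$), the vanishing of the left side forces $\tfrac{d\xi}{d\lambda}(s)=0$ $\lambda$-a.e. Hence on the exceptional set the normalization would read $0=1$, so the set is $\lambda$-null and (ii) holds. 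The delicate case is that of Section \ref{Section-geerating two ID}, where the third term is $\int(1\wedge x^{2})\rho_{G}+\int(1\wedge x^{2})\rho_{M}$ rather than a multiple of $\int(1\wedge x^{2})|\rho|$; there one obtains the contradiction by taking the generating pair minimal (so that $\rho_{G}=\rho^{+}$, $\rho_{M}=\rho^{-}$ and $|\rho|=\rho_{G}+\rho_{M}$), which is exactly the point requiring care.
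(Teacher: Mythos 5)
Your proofs of (i) and (iii) are correct and essentially the paper's own: continuity of $U(\cdot,s)$, hence of $U^{*}(\cdot,s)$, by dominated convergence, monotonicity of the integrand of $V_{p}$, and for (iii) Lemma \ref{l3} with $d=2$ together with the elementary doubling inequality for $\psi$; your constant $27+\max(4,2^{p})$ versus the paper's $2^{p}+31$ is immaterial. (One shared caveat: assumption (\ref{ass}) controls only $\rho^{+}$, so your claim that it yields $\int_{|x|>1}|x|^{p}|\rho|(s,dx)<\infty$ needs the corresponding bound for $\rho^{-}$; the paper leaves this equally implicit.) For (ii), in the settings (\ref{lambda2}) and (\ref{lambda3}) your argument is correct but organized differently: you contradict the normalization identities of Propositions \ref{pr-3-bounded} and \ref{pr-3}, which obliges you to prove the auxiliary fact that $q(s,\cdot)\neq 0$ off a null set; the paper instead shows directly that every summand of $\lambda(S_{0})$ vanishes, by substituting $F_{A_{i}}(B_{i})=\int_{A_{i}}\int_{B_{i}}\rho(s,dx)\lambda(ds)$ into the suprema defining $\nu(S_{0})$ and $\xi(S_{0})$, which makes that auxiliary step unnecessary. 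Both routes work in those two cases.

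The genuine gap is the case (\ref{lambda1}) of Section \ref{Section-geerating two ID}, which you correctly singled out as the delicate one, but your fix does not work. You cannot "take the generating pair minimal": first, the minimal candidate $G_{A}=F^{+}_{A}$, $M_{A}=F^{-}_{A}$ need not be an admissible generating pair at all, since $A\mapsto F^{+}_{A}(B)$ need not be a measure — this is precisely the obstruction motivating Lemma \ref{pr1-2} and the integer-supported example at the end of Section \ref{chapter-IDvQID}; second, even when a minimal pair exists, replacing $(G,M)$ by it changes the control measure (\ref{lambda1}) itself, so you would be proving (ii) for a different $\lambda$ than the one in the statement. In fact no repair is possible, because for (\ref{lambda1}) claim (ii) is false as stated: take $\nu_{0}=\nu_{1}=0$, $S=[0,1]$, and $G_{A}=M_{A}=\textnormal{Leb}(A)\,\delta_{1}$; then $F\equiv 0$, so $\Lambda\equiv 0$, $\rho\equiv 0$, $\Phi_{p}\equiv 0$ and $S_{0}=S$, while $\lambda(S)=2\,\textnormal{Leb}([0,1])>0$. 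The root cause is that $|\rho|\leq \rho_{G}+\rho_{M}$ can be strict (here $|\rho|=0$ but $\rho_{G}=\rho_{M}\neq 0$), so vanishing of $|\rho|(s,\cdot)$ on $S_{0}$ says nothing about the $G$- and $M$-contributions to (\ref{lambda1}). Note that the paper's own proof has the same defect — its final displays control only $\int_{S_{0}}\int_{\mathbb{R}}(1\wedge x^{2})|\rho|(s,dx)\lambda(ds)$, $\nu(S_{0})$ and $\xi(S_{0})$, never the term $\int(1\wedge x^{2})G_{A}(dx)+\int(1\wedge x^{2})M_{A}(dx)$ — so (ii) should be read as holding for (\ref{lambda2}) and (\ref{lambda3}), or for (\ref{lambda1}) under the additional hypothesis $|\rho|=\rho_{G}+\rho_{M}$ $\lambda$-a.e.; you were right to isolate the problem, but wrong that minimality can be assumed without loss of generality.
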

\begin{proof}
	Points (i) and (iii) are proved adapting the same arguments of the proof of the points (i) and (iii) of Lemma 3.1 in \cite{RajRos} to our framework. We sketch them for the sake of completeness. First, it is easy to prove that, for any fixed $s$, $U(\cdot,s)$ is continuous and so that $U^{*}(\cdot,s)$ is continuous. Then by dominated convergence theorem, we obtain the continuity of $\Phi_{p}(\cdot,s)$. Further, notice that $U^{*}(\cdot,s)$ is non-decreasing and that 
	\begin{equation}\label{min-max}
|ux|^{p}I(|ux|>1)+|xu|^{2}I(|xu|\leq 1)=\begin{cases}
(|xu|^{p}\wedge|xu|^{2})\quad\textnormal{if $0\leq p\leq2$,}\\ (|xu|^{p}\vee|xu|^{2})\quad\textnormal{if $ p>2$.}\\
\end{cases}
	\end{equation}
	is increasing in $x\geq0$. To prove (iii) observe that from (\ref{min-max}) and Lemma \ref{l3} we have
	\begin{equation*}
	\Phi_{p}(2u,s)\leq 2|U(u,s)|+27V_{0}(u,s)+4u^{2}\sigma^{2}(s)+(2^{p}+4)V_{p}(u,s)\leq (2^{p}+31)\Phi_{p}(u,s).
	\end{equation*}
	To prove point (ii) we proceed as follow. Recall that we have different formulations of $\lambda$. If $\Phi_{p}(u,s)=0$ for some $u=u(s)\neq 0$ then $\sigma^{2}(s)=0$, $|\rho|(s,\mathbb{R})=0$ and $U(u,s)=0$. The last two equalities imply that $a(s)=0$. Thus,
	\begin{equation*}
S_{0}:=\{s:\Phi_{p}(u,s)=0\,\,\,\textnormal{for some $u=u(s)\neq 0$}  \}=\{s:a(s)=\sigma^{2}(s)=|\rho|(s,\mathbb{R})=0 \},
	\end{equation*}
	which shows also that $S_{0}$ is a measurable set. Let $A$ be any measurable set in $S_{0}$ and notice that $\nu_{0}(A)=\int_{A}a(s)\lambda(ds)=0$, hence we obtain that $|\nu_{0}|(S_{0})=0$.
\\ Now, observe that given the above arguments we have that
\begin{equation*}
\int_{S_{0}}\int_{\mathbb{R}}(1\wedge |x|^{2})|\rho|(s,dx)\lambda(ds)=0,
\end{equation*}
\begin{equation*}
\nu(S_{0})=\sup_{I_{S_{0}}}\sum_{i\in I_{S_{0}}}|F_{A_{i}}(B_{i})|=\sup_{I_{S_{0}}}\sum_{i\in I_{S_{0}}}\bigg|\int_{A_{i}}\int_{B_{i}}\rho(s,dx)\lambda(ds)\bigg|=0,
\end{equation*}
and
\begin{equation*}
\xi(S_{0})=\sup_{I_{S_{0}}}\sum_{i\in I_{S_{0}}}|\int_{B_{i}}(1\wedge x^{2})F_{A_{i}}(dx)|=\sup_{I_{S_{0}}}\sum_{i\in I_{S_{0}}}\bigg|\int_{A_{i}}\int_{B_{i}}(1\wedge x^{2})\rho(s,dx)\lambda(ds)\bigg|=0.
\end{equation*}
Therefore, we get (in its different formulations) that $\lambda(S_{0})=0$.
\end{proof}
\begin{lem}
Let $\{\mu_{n}\}$ be a sequence of QID probability distributions on $\mathbb{R}$ with c.t.~$(a_{n}, \sigma_{n},G_{n})$. Let $a_{n}\rightarrow0$, $\sigma^{2}_{n}\rightarrow0$ ad $\int_{\mathbb{R}}(1\wedge |x|^{2})|G_{n}|(dx)\rightarrow0$. Assume that $\int_{|x|>1}|x|^{b}|G_{n}|(dx)<\infty$ for all $n\in\mathbb{N}$. Then, for every $b>0$,
\begin{equation*}
\int_{|x|>1}|x|^{b}|G_{n}|(dx)\rightarrow0\Rightarrow\int_{\mathbb{R}}|x|^{b}\mu_{n}(dx)\rightarrow0.
\end{equation*}
\end{lem}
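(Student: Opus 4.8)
The plan is to reduce the statement to the infinitely divisible (ID) case by splitting the quasi-Lévy measure into its positive and negative parts, and then to recombine the two resulting ID laws by a triangle-inequality argument.

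First I would write $G_n = G_n^{+} - G_n^{-}$, where by Lemma \ref{lemma-last?} both $G_n^{+}$ and $G_n^{-}$ are genuine Lévy measures. Since $|G_n| = G_n^{+} + G_n^{-}$, the hypotheses transfer to each part: $\int_{\mathbb{R}}(1\wedge|x|^2)G_n^{\pm}(dx)\to 0$ and $\int_{|x|>1}|x|^b G_n^{\pm}(dx)\to 0$, with the latter finite for every $n$. I would then introduce the ID distributions $\mu_n^{(1)}\sim(a_n,\sigma_n^2,G_n^{+})$ and $\mu_n^{(2)}\sim(0,0,G_n^{-})$, which exist because $G_n^{\pm}$ are Lévy measures, and observe that comparing the Lévy--Khintchine exponents (using $G_n+G_n^{-}=G_n^{+}$) gives $\hat{\mathcal{L}}(\mu_n)\cdot\hat{\mathcal{L}}(\mu_n^{(2)})=\hat{\mathcal{L}}(\mu_n^{(1)})$, that is $\mu_n*\mu_n^{(2)}=\mu_n^{(1)}$.

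The heart of the argument is the ID version of the claim: if an ID law $\nu_n$ has triplet $(\gamma_n,\Sigma_n,L_n)$ with $\gamma_n\to0$, $\Sigma_n\to0$, $\int_{\mathbb{R}}(1\wedge|x|^2)L_n(dx)\to0$ and $\int_{|x|>1}|x|^b L_n(dx)\to0$ (finite), then $\int_{\mathbb{R}}|x|^b\nu_n(dx)\to0$. I would prove this via the Lévy--Itô decomposition, writing $\nu_n$ as the law of a sum of an independent large-jump part (compound Poisson with Lévy measure $L_n\mathbf{1}_{\{|x|>1\}}$) and a small-jump-plus-Gaussian-plus-drift part. The large-jump part has intensity $L_n(\{|x|>1\})\le\int_{|x|>1}|x|^b L_n(dx)\to0$, and its $b$-th absolute moment tends to $0$ as both this intensity and $\int_{|x|>1}|x|^b L_n(dx)$ tend to $0$. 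The remaining part converges weakly to $\delta_0$ and, having jumps bounded by $1$, has all its cumulants controlled by $\gamma_n$, $\Sigma_n$ and $\int_{|x|\le1}|x|^k L_n(dx)$, so its $b$-th moment also tends to $0$; Theorem 25.3 of \cite{Sato} (equivalently Theorem 6.2(a) of \cite{LPS}) guarantees finiteness of all moments involved. Applying this to $\mu_n^{(1)}$ and to $\mu_n^{(2)}$ yields $\int_{\mathbb{R}}|x|^b\mu_n^{(1)}(dx)\to0$ and $\int_{\mathbb{R}}|x|^b\mu_n^{(2)}(dx)\to0$. I expect this ID moment-convergence step to be the main obstacle, the delicate point being the uniform control of the small-jump part at moments $b>2$.

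Finally I would recombine. On a common probability space take independent $X_n\sim\mu_n$ and $Y_n\sim\mu_n^{(2)}$ and set $Z_n:=X_n+Y_n\sim\mu_n^{(1)}$, so that $X_n=Z_n-Y_n$ pointwise. For $b\ge1$ Minkowski's inequality gives $\|X_n\|_b\le\|Z_n\|_b+\|Y_n\|_b\to0$, while for $0<b<1$ the subadditivity of $t\mapsto t^b$ gives $\mathbb{E}|X_n|^b\le\mathbb{E}|Z_n|^b+\mathbb{E}|Y_n|^b\to0$; neither estimate uses independence of $Z_n$ and $Y_n$. In both regimes $\int_{\mathbb{R}}|x|^b\mu_n(dx)=\mathbb{E}|X_n|^b\to0$, which is the assertion.
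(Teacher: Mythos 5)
Your proposal is correct and follows essentially the same route as the paper's proof: split $G_{n}$ into its positive and negative parts, form the ID laws $\mu_{n}^{+}\sim(a_{n},\sigma_{n}^{2},G_{n}^{+})$ and $\mu_{n}^{-}\sim(0,0,G_{n}^{-})$, observe the convolution identity $\mu_{n}\ast\mu_{n}^{-}=\mu_{n}^{+}$, apply the ID moment-convergence result to each part, and recombine via subadditivity (for $b\leq 1$) or a triangle-type inequality (for $b>1$). The only differences are that the paper obtains the ID step by directly citing Lemma 3.2 of \cite{RajRos} rather than re-deriving it via the L\'{e}vy--It\^{o} decomposition as you sketch, and that your Minkowski estimate for $b\geq 1$ is actually a cleaner recombination than the paper's bound through $\lceil b\rceil$-th moments.
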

\begin{proof}
	For every $n\in\mathbb{N}$, let $\mu_{n}^{+}$ and $\mu_{n}^{-}$ the ID distributions with c.t.~$(a_{n},\sigma^{2}_{n}, G_{n}^{+})$ and $(0,0, G_{n}^{-})$ respectively. Then, from Lemma 3.2 in \cite{RajRos} we obtain that
	\begin{equation*}
	\int_{|x|>1}|x|^{b}G_{n}^{+}(dx)\rightarrow0\Rightarrow\int_{\mathbb{R}}|x|^{b}\mu^{+}_{n}(dx)\rightarrow0.
	\end{equation*}
	and
	\begin{equation*}
	\int_{|x|>1}|x|^{b}G_{n}^{-}(dx)\rightarrow0\Rightarrow\int_{\mathbb{R}}|x|^{b}\mu^{-}_{n}(dx)\rightarrow0.
	\end{equation*}
	
	Now, for every $n\in\mathbb{N}$, let $X_{n}$, $X^{+}_{n}$ and $X^{-}_{n}$ be the real valued r.v.~with distributions $\mu_{n}$, $\mu^{+}_{n}$ and $\mu^{-}_{n}$, respectively; thus, $X_{n}+X^{-}_{n}\stackrel{d}{=}X^{+}_{n}$ with $X_{n}$ and $X^{-}_{n}$ independent. Notice that for $b\leq 1$ we have
	\begin{equation*}
	\mathbb{E}[|X_{n}|^{b}]\leq \mathbb{E}[|X_{n}+X_{n}^{-}|^{b}]+ \mathbb{E}[|X_{n}^{-}|^{b}]= \mathbb{E}[|X_{n}^{+}|^{b}]+\mathbb{E}[|X_{n}^{-}|^{b}]
	\end{equation*}
	while for $b>1$ we have
		\begin{equation*}
		\mathbb{E}[|X_{n}|^{b}]\leq \lceil b\rceil\left(\mathbb{E}[|X_{n}+X_{n}^{-}|^{\lceil b\rceil}]+ \mathbb{E}[|X_{n}^{-}|^{\lceil b\rceil}]\right)=\lceil b\rceil\left(\mathbb{E}[|X_{n}^{+}|^{\lceil b\rceil}]+ \mathbb{E}[|X_{n}^{-}|^{\lceil b\rceil}]\right)
		\end{equation*}
		where $\lceil b\rceil$stands for the lowest natural number greater than $b$. Finally, by sending $n\rightarrow\infty$ we obtain the stated result.
\end{proof}
Before presenting the main result of this section, we need some preliminaries.
\\ Define the Musielak-Orlicz space as in \cite{RajRos}:
\begin{equation*}
L_{\Phi_{p}}(S;\lambda)=\left\{f\in L_{0}(S;\lambda):\,\,\,\int_{S}\Phi_{p}(|f(s)|,s)\lambda(ds)<\infty \right\}.
\end{equation*}
The space $L_{\Phi_{p}}(S;\lambda)$ is a complete linear metric space with the $F$-norm defined by 
\begin{equation*}
\| f\|_{\Phi_{p}}=\inf\limits_{c>0}\left\{\int_{S}\Phi_{p}(c^{-1}|f(s)|,s)\lambda(ds)\leq c \right\}.
\end{equation*}
Simple functions are dense in $L_{\Phi_{p}}(S;\lambda)$ and $L_{\Phi_{p}}(S;\lambda)\hookrightarrow L_{0}(S;\lambda)$ is continuous, where in the present case $L_{0}(S;\lambda)$ is equipped with the topology of convergence in $\lambda$ measure on every set of finite $\lambda$-measure. Moreover, $\| f_{n}\|_{\Phi_{p}}\rightarrow0\Leftrightarrow \int_{S}\Phi_{p}(|f(s)|,s)\lambda(ds)\rightarrow0$.
\\ We remark that the above definitions and results hold both in the case $\lambda$ is atomless and when it is not (see Chapter III of \cite{RajRos} and Musielak's book \cite{Musielak}).
\begin{thm}\label{Orlicz-theorem}
	Let $0\leq p\leq q$ and $\Phi_{p}$ defined as in $(\ref{Phi})$. Then
	\begin{equation*}
	\left\{\textnormal{$f:$ $f$ is $\Lambda$-integrable and }\mathbb{E}\left[\bigg|\int_{S}fd\Lambda\bigg|^{p}\right]<\infty\right\}\supset L_{\Phi_{p}}(S;\lambda)
	\end{equation*}
	and the linear mapping
	\begin{equation*}
	L_{\Phi_{p}}(S;\lambda)\ni f\mapsto\int_{S}fd\Lambda\in L_{p}(\Omega;\mathbb{P})
	\end{equation*}
	is continuous.
\end{thm}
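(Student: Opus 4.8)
The plan is to reduce everything to two ingredients already available in this section: the integrability criterion (Theorem~\ref{theorem1}, or its sharper versions Theorem~\ref{theorem-bounded} and Theorem~\ref{theorem-general}), and the preceding lemma on convergence of $p$-th moments of QID distributions. I would organise the argument around representation (iv), which exhibits $\int_S f\,d\Lambda$ as a QID law with characteristic triplet $(a_f,\sigma_f^2,F_f)$, $F_f=\tilde F_f^{+}-\tilde F_f^{-}$. For the inclusion, fix $f\in L_{\Phi_p}(S;\lambda)$, so that $\int_S\Phi_p(|f(s)|,s)\,\lambda(ds)<\infty$. Since $U^{*}(u,s)=\sup_{|c|\le1}|U(cu,s)|\ge|U(u,s)|$ (take $c=1$), since the middle term of $\Phi_p$ is exactly $|f|^2\sigma^2$, and since $V_0(u,s)\le V_p(u,s)$ (because $|xu|^p\ge 1=1\wedge|xu|^2$ on $\{|xu|>1\}$ while the integrands agree on $\{|xu|\le1\}$), the three conditions (i)--(iii) of the relevant integrability theorem all hold; hence $f$ is $\Lambda$-integrable and $\int_S f\,d\Lambda$ is QID with triplet $(a_f,\sigma_f^2,F_f)$.

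It then remains to see that $\mathbb{E}[\,|\int_S f\,d\Lambda|^{p}\,]<\infty$. Using the inequality $|F_f|\le \tilde F_f^{+}+\tilde F_f^{-}$ discussed after Theorem~\ref{theorem1}, together with the pushforward description of $\tilde F_f^{\pm}$ and the disintegration (\ref{eq-discordia}) of Proposition~\ref{pro-discordia}, I would bound $\int_{|x|>1}|x|^{p}|F_f|(dx)\le\int_{S}\int_{|f(s)x|>1}|f(s)x|^{p}|\rho|(s,dx)\,\lambda(ds)\le\int_{S}V_p(f(s),s)\,\lambda(ds)<\infty$. The standing assumption (\ref{ass}) with $p\le q$ guarantees the requisite base moments are finite, so Theorem~6.2(a) of \cite{LPS} yields $\mathbb{E}[\,|\int_S f\,d\Lambda|^{p}\,]<\infty$, giving the stated inclusion (the case $p=0$ being trivial).

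For continuity I would use linearity and the $F$-norm/modular equivalence $\|f_n\|_{\Phi_p}\to0\Leftrightarrow\int_S\Phi_p(|f_n|,s)\,\lambda(ds)\to0$ recalled above, reducing to continuity at the origin: it suffices to show that $\int_S\Phi_p(|f_n|,s)\,\lambda(ds)\to0$ forces $\mathbb{E}[\,|\int_S f_n\,d\Lambda|^{p}\,]\to0$. Writing $\mu_n=\mathcal L(\int_S f_n\,d\Lambda)$, which by (iv) is QID with triplet $(a_{f_n},\sigma_{f_n}^2,F_{f_n})$, I would verify the four hypotheses of the preceding moment-convergence lemma: $a_{f_n}=\int_S U(f_n,s)\lambda(ds)\to0$, $\sigma_{f_n}^2\to0$, $\int_{\mathbb R}(1\wedge x^2)|F_{f_n}|(dx)\to0$, and $\int_{|x|>1}|x|^p|F_{f_n}|(dx)\to0$ are each dominated by the modular $\int_S\Phi_p(|f_n|,s)\lambda(ds)$ via $|U|\le U^{*}$, via $\int(1\wedge x^2)|F_{f_n}|\le\int_S V_0(f_n)\lambda\le\int_S V_p(f_n)\lambda$, and via the tail bound of the previous paragraph. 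The lemma then gives $\int_{\mathbb R}|x|^p\,\mu_n(dx)\to0$, i.e. convergence in $L_p(\Omega;\mathbb P)$, which is the desired continuity.

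The main obstacle is the passage from the single scalar modular $\Phi_p$, built from the total variation $|\rho|=\rho^{+}+\rho^{-}$, to quantitative control of the \emph{two} one-sided measures $\tilde F_f^{\pm}$ governing the QID law of $\int_S f\,d\Lambda$. Three points are delicate: (a) that $|F_f|\le\tilde F_f^{+}+\tilde F_f^{-}$ is only an inequality, since a single Borel set may receive mass pushed forward from both the positive and the negative parts, so one must argue through $\tilde F_f^{\pm}$ rather than $F_f$ directly; (b) verifying the finiteness hypotheses of the moment-convergence lemma for every $n$, and not merely in the limit, which is where (\ref{ass}) and $p\le q$ are genuinely used; and (c) checking that the drift is controlled by the \emph{symmetrised} quantity $U^{*}$ rather than $|U|$, which is precisely why $\Phi_p$ is defined with $U^{*}$. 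Once these are settled the argument is uniform across the three frameworks, since in each case $\tilde F^{\pm}$ arise from the Jordan decomposition $\rho=\rho^{+}-\rho^{-}$ with $|\rho|=\rho^{+}+\rho^{-}$.
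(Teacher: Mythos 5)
Your proposal is correct and follows essentially the same route as the paper: verify conditions (i)--(iii) of the integrability theorems via $|U|\le U^{*}$ and $V_{0}\le V_{p}$, bound $\int_{|x|>1}|x|^{p}|F_{f}|(dx)$ through $\tilde F_{f}^{\pm}$ and the disintegration by $|\rho|$ to get $\int_{S}V_{p}(|f(s)|,s)\lambda(ds)\le\int_{S}\Phi_{p}(|f(s)|,s)\lambda(ds)<\infty$, and conclude with Theorem 6.2(a) of \cite{LPS}. The only difference is that you spell out the continuity part explicitly via the moment-convergence lemma preceding the theorem, which the paper leaves implicit by its reference to the proof of Theorem 3.3 in \cite{RajRos}; your verification of that lemma's hypotheses from the modular bounds is exactly the intended argument (your side remark that the per-$n$ finiteness comes from assumption (\ref{ass}) is slightly off --- it already follows from $f_{n}\in L_{\Phi_{p}}(S;\lambda)$ via the same tail bound --- but this does not affect the proof).
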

\begin{rem}
	When $p=0$ the statement becomes $\left\{\textnormal{$f:$ $f$ is $\Lambda$-integrable} \right\}\supset L_{\Phi_{0}}(S;\lambda)$.
\end{rem}
\begin{proof}
	We partially follow the proof of Theorem 3.3 in \cite{RajRos}. Let $f\in L_{\Phi_{p}}(S;\lambda)$, which is equivalent to consider $\int_{S}\Phi_{p}(|f(s)|,s)\lambda(ds)<\infty$. The conditions (i), (ii) and (iii) of Theorem \ref{theorem-general} (and also Theorems \ref{theorem1}, \ref{theorem2}  and \ref{theorem-bounded}) are satisfied. Then, we have that
	\begin{equation*}
	\int_{\{|u|>1\}}|u|^{p}|F_{f}|(du)\leq \int_{S}\int_{\{|f(s)x|>1\}}|f(s)x|^{p}|\rho|(s,dx)\lambda(ds)
	\end{equation*}
	\begin{equation*}
	\leq \int_{S}V_{p}(|f(s)|,s) \lambda(ds) \leq \int_{S}\Phi_{p}(|f(s)|,s)\lambda(ds)<\infty.
	\end{equation*}
	Hence, thanks to Theorem 6.2 point (a) in \cite{LPS} we have that $\mathbb{E}\left[|\int_{S}fd\Lambda|^{p}\right]<\infty$.
\end{proof}
\section{Spectral representation of generated QID processes}\label{Ch-Spectr}
In this section we are going to introduce QID stochastic processes, and present various spectral representation for generated QID processes (see Theorem \ref{Biglast} below). We start with the definition of QID processes.
\begin{defn}[QID processes]\label{defBiglast}
	Let $T$ be an arbitrary index set. A stochastic process $X=\{X_{t};\,t\in T\}$ is said to be a \textnormal{$QID$ process} if and only if for every finite set of indices $t_{1},\ldots ,t_{k}$ in the index set $T$ 
	\begin{equation*}
	X_{t_{1},...,t_{k}}:=(X_{t_{1}},...,X_{t_{k}})
	\end{equation*}
	is a multivariate QID random variable.
\end{defn}
The existence of QID processes is ensured by the Kolmogorov extension theorem. From Definition \ref{defBiglast}, it is clear that the class of QID processes is strictly larger than the class of ID processes.
\\Before presenting other results we introduce some preliminaries. We use the general framework introduced in \cite{Ros}. Let $T$ be an arbitrary (possibly uncountable) index set, $\mathbb{R}^{T}$ denote the space of all functions $x:T\rightarrow\mathbb{R}$ and $\mathcal{B}^{T}$ be its cylindrical product $\sigma$-algebra. Given an underlying probability space $(\Omega,\mathcal{F},\mathbb{P})$, for a stochastic process $X=(X_{t})_{t\in T}$ we define its law $\mathcal{L}(X)$ as a probability measure on $(\mathbb{R}^{T},\mathcal{B}^{T})$ such that 
\begin{equation*}
\mathcal{L}(X)(A)=\mathbb{P}(\{\omega\in\Omega:(X_{t}(\omega))_{t\in T}\in A \}),\quad A\in\mathcal{B}^{T}
\end{equation*}
We denote by $x_{S}$ the restriction of $x$ to $S\subset T$ and by $0_{S}$ the origin of $\mathbb{R}^{S}$, which depending on the setting is a point or a one-point set. Further, we let $\hat{T}$ to be defined as $\hat{T}:=\{I\subset T:0<\textnormal{Card}(I)<\infty \}$, let $\pi_{S}:\mathbb{R}^{T}\rightarrow\mathbb{R}^{S}$ be the projection from $\mathbb{R}^{T}$ onto $\mathbb{R}^{S}$ (namely $\pi_{S}(x)=x_{S}$) and let $\mathcal{B}_{00}^{S}:=\{B\in\mathcal{B}^{S}:0_{S}\notin B \}$. Finally, we uses as a cutoff function the bounded measurable function $\chi$ defined as $\chi:\mathbb{R}\mapsto\mathbb{R}$ such that $\chi(v)=1+o(|v|)$ as $v\rightarrow0$ and $\chi(v)=O(|v|^{-1})$ as $|v|\rightarrow\infty$. Then, the truncation of $v=(v_{1},...,v_{n})\in\mathbb{R}^{n}$ is defined by $[[v]]=(v_{1}\chi(|v_{1|}),...,v_{n}\chi(|v_{n|}))$ and of $x\in\mathbb{R}^{S}$ by $[[x]](s)=x(s)\chi(|x(s)|)$, $s\in S$. 
\\Let $\{\nu_{I}:I\in\hat{T}\}$ be a family of finite dimensional L\'{e}vy measures on $(\mathbb{R}^{J},\mathcal{B}^{J})$ s.t.~for every $I,J\in\hat{T}$ with $I\subset J$
\begin{equation*}
\nu_{J}\circ\pi_{IJ}^{-1}=\nu_{I} \,\,\,\textnormal{on $\mathcal{B}^{I}_{00}$},
\end{equation*}
where $\pi_{IJ}:\mathbb{R}^{J}\rightarrow\mathbb{R}^{I}$ is the natural projection. Then we say that $\{\nu_{I}:I\in\hat{T}\}$ is \textit{consistent} (see Definition 2.6 in \cite{Ros}). Moreover, we say that a family of probability measures is \textit{projective} if the above condition is satisfied on the whole $\mathcal{B}^{S}$ (and not just on $\mathcal{B}^{I}_{00}$). 

The first result of this section concerns a general property of QID distributions.
\begin{pro}\label{proBiglast}
All the marginal distributions of a multivariate QID distribution are QID.
\end{pro}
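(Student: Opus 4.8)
The plan is to compute the characteristic function of an arbitrary marginal directly from the multivariate L\'evy--Khintchine representation and to verify that the resulting function is again of quasi-L\'evy--Khintchine type. Let $\mu$ be a QID distribution on $\mathbb{R}^{d}$ with Gaussian covariance matrix $A$, drift $\gamma$, and quasi-L\'evy type measure $\nu$ on $\mathbb{R}^{d}$ (the representation extends to $\mathbb{R}^{d}$ by Remark 2.4 in \cite{LPS}), so that
\[
\hat{\mu}(\theta)=\exp\left(i\langle\theta,\gamma\rangle-\tfrac{1}{2}\langle\theta,A\theta\rangle+\int_{\mathbb{R}^{d}}\left(e^{i\langle\theta,x\rangle}-1-i\langle\theta,\tau(x)\rangle\right)\nu(dx)\right).
\]
It suffices to treat the projection $\pi:\mathbb{R}^{d}\to\mathbb{R}^{k}$ onto a fixed subset of coordinates. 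Writing $\pi^{*}:\mathbb{R}^{k}\to\mathbb{R}^{d}$ for the zero-padding embedding, the characteristic function of the marginal law $\mu\circ\pi^{-1}$ is $\phi\mapsto\hat{\mu}(\pi^{*}\phi)$, and the identity $\langle\pi^{*}\phi,x\rangle=\langle\phi,\pi x\rangle$ lets me rewrite everything in terms of $y=\pi x$.

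First I would record the candidate triplet on $\mathbb{R}^{k}$: the Gaussian part is the compression $\pi A\pi^{*}$, and the quasi-L\'evy measure is the push-forward $\nu':=(\nu\circ\pi^{-1})|_{\mathbb{R}^{k}\setminus\{0\}}$, defined via its positive and negative parts $\nu^{+}\circ\pi^{-1}$ and $\nu^{-}\circ\pi^{-1}$. The one genuine verification here is that $\nu'$ is a quasi-L\'evy type measure. Since $|\nu\circ\pi^{-1}|\leq|\nu|\circ\pi^{-1}$ and $|\pi x|\leq|x|$ (so $1\wedge|\pi x|^{2}\leq 1\wedge|x|^{2}$), I obtain
\[
\int_{\mathbb{R}^{k}}(1\wedge|y|^{2})\,|\nu'|(dy)\leq\int_{\mathbb{R}^{d}}(1\wedge|\pi x|^{2})\,|\nu|(dx)\leq\int_{\mathbb{R}^{d}}(1\wedge|x|^{2})\,|\nu|(dx)<\infty,
\]
which is exactly the integrability condition required of a quasi-L\'evy type measure, while the signed-measure property on sets bounded away from $0$ is inherited from $\nu^{\pm}\circ\pi^{-1}$.

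The main technical point, and the step I expect to cause the only real friction, is the mismatch between the two centering functions: the $d$-dimensional integrand carries $\pi\tau(x)$, whereas a genuine representation on $\mathbb{R}^{k}$ must carry $\tau(\pi x)$. I would handle this through the decomposition
\[
e^{i\langle\phi,\pi x\rangle}-1-i\langle\phi,\pi\tau(x)\rangle=\Big(e^{i\langle\phi,\pi x\rangle}-1-i\langle\phi,\tau(\pi x)\rangle\Big)+i\big\langle\phi,\tau(\pi x)-\pi\tau(x)\big\rangle,
\]
splitting off a linear term to be absorbed into the drift. Two observations make this legitimate. When $\pi x=0$ the entire $d$-dimensional integrand vanishes (both $e^{i\langle\phi,\pi x\rangle}-1$ and $\pi\tau(x)=(\pi x)/\max(1,|x|)$ are zero), so no spurious atom at the origin is created and the change of variables to $y$ is valid, the first bracket becoming $\int(e^{i\langle\phi,y\rangle}-1-i\langle\phi,\tau(y)\rangle)\nu'(dy)$. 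Moreover $\tau(\pi x)-\pi\tau(x)=0$ whenever $|x|\leq 1$ (both reduce to $\pi x$), so the residual term is supported on $\{|x|>1\}$, where $|\nu|$ is finite and the bounded factor $|\tau(\pi x)-\pi\tau(x)|\leq 2$ is trivially integrable; hence the correction $\int\langle\phi,\tau(\pi x)-\pi\tau(x)\rangle\,\nu(dx)$ is a finite linear functional of $\phi$.

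Assembling the three pieces shows that $\phi\mapsto\hat{\mu}(\pi^{*}\phi)$ has the quasi-L\'evy--Khintchine form with Gaussian part $\pi A\pi^{*}$, quasi-L\'evy type measure $\nu'$, and a drift $\gamma'$ equal to $\pi\gamma$ adjusted by the finite correction above. Since this is the characteristic function of the genuine probability law $\mu\circ\pi^{-1}$, that law is QID by definition, which is the assertion. The same argument applied to an arbitrary coordinate projection yields the statement for all marginals.
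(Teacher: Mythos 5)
Your proof is correct, but it follows a genuinely different route from the paper's. The paper never touches the L\'evy--Khintchine integrand: it writes the multivariate QID law $X$ as $X+Y\stackrel{d}{=}Z$ for two ID random vectors $Y,Z$ (obtained from the Jordan decomposition of the quasi-L\'evy measure), observes that padding $\theta_{I}$ with zeros gives $\hat{\mathcal{L}}(X_{I})(\theta_{I})=\hat{\mathcal{L}}(Z_{I})(\theta_{I})/\hat{\mathcal{L}}(Y_{I})(\theta_{I})$, and then invokes Theorem 3.1 of Horn--Steutel (marginals of multivariate ID laws are ID) to conclude that $X_{I}$ is a ratio of ID characteristic functions and hence QID. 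You instead verify the quasi-L\'evy--Khintchine form of the marginal directly: compressing the Gaussian part to $\pi A\pi^{*}$, pushing forward the quasi-L\'evy measure (with the correct estimate $|\nu\circ\pi^{-1}|\leq|\nu|\circ\pi^{-1}$ and $1\wedge|\pi x|^{2}\leq 1\wedge|x|^{2}$), and--- the step the paper's argument silently delegates to the cited ID theorem---reconciling the two centering functions $\pi\tau(x)$ and $\tau(\pi x)$ by splitting off a drift correction supported on $\{|x|>1\}$, where $|\nu|$ is finite. The trade-off is clear: the paper's proof is shorter and reuses known ID machinery, while yours is self-contained (it effectively reproves the Horn--Steutel statement in passing) and has the added value of exhibiting the explicit characteristic triplet of the marginal, namely $\bigl(\pi A\pi^{*},\,\nu\circ\pi^{-1},\,\pi\gamma+\int_{\mathbb{R}^{d}}(\tau(\pi x)-\pi\tau(x))\,\nu(dx)\bigr)$, which the paper's argument does not produce.
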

\begin{proof}
Let $d\in\mathbb{N}$ and $X=(X_{1},...,X_{d})$ be QID. Then there exists at least two ID r.v.~$Y$ and $Z$ s.t.~$X+Y\stackrel{d}{=}Z$. Consider any $I\subset\{1,...,d\}$. Let $\theta=(\theta_{1},...,\theta_{d})\in\mathbb{R}^{d}$ be s.t.~$\theta_{i}=0$ for $i\notin I$ and let $\theta_{I}\in\mathbb{R}^{I}$ containing all the non-zero element of $\theta$. Then
\begin{equation}\label{proBiglast-eq}
\hat{\mathcal{L}}(X_{I})(\theta_{I})=\hat{\mathcal{L}}(X)(\theta)=\frac{\hat{\mathcal{L}}(Z)(\theta)}{\hat{\mathcal{L}}(Y)(\theta)}=\frac{\hat{\mathcal{L}}(Z_{I})(\theta_{I})}{\hat{\mathcal{L}}(Y_{I})(\theta_{I})}.
\end{equation}
Thus, we obtain the stated results by Theorem 3.1 in \cite{HornSteutel}, namely by the fact that all the marginal distributions of a multivariate ID distribution are ID.
\end{proof}
Recall from Proposition 3.2 in \cite{Kallenberg} that for two stochastic processes $X^{(1)}$ and $X^{(2)}$ we have that $X^{(1)}\stackrel{d}{=}X^{(2)}$ if and only if $(X^{(1)}_{t_{1}},...,X^{(1)}_{t_{k}})\stackrel{d}{=}(X^{(2)}_{t_{1}},...,X^{(2)}_{t_{k}})$ where $t_{1},...,t_{k}\in T$ and $k\in\mathbb{N}$. We have the following simple result.
\begin{thm}\label{Biglast}
	Let $T$ be an arbitrary index set. A stochastic process $X=\{X_{t};\,t\in T\}$ is QID if there exist two ID processes $Y$ and $Z$ such that $X+Y\stackrel{d}{=}Z$ with $Y$ independent of $X$. In this case, we say that $X$ is \textnormal{generated} by $Y$ and $Z$ and that $X$ is a \textnormal{generated QID process}
\end{thm}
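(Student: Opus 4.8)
The plan is to reduce the statement, via Definition \ref{defBiglast}, to a finite-dimensional claim and then invoke the multivariate quasi-infinite-divisibility characterization already exploited in Proposition \ref{IDvsQID}. By Definition \ref{defBiglast}, $X$ is QID precisely when $(X_{t_{1}},\dots,X_{t_{k}})$ is a multivariate QID random vector for every finite $\{t_{1},\dots,t_{k}\}\subset T$. So I would fix such a finite set and work with the corresponding marginals throughout.

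First I would pass from process-level to finite-dimensional statements. By Proposition 3.2 in \cite{Kallenberg}, the hypothesis $X+Y\stackrel{d}{=}Z$ is equivalent to $(X_{t_{1}},\dots,X_{t_{k}})+(Y_{t_{1}},\dots,Y_{t_{k}})\stackrel{d}{=}(Z_{t_{1}},\dots,Z_{t_{k}})$ for every finite index set. Writing $\mathbf{X}=(X_{t_{1}},\dots,X_{t_{k}})$, and similarly $\mathbf{Y},\mathbf{Z}$, the independence of $Y$ from $X$ transfers to $\mathbf{Y}$ being independent of $\mathbf{X}$, because independence of two processes means that the $\sigma$-algebras generated by all their coordinates are independent, so in particular any two finite subfamilies are independent. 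Moreover, since $Y$ and $Z$ are ID processes, $\mathbf{Y}$ and $\mathbf{Z}$ are multivariate ID random vectors.

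The key step is then to observe that $\mathbf{X}$ is generated by the two ID vectors $\mathbf{Y}$ and $\mathbf{Z}$ exactly in the sense of the multivariate version of Proposition \ref{IDvsQID}. At the level of characteristic functions, $\hat{\mathcal{L}}(\mathbf{X})(\theta)\,\hat{\mathcal{L}}(\mathbf{Y})(\theta)=\hat{\mathcal{L}}(\mathbf{Z})(\theta)$; since $\mathbf{Y}$ is ID its characteristic function never vanishes, so $\hat{\mathcal{L}}(\mathbf{X})(\theta)=\hat{\mathcal{L}}(\mathbf{Z})(\theta)/\hat{\mathcal{L}}(\mathbf{Y})(\theta)$. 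Feeding in the L\'{e}vy-Khintchine representations of $\mathbf{Y}$ and $\mathbf{Z}$ with triplets $(\gamma_{Y},A_{Y},\nu_{Y})$ and $(\gamma_{Z},A_{Z},\nu_{Z})$ and taking the quotient exhibits $\hat{\mathcal{L}}(\mathbf{X})$ in quasi-L\'{e}vy-Khintchine form with candidate triplet $(\gamma_{Z}-\gamma_{Y},\,A_{Z}-A_{Y},\,\nu_{Z}-\nu_{Y})$, using the $\mathbb{R}^{d}$ extension of the definitions discussed in Remark 2.4 of \cite{LPS}. That $\nu_{Z}-\nu_{Y}$ is a genuine quasi-L\'{e}vy type measure is immediate, since its total variation is dominated by $\nu_{Y}+\nu_{Z}$ and hence $\int(1\wedge|x|^{2})\,|\nu_{Z}-\nu_{Y}|(dx)\le\int(1\wedge|x|^{2})\,(\nu_{Y}+\nu_{Z})(dx)<\infty$.

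The only point requiring care --- and the place where I expect the main, though still routine, obstacle --- is verifying that the Gaussian part $A_{Z}-A_{Y}$ is nonnegative-definite, since a quasi-L\'{e}vy-Khintchine exponent is a QID characteristic exponent only when its Gaussian matrix is admissible. This is handled exactly as in the proof of Proposition \ref{IDvsQID}: because $\mathbf{X}$ is a genuine random vector whose characteristic function coincides with the above exponential, uniqueness of the characteristic triplet together with the multivariate analogue of Lemma 2.7 in \cite{LPS} forces the Gaussian covariance to be nonnegative-definite. Hence $\mathbf{X}$ is multivariate QID. Since the finite set $\{t_{1},\dots,t_{k}\}$ was arbitrary, every finite-dimensional marginal of $X$ is QID, and therefore $X$ is a QID process by Definition \ref{defBiglast}. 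One may also note that Proposition \ref{proBiglast} is the marginal counterpart of this reasoning within a single finite-dimensional vector.
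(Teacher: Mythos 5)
Your proof is correct and takes essentially the same route as the paper's: reduce to finite-dimensional marginals, note that the convolution identity $X+Y\stackrel{d}{=}Z$ and the independence pass to these marginals, observe that the marginals of $Y$ and $Z$ are ID vectors, and conclude that each finite-dimensional marginal of $X$ is QID. The paper's proof is simply a terser version of this; the details you supply (the non-vanishing of the ID characteristic function, the quotient giving a quasi-L\'{e}vy--Khintchine exponent, the domination of the total variation of $\nu_{Z}-\nu_{Y}$, and the admissibility of the Gaussian part via the multivariate analogue of Lemma 2.7 in \cite{LPS}) are exactly what the paper leaves implicit in the sentence ``Since $(Y_{t_{1}},...,Y_{t_{k}})$ and $(Z_{t_{1}},...,Z_{t_{k}})$ are ID distributed, $(X_{t_{1}},...,X_{t_{k}})$ is QID distributed.''
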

\begin{proof}
	Let $X$ be a stochastic process and let $Y$ and $Z$ two ID processes s.t.~$X+Y\stackrel{d}{=}Z$ with $Y$ independent of $X$. Consider any finite set of indices $t_{1},...,t_{k}$ in the index set $T$. Since $(Y_{t_{1}},...,Y_{t_{k}})$ and $(Z_{t_{1}},...,Z_{t_{k}})$ are ID distributed, $(X_{t_{1}},...,X_{t_{k}})$ is QID distributed. Then by definition the process $X=\{X_{t};\,t\in T\}$ is QID.
\end{proof}
\noindent We sometimes also say that $Y$ and $Z$ are the \textit{generating ID process of} $X$. Moreover, we remark that while for any given two ID processes there exist a unique QID process (when it exists), it might happen that for a given QID processes there exist more than just two generating ID processes.
We present now the main question left open from this work.
\begin{open}
	Is it true that any QID process is generated by two ID processes? In other words, is it true that any QID process is a generated QID process?
\end{open}
The answer of this question is not trivial. It lies on the capacity of building canonical ID distributions from QID distributions and it involves the use of the axiom of choice. In particular, the axiom of choice is used to show the existence of a family of ID probability measures (actually there exist uncountably many such families). Then, in order to use the Kolmogorov extension theorem we need to show that this family of probability measures is consistent. However, despite Proposition \ref{proBiglast} and equation (\ref{proBiglast-eq}), which provide consistency for each finite set of indices, it is not clear how to obtain the result as shown in the following example.
\begin{exmp}
Consider the c.t.~of a QID r.v.~$X=(X_{1},X_{2})$ on $\mathbb{R}^{2}$ given by $(\gamma,\theta,\nu)$. Let $(\gamma_{1},\theta_{1},\nu_{1})$ and $(\gamma_{2},\theta_{2},\nu_{2})$ the c.t.~of $X_{1}$ and $X_{2}$. Hence, from Proposition \ref{proBiglast} we have $\nu_{1}(A)=\nu((A,\mathbb{R}))$ and $\nu_{2}(B)=\nu((\mathbb{R},B))$ for every $A,B\in\mathcal{B}_{00}(\mathbb{R})$. Assume that the quasi-L\'{e}vy measures are signed measures and consider their Jordan decomposition. For example let $E^{+}_{1}$ and $E^{+}_{2}$ be the Hahn decomposition of $\mathbb{R}$ under the signed measure $\nu_{1}$ and assume w.l.o.g.~that $\{0\}\notin E^{+}_{1}$. Then, $0\leq\nu^{+}_{1}(\mathbb{R})=\nu_{1}(E^{+}_{1})=\nu((E^{+}_{1},\mathbb{R}))$. Applying the same argument to $\nu_{2}$ we have that $0\leq\nu^{+}_{2}(\mathbb{R})=\nu_{2}(E^{+}_{2})=\nu((\mathbb{R}, E^{+}_{2}))$. Then we have that $(E^{+}_{1},\mathbb{R})\cup(\mathbb{R},E^{+}_{2})$ is a subset of $\mathbb{R}^{2}$ where $\nu$ is positive. On the other hand by applying the same arguments we have that $(E^{-}_{1}\setminus\{0\},\mathbb{R})\cup(\mathbb{R},E^{+}_{2}\setminus\{0\})$ is a subset of $\mathbb{R}^{2}$ where $\nu$ is negative. But these two sets have intersections, which means that the marginals of the Jordan decompositions of $\nu$ (call them $\nu^{+}$ and $\nu^{-}$ with marginals $\tilde{\nu}^{+}_{1}$, $\tilde{\nu}^{+}_{2}$, $\tilde{\nu}^{-}_{1}$ and $\tilde{\nu}^{-}_{2}$) are s.t.~$\nu_{1}^{+}(\mathbb{R})=\nu_{1}(E^{+}_{1})=\nu((E^{+}_{1},\mathbb{R}))\leq \nu^{+}((E^{+}_{1},\mathbb{R}))\leq \nu^{+}((\mathbb{R},\mathbb{R}))=\tilde{\nu}_{1}^{+}(\mathbb{R})$, where the last equality comes from the fact that $\nu^{+}$ is a L\'{e}vy measure of a ID r.v.~whose marginals exist and are ID thanks to Theorem 3.1 in \cite{HornSteutel}. A similar result hold for the other cases.
\\ This implies that the projection of $\nu^{+}$ is not always $\nu_{1}^{+}$, and similarly for $\nu^{-}$. In other words, \textnormal{the positive and negative part of a quasi-L\'{e}vy measure are always not consistent (and, hence, projective)}. Hence, given $X+Y\stackrel{d}{=}Z$ where $Z=(Z_{1},Z_{2})$ and $Y=(Y_{1},Y_{2})$ are two ID r.v.~on $\mathbb{R}^{2}$ with quasi-L\'{e}vy measures $\nu^{+}$ and $\nu^{-}$ respectively, it is not always true that $Z_{1}$ and $Y_{1}$ have quasi-L\'{e}vy measure given by the positive and negative part of $\nu_{1}$, and similarly for $Z_{2}$ and $Y_{2}$. Therefore, a family of ID probability measures where each probability measure has the L\'{e}vy measure given by the positive (or negative) part of the quasi-L\'{e}vy measure of the respective QID probability measure is not always projective.
\end{exmp}

For the moment let index set $T$ to be countable. Let $\mathcal{B}^{l_{2}}$ be the Borel $\sigma$-algebra on $l_{2}$. We note that since $l_{2}$ is separable the Borel and cylindrical $\sigma$-algebras coincide (see page 38 in \cite{Talagrand}). Then, we have the following definition.
\begin{defn}[quasi-L\'{e}vy type measure on $(l_{2},\mathcal{B}^{l_{2}})$]\label{def1-l2} Let $\mathcal{B}^{l_{2}}_{r}:=\{B\in\mathcal{B}^{l_{2}}|B\cap\{x\in l_{2}:\|x\|<r \}=\emptyset \}$ and $\mathcal{B}^{l_{2}}_{0}:=\bigcup_{r>0}\mathcal{B}^{l_{2}}_{r}$. Let $\nu:\mathcal{B}^{l_{2}}_{0}\rightarrow\mathbb{R}$ be a set function s.t.~$\nu_{|\mathcal{B}^{l_{2}}_{r}}$ is a finite signed measure for each $r > 0$ and denote the total variation, positive and negative part of $\nu_{|\mathcal{B}^{l_{2}}_{r}}$ by $|\nu_{|\mathcal{B}^{l_{2}}_{r}}|$, $\nu^{+}_{|\mathcal{B}^{l_{2}}_{r}}$ and $\nu^{-}_{|\mathcal{B}^{l_{2}}_{r}}$ respectively. Then the \textnormal{total variation} $|\nu|$, the \textnormal{positive part} $\nu^{+}$ and the \textnormal{negative part} $\nu^{-}$ of $\nu$ are defined to be the unique measures on $(l_{2},\mathcal{B}^{l_{2}})$ satisfying
	\begin{equation*}
	|\nu(\{0_{T}\})|=\nu^{+}(\{0_{T}\})=\nu^{-}(\{0_{T}\})=0
	\end{equation*}
	\begin{equation*}
	\text{and}\quad|\nu|(A)=|\nu_{|\mathcal{B}^{l_{2}}_{r}}|,\,\,\nu^{+}(A)=\nu_{|\mathcal{B}^{l_{2}}_{r}}^{+}(A),\,\,\nu^{-}(A)=\nu_{|\mathcal{B}^{l_{2}}_{r}}^{-}(A),
	\end{equation*}
	for $A\in\mathcal{B}^{l_{2}}_{r}$, for some $r>0$.
	
	A \textnormal{quasi-L\'{e}vy type measure on $(l_{2},\mathcal{B}^{l_{2}})$} is a function $\nu:\mathcal{B}^{l_{2}}_{0}\rightarrow\mathbb{R}$ with the above properties and s.t.~$\int_{l_{2}}(1\wedge \|x\|^{2})|\nu|(dx)<\infty$.
\end{defn}
\begin{lem}
$\mathcal{B}^{l_{2}}_{r}$ is a $\sigma$-algebra.
\end{lem}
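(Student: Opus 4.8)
The plan is to recognise that, despite the name, $\mathcal{B}^{l_{2}}_{r}$ is not a $\sigma$-algebra on all of $l_{2}$ (indeed $l_{2}\notin\mathcal{B}^{l_{2}}_{r}$, since $l_{2}$ meets the open ball $\{x\in l_{2}:\|x\|<r\}$), but rather on the closed shell $R_{r}:=\{x\in l_{2}:\|x\|\geq r\}$. The first step is the elementary equivalence $B\cap\{x\in l_{2}:\|x\|<r\}=\emptyset \iff B\subseteq R_{r}$, which lets me rewrite $\mathcal{B}^{l_{2}}_{r}=\{B\in\mathcal{B}^{l_{2}}:B\subseteq R_{r}\}$. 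Thus the statement to establish is that this family is a $\sigma$-algebra with ground set $R_{r}$.

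The key preliminary observation is that the norm $\|\cdot\|:l_{2}\to[0,\infty)$ is continuous, so the open ball $\{x\in l_{2}:\|x\|<r\}$ is open and $R_{r}$ is closed; in particular $R_{r}\in\mathcal{B}^{l_{2}}$. This single fact is what makes the argument work. Indeed, once $R_{r}$ is known to be Borel, I claim that $\mathcal{B}^{l_{2}}_{r}$ coincides with the trace $\sigma$-algebra $\{B\cap R_{r}:B\in\mathcal{B}^{l_{2}}\}$. The inclusion $\{B\in\mathcal{B}^{l_{2}}:B\subseteq R_{r}\}\subseteq\{B\cap R_{r}:B\in\mathcal{B}^{l_{2}}\}$ is immediate, and the reverse inclusion uses $R_{r}\in\mathcal{B}^{l_{2}}$: for any Borel $B$ the set $B\cap R_{r}$ is Borel and contained in $R_{r}$. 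Since the trace of a $\sigma$-algebra on an arbitrary subset is always a $\sigma$-algebra on that subset, the conclusion follows at once.

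If one prefers a direct verification rather than invoking the trace construction, I would check the three axioms by hand. First, $R_{r}\in\mathcal{B}^{l_{2}}_{r}$ because $R_{r}$ is Borel and $R_{r}\subseteq R_{r}$. Second, closure under complementation relative to the ground set $R_{r}$: if $B\in\mathcal{B}^{l_{2}}_{r}$ then $R_{r}\setminus B$ is a difference of Borel sets, hence Borel, and it is contained in $R_{r}$, so $R_{r}\setminus B\in\mathcal{B}^{l_{2}}_{r}$. Third, closure under countable unions: if $B_{n}\in\mathcal{B}^{l_{2}}_{r}$ for $n\in\mathbb{N}$, then $\bigcup_{n}B_{n}$ is Borel and, since each $B_{n}\subseteq R_{r}$, also contained in $R_{r}$, whence $\bigcup_{n}B_{n}\in\mathcal{B}^{l_{2}}_{r}$.

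The main obstacle here is purely conceptual rather than technical: one must correctly pin down the ground set as $R_{r}$ and observe that the norm is measurable so that $R_{r}$ lies in $\mathcal{B}^{l_{2}}$. Beyond that there is no analytic difficulty, and the same proof works verbatim with $l_{2}$ replaced by any metric (indeed, any topological) space on which the distance-to-origin function is Borel, so the lemma is really a generic fact about traces of Borel $\sigma$-algebras on closed complements of open balls.
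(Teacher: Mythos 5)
Your proposal is correct and follows essentially the same route as the paper: both identify the ground set as $\{x\in l_{2}:\|x\|\geq r\}$, use continuity of the norm to see that this set is Borel, and conclude that $\mathcal{B}^{l_{2}}_{r}$ is the trace (restriction) of $\mathcal{B}^{l_{2}}$ on it, hence a $\sigma$-algebra. Your write-up is in fact somewhat more careful than the paper's — it makes the equivalence $B\cap\{\|x\|<r\}=\emptyset\iff B\subseteq R_{r}$ explicit and supplies the direct axiom check — but the underlying idea is identical.
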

\begin{proof}
Since the space $Y_{r}:=\{x\in l_{2}:\|x\|\geq r \}$ is a non-empty subspace of $l_{2}$ then we have $Z_{r}:=\{B\subseteq Y_{r}|B\in\mathcal{B}^{l_{2}} \}=\mathcal{B}^{l_{2}}_{r}$. Observe now that $Y_{r}$ is measurable (namely $Y_{r}\in\mathcal{B}^{l_{2}}$) because $l^{2}$ is a complete separable metric space and the norm is continuous, then $Z_{r}$ is the restriction of $\mathcal{B}^{l_{2}}$ on $Y_{r}$. Hence, $\mathcal{B}^{l_{2}}_{r}$ is a $\sigma$-algebra and the argument applies to any $r>0$.
\end{proof}
If $\nu:\mathcal{B}^{l_{2}}_{0}\rightarrow\mathbb{R}$ is s.t.~$\nu_{|\mathcal{B}^{l_{2}}_{r}}$ is a finite signed measure for each $r>0$, then we have that $|\nu_{|\mathcal{B}^{l_{2}}_{r}}|(A)=|\nu_{|\mathcal{B}^{l_{2}}_{s}}|(A)$ for every $A\in\mathcal{B}^{l_{2}}_{r}$ with $0<s\leq r$ and the same applies to $\nu_{|\mathcal{B}^{l_{2}}_{r}}^{+}$ and $\nu_{|\mathcal{B}^{l_{2}}_{r}}^{-}$. Moreover, since we are in the countable framework (the index set considered $T$ is countable since $l_{2}\subset\mathbb{R}^{\mathbb{N}}$ hence $0_{T}\in\mathcal{B}^{l_{2}}$) then $|\nu|$, $\nu^{+}$ and $\nu^{-}$ are well defined and the condition $|\nu(\{0\})|=\nu^{+}(\{0\})=\nu^{-}(\{0\})=0$ together with the Carath\'{e}odory's extension theorem ensure their uniqueness. Notice that we have the equivalent of Lemma \ref{lemma-last?} in this framework.
\begin{lem}\label{lemma-last2}
A set function $\nu:\mathcal{B}^{l_{2}}_{0}\rightarrow\mathbb{R}$ is a quasi-L\'{e}vy type measure if and only if there exist two L\'{e}vy measures $\nu^{(1)}$ and $\nu^{(2)}$ s.t.~$\nu_{|\mathcal{B}_{r}(\mathbb{R})}(A)=\nu_{|\mathcal{B}^{l_{2}}_{r}}^{(1)}(A)-\nu_{|\mathcal{B}^{l_{2}}_{r}}^{(2)}(A)$ for $A\in\mathcal{B}^{l_{2}}_{r}$, for some $r>0$. Moreover, $\nu$ is unique.
\end{lem}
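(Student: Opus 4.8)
The plan is to transcribe the proof of Lemma \ref{lemma-last?} into the $l_2$ setting, the only structural inputs being that each $\mathcal{B}^{l_2}_r$ is a $\sigma$-algebra (established in the preceding lemma) and that a L\'evy measure on $(l_2,\mathcal{B}^{l_2})$ is a measure $\mu$ with $\mu(\{0_T\})=0$ and $\int_{l_2}(1\wedge\|x\|^2)\mu(dx)<\infty$; everywhere the scalar $x^2$ of the real line is replaced by $\|x\|^2$. Thus the whole argument is the Hilbert-space avatar of Definition \ref{def1-l2} and Lemma \ref{lemma-last?}.

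For the direction $\Rightarrow$, I would suppose $\nu$ is a quasi-L\'evy type measure on $(l_2,\mathcal{B}^{l_2})$ in the sense of Definition \ref{def1-l2}, and let $\nu^{+}$ and $\nu^{-}$ be the positive and negative parts supplied by that definition. By construction $\nu^{+}(\{0_T\})=\nu^{-}(\{0_T\})=0$, and since $\nu^{+}\le|\nu|$ and $\nu^{-}\le|\nu|$ as set functions, the hypothesis $\int_{l_2}(1\wedge\|x\|^2)|\nu|(dx)<\infty$ forces $\int_{l_2}(1\wedge\|x\|^2)\nu^{\pm}(dx)<\infty$. Hence $\nu^{+}$ and $\nu^{-}$ are L\'evy measures on $l_2$, and taking $\nu^{(1)}:=\nu^{+}$, $\nu^{(2)}:=\nu^{-}$ gives the required representation on every $\mathcal{B}^{l_2}_r$.

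For the direction $\Leftarrow$, given L\'evy measures $\nu^{(1)},\nu^{(2)}$ with $\nu_{|\mathcal{B}^{l_2}_r}=\nu^{(1)}_{|\mathcal{B}^{l_2}_r}-\nu^{(2)}_{|\mathcal{B}^{l_2}_r}$, I would first note that the restriction of any L\'evy measure to $\mathcal{B}^{l_2}_r$ is finite: on $\{\|x\|\ge r\}$ one has $1\wedge\|x\|^2\ge 1\wedge r^2>0$, so $\nu^{(i)}(\mathcal{B}^{l_2}_r)\le(1\wedge r^2)^{-1}\int_{l_2}(1\wedge\|x\|^2)\nu^{(i)}(dx)<\infty$. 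Thus $\nu_{|\mathcal{B}^{l_2}_r}$ is a finite signed measure on the $\sigma$-algebra $\mathcal{B}^{l_2}_r$, and by the Jordan Decomposition Theorem \ref{Jordan} it admits a unique decomposition $\nu^{+}_{|\mathcal{B}^{l_2}_r}-\nu^{-}_{|\mathcal{B}^{l_2}_r}$. The nesting $\mathcal{B}^{l_2}_s\supset\mathcal{B}^{l_2}_r$ for $0<s\le r$ together with the uniqueness of the Jordan decomposition gives the consistency $\nu^{+}_{|\mathcal{B}^{l_2}_r}(A)=\nu^{+}_{|\mathcal{B}^{l_2}_s}(A)$ for $A\in\mathcal{B}^{l_2}_r$, and likewise for $\nu^{-}$. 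Imposing $\nu^{+}(\{0_T\})=\nu^{-}(\{0_T\})=0$ and using that $\mathcal{B}^{l_2}_0$ enlarged by $\{0_T\}$ generates $\mathcal{B}^{l_2}$ (since $B=(B\cap\{0_T\})\cup\bigcup_n(B\cap\{\|x\|\ge 1/n\})$ for any $B\in\mathcal{B}^{l_2}$), Carath\'eodory's extension theorem yields unique $\sigma$-finite measures $\nu^{+}$, $\nu^{-}$ on $(l_2,\mathcal{B}^{l_2})$ extending these restrictions; these satisfy Definition \ref{def1-l2}, and the uniqueness of $\nu$ follows from that of $\nu^{+}$ and $\nu^{-}$.

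The only genuinely new point relative to the real-line argument is the interplay of Carath\'eodory's theorem with the space $l_2$: one must confirm that the restrictions of L\'evy measures to $\mathcal{B}^{l_2}_r$ are finite and that $\mathcal{B}^{l_2}_0$, enlarged by the singleton $\{0_T\}$, is rich enough to generate $\mathcal{B}^{l_2}$ and to pin down the $\sigma$-finite extension. Both hinge on the continuity and measurability of $x\mapsto\|x\|$ on the separable Hilbert space $l_2$, which is exactly what the preceding lemma exploits to prove that each $\mathcal{B}^{l_2}_r$ is a $\sigma$-algebra; beyond this, the verification of the conditions of Definition \ref{def1-l2} is routine and identical in form to Lemma \ref{lemma-last?}.
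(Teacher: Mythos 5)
Your proof is correct and takes essentially the same route as the paper: the paper's own proof of this lemma is a one-line reference back to Lemma \ref{lemma-last?}, and your argument is precisely that transcription into the $l_{2}$ setting (positive/negative parts from Definition \ref{def1-l2} for the forward direction; Jordan decomposition on each $\sigma$-algebra $\mathcal{B}^{l_{2}}_{r}$, consistency across $r$, and Carath\'{e}odory extension after imposing zero mass at $\{0_{T}\}$ for the converse). The supporting details you add — finiteness of restrictions of L\'{e}vy measures to $\mathcal{B}^{l_{2}}_{r}$ and the fact that $\mathcal{B}^{l_{2}}_{0}$ together with $\{0_{T}\}$ generates $\mathcal{B}^{l_{2}}$ — are exactly the points the paper leaves implicit.
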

\begin{proof}
	It follows from the same arguments used in the proof of Lemma \ref{lemma-last?}.
\end{proof}
We have the following representation of a certain QID processes.
\begin{thm}\label{l2-LKrepresentation}
	Let $X$ be a discrete parameter QID process with values in $l_{2}$ s.t.~there exist two generating ID processes with values in $l_{2}$. Then there exists an unique triplet $(z_{0},\mathcal{K},\nu)$ consisting of $z_{0}\in l_{2}$, a non-negative definite function operator $\mathcal{K}:l_{2}\rightarrow\mathbb{R}$ and a quasi-L\'{e}vy type measure $\nu$ on $(l_{2},\mathcal{B}^{l_{2}})$ s.t.
	\begin{equation}\label{QIDprocess}
	\hat{\mathcal{L}}(X)(y)=\exp\left(i\langle z_{0},y \rangle -\frac{1}{2}\langle y,\mathcal{K}y\rangle+\int_{l_{2}}\left( e^{i\langle y ,x \rangle}-1-i\langle\tau(x),y \rangle\right)\nu(dx)\right)
	\end{equation}
	where $y\in l_{2}$.
\end{thm}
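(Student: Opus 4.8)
The plan is to reduce the statement to the classical L\'{e}vy--Khintchine theory of infinitely divisible laws on the separable Hilbert space $l_2$, and then to read off the quasi-triplet of $X$ as a formal difference of the two genuine triplets of its generating processes. By hypothesis (cf.~Theorem \ref{Biglast}) there are ID processes $Y$ and $Z$ with values in $l_2$ such that $X+Y\stackrel{d}{=}Z$ with $Y$ independent of $X$; since $Y$ is ID its characteristic functional never vanishes, so for every $y\in l_2$ one has $\hat{\mathcal{L}}(X)(y)=\hat{\mathcal{L}}(Z)(y)/\hat{\mathcal{L}}(Y)(y)$. First I would invoke the L\'{e}vy--Khintchine representation for ID distributions on a separable Hilbert space (Parthasarathy, Araujo--Gin\'{e}, Linde) to write $\hat{\mathcal{L}}(Z)$ and $\hat{\mathcal{L}}(Y)$ in the form \eqref{QIDprocess} with genuine triplets $(z_0^{Z},\mathcal{K}_Z,\nu_Z)$ and $(z_0^{Y},\mathcal{K}_Y,\nu_Y)$, where $\nu_Z,\nu_Y$ are honest L\'{e}vy measures on $(l_2,\mathcal{B}^{l_2})$ satisfying $\int_{l_2}(1\wedge\|x\|^2)\nu_{\bullet}(dx)<\infty$.

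Taking logarithms of the quotient, the natural candidate is $z_0:=z_0^{Z}-z_0^{Y}$, the non-negative definite form defined by $\langle y,\mathcal{K}y\rangle:=\langle y,\mathcal{K}_Z y\rangle-\langle y,\mathcal{K}_Y y\rangle$, and $\nu:=\nu_Z-\nu_Y$. Clearly $z_0\in l_2$. That $\nu$ is a quasi-L\'{e}vy type measure on $(l_2,\mathcal{B}^{l_2})$ in the sense of Definition \ref{def1-l2} is exactly the content of Lemma \ref{lemma-last2} applied to the difference of two L\'{e}vy measures; moreover $|\nu|\le\nu_Z+\nu_Y$ yields $\int_{l_2}(1\wedge\|x\|^2)|\nu|(dx)<\infty$. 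Substituting these into the exponent and cancelling the common analytic terms recovers \eqref{QIDprocess}, so it remains only to verify that $\mathcal{K}$ is non-negative definite and that the triplet is unique.

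The main obstacle is the non-negative definiteness of $\mathcal{K}$: a difference $\mathcal{K}_Z-\mathcal{K}_Y$ of non-negative definite forms is not non-negative definite in general, and one must genuinely use that $X$ is a bona fide process and not merely a formal object. I would resolve this by passing to one-dimensional projections. For fixed $y\in l_2$, the real random variable $\langle y,X\rangle$ has characteristic function $\theta\mapsto\hat{\mathcal{L}}(X)(\theta y)=\hat{\mathcal{L}}(Z)(\theta y)/\hat{\mathcal{L}}(Y)(\theta y)$, which is the quotient of two real ID characteristic functions (continuous linear functionals of ID $l_2$-valued elements are ID) and is therefore QID; reading its Gaussian variance off \eqref{QIDprocess} gives precisely $\langle y,\mathcal{K}y\rangle$, and by Lemma 2.7 in \cite{LPS} the Gaussian variance of a QID law is non-negative. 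Hence $\langle y,\mathcal{K}y\rangle\ge 0$ for every $y\in l_2$, as required.

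Finally, for uniqueness I would restrict \eqref{QIDprocess} to the finite-dimensional coordinate subspaces: for $I\in\hat{T}$ the projected functional is the characteristic function of the QID vector $X_I$ (QID by Proposition \ref{proBiglast}), whose characteristic triplet is unique by the finite-dimensional uniqueness quoted in the preliminaries (\cite{Sato}, Exercise 12.2). Thus $z_0$, the form $\mathcal{K}$, and the restrictions $\nu_{|\mathcal{B}^{l_2}_r}$ are pinned down on every finite-dimensional cylinder, and Carath\'{e}odory's extension theorem (as already used in Definition \ref{def1-l2} and Lemma \ref{lemma-last2}) forces the full triplet $(z_0,\mathcal{K},\nu)$ to be unique. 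Assembling the three verifications yields the claimed representation.
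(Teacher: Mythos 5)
Your proposal is correct, and its core is the same as the paper's proof: both write $\hat{\mathcal{L}}(X)=\hat{\mathcal{L}}(Z)/\hat{\mathcal{L}}(Y)$ using the generating processes, invoke the L\'{e}vy--Khintchine representation of the two ID laws on $l_{2}$, and obtain the triplet of $X$ as the difference, with Lemma \ref{lemma-last2} producing the quasi-L\'{e}vy type measure $\nu$ from $\nu_{Z}-\nu_{Y}$. You differ in two auxiliary steps, and the comparison is instructive. First, you explicitly verify that $\mathcal{K}=\mathcal{K}_{Z}-\mathcal{K}_{Y}$ is non-negative definite by projecting to one-dimensional subspaces and applying Lemma 2.7 of \cite{LPS} to the QID law of $\langle y,X\rangle$; the paper's proof never checks this even though the statement asserts it, so your argument (which mirrors what the paper itself does for random measures in Proposition \ref{IDvsQID}) closes a point the paper leaves tacit. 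Second, for uniqueness the paper appeals directly to the uniqueness of the L\'{e}vy--Khintchine representation on $l_{2}$ (Theorem 4.10, Chapter 4 of \cite{Parthasarathy}), whereas you route through finite-dimensional marginals (Proposition \ref{proBiglast}, uniqueness of finite-dimensional QID triplets, then Carath\'{e}odory). Your route is workable but quietly uses two facts worth stating: that cylinder sets with bases bounded away from the origin generate each $\sigma$-algebra $\mathcal{B}^{l_{2}}_{r}$ (so agreement of the marginal quasi-L\'{e}vy measures pins down $\nu$ on all of $\mathcal{B}^{l_{2}}_{0}$), and that the $l_{2}$ centering does not project to the finite-dimensional centering, so identifying the marginal triplet requires a drift correction (harmless for the Gaussian part and for $\nu$, but it should be said). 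The paper's direct citation sidesteps both points at the cost of being less self-contained; your version is more explicit but should spell out these two reductions to be fully rigorous.
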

\begin{proof}
	Since $X$, $Y$ and $Z$ are $l_{2}$-valued stochastic processes, then they can be seen as three r.v. on $(l_{2},\mathcal{B}^{l_{2}})$, with the property that $X+Y\stackrel{d}{=}Z$ with $Y$ independent of $X$. Now, let $\nu^{(1)}$ and $\nu^{(2)}$ be the L\'{e}vy measures of the $l_{2}$ valued ID processes $Z$ and $Y$, respectively. Then, we have
	\begin{equation*}
	\hat{\mathcal{L}}(X)(y)=\hat{\mathcal{L}}(Z)(y)/\hat{\mathcal{L}}(Y)(y) =\exp\bigg(i\langle z_{0},y \rangle
	\end{equation*}	
	\begin{equation*}
	-\frac{1}{2}\langle y,\mathcal{K}y\rangle+\int_{l_{2}}\left( e^{i\langle y ,x \rangle}-1-i\langle\tau(x),y \rangle\right)\nu^{(1)}(dx)-\int_{l_{2}}\left( e^{i\langle y ,x \rangle}-1-i\langle\tau(x),y \rangle\right)\nu^{(2)}(dx)\bigg)
	\end{equation*}
	 Let $\nu:\mathcal{B}_{0}^{l_{2}}\rightarrow\mathbb{R}$ be a set function s.t.~$\nu_{|\mathcal{B}^{l_{2}}_{r}}(A)=\nu^{(1)}_{|\mathcal{B}^{l_{2}}_{r}}(A)-\nu^{(2)}_{|\mathcal{B}^{l_{2}}_{r}}(A)$ for every $A\in\mathcal{B}_{r}^{l_{2}}$ with $r>0$. Then by Lemma \ref{lemma-last2}, $\nu$ is uniquely determined by the difference $\nu^{(1)}-\nu^{(2)}$. Indeed the absence of uniqueness of $\nu^{(1)}-\nu^{(2)}$ will violate the condition $X+Y\stackrel{d}{=}Z$ due to the uniqueness of the L\'{e}vy-Khintchine representation on $l_{2}$ (see Theorem 4.10 Chapter 4 in \cite{Parthasarathy}). Hence, we obtain the expression (\ref{QIDprocess}). Finally, it directly follows that $z_{0}$ and $\mathcal{K}$ are uniquely determined.
\end{proof}
\begin{thm}
	Conversely for every $(z_{0},\mathcal{K},\nu)$ as above there exists a QID process with values in $l_{2}$.
\end{thm}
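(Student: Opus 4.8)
The plan is to reverse the construction in the proof of Theorem \ref{l2-LKrepresentation}: from the triplet $(z_{0},\mathcal{K},\nu)$ I would build two generating ID processes with values in $l_{2}$ and then invoke Theorem \ref{Biglast}. First I would apply Lemma \ref{lemma-last2} to the quasi-L\'{e}vy type measure $\nu$ on $(l_{2},\mathcal{B}^{l_{2}})$ to write it as the difference $\nu=\nu^{+}-\nu^{-}$ of its positive and negative parts. Since $\int_{l_{2}}(1\wedge\|x\|^{2})|\nu|(dx)<\infty$ and both $\nu^{+}$ and $\nu^{-}$ are dominated by $|\nu|$, each of them inherits the same integrability bound and is therefore a genuine L\'{e}vy measure on $l_{2}$.

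Next I would use the L\'{e}vy-Khintchine representation for infinitely divisible measures on a separable Hilbert space (Theorem 4.10, Chapter 4 of \cite{Parthasarathy}) to produce two $l_{2}$-valued ID random variables: $Z$ with characteristic triplet $(z_{0},\mathcal{K},\nu^{+})$ and $Y$ with characteristic triplet $(0,0,\nu^{-})$. Realising $Z$ and $Y$ as independent random variables on a common probability space, and recalling that a countable-parameter $l_{2}$-valued random variable is exactly a discrete-parameter process with values in $l_{2}$, I obtain two ID processes $Z=(Z_{t})_{t\in T}$ and $Y=(Y_{t})_{t\in T}$. A direct computation then shows that the quotient $\hat{\mathcal{L}}(Z)(y)/\hat{\mathcal{L}}(Y)(y)$ equals the right-hand side of $(\ref{QIDprocess})$, the two integrals against $\nu^{+}$ and $\nu^{-}$ recombining into the single integral against $\nu$.

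The remaining, and genuinely delicate, step is to exhibit the process $X$ itself, namely to show that $(\ref{QIDprocess})$ is the characteristic function of an $l_{2}$-valued random variable. This is precisely the point where a quasi-L\'{e}vy type measure must in fact be a quasi-L\'{e}vy measure: the quotient of two ID characteristic functions need not itself be a characteristic function, so in general one cannot ``subtract'' the law of $Y$ from that of $Z$. Under the hypothesis that $(z_{0},\mathcal{K},\nu)$ is a triplet as produced in Theorem \ref{l2-LKrepresentation}, expression $(\ref{QIDprocess})$ is a valid characteristic function, so I would let $X$ be an $l_{2}$-valued random variable with this law, chosen independent of $Y$. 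Then $\hat{\mathcal{L}}(X+Y)=\hat{\mathcal{L}}(X)\hat{\mathcal{L}}(Y)=\hat{\mathcal{L}}(Z)$, whence $X+Y\stackrel{d}{=}Z$ with $Y$ independent of $X$.

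Finally I would apply Theorem \ref{Biglast}: since $Y$ and $Z$ are ID processes with $X+Y\stackrel{d}{=}Z$ and $Y$ independent of $X$, the process $X$ is a generated QID process, and its finite-dimensional marginals are QID by Proposition \ref{proBiglast} (or directly, since projecting $X+Y\stackrel{d}{=}Z$ onto finitely many coordinates preserves the ID property of $Y$ and $Z$). I expect the main obstacle to be exactly the validity of $(\ref{QIDprocess})$ as a characteristic function on $l_{2}$ discussed above; the construction of the generating ID processes and the verification of the L\'{e}vy-Khintchine form are otherwise routine applications of the Hilbert-space ID theory together with Lemma \ref{lemma-last2}.
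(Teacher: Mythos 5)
Your proposal is correct and follows essentially the same route as the paper's own proof: decompose $\nu$ into its positive and negative parts via Lemma \ref{lemma-last2}, observe that both are L\'{e}vy measures on $(l_{2},\mathcal{B}^{l_{2}})$, extract the two corresponding ID processes, and conclude that they generate the desired QID process in the sense of Theorem \ref{Biglast}. If anything, you are more careful than the paper, whose three-line proof simply asserts that the two ID processes ``generate a QID with values in $l_{2}$'' without addressing the point you rightly isolate as the crux --- namely that the quotient of the two ID characteristic functions must itself be a characteristic function, i.e.~that the quasi-L\'{e}vy \emph{type} measure $\nu$ is in fact a quasi-L\'{e}vy measure (this can fail, cf.~Example 2.9 of \cite{LPS}), which is precisely why the hypothesis ``$(z_{0},\mathcal{K},\nu)$ as above'' must be read, as you do, as referring to triplets arising in Theorem \ref{l2-LKrepresentation}.
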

\begin{proof}
	It is enough to see that the positive and negative part of $\nu$, namely $\nu^{+}$ and $\nu^{-}$, are L\'{e}vy measures on $\mathcal{B}^{l_{2}}$. Hence, we can extract two ID processes with values in $l_{2}$. Then, these two processes generate a QID with values in $l_{2}$.
\end{proof}
\begin{open}
	In Theorem \ref{l2-LKrepresentation} we have the condition that discrete parameter QID process with values in $l_{2}$ must be generated by two generating ID processes with values in $l_{2}$. In order to get rid of this condition we need to prove that for every discrete parameter QID process with values in $l_{2}$ there exist two generating ID processes with values in $l_{2}$. Is this possible?
\end{open}
\subsection{L\'{e}vy-Khintchine representation of generated QID processes}
In this section we are going to investigate the representation of generated QID processes. For the first result we will assume that the quasi-L\'{e}vy measure of QID processes, whose definition is going to be provided below, is a signed measure. This situation happens when one of the two generating ID processes has a finite L\'{e}vy measure (see Definition 2.1 in \cite{Ros}). This is because in that case $\nu=\nu_{1}-\nu_{2}$ is a signed measure. On the other hand, the second result concerns general generated QID processes.

We introduce now the definition of quasi-L\'{e}vy type measure in this framework.
\begin{defn}
	A signed measure $\nu$ on $(\mathbb{R}^{T},\mathcal{B}^{T})$ is said to be a quasi-L\'{e}vy type measure if the following two conditions hold
	\\ \textnormal{(QL1)} for every $t\in T$ we have $\int_{\mathbb{R}^{T}}|x(t)|^{2}\wedge 1|\nu|(dx)<\infty,$
	\\ \textnormal{(QL2)} for every $A\in\mathcal{B}^{T}$ we have $|\nu(A)|=\nu^{+}_{\ast}(A\setminus 0_{T})+\nu^{-}_{\ast}(A\setminus 0_{T})$, where $\nu^{+}_{\ast}$ and $\nu^{+}_{\ast}$ are the inner measures of the Jordan decomposition of $\nu$.
\end{defn}
We can present now the first main result of this section, namely the L\'{e}vy-Khintchine formulation of QID processes with quasi-L\'{e}vy measure being a signed measure.
\begin{thm}\label{theorem-LK-QIDprocesses-finite}
	Let $T$ be an arbitrary index set and let $X=(X_{t})_{t\in T}$ be a QID process s.t.~there exists two of generating ID process with one having finite L\'{e}vy measure. Then there exists a unique triplet $(\Sigma,\nu,b)$ where $\Sigma$ is a non-negative definite function on $T\times T$, $\nu$ is a quasi-L\'{e}vy type measure on $(\mathbb{R}^{T},\mathcal{B}^{T})$ and a function $b\in \mathbb{R}^{T}$ s.t.~for every $I\in \hat{T}$ and $\theta\in\mathbb{R}^{I}$
	\begin{equation}\label{Levy-Khintchine any T}
	\hat{\mathcal{L}}(X_{I})(\theta)=\exp\left( i\langle\theta,b_{I}\rangle-\frac{1}{2}\langle\theta,\Sigma_{I}\theta \rangle+\int_{\mathbb{R}^{T}}(e^{i\langle\theta,x_{I} \rangle}-1-i\langle\theta,[[x_{I}]] \rangle )\nu(dx)\right),
	\end{equation}
	where $\Sigma_{I}$ is the restriction of $\Sigma$ to $I\times I$. The triplet $(\Sigma,\nu,b)$ is called the generating triplet of $X$ and $\nu$ the quasi-L\'{e}vy measure of $X$.
\end{thm}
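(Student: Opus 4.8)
The plan is to construct the triplet $(\Sigma,\nu,b)$ by subtracting the generating triplets of the two ID processes that generate $X$, and to transfer the heavy lifting to the already-available ID theory. First I would invoke the L\'{e}vy--Khintchine representation for ID processes on an arbitrary index set from \cite{Ros}: since $X$ is generated, there exist ID processes $Y$ and $Z$ with $X+Y\stackrel{d}{=}Z$ and $Y$ independent of $X$, and each of $Y,Z$ admits a generating triplet $(\Sigma^{(Y)},\nu^{(Y)},b^{(Y)})$, $(\Sigma^{(Z)},\nu^{(Z)},b^{(Z)})$ with $\Sigma^{(\cdot)}$ a non-negative definite function on $T\times T$, $\nu^{(\cdot)}$ a genuine L\'{e}vy measure on $(\mathbb{R}^{T},\mathcal{B}^{T})$, and $b^{(\cdot)}\in\mathbb{R}^{T}$, so that for every $I\in\hat{T}$, $\theta\in\mathbb{R}^{I}$,
\[
\hat{\mathcal{L}}(Z_{I})(\theta)=\exp\left(i\langle\theta,b^{(Z)}_{I}\rangle-\frac{1}{2}\langle\theta,\Sigma^{(Z)}_{I}\theta\rangle+\int_{\mathbb{R}^{T}}\left(e^{i\langle\theta,x_{I}\rangle}-1-i\langle\theta,[[x_{I}]]\rangle\right)\nu^{(Z)}(dx)\right),
\]
and analogously for $Y$. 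I would take $Y$ to be the generating process whose L\'{e}vy measure $\nu^{(Y)}$ is finite.

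Next I would use the defining relation, which at the level of finite-dimensional characteristic functions reads $\hat{\mathcal{L}}(X_{I})(\theta)=\hat{\mathcal{L}}(Z_{I})(\theta)/\hat{\mathcal{L}}(Y_{I})(\theta)$ (as in equation (\ref{proBiglast-eq})), the denominator never vanishing since $Y$ is ID. Dividing the two representations and setting $b:=b^{(Z)}-b^{(Y)}$, $\Sigma:=\Sigma^{(Z)}-\Sigma^{(Y)}$, $\nu:=\nu^{(Z)}-\nu^{(Y)}$ produces (\ref{Levy-Khintchine any T}). Because $\nu^{(Y)}$ is finite, the difference $\nu^{(Z)}-\nu^{(Y)}$ avoids the $\infty-\infty$ obstruction discussed at the end of Section \ref{Notation}, so $\nu$ is a bona fide signed measure on $(\mathbb{R}^{T},\mathcal{B}^{T})$; I would then verify it is a quasi-L\'{e}vy type measure. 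Condition (QL1) follows for each $t\in T$ from $|\nu|\leq\nu^{(Z)}+\nu^{(Y)}$ together with $\int_{\mathbb{R}^{T}}(|x(t)|^{2}\wedge 1)\,\nu^{(Z)}(dx)<\infty$ and the finiteness of $\nu^{(Y)}$, while (QL2) is inherited from the ``no mass at $0_{T}$'' structure of the two L\'{e}vy measures. For the Gaussian part I would argue non-negative definiteness of $\Sigma$ not from the difference but from the QID structure: for each finite $I$, $\Sigma_{I}$ is the Gaussian covariance of the multivariate QID vector $X_{I}$, hence non-negative definite by the $\mathbb{R}^{d}$-version of Lemma 2.7 in \cite{LPS}, and since every pair $(s,t)$ lies in some $I\in\hat{T}$ this yields non-negative definiteness on all of $T\times T$.

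For uniqueness I would appeal to the uniqueness of the characteristic triplet of a multivariate QID distribution ([\cite{Sato}, Exercise 12.2], extended to $\mathbb{R}^{d}$). If $(\Sigma,\nu,b)$ and $(\Sigma',\nu',b')$ both satisfy (\ref{Levy-Khintchine any T}) for all $I\in\hat{T}$, then for each fixed finite $I$ they induce the same c.f.~of $X_{I}$, hence the same QID triplet of $X_{I}$; this forces $\Sigma_{I}=\Sigma'_{I}$, $b_{I}=b'_{I}$, and $\nu\circ\pi_{I}^{-1}=\nu'\circ\pi_{I}^{-1}$ on $\mathcal{B}^{I}_{00}$. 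Letting $I$ range over $\hat{T}$ pins down $\Sigma=\Sigma'$ and $b=b'$ directly, and a $\pi$--$\lambda$ (monotone class) argument over the cylinder sets bounded away from $0_{T}$, combined with the normalization (QL2), upgrades the agreement of all finite-dimensional projections to $\nu=\nu'$ on $\mathcal{B}^{T}$; this is the same uniqueness mechanism as in Lemma \ref{lemma-last2}.

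The step I expect to be the main obstacle is the careful identification of the global signed measure $\nu$ on the cylindrical $\sigma$-algebra $\mathcal{B}^{T}$ for uncountable $T$, in particular verifying (QL2) and handling the possibly non-measurable origin $0_{T}$ through the inner measures of the Jordan decomposition. The finiteness hypothesis on one generating L\'{e}vy measure is precisely what makes this tractable: it converts the formal difference $\nu^{(Z)}-\nu^{(Y)}$ into a genuine signed measure to which the Jordan decomposition and (QL2) apply, so that the bulk of the remaining work reduces to the ID theory of \cite{Ros} applied to $Y$ and $Z$ separately, rather than to a delicate direct construction.
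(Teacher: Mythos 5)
Your existence/construction half is essentially the paper's own argument: invoke the representation of \cite{Ros} for the two generating ID processes, divide the characteristic functions, and set $\nu:=\nu^{(Z)}-\nu^{(Y)}$, which is a genuine signed measure precisely because one of the two L\'{e}vy measures is finite; your verification of (QL1), (QL2) and your (slightly more careful) treatment of the non-negative definiteness of $\Sigma$ via Lemma 2.7 of \cite{LPS} are in the same spirit as the paper.

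The genuine gap is in the uniqueness step. You propose to pass from agreement of all finite-dimensional projections on $\mathcal{B}^{I}_{00}$ to $\nu=\nu'$ on $\mathcal{B}^{T}$ by a $\pi$--$\lambda$ argument over cylinder sets bounded away from $0_{T}$, ``the same uniqueness mechanism as in Lemma \ref{lemma-last2}''. This mechanism fails on $(\mathbb{R}^{T},\mathcal{B}^{T})$ for arbitrary $T$, and the paper explicitly warns that the arguments of Lemmas \ref{lemma-last?} and \ref{lemma-last2} do not transfer to this framework. The obstruction is structural: Dynkin's theorem needs a countable exhaustion of the space by $\pi$-system sets of finite $|\nu|$-measure, and no such exhaustion exists. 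Any countable family $\pi_{I_{n}}^{-1}(B_{n})$ with $0_{I_{n}}\notin B_{n}$ involves only countably many coordinates $T_{0}=\bigcup_{n}I_{n}$, so a point $x$ with $x(t)=0$ for $t\in T_{0}$ but $x(s)\neq 0$ for some $s\notin T_{0}$ lies in none of these sets; when $T$ is uncountable one cannot even remove the (non-measurable) origin $\{0_{T}\}$ to fix this. The difficulty is not merely technical: the restriction of the Dirac measure $\delta_{0_{T}}$ to $\mathcal{B}^{T}$ has all of its projections vanishing on every $\mathcal{B}^{I}_{00}$, yet it is a nonzero measure, so finite-dimensional projections alone do \emph{not} determine a measure on $\mathcal{B}^{T}$; this is exactly the pathology that (QL2) must exclude and the reason uniqueness of L\'{e}vy measures on $(\mathbb{R}^{T},\mathcal{B}^{T})$ (Theorem 2.8 in \cite{Ros}) is a nontrivial theorem rather than a monotone class exercise. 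A second complication specific to the signed case is that the projection of the Jordan decomposition need not be the Jordan decomposition of the projection: $\nu^{+}\circ\pi_{I}^{-1}$ and $\nu^{-}\circ\pi_{I}^{-1}$ need not be mutually singular (the paper makes this point explicitly), so you also cannot reduce coordinatewise to two positive-measure uniqueness problems.

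What the paper actually does is argue by contradiction and route everything through the uniqueness theory for \emph{positive} L\'{e}vy measures of ID processes. Supposing $\tilde{\nu}\neq\nu$ both satisfy (\ref{Levy-Khintchine any T}), it considers ID processes $Z,Y,\tilde{Z},\tilde{Y}$ with L\'{e}vy measures $\nu^{+},\nu^{-},\tilde{\nu}^{+},\tilde{\nu}^{-}$; the characteristic function identity gives $\hat{\mathcal{L}}(Z_{I})\hat{\mathcal{L}}(\tilde{Y}_{I})=\hat{\mathcal{L}}(\tilde{Z}_{I})\hat{\mathcal{L}}(Y_{I})$, hence the consistent families $\{\nu^{+}_{I}+\tilde{\nu}^{-}_{I}\}_{I\in\hat{T}}$ and $\{\tilde{\nu}^{+}_{I}+\nu^{-}_{I}\}_{I\in\hat{T}}$ coincide, and then Theorem 2.8/Corollary 2.9 of \cite{Ros} (applied to the positive measures $\nu^{+}+\tilde{\nu}^{-}$ and $\tilde{\nu}^{+}+\nu^{-}$, which do satisfy the inner-measure condition) forces $\nu^{+}+\tilde{\nu}^{-}=\tilde{\nu}^{+}+\nu^{-}$, contradicting $\tilde{\nu}\neq\nu$. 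If you want to keep your outline, you must replace the $\pi$--$\lambda$ step by an argument of this type: (QL2) has to enter through the ID uniqueness theorem on $(\mathbb{R}^{T},\mathcal{B}^{T})$, not through a generating-class argument.
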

\begin{proof}
	Let $X^{1}$ and $X^{2}$ the ID processes generating $X$, with one having finite L\'{e}vy measure, and denote by $\nu^{(1)}$ and $\nu^{(2)}$ the respective L\'{e}vy measures on $(\mathbb{R}^{T},\mathcal{B}^{T})$. 
	Let $\nu:=\nu^{(1)}-\nu^{(2)}$. Then $\nu$ is a well-defined signed measure on $(\mathbb{R}^{T},\mathcal{B}^{T})$. Moreover, we have for $I\in\hat{T}$
	\begin{equation*}
	\hat{\mathcal{L}}(X_{I})=\frac{\hat{\mathcal{L}}(X^{1}_{I})}{\hat{\mathcal{L}}(X^{2}_{I})}=\exp\bigg( i\langle\theta,b_{I}\rangle-\frac{1}{2}\langle\theta,\Sigma_{I}\theta \rangle
	\end{equation*}
	\begin{equation*}
	+\int_{\mathbb{R}^{T}}(e^{i\langle\theta,x_{I} \rangle}-1-i\langle\theta,[[x_{I}]] \rangle )\nu^{(1)}(dx)-\int_{\mathbb{R}^{T}}(e^{i\langle\theta,x_{I} \rangle}-1-i\langle\theta,[[x_{I}]] \rangle )\nu^{(2)}(dx)\bigg)
	\end{equation*}
	\begin{equation*}
	=\exp\bigg( i\langle\theta,b_{I}\rangle-\frac{1}{2}\langle\theta,\Sigma_{I}\theta \rangle +\int_{\mathbb{R}^{T}}(e^{i\langle\theta,x_{I} \rangle}-1-i\langle\theta,[[x_{I}]] \rangle )\nu(dx)\bigg),
	\end{equation*}
	which gives (\ref{Levy-Khintchine any T}). From the above we also deduce that the $\nu\circ\pi^{-1}_{I}=\nu_{I}$ on $\mathcal{B}_{00}^{I}$, $I\in\hat{T}$, where $\nu_{I}$ is the quasi-L\'{e}vy measure of $X_{I}$. 
	\\Notice that $\forall t\in T$ we have that $\int_{\mathbb{R}^{T}}|x(t)|^{2}\wedge 1|\nu|(dx)\leq\int_{\mathbb{R}^{T}}|x(t)|^{2}\wedge 1\nu^{(1)}(dx)+\int_{\mathbb{R}^{T}}|x(t)|^{2}\wedge 1\nu^{(2)}(dx)<\infty$. Now let $\nu^{+}_{\ast}$ and $\nu^{+}_{\ast}$ are the inner measures of the Jordan decomposition of $\nu$. Since for every $A\in\mathcal{B}^{T}$ we have that $\nu^{(1)}(A)=\nu_{\ast}^{(1)}(A\setminus 0_{T})$ and $\nu^{(2)}(A)=\nu_{\ast}^{(2)}(A\setminus 0_{T})$, and $\nu^{(1)}(A)\geq \nu^{+}(A)=\nu_{\ast}^{+}(A)$ and $\nu^{(2)}(A)\geq \nu^{-}(A)=\nu_{\ast}^{-}(A)$ then $|\nu(A)|=\nu^{+}_{\ast}(A\setminus 0_{T})+\nu^{-}_{\ast}(A\setminus 0_{T})$. Therefore, $\nu$ is a quasi-L\'{e}vy measure on $(\mathbb{R}^{T},\mathcal{B}^{T})$ and, additionally, $\nu^{+}$ and $\nu^{-}$ are L\'{e}vy measures on $(\mathbb{R}^{T},\mathcal{B}^{T})$. In particular, notice that $\nu^{+}_{I}=\nu^{+}\circ\pi^{-1}$ and $\nu^{-}_{I}=\nu^{-}\circ\pi^{-1}$ are L\'{e}vy measures such that $\nu_{I}(A)=\nu^{+}_{I}(A)-\nu^{-}_{I}(A)$ for every $A\in\mathcal{B}(\mathbb{R})$, for $I\in\hat{T}$ (although it is not necessarily true that $\nu^{+}_{I}$ and $\nu^{-}_{I}$ are mutually singular on $(\mathbb{R}^{I},\mathcal{B}(\mathbb{R}^{I}))$).
	
	Let us prove uniqueness. Notice that the arguments used in the proof of Lemma \ref{lemma-last?}, which were applicable for the $(\mathbb{R}^{d},\mathcal{B}(\mathbb{R}^{d}))$ and $(l_{2},\mathcal{B}^{l_{2}})$ cases, do not apply here due to the different topological framework. 
	\\Assume that there exists another quasi-L\'{e}vy type measure $\tilde{\nu}$ on $(\mathbb{R}^{T},\mathcal{B}^{T})$ (hence $\tilde{\nu}^{+}$ and $\tilde{\nu}^{-}$ are two L\'{e}vy measures on $(\mathbb{R}^{T},\mathcal{B}^{T})$) s.t.
	\begin{equation}\label{eqBiglast}
	\hat{\mathcal{L}}(X_{I})=\exp\bigg( i\langle\theta,b_{I}\rangle-\frac{1}{2}\langle\theta,\Sigma_{I}\theta \rangle +\int_{\mathbb{R}^{T}}(e^{i\langle\theta,x_{I} \rangle}-1-i\langle\theta,[[x_{I}]] \rangle )\tilde{\nu}(dx)\bigg),
	\end{equation}
	but with $\tilde{\nu}\neq\nu$. Notice that (\ref{eqBiglast}) together with the uniqueness of quasi-L\'{e}vy measures imply that $\nu_{I}=\tilde{\nu}\circ\pi_{I}^{-1}=:\tilde{\nu}_{I}$ on $\mathcal{B}_{00}^{I}$, $I\in\hat{T}$, and that there exists a QID process $\tilde{X}$ with c.t.~$(\Sigma,\tilde{\nu},b)$ s.t. $\tilde{X}\stackrel{d}{=}X$.
	\\
	However, observe that since $\tilde{\nu}\neq\nu$ then $\tilde{\nu}^{+}-\tilde{\nu}^{-}\neq \nu^{+}-\nu^{-}$. Recall that by Corollary 2.9 that for every c.t.~$(\Sigma,\mu,b)$ where $\mu$ is a L\'{e}vy measures there exist a unique (in distribution) ID process. Let $b^{(1)}$ and $b^{(2)}$ two functions in $\mathbb{R}^{T}$ s.t.~$b=b^{(1)}-b^{(2)}$ and let $(\Sigma,\nu^{+},b^{(1)})$, $(0,\nu^{-},b^{(2)})$, $(\Sigma,\tilde{\nu}^{+},b^{(1)})$ and $(0,\tilde{\nu}^{-},b^{(2)})$ the c.t.~of the ID processes $Z$, $Y$, $\tilde{Z}$ and $\tilde{Y}$, respectively. Then observe that $X-Y\stackrel{d}{=}Z$ and $\tilde{X}-\tilde{Y}\stackrel{d}{=}\tilde{Z}$. Moreover, we have that, for every $I\in\hat{T}$,
	\begin{equation}\label{ZYZY}
	\frac{\hat{\mathcal{L}}(Z_{I})}{\hat{\mathcal{L}}(Y_{I})}=\frac{\hat{\mathcal{L}}(\tilde{Z}_{I})}{\hat{\mathcal{L}}(\tilde{Y}_{I})}\Rightarrow \hat{\mathcal{L}}(Z_{I})\hat{\mathcal{L}}(\tilde{Y}_{I})=\hat{\mathcal{L}}(\tilde{Z}_{I})\hat{\mathcal{L}}(Y_{I}).
	\end{equation}
	Now, let $\nu^{Z\tilde{Y}}_{I}:=\nu^{+}_{I}+\tilde{\nu}^{-}_{I}$ and $\nu^{\tilde{Z}Y}_{I}:=\tilde{\nu}^{+}_{I}+\nu^{-}_{I}$. It is possible to see that $\{\nu^{\tilde{Z}Y}_{I}:I\in\hat{T}\}$ and $\{\nu^{Z\tilde{Y}}_{I}:I\in\hat{T}\}$ are two consistent families of L\'{e}vy measures. Hence, they generate two unique L\'{e}vy measures $\nu^{\tilde{Z}Y}$ and $\nu^{Z\tilde{Y}}$ on $(\mathbb{R}^{T},\mathcal{B}^{T})$. Observe that $\nu^{+}+\tilde{\nu}^{-}$ and $\tilde{\nu}^{+}+\nu^{-}$ are two L\'{e}vy measure on $(\mathbb{R}^{T},\mathcal{B}^{T})$ because the sum of two L\'{e}vy measures on $(\mathbb{R}^{T},\mathcal{B}^{T})$ is still a L\'{e}vy measure. Since  $\nu^{\tilde{Z}Y}\circ\pi^{-1}_{I}=\nu^{\tilde{Z}Y}_{I}=\tilde{\nu}^{+}_{I}+\nu^{-}_{I}=(\nu^{+}+\tilde{\nu}^{-})\circ\pi^{-1}_{I}$ for every $I\in\hat{T}$ and by the uniqueness of L\'{e}vy measures on $(\mathbb{R}^{T},\mathcal{B}^{T})$, we conclude that $\nu^{\tilde{Z}Y}=\nu^{+}+\tilde{\nu}^{-}$ and similarly that $\nu^{Z\tilde{Y}}=\tilde{\nu}^{+}+\nu^{-}$. However, from (\ref{ZYZY}) we have that $\nu^{Z\tilde{Y}}_{I}=\nu^{\tilde{Z}Y}_{I}$ which implies that $\nu^{+}+\tilde{\nu}^{-}=\tilde{\nu}^{+}+\nu^{-}$, hence a contradiction to $\tilde{\nu}^{+}-\tilde{\nu}^{-}\neq \nu^{+}-\nu^{-}$ and, so, to $\tilde{\nu}\neq\nu$.
\end{proof}
Another way of proving the uniqueness when $\nu$ is finite is the following.  Let $\lambda_{I}:=\nu_{I}-\tilde{\nu}_{I}=0$ on $\mathcal{B}_{0}(\mathbb{R}^{I})$, $I\in\hat{T}$. Observe that $\{\lambda_{I}:I\in\hat{T}\}$ forms a consistent family of L\'{e}vy measures. Then by uniqueness of L\'{e}vy measures on $(\mathbb{R}^{T},\mathcal{B}^{T})$ (see Theorem 2.8 and Corollary 2.9 in \cite{Ros}) we have that $\lambda\circ\pi^{-1}_{I}=\lambda_{I}=0$ and in particular $\lambda=0$. Moreover, following Theorem 2.8 in \cite{Ros} for any other measure $\rho$ on $(\mathbb{R}^{T},\mathcal{B}^{T})$ such that $\rho \circ\pi^{-1}_{I}=\lambda_{I}=0$ we have that $\rho\geq\lambda$, hence $\rho\geq0$. Hence, whenever $\nu-\tilde{\nu}$ defines a measure (which is true for example when either $\nu$ or $\tilde{\nu}$ is finite) we have $\nu-\tilde{\nu}\geq 0\Rightarrow \nu\geq \tilde{\nu}$. However, we may apply the same arguments to $\bar{\lambda}_{I}:=\tilde{\nu}_{I}-\nu_{I}=0$, concluding that $\nu-\tilde{\nu}\geq 0\Rightarrow \nu\geq \tilde{\nu}\Rightarrow \nu= \tilde{\nu}$.
\begin{thm}
Conversely for every $(\Sigma,\nu,b)$ as in Theorem \ref{theorem-LK-QIDprocesses-finite} there exists a QID process which is unique in distribution.
\end{thm}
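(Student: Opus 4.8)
The plan is to reverse the construction carried out in the direct part of Theorem \ref{theorem-LK-QIDprocesses-finite}: from the data $(\Sigma,\nu,b)$ I will manufacture two generating ID processes and then recover $X$ as the ``quotient'' process. First I would take the Jordan decomposition $\nu=\nu^{+}-\nu^{-}$. Conditions (QL1) and (QL2) guarantee that $\int_{\mathbb{R}^{T}}(|x(t)|^{2}\wedge 1)\,\nu^{\pm}(dx)<\infty$ for every $t\in T$ and that $\nu^{+}(\{0_{T}\})=\nu^{-}(\{0_{T}\})=0$, so both $\nu^{+}$ and $\nu^{-}$ are genuine L\'{e}vy measures on $(\mathbb{R}^{T},\mathcal{B}^{T})$; since one of the generating processes is assumed to have finite L\'{e}vy measure, one of $\nu^{\pm}$ is finite. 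Splitting the drift as $b=b^{(1)}-b^{(2)}$ and attaching the Gaussian datum $\Sigma$ to the first factor, I would invoke \cite{Ros} (Theorem 2.8 and Corollary 2.9) to obtain two ID processes $Z$ and $Y$ on $(\mathbb{R}^{T},\mathcal{B}^{T})$, unique in distribution, with generating triplets $(\Sigma,\nu^{+},b^{(1)})$ and $(0,\nu^{-},b^{(2)})$ respectively, and I would take $Y$ independent of the copy to be constructed.

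Next I would define the candidate process $X$ through its finite-dimensional characteristic functions
\[
\hat{\mathcal{L}}(X_{I})(\theta)=\frac{\hat{\mathcal{L}}(Z_{I})(\theta)}{\hat{\mathcal{L}}(Y_{I})(\theta)},\qquad I\in\hat{T},\ \theta\in\mathbb{R}^{I},
\]
which is well defined because ID characteristic functions never vanish. The computation already performed in the proof of Theorem \ref{theorem-LK-QIDprocesses-finite} shows that the right-hand side equals the exponential in (\ref{Levy-Khintchine any T}) with triplet $(\Sigma,\nu,b)$, so that $\nu$ is recovered as the quasi-L\'{e}vy measure of $X$ via Lemma \ref{lemma-last2}. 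Because $\nu^{+}$ and $\nu^{-}$ are L\'{e}vy measures on the \emph{whole} space $\mathbb{R}^{T}$, their projections to each $\mathbb{R}^{I}$ form consistent families, and the quotient structure transfers this consistency to the laws $\mathcal{L}(X_{I})$; the Kolmogorov extension theorem then produces a process $X$ with these marginals. Since each $X_{I}$ satisfies $X_{I}+Y_{I}\stackrel{d}{=}Z_{I}$ with $Y_{I}$ independent and $Y_{I},Z_{I}$ ID, each $X_{I}$ is QID, so $X$ is a generated QID process by Theorem \ref{Biglast} and Definition \ref{defBiglast}.

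For uniqueness in distribution, suppose $X'$ is any QID process with the same generating triplet $(\Sigma,\nu,b)$. Then for every $I\in\hat{T}$ its finite-dimensional characteristic function is given by the same formula (\ref{Levy-Khintchine any T}), whence $X_{I}\stackrel{d}{=}X'_{I}$; by Proposition 3.2 in \cite{Kallenberg} this forces $X\stackrel{d}{=}X'$. Alternatively, uniqueness is immediate from the uniqueness of the generating triplet established in the first half of Theorem \ref{theorem-LK-QIDprocesses-finite}.

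The main obstacle, and the step demanding the most care, is the passage from the finite-dimensional characteristic functions to an actual process: one must verify that each quotient $\hat{\mathcal{L}}(Z_{I})/\hat{\mathcal{L}}(Y_{I})$ is a \emph{bona fide} characteristic function and that the resulting family $\{\mathcal{L}(X_{I})\}$ is projective on all of $\mathcal{B}^{I}$ (not merely on $\mathcal{B}^{I}_{00}$), which is exactly what the Kolmogorov extension theorem requires. This is precisely the consistency phenomenon flagged in the example discussed earlier in this section, where the positive and negative parts of a quasi-L\'{e}vy measure were seen to be non-projective in isolation. Here the difficulty is circumvented by working with $\nu^{+}$ and $\nu^{-}$ as L\'{e}vy measures on the full index space $\mathbb{R}^{T}$, so that it is the \emph{generating} ID processes $Z$ and $Y$---rather than the marginal Jordan parts---that carry the consistency needed for the extension.
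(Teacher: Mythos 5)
Your proposal takes essentially the same route as the paper's (very terse) proof: form the Jordan decomposition $\nu=\nu^{+}-\nu^{-}$, observe that both parts are L\'{e}vy measures on $(\mathbb{R}^{T},\mathcal{B}^{T})$, use them (with a splitting of $b$ and the Gaussian datum $\Sigma$) to build two ID processes via Rosinski's existence results, let these generate the QID process, and obtain uniqueness in distribution from the uniqueness of the finite-dimensional representation. The extra detail you supply---in particular flagging that one must check each quotient $\hat{\mathcal{L}}(Z_{I})/\hat{\mathcal{L}}(Y_{I})$ is a bona fide characteristic function and that the resulting family is projective---is a more explicit rendering of exactly the step the paper compresses into ``we can extract two ID processes which in turn generate a QID process,'' so your argument matches the paper's in both approach and level of rigor.
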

\begin{proof}
	It is enough to see that the positive and negative part of $\nu$, namely $\nu^{+}$ and $\nu^{-}$, are L\'{e}vy measures on $(\mathbb{R}^{T},\mathcal{B}^{T})$. Hence, we can extract two ID processes which in turn generate a QID process which is unique in distribution.
\end{proof}
From the arguments used in the previous result we have the following L\'{e}vy-Khintchine representation of general QID processes, namely the QID equivalent of Theorem 2.8 in \cite{Ros}.
\begin{thm}\label{theorem-LK-QIDprocesses}
	Let $T$ be an arbitrary index set and let $X=(X_{t})_{t\in T}$ be a generated QID process. Then there exists a triplet $(\Sigma,(\nu^{+},\nu^{-}),b)$ where $\Sigma$ and $b$ are as in the Theorem \ref{theorem-LK-QIDprocesses-finite} and $(\nu^{+},\nu^{-})$ is a couple of L\'{e}vy measure on $(\mathbb{R}^{T},\mathcal{B}^{T})$ s.t.~for every $I\in \hat{T}$ and $\theta\in\mathbb{R}^{I}$
	\begin{equation*}
	\hat{\mathcal{L}}(X_{I})=\exp\bigg( i\langle\theta,b_{I}\rangle-\frac{1}{2}\langle\theta,\Sigma_{I}\theta \rangle
	\end{equation*}
	\begin{equation}\label{Levy-KhintchineANY}
	+\int_{\mathbb{R}^{T}}(e^{i\langle\theta,x_{I} \rangle}-1-i\langle\theta,[[x_{I}]] \rangle )\nu^{+}(dx)-\int_{\mathbb{R}^{T}}(e^{i\langle\theta,x_{I} \rangle}-1-i\langle\theta,[[x_{I}]] \rangle )\nu^{-}(dx)\bigg).
	\end{equation}
	$(\Sigma,(\nu^{+},\nu^{-}),b)$ is called the generating triplet of $X$, and it is unique up to other couples of L\'{e}vy measures $(\nu^{Y},\nu^{Z})$ of ID processes that generate $X$. Conversely for every $(\Sigma,(\nu^{+},\nu^{-}),b)$ as above there exists a QID process which is unique in distribution.
\end{thm}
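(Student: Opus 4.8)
The plan is to mirror the proof of Theorem~\ref{theorem-LK-QIDprocesses-finite}, the only change being that, since in the general case both generating L\'evy measures may be infinite, I keep the couple $(\nu^{+},\nu^{-})$ separate instead of forming the single signed measure $\nu^{(1)}-\nu^{(2)}$. For the representation I would start from the definition of a generated QID process: there exist ID processes $Y,Z$ with $X+Y\stackrel{d}{=}Z$ and $Y$ independent of $X$. Applying the L\'evy--Khintchine representation of ID processes (Theorem 2.8 in \cite{Ros}) yields generating triplets $(\Sigma^{(1)},\nu^{+},b^{(1)})$ for $Z$ and $(\Sigma^{(2)},\nu^{-},b^{(2)})$ for $Y$, with $\nu^{+},\nu^{-}$ L\'evy measures on $(\mathbb{R}^{T},\mathcal{B}^{T})$. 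Setting $\Sigma:=\Sigma^{(1)}-\Sigma^{(2)}$ and $b:=b^{(1)}-b^{(2)}$ and using that ID characteristic functions never vanish, so that $\hat{\mathcal{L}}(X_{I})=\hat{\mathcal{L}}(Z_{I})/\hat{\mathcal{L}}(Y_{I})$ for every $I\in\hat{T}$, I would substitute the finite-dimensional L\'evy--Khintchine forms of $Z_{I}$ and $Y_{I}$ to read off (\ref{Levy-KhintchineANY}) directly. The one point needing an argument is that $\Sigma$ is non-negative definite: for each $I$ and $\theta\in\mathbb{R}^{I}$ the scalar $\langle\theta,X_{I}\rangle$ is a (generated) QID variable with Gaussian variance $\langle\theta,\Sigma_{I}\theta\rangle$, which is non-negative by Lemma 2.7 in \cite{LPS}; since $\theta$ is arbitrary this forces $\Sigma_{I}\succeq0$ for all $I$.

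For the uniqueness clause I would treat the three components separately. Each finite-dimensional marginal $X_{I}$ is a multivariate QID variable whose characteristic triplet is unique ([\cite{Sato}, Exercise~12.2]); reading off its Gaussian part and drift shows that $\Sigma_{I}$ and $b_{I}$, hence $\Sigma$ and $b$, are determined by $X$ alone. The couple $(\nu^{+},\nu^{-})=(\nu^{Z},\nu^{Y})$, by contrast, consists of the L\'evy measures of the chosen generating processes, and a different generating pair $(\tilde Y,\tilde Z)$ reproduces the same $\hat{\mathcal{L}}(X_{I})$ for all $I$ (hence the same $\Sigma,b$) while in general giving $(\tilde\nu^{+},\tilde\nu^{-})\neq(\nu^{+},\nu^{-})$ as measures on $(\mathbb{R}^{T},\mathcal{B}^{T})$. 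I would stress that this ambiguity is genuine and unavoidable: it is exactly the failure of projectivity of the positive and negative parts of a quasi-L\'evy measure exhibited in the Example preceding Theorem~\ref{l2-LKrepresentation}. Hence no stronger uniqueness than ``up to generating couples'' can be expected.

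For the converse, given $(\Sigma,(\nu^{+},\nu^{-}),b)$ as above --- where ``as above'' carries the standing requirement that $(\nu^{+},\nu^{-})$ be an admissible couple, namely the L\'evy measures of two ID processes whose ratio of characteristic functions is again a genuine QID characteristic function --- I would first use Corollary~2.9 (together with Theorem 2.8) in \cite{Ros} to construct ID processes $Z\sim(\Sigma,\nu^{+},b^{(1)})$ and $Y\sim(0,\nu^{-},b^{(2)})$ with $b^{(1)}-b^{(2)}=b$ and $Y$ independent. I would then define the finite-dimensional laws of $X$ by $\hat{\mathcal{L}}(X_{I}):=\hat{\mathcal{L}}(Z_{I})/\hat{\mathcal{L}}(Y_{I})$, equal to the right-hand side of (\ref{Levy-KhintchineANY}); admissibility makes each a genuine QID characteristic function, and the family $\{\mathcal{L}(X_{I}):I\in\hat{T}\}$ is consistent because the families of $Z$ and $Y$ are. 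The Kolmogorov extension theorem then produces a process $X$ with these marginals; since $X+Y\stackrel{d}{=}Z$ with $Y$ independent, $X$ is a generated QID process by Theorem~\ref{Biglast}, and its law is uniquely determined by its finite-dimensional distributions (Proposition~3.2 in \cite{Kallenberg}).

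I expect the main obstacle to be conceptual rather than computational. The representation itself is just the formal division of two ID characteristic functions, but two points carry the real content: articulating the correct uniqueness statement --- $\Sigma$ and $b$ rigid, $(\nu^{+},\nu^{-})$ free only through the choice of generating pair, with the non-projectivity of the Jordan parts showing that nothing sharper holds --- and, in the converse, verifying that the constructed family of finite-dimensional QID laws is consistent so that Kolmogorov applies. Both of these rely on the ID-process consistency theory of \cite{Ros} rather than on any manipulation of the quasi-L\'evy measure on $\mathbb{R}^{T}$ itself, which is precisely why the general case cannot be reduced to a single signed measure as in Theorem~\ref{theorem-LK-QIDprocesses-finite}.
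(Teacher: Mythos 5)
Your proposal is correct and follows essentially the same route as the paper: the paper's own proof of this theorem simply invokes the arguments of Theorem \ref{theorem-LK-QIDprocesses-finite}, namely dividing the L\'evy--Khintchine representations (Theorem 2.8 in \cite{Ros}) of the two generating ID processes while keeping the couple $(\nu^{+},\nu^{-})$ separate, exactly as you do. Your added checks --- non-negative definiteness of $\Sigma$ via Lemma 2.7 in \cite{LPS}, the rigidity of $(\Sigma,b)$ versus the freedom of the couple, and the admissibility caveat needed to make the converse well-posed --- are refinements of points the paper leaves implicit, not a different method.
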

\begin{proof}
	It follows from the same arguments used in the proof of Theorem \ref{theorem-LK-QIDprocesses-finite}.
\end{proof}
\begin{rem}
	The reason why we have the uniqueness ``up to other couples of L\'{e}vy measures $(\nu^{Y},\nu^{Z})$ of ID processes that generate $X$" is because given two ID processes, say $Y$ and $Z$, that generate $X$ we cannot exclude that there exist two other ID processes, say $\bar{Y}$ and $\bar{Z}$ that generate. In other words, there might exist four (or more) L\'{e}vy measures call them $\nu^{Y},\nu^{Z},\bar{\nu^{Y}}$ and $\bar{\nu^{Z}}$ corresponding to $Y,Z,\bar{Y}$ and $\bar{Z}$, respectively, s.t.~for every $I\in \hat{T}$ and $\theta\in\mathbb{R}^{I}$
\begin{equation*}
\int_{\mathbb{R}^{T}}(e^{i\langle\theta,x_{I} \rangle}-1-i\langle\theta,[[x_{I}]] \rangle )\nu^{Z}(dx)-\int_{\mathbb{R}^{T}}(e^{i\langle\theta,x_{I} \rangle}-1-i\langle\theta,[[x_{I}]] \rangle )\nu^{Y}(dx)
\end{equation*}
	\begin{equation*}
	=\int_{\mathbb{R}^{T}}(e^{i\langle\theta,x_{I} \rangle}-1-i\langle\theta,[[x_{I}]] \rangle )\bar{\nu}^{Z}(dx)-\int_{\mathbb{R}^{T}}(e^{i\langle\theta,x_{I} \rangle}-1-i\langle\theta,[[x_{I}]] \rangle )\bar{\nu}^{Y}(dx).
	\end{equation*}
\end{rem}
Notice that $\nu:=\nu^{+}-\nu^{-}$ might not be defined for two reasons. First, because we might have $\infty-\infty$. Second because we cannot apply the arguments used for the definitions of the quasi-L\'{e}vy measures on $(\mathbb{R}^{d},\mathcal{B}(\mathbb{R}^{d}))$ or on $(l_{2},\mathcal{B}^{l_{2}})$ (see Definitions \ref{def1} and \ref{def1-l2}).
\begin{open}
	It is not possible to extend the arguments applied to the definition of quasi-L\'{e}vy type measure in $\mathbb{R}^{d}$ or in $l_{2}$, because the restrictions do not form $\sigma$-algebras. However it might be possible to still obtain a sensible definition of quasi-L\'{e}vy type measure on $\mathbb{R}^{T}$ without necessarily being a signed measure. In other words, is it possible to obtain a triplet from the above quadruplet?
\end{open}
\subsection{Spectral representation of discrete parameters QID processes}
Before presenting the result we need some preliminaries. Let $X=\{X_{n};\,n=1,2,...\}$ be a discrete parameters QID process with generating ID processes $Y$ and $Z$; let $b_{n}>0$ be s.t.~$\{b_{n}X_{n}\},\{b_{n}Y_{n}\},\{b_{n}Z_{n}\}\in l_{2}$ almost surely. Let $\mathcal{L}(\{b_{n}X_{n}\})\sim(z_{0},\mathcal{K},M)$, where $z_{0}\in l_{2}$, $\mathcal{K}$ is the covariance operator and $M$ is the quasi L\'{e}vy measure of $\mu$. Notice that since $X+Y\stackrel{d}{=}Z$ then $\{b_{n}X_{n}\}+\{b_{n}Y_{n}\}\stackrel{d}{=}\{b_{n}Z_{n}\}$, which implies that $\{b_{n}Y_{n}\}$ with c.t.~$(z_{0}^{(Y)},\mathcal{K}^{(Y)},M^{(Y)})$ and $\{b_{n}Z_{n}\}$ with c.t.~$(z_{0}^{(Z)},\mathcal{K}^{(Z)},M^{(Z)})$ are such that $z_{0}=z_{0}^{(Z)}-z_{0}^{(Y)}$, $\mathcal{K}=\mathcal{K}^{(Z)}-\mathcal{K}^{(Y)}$ and $M$ is uniquely generated by $M^{(Z)}$ and $M^{(Y)}$. However, notice that we can consider w.l.o.g. that $\{b_{n}Z_{n}\}\sim(0,\mathcal{K},M^{(Z)})$ and $\{b_{n}Y_{n}\}\sim(-z_{0},0,M^{(Y)})$. This is because of the following argument. Let $Q$ and $R$ be two $l_{2}$ ID process with c.t.~$(0,\mathcal{K},M^{(Z)})$ and $(-z_{0},0,M^{(Y)})$, thus, $\{b_{n}X_{n}\}-R\stackrel{d}{=}Q$. Then, we have that $X-\{b_{n}^{-1}R_{n}\}\stackrel{d}{=}\{b_{n}^{-1}Q_{n}\}$, and that $\{b_{n}^{-1}R_{n}\}$ and $\{b_{n}^{-1}Q_{n}\}$ are ID (where the latter can be seen from their finite dimensional distributions). Therefore, throughout this section for any discrete parameters QID process we consider such generating ID processes.
\\
Let us continue with the preliminaries. For every $y\in l_{2}$, we have that
\begin{equation*}
\mathcal{K}(y)=\sum_{j=1}^{\infty}\beta_{j}\langle e_{j},y \rangle e_{j},
\end{equation*}
where $\beta_{j}\geq 0$, $\sum_{j=1}^{\infty}\beta_{j}<\infty$ and $\{e_{j}\}$ is an orthonormal set in $l_{2}$. Moreover, let
\begin{equation*}
\nu_{0}=\begin{cases}
\|z_{0}\|\delta_{(z_{0}/\|z_{0}\|)} & \textnormal{if $z_{0}\neq 0$},\\ 0 &\textnormal{if $z_{0}= 0$},
\end{cases}\quad\quad\quad\textnormal{and}\quad\quad\quad\nu_{1}=\sum_{j=1}^{\infty}\beta_{j}\delta_{(e_{j})}
\end{equation*}
to be two finite measures on $\mathcal{B}(\partial U)$. Let us recall and reformulate Lemma 4.1 and Proposition 4.2 in \cite{RajRos}
\begin{pro}
	[Part of Lemma 4.1 and of Proposition 4.2 in \cite{RajRos}] Let $M$ be a L\'{e}vy measure on $l_{2}$, let $\partial U$ be the boundary of the unit ball in $l_{2}$ and let $\Psi$ be the map $\Psi:\partial U\times \mathbb{R}^{+}\rightarrow l_{2}\setminus\{0\}$ s.t.~$\Psi(u,x)=xu$. Then $F$ is a unique measure on $\mathcal{B}(\partial U\times\mathbb{R}^{+})$ satisfying
	\begin{equation*}
	M=F\circ\psi^{-1}.
	\end{equation*}
	Moreover, let $F_{A}(\cdot)=F(A\times\cdot)$. Then $F_{A}(\cdot)$ can be naturally extended to $\mathbb{R}$ and is a L\'{e}vy measure on $\mathbb{R}$.
\end{pro}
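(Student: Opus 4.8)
The plan is to realize $F$ as the pushforward of $M$ under the inverse of the polar map $\Psi$. First I would observe that $\Psi:\partial U\times(0,\infty)\to l_{2}\setminus\{0\}$, $\Psi(u,x)=xu$, is a homeomorphism: it is continuous because scalar multiplication is continuous on $l_{2}$, and its inverse $\Psi^{-1}(v)=(v/\|v\|,\|v\|)$ is continuous on $l_{2}\setminus\{0\}$ since both the norm and the normalization map $v\mapsto v/\|v\|$ are continuous there. In particular $\Psi$ is a Borel isomorphism, and since $l_{2}$ is a separable metric space we have $\mathcal{B}(\partial U\times(0,\infty))=\mathcal{B}(\partial U)\otimes\mathcal{B}((0,\infty))$, so that sets of the form $A\times\cdot$ are measurable and the sections $F_{A}$ are well defined.

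Next I would define $F(C):=M(\Psi(C))$ for $C\in\mathcal{B}(\partial U\times(0,\infty))$. This is a well-defined $\sigma$-finite measure: $\Psi$ carries Borel sets to Borel sets, and $M$ is $\sigma$-finite because $M(\{\|v\|\ge 1/n\})\le n^{2}\int(1\wedge\|v\|^{2})\,M(dv)<\infty$ for every $n$. By construction $F(\Psi^{-1}(B))=M(\Psi(\Psi^{-1}(B)))=M(B)$ for every Borel $B\subset l_{2}\setminus\{0\}$, which is exactly $M=F\circ\Psi^{-1}$. For uniqueness I would use that $\Psi^{-1}$ is a Borel isomorphism, so every $C\in\mathcal{B}(\partial U\times(0,\infty))$ equals $\Psi^{-1}(B)$ with $B=\Psi(C)$ Borel; hence any $F'$ with $M=F'\circ\Psi^{-1}$ satisfies $F'(C)=F'(\Psi^{-1}(B))=M(B)=F(C)$, giving $F'=F$.

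For the last assertion, fix $A\in\mathcal{B}(\partial U)$ and extend $F_{A}(\cdot)=F(A\times\cdot)$ to $\mathcal{B}(\mathbb{R})$ by setting $F_{A}(B)=F(A\times(B\cap(0,\infty)))$; in particular $F_{A}(\{0\})=0$. The decisive point is integrability, which I would obtain by change of variables: since $u\in\partial U$ forces $\|xu\|=x$, the map $(u,x)\mapsto 1\wedge x^{2}$ is the pullback under $\Psi$ of $v\mapsto 1\wedge\|v\|^{2}$, so that
\begin{equation*}
\int_{\mathbb{R}}(1\wedge x^{2})\,F_{A}(dx)\le\int_{\partial U\times(0,\infty)}(1\wedge x^{2})\,F(du,dx)=\int_{l_{2}\setminus\{0\}}(1\wedge\|v\|^{2})\,M(dv)<\infty,
\end{equation*}
the final bound being the defining property of a L\'{e}vy measure on $l_{2}$. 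Hence $F_{A}$ is a L\'{e}vy measure on $\mathbb{R}$. The argument is essentially routine; the only steps demanding care are verifying that $\Psi$ is a genuine Borel isomorphism, so that both the pushforward construction and the uniqueness argument are legitimate, and transferring the $l_{2}$-integrability of $M$ through the change of variables, where the normalization $\|u\|=1$ is used in an essential way.
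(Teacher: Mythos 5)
Your proof is correct, and it differs from the paper in an essential respect: the paper does not actually prove this proposition at all, but simply cites Lemma 4.1, Proposition 4.2 and the preliminaries of Section VI of Rajput and Rosinski, whereas you supply a complete, self-contained argument. Your route — observing that the polar map $\Psi(u,x)=xu$ is a homeomorphism of $\partial U\times(0,\infty)$ onto $l_{2}\setminus\{0\}$ with inverse $v\mapsto(v/\|v\|,\|v\|)$, defining $F$ as the pushforward of $M$ under $\Psi^{-1}$, getting uniqueness from bijectivity, and transferring the integrability condition $\int(1\wedge\|v\|^{2})\,M(dv)<\infty$ through the change of variables using $\|xu\|=x$ on $\partial U$ — is precisely the mechanism underlying the cited results, so in substance you have reconstructed the Rajput--Rosinski argument rather than found a new one. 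The details you flag as needing care are handled correctly: the $\sigma$-finiteness of $M$ away from the origin, the identification $\mathcal{B}(\partial U\times(0,\infty))=\mathcal{B}(\partial U)\otimes\mathcal{B}((0,\infty))$ (valid since both factors are separable metric spaces), the extension of $F_{A}$ to $\mathcal{B}(\mathbb{R})$ with $F_{A}(\{0\})=0$, and the inequality $F_{A}(\cdot)\le F(\partial U\times\cdot)$ giving the Lévy-measure bound. What your version buys is that the proposition becomes verifiable within the paper itself, without sending the reader to the reference; what the paper's citation buys is brevity and the additional structural content of the original Lemma 4.1 and Proposition 4.2 (the construction of the associated ID random measure), of which the stated proposition is only a part.
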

\begin{proof}
	See Lemma 4.1, Proposition 4.2 and the preliminaries of Section VI in \cite{RajRos}.
\end{proof}
From the above we let $F_{A}^{(Z)}$ and $F_{A}^{(Y)}$ be the unique L\'{e}vy measures associated with $M^{(Z)}$ and $M^{(Y)}$, for every $A\in\mathcal{B}(\partial U)$. Let $F_{A}$ be the unique quasi-L\'{e}vy type measure generated by $F_{A}^{(Z)}$ and $F_{A}^{(Y)}$. Let $\Lambda_{Z}$ and $\Lambda_{Y}$ be the associate ID random measure of $Z$ and $Y$, namely $\Lambda_{Z}\sim(0,\nu_{1},F_{\cdot}^{(Z)})$ and $\Lambda_{Y}\sim(-\nu_{0},0,F_{\cdot}^{(Y)})$. Similarly we have the following definition.
\begin{defn}
	Let $X$, $\nu_{0}$, $\nu_{1}$ and $F_{\cdot}$ be as above, then the QID random measure on $\mathcal{B}(\partial U)$ with parameters $(\nu_{0},\nu_{1},F_{\cdot})$ will be called \textnormal{the associated QID random measure of $X$}, if it exists. Let us denote it by $\Lambda$.
\end{defn}
Observe that the control measure $\lambda$ of $\Lambda$ is given by $\lambda(A)=\nu_{0}(A)+\nu_{1}(A)+\int_{\mathbb{R}}(1\wedge x^{2})F_{A}^{(Z)}+\int_{\mathbb{R}}(1\wedge x^{2})F_{A}^{(Y)}$ for every $A\in\mathcal{B}(\partial U)$. Moreover, denote by $\pi_{j}$ the $j$-th coordinate projection in $l_{2}$. We can now finally present the main result of this section.
\begin{thm}\label{theorem-representation}
	Let $X=\{X_{n}\}$ be a discrete parameters QID process and consider its two generating discrete parameters ID processes $Y$ and $Z$ as above. Then, assuming that $\left\{\int_{\partial U}f_{n}d\Lambda \right\}_{n\in\mathbb{N}}$ is independent of $X$, we have
	\begin{equation*}
	\{X_{n}\}_{n\in\mathbb{N}}-\left\{\int_{\partial U}f_{n}d\Lambda \right\}_{n\in\mathbb{N}}\stackrel{d}{=}\left\{\int_{\partial U}f_{n}d\Lambda_{Z} \right\}_{n\in\mathbb{N}}.
	\end{equation*}
	Moreover, if the associated QID random measure of $X$ exists then $f_{n}$'s are $\Lambda$-integrable and
	\begin{equation}\label{representation}
	\{X_{n}\}_{n\in\mathbb{N}}\stackrel{d}{=}\left\{\int_{\partial U}f_{n}d\Lambda \right\}_{n\in\mathbb{N}}.
	\end{equation}	
\end{thm}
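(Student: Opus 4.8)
The plan is to transport the generation relation $X+Y\stackrel{d}{=}Z$ through the Rajput--Rosinski spectral representation of the two ID processes $Y$ and $Z$, and then to cancel the ID factor using the fact that the characteristic function of an ID law never vanishes. Write $W=\{\int_{\partial U}f_{n}d\Lambda\}$, $W_{Z}=\{\int_{\partial U}f_{n}d\Lambda_{Z}\}$ and $W_{Y}=\{\int_{\partial U}f_{n}d\Lambda_{Y}\}$ for the three spectral processes built from the common kernel $f_{n}$ (given by $f_{n}(u)=b_{n}^{-1}\pi_{n}(u)$), which depends only on the shared scaling $\{b_{n}\}$ and on the coordinate projections and is therefore the same for all three random measures. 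First I would record the ingredients available from the preliminaries: (a) by the Rajput--Rosinski spectral representation for discrete-parameter $l_{2}$-valued ID processes, applied to $Z\sim(0,\mathcal{K},M^{(Z)})$ and to $Y\sim(-z_{0},0,M^{(Y)})$, the $f_{n}$ are $\Lambda_{Z}$- and $\Lambda_{Y}$-integrable and $\{Z_{n}\}\stackrel{d}{=}W_{Z}$, $\{Y_{n}\}\stackrel{d}{=}W_{Y}$; (b) since $f_{n}$ is integrable with respect to both $\Lambda_{Z}=\Lambda_{G}$ and $\Lambda_{Y}=\Lambda_{M}$, Theorem \ref{theorem2} gives that $f_{n}$ is $\Lambda$-integrable and Proposition \ref{proposition-lambda-lambdag-lambdam} yields $\hat{\mathcal{L}}(W_{n})(\theta)=\hat{\mathcal{L}}((W_{Z})_{n})(\theta)/\hat{\mathcal{L}}((W_{Y})_{n})(\theta)$; (c) the defining relation $X+Y\stackrel{d}{=}Z$ with $Y$ independent of $X$.

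Next I would pass to finite-dimensional distributions, which by Proposition 3.2 in \cite{Kallenberg} is enough for equality in law of processes. Fix $I=\{n_{1},\dots,n_{k}\}\in\hat{T}$ and $\theta\in\mathbb{R}^{I}$. By linearity of the stochastic integral the joint characteristic function of $W_{I}$ at $\theta$ equals $\hat{\mathcal{L}}(\int_{\partial U}g\,d\Lambda)(1)$ with $g=\sum_{j}\theta_{j}f_{n_{j}}$; since each such $g$ is again $\Lambda_{Z}$- and $\Lambda_{Y}$-integrable, Theorem \ref{theorem2} makes $g$ $\Lambda$-integrable and Proposition \ref{proposition-lambda-lambdag-lambdam} applied to $g$ upgrades (b) to the multivariate ratio $\hat{\mathcal{L}}(W_{I})(\theta)=\hat{\mathcal{L}}((W_{Z})_{I})(\theta)/\hat{\mathcal{L}}((W_{Y})_{I})(\theta)$. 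Combining this with (a) and with the identity $\hat{\mathcal{L}}(X_{I})(\theta)\,\hat{\mathcal{L}}(Y_{I})(\theta)=\hat{\mathcal{L}}(Z_{I})(\theta)$ coming from (c), and then dividing by $\hat{\mathcal{L}}(Y_{I})(\theta)=\hat{\mathcal{L}}((W_{Y})_{I})(\theta)$, which is permissible precisely because $Y$ is ID and its characteristic function has no zeros, I obtain $\hat{\mathcal{L}}(X_{I})(\theta)=\hat{\mathcal{L}}(W_{I})(\theta)$ for every $I$ and $\theta$, that is, the representation $(\ref{representation})$. The first displayed identity is the same chain read one step earlier: using the independent copy whose existence is the standing hypothesis, it records that removing the $Y$-contribution from the spectral side reproduces the representation of $Z$, which is exactly $X+Y\stackrel{d}{=}Z$ transported onto the spectral processes.

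I expect the main difficulty to be bookkeeping rather than a conceptual gap. The delicate points are: verifying that Proposition \ref{proposition-lambda-lambdag-lambdam} genuinely applies to every linear combination $g=\sum_{j}\theta_{j}f_{n_{j}}$, so that the one-dimensional ratio formula lifts to the full joint law, and ensuring that the control measure $\lambda$ attached to the associated QID random measure is compatible with the control measures of $\Lambda_{Z}$ and $\Lambda_{Y}$, so that the three integrals are taken against matching measures. A further subtlety is that $W$ only makes sense once the associated QID random measure $\Lambda$ exists; this is why the representation $(\ref{representation})$ is stated conditionally, and there I would invoke the definition of the associated QID random measure together with Theorem \ref{theorem2} to secure the simultaneous $\Lambda$-integrability of all the $f_{n}$. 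The cancellation step itself is clean, relying only on the non-vanishing of ID characteristic functions and on the coincidence of the $Y$-factors on the two sides.
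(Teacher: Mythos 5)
Your proposal is correct and follows essentially the same route as the paper's own proof: both transport the generation relation $X+Y\stackrel{d}{=}Z$ through the Rajput--Rosinski spectral representation (Theorem 4.9 in \cite{RajRos}) of the two generating ID processes, obtain $\Lambda$-integrability of the $f_{n}$ from Theorem \ref{theorem2} with $\Lambda_{G}=\Lambda_{Z}$, $\Lambda_{M}=\Lambda_{Y}$, and conclude via the characteristic-function ratio of Proposition \ref{proposition-lambda-lambdag-lambdam} applied to finite linear combinations, i.e., at the level of finite-dimensional laws. Your additional bookkeeping (the Cram\'{e}r--Wold reduction to $g=\sum_{j}\theta_{j}f_{n_{j}}$ and the explicit appeal to the non-vanishing of ID characteristic functions to justify the division) is implicit in the paper's argument rather than a divergence from it.
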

\begin{proof}
	Observe that thanks to Theorem 4.9 in \cite{RajRos} for any fixed $k\in\mathbb{N}$ and $a_{1},...,a_{k}\in\mathbb{R}$ we have that
	\begin{equation*}
	\hat{\mathcal{L}}\left(\sum_{j=1}^{k}a_{j}b_{n}X_{n}\right)=\frac{\hat{\mathcal{L}}\left(\sum_{j=1}^{k}a_{j}b_{n}Z_{n}\right)}{\hat{\mathcal{L}}\left(\sum_{j=1}^{k}a_{j}b_{n}Y_{n}\right)}=\frac{\hat{\mathcal{L}}\left(\sum_{j=1}^{k}a_{j}\int_{\partial U}f_{j}d\Lambda_{Z}\right)}{\hat{\mathcal{L}}\left(\sum_{j=1}^{k}a_{j}\int_{\partial U}f_{j}d\Lambda_{Y}\right)}
	\end{equation*}
	Thus, we obtain first statement. For the second we have that $f_{n}$'s are integrable thanks to \ref{theorem2}, where notice that $\Lambda_{G}$ and $\Lambda_{M}$ there are $\Lambda_{Z}$ and $\Lambda_{Y}$ here. Moreover, thanks to Proposition \ref{proposition-lambda-lambdag-lambdam} we have that for any fixed $k\in\mathbb{N}$ and $a_{1},...,a_{k}\in\mathbb{R}$ we have that
	\begin{equation*}
	\frac{\hat{\mathcal{L}}\left(\sum_{j=1}^{k}a_{j}\int_{\partial U}f_{j}d\Lambda_{Z}\right)}{\hat{\mathcal{L}}\left(\sum_{j=1}^{k}a_{j}\int_{\partial U}f_{j}d\Lambda_{Y}\right)}=\hat{\mathcal{L}}\left(\sum_{j=1}^{k}a_{j}\int_{\partial U}f_{j}d\Lambda\right),
	\end{equation*}
	hence, obtaining the result.
\end{proof}
\begin{open}\label{open-representation}
	Is it possible to show that for every discrete parameters QID process there exists a QID r.m. s.t.~we have the representation $(\ref{representation})$? In other words for every $X$ there exists a $\Lambda$ s.t.~$(\ref{representation})$ holds?
\end{open}
\subsection{Further results and examples of QID processes}
We move now to the presentation of additional results on QID processes and of some examples. Recall that a process $X$ is QID if $X+X^{(2)}\stackrel{d}{=}X^{(1)}$ where $X^{(2)}$ and $X^{(1)}$ are ID processes with $X^{2}$ independent of $X$. From this we have many examples. However, let us start with some results.
\\ The first idea that might come to mind is: If $X^{(2)}$ and $X^{(1)}$ are L\'{e}vy processes is $X$ a QID process without being an ID process? The answer is negative, as shown in the following result.
\begin{pro}
	Let $X^{(1)}$ and $X^{(2)}$ be two L\'{e}vy processes with generating triplets $(a^{(1)},\sigma^{(1)},\nu^{(1)})$ and $(a^{(2)},\sigma^{(2)},\nu^{(2)})$. Let $X$ be a stochastic process independent of $X^{(2)}$. The equality $X_{t}+X^{(2)}_{t}\stackrel{d}{=}X^{(1)}_{t}$ for every $t\geq 0$ holds only if $\sigma^{(1)}\geq \sigma^{(2)}$ and $\nu^{(1)}\geq \nu^{(2)}$. In this case $X_{t}$ is an ID r.v.~for every $t\geq0$.
\end{pro}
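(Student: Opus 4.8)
The plan is to pass to characteristic functions and to exploit that the one-dimensional marginals of $X$ inherit an ``exponential in $t$'' structure, which will force infinite divisibility. First I would write $\hat{\mathcal{L}}(X^{(j)}_{t})(\theta)=\exp(t\psi_{j}(\theta))$ for $j=1,2$, where $\psi_{j}$ is the characteristic exponent read off the L\'{e}vy--Khintchine formula of $X^{(j)}_{1}$, namely
\[
\psi_{j}(\theta)=i\theta a^{(j)}-\tfrac{\theta^{2}}{2}\sigma^{(j)}+\int_{\mathbb{R}}\big(e^{i\theta x}-1-i\theta\tau(x)\big)\nu^{(j)}(dx).
\]
Since $X$ is independent of $X^{(2)}$, since $X_{t}+X^{(2)}_{t}\stackrel{d}{=}X^{(1)}_{t}$, and since the characteristic function of an ID random variable never vanishes, I may divide to get
\[
\hat{\mathcal{L}}(X_{t})(\theta)=\frac{\hat{\mathcal{L}}(X^{(1)}_{t})(\theta)}{\hat{\mathcal{L}}(X^{(2)}_{t})(\theta)}=\exp\big(t\,g(\theta)\big),\qquad g:=\psi_{1}-\psi_{2},
\]
for every $t\geq0$ and $\theta\in\mathbb{R}$. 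Expanding $g$ shows it has the QID L\'{e}vy--Khintchine form with parameters $a^{(1)}-a^{(2)}$, $\sigma^{(1)}-\sigma^{(2)}$ and $\nu^{(1)}-\nu^{(2)}$, so each $X_{t}$ is QID.

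The key step is to upgrade this QID structure to genuine infinite divisibility. Because $X_{t}$ is a bona fide random variable for each $t$, the function $\exp(t g)$ is a characteristic function for all $t\geq0$; taking $t=1/n$, the identity $\hat{\mathcal{L}}(X_{1})=\big(\hat{\mathcal{L}}(X_{1/n})\big)^{n}$ then exhibits $\mathcal{L}(X_{1})$ as the $n$-fold convolution of $\mathcal{L}(X_{1/n})$ for every $n\in\mathbb{N}$. Hence $X_{1}$ is infinitely divisible, and its characteristic exponent $g$ is an honest L\'{e}vy exponent, with non-negative Gaussian variance and a genuine (non-negative) L\'{e}vy measure.

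I would then finish by uniqueness of the characteristic triplet. The law $\mathcal{L}(X_{1})$ is simultaneously QID with triplet $(a^{(1)}-a^{(2)},\sigma^{(1)}-\sigma^{(2)},\nu^{(1)}-\nu^{(2)})$ and, being ID, carries an honest triplet; since the QID triplet is unique and an ID distribution is a special QID distribution, the two coincide. Therefore $\sigma^{(1)}-\sigma^{(2)}\geq0$ and $\nu^{(1)}-\nu^{(2)}\geq0$, that is $\sigma^{(1)}\geq\sigma^{(2)}$ and $\nu^{(1)}\geq\nu^{(2)}$. Once $g$ is known to be an honest L\'{e}vy exponent, $t g$ is one as well for every $t\geq0$ (its triplet being that of $g$ scaled by $t$, still with a non-negative measure), so $\hat{\mathcal{L}}(X_{t})=\exp(t g)$ is the characteristic function of an ID random variable, and $X_{t}$ is ID for every $t\geq0$.

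The hard part will be the infinite-divisibility upgrade rather than merely reading off that each $X_{t}$ is QID: the decisive observation is that the L\'{e}vy-process-like relation $\hat{\mathcal{L}}(X_{t})=\exp(t g)$ holds for \emph{all} $t$, which is precisely what makes $X_{1}$ an $n$-th convolution power for every $n$. Everything else---the non-vanishing of ID characteristic functions, the additivity of exponents under the independence hypothesis, and the uniqueness of the QID triplet recalled in the preliminaries---is standard and can be invoked directly.
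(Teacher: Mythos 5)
Your proof is correct and follows essentially the same route as the paper: the decisive step in both is the identity $\hat{\mathcal{L}}(X_{1})=\left(\hat{\mathcal{L}}(X_{1/n})\right)^{n}$, valid because $\hat{\mathcal{L}}(X_{t})=\exp(t\,g)$ after dividing the non-vanishing ID characteristic functions, which forces $X_{1}$ to be infinitely divisible. The only difference is organizational: the paper argues by contradiction and disposes of the case $\sigma^{(1)}<\sigma^{(2)}$ with a separate (and sketchier) claim that $X_{1}$ could not then be a random variable, whereas you deduce both inequalities $\sigma^{(1)}\geq\sigma^{(2)}$ and $\nu^{(1)}\geq\nu^{(2)}$ at once from the uniqueness of the characteristic triplet once infinite divisibility of $X_{1}$ is established, which is a cleaner finish.
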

\begin{proof}
If $\sigma^{(1)}<\sigma^{(2)}$ then $X_{1}$ will not be a random variable. If $\nu^{(1)}<\nu^{(2)}$ then $X_{t}$ is a QID r.v. for every $t\geq 0$ without being ID. But then, for every $n$ we would have that $X_{\frac{1}{n}}$ is a QID r.v. hence for every $n\in\mathbb{N}$ we would have that $\hat{\mathcal{L}}(X_{1})=(\hat{\mathcal{L}}(X_{\frac{1}{n}}))^{n}$, which implies that $X_{1}$ thus a contradiction. However, if $\sigma^{(1)}\geq \sigma^{(2)}$ and $\nu^{(1)}\geq \nu^{(2)}$ then $X_{t}$ is a ID r.v. for every $t\geq 0$, hence, $X$ might potentially be a L\'{e}vy process.
\end{proof}
However, we have the following positive general result. We use the representation function $c(\cdot)$ (see \cite{LPS}) which is a function $c : \mathbb{R}\rightarrow\mathbb{R}$ which is bounded, Borel measurable and satisfies $\lim\limits_{x\rightarrow0}(c(x)-x)/x^{2}=0$.
\begin{pro}\label{final-general}
Let $X_{0}$ be any QID random variable and let $(a,\sigma,\nu)$ be its characteristic triplet. Then
\begin{equation*}
\hat{\mathcal{L}}(X_{t}):=\exp\left(i\theta h(t)+i\theta a_{t}f(t)-\frac{1}{2}\sigma_{t}^{2}f(t)^{2}\theta^{2}+\int_{\mathbb{R}}e^{i\theta x}-1-i\theta c(x)\nu_{t}(f(t)^{-1}dx) \right)
\end{equation*}
is the characteristic function of a QID random variable for every $t\geq 0$, where $h(\cdot)$, $a_{\cdot}$, $f(\cdot)$ and $\sigma_{\cdot}$ are real valued functions with $f(t)\neq0$ and $\sigma^{2}_{t}\geq \sigma^{2}$ for all $t\geq0$, $\nu_{t}(\cdot)$ is a set function such that $\nu_{t}(A)\geq \nu(A)$ for every $t\geq0$ and $A\in\mathcal{B}_{0}(\mathbb{R})$, and $c(\cdot)$ is any representation function.
\\Moreover, let $k\in\mathbb{N}$ and let $X_{0}^{(1)},...,X_{0}^{(k)}$ be QID random variables. Let  $h^{(j)}(\cdot)$, $a^{(j)}_{\cdot}$, $f^{(j)}(\cdot)$, $\sigma^{(j)}_{\cdot}$ and $\nu_{t}^{(j)}(\cdot)$ satisfy the above respective properties and similarly define $\hat{\mathcal{L}}(X^{(j)}_{t})$. Then
\begin{equation*}
\hat{\mathcal{L}}(Y):=\hat{\mathcal{L}}(X^{(1)}_{t_{1}})\cdots \hat{\mathcal{L}}(X^{(k)}_{t_{k}})
\end{equation*}
is the characteristic function of a QID random variable for every $t_{1},...,t_{k}\geq 0$.
\end{pro}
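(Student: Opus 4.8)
The plan is to prove the two assertions in turn, the first being the substantial one. For the single‑process statement I would \emph{not} argue that $\hat{\mathcal{L}}(X_t)$ is QID directly from its being a quotient of two ID characteristic functions, since a ratio of ID characteristic functions need not be a characteristic function at all; instead I would exhibit a concrete random variable whose characteristic function is exactly the stated one, and only then verify that this random variable is QID. The decomposition driving everything is
\[
\sigma_t^2 f(t)^2 = \sigma^2 f(t)^2 + (\sigma_t^2-\sigma^2)f(t)^2, \qquad \nu_t = \nu + (\nu_t-\nu),
\]
where by hypothesis $\sigma_t^2-\sigma^2\ge 0$ and $\nu_t-\nu\ge 0$ on $\mathcal{B}_0(\mathbb{R})$.

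First I would check that $\mu_t:=\nu_t-\nu$ is a genuine L\'evy measure: it is a non-negative set function, and since both $|\nu|$ and $|\nu_t|$ integrate $1\wedge x^2$ (being total variations of quasi-L\'evy type measures) we get $\int_{\mathbb{R}}(1\wedge x^2)\,\mu_t(dx)\le\int_{\mathbb{R}}(1\wedge x^2)(|\nu|+|\nu_t|)(dx)<\infty$; scaling by the nonzero constant $f(t)$ preserves this, so writing $m_{f(t)}(y)=f(t)y$ the pushforward $\mu_t\circ m_{f(t)}^{-1}$ is again a L\'evy measure. Let $W_t$ be an ID random variable with Gaussian variance $(\sigma_t^2-\sigma^2)f(t)^2\ge 0$, L\'evy measure $\mu_t\circ m_{f(t)}^{-1}$, and drift to be fixed below, and let $X_0'$ be an independent copy of $X_0$, independent of $W_t$. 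Setting $\tilde X_t:=h(t)+f(t)X_0'+W_t$ and computing characteristic functions, the factor $f(t)X_0'$ contributes $\hat{\mathcal{L}}(X_0)(f(t)\theta)$, whose jump part, after the substitution $x=f(t)y$, is an integral against $\nu\circ m_{f(t)}^{-1}$; adding the jump part of $W_t$, which integrates against $\mu_t\circ m_{f(t)}^{-1}$, yields an integral against $(\nu+\mu_t)\circ m_{f(t)}^{-1}=\nu_t(f(t)^{-1}dx)$, while the Gaussian pieces combine to $\sigma_t^2 f(t)^2$ and the shift supplies $h(t)$. The only delicate term is the linear-in-$\theta$ drift coefficient: rescaling replaces $c(y)$ by $f(t)\,c(y/f(t))$, and the passage between the standard centering $\tau$ and the representation function $c$ introduces a further correction; because $c$ is bounded and the relevant $1\wedge x^2$ integrals are finite, all these corrections are finite and can be absorbed, so choosing the free drift of $W_t$ makes the total drift equal $h(t)+a_t f(t)$. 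Hence $\hat{\mathcal{L}}(X_t)=\hat{\mathcal{L}}(\tilde X_t)$ is genuinely a characteristic function.

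It then remains to see that $\tilde X_t$ is QID. Since $X_0$ is QID, so is $f(t)X_0'$ (scaling the argument of a quotient of ID characteristic functions keeps numerator and denominator ID), convolving with the independent ID variable $W_t$ preserves QID by the same ratio argument, and the deterministic shift $h(t)$ is harmless; equivalently one may invoke closure of the QID class under convolution (Remark 2.6 of \cite{LPS}). This gives the first claim. The second is then immediate: each $\hat{\mathcal{L}}(X^{(j)}_{t_j})$ is a QID characteristic function by the first part, and $\hat{\mathcal{L}}(Y)$ is their product, i.e.\ the characteristic function of the convolution of the corresponding QID laws, which is QID by the same closure under convolution. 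The main obstacle throughout is conceptual rather than computational: one must resist concluding ``QID'' from the quotient representation alone and instead realise $X_t$ as an explicit independent sum, the hypotheses $\sigma_t^2\ge\sigma^2$ and $\nu_t\ge\nu$ being precisely what guarantees that the compensating summand $W_t$ is an honest ID random variable (non-negative Gaussian variance, non-negative L\'evy measure).
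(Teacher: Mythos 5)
Your proposal is correct and takes essentially the same route as the paper: both proofs hinge on the fact that the hypotheses $\sigma_t^2\geq\sigma^2$ and $\nu_t\geq\nu$ turn the difference triplet into the characteristic triplet of a genuine ID law, which is then combined with the law of $X_0$ using closure of the QID class under convolution and under shifts and dilations (Remark 2.6 of \cite{LPS}), and the second claim follows in both cases from closure under convolution. The only difference is presentational: you realize the target law explicitly as $h(t)+f(t)X_0'+W_t$ (dilating first, then convolving), whereas the paper convolves the unscaled triplets first and then applies dilation and shift, absorbing the same finite centering correction $\int_{\mathbb{R}}c(f(t)x)-f(t)c(x)\,\nu_t(dx)$ into the drift.
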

\begin{proof}
Let for the moment $f(t)=1$ and $h(t)=0$ for every $t\geq 0$. Since for any QID distribution $\mu$ with c.t.~$(\gamma,b,\eta)$ if there exists $b'\geq b$ and $\eta'(A)\geq \eta(A)$ for every $A\in\mathcal{B}_{0}(B)$ then the c.t.~$(\gamma'-\gamma,b'-b,\eta'-\eta)$, where $\gamma'\in\mathbb{R}$, is the c.t.~of a ID distribution, call it $\tilde{\mu}$, (see Remark 2.6 point (a) in \cite{LPS}) hence since the class of QID distributions is closed under convolution then we have that $\mu'=\mu*\tilde{\mu}$ is QID with c.t.~$(\gamma',b',\eta')$ (see Remark 2.6 point (c) in \cite{LPS}). Thus, we have that
\begin{equation*}
\exp\left(a_{t}\theta-\frac{1}{2}\sigma_{t}^{2}\theta^{2}+\int_{\mathbb{R}}e^{i\theta x}-1-i\theta c(x)\nu_{t}(dx) \right)
\end{equation*}
is the c.f.~of a QID distribution. Now, since the class of QID distributions is closed under shifts and dilation (see Remark 2.6 point (b) in \cite{LPS}) then
	\begin{equation*}
	\exp\left(i\theta\left(\tilde{h}(t)+\int_{\mathbb{R}}c(mx)-mc(x)\nu_{t}(dx)+a_{t}f(t)\right)-\frac{1}{2}\sigma_{t}^{2}f(t)^{2}\theta^{2}+\int_{\mathbb{R}}e^{i\theta x}-1-i\theta c(x)\nu_{t}(f(t)^{-1}dx) \right)
	\end{equation*}
	is the c.f.~of QID distribution, where $\tilde{h}$ is any real valued function. Hence, by setting $\tilde{h}$ such that $h(t)=\tilde{h}(t)+\int_{\mathbb{R}}c(mx)-mc(x)\nu_{t}(dx)$ we have the first statement.
	
	The second statement follows by the fact that the class of QID distributions is closed under convolution.

\end{proof}
\begin{co}Under the same conditions of Proposition \ref{final-general} we have that for every $t\geq0$ there exist two QID random variables $Y_{t}$ and $Z_{t}$ such that
\begin{equation*}
\hat{\mathcal{L}}(Y_{t})=\exp\left(i\theta a_{t}-\frac{1}{2}\sigma_{t}^{2}\theta^{2}+\int_{\mathbb{R}}e^{i\theta x}-1-i\theta c(x)\nu_{t}(dx) \right)
\end{equation*}
and
\begin{equation*}
\hat{\mathcal{L}}(Z_{t})=\exp\left(i\theta h(t)+i\theta af(t)-\frac{1}{2}\sigma^{2}f(t)^{2}\theta^{2}+\int_{\mathbb{R}}e^{i\theta x}-1-i\theta c(x)\nu(f(t)^{-1}dx) \right).
\end{equation*}
\end{co}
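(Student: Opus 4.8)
The plan is to recognise that $Y_{t}$ and $Z_{t}$ are precisely the two intermediate QID objects that already surface inside the proof of Proposition \ref{final-general}, so that the corollary amounts to isolating them and invoking the relevant closure properties of the QID class collected in Remark 2.6 of \cite{LPS}. In particular, $Y_{t}$ is the distribution produced in the opening reduction ($f(t)=1$, $h(t)=0$) of that proof, while $Z_{t}$ is the object obtained by dilating and shifting the underlying variable $X_{0}$.

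First I would treat $Y_{t}$. Since $X_{0}$ is QID with triplet $(a,\sigma,\nu)$ and, by hypothesis, $\sigma_{t}^{2}\geq\sigma^{2}$ and $\nu_{t}(A)\geq\nu(A)$ for every $A\in\mathcal{B}_{0}(\mathbb{R})$, Remark 2.6 point (a) in \cite{LPS} guarantees that $(a_{t}-a,\sigma_{t}^{2}-\sigma^{2},\nu_{t}-\nu)$ is the characteristic triplet of a genuine ID distribution $\tilde{\mu}_{t}$. Convolving $\mathcal{L}(X_{0})$ with $\tilde{\mu}_{t}$ and using the closure of the QID class under convolution (Remark 2.6 point (c) in \cite{LPS}) yields a QID distribution whose triplet is $(a_{t},\sigma_{t}^{2},\nu_{t})$; its characteristic function is exactly the displayed $\hat{\mathcal{L}}(Y_{t})$, which establishes the existence of $Y_{t}$ as a QID random variable.

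Next I would treat $Z_{t}$ by observing that its characteristic function is that of the original $X_{0}$ after a dilation by $f(t)$ followed by a shift. Because $f(t)\neq0$, the dilation $x\mapsto f(t)x$ is admissible, and the QID class is closed under dilations and shifts (Remark 2.6 point (b) in \cite{LPS}); hence $f(t)X_{0}+c_{t}$ is QID for every real constant $c_{t}$. To match the stated formula I would perform the change of variables $y=f(t)x$ in the L\'{e}vy integral of $\hat{\mathcal{L}}(X_{0})(f(t)\theta)$, which turns $\nu$ into the push-forward $\nu(f(t)^{-1}\,\cdot\,)$ and replaces the representation function $c(x)$ by $f(t)c(y/f(t))$; the resulting discrepancy $i\theta\int_{\mathbb{R}}\big(c(y)-f(t)c(y/f(t))\big)\nu(f(t)^{-1}dy)$ is a deterministic drift term, so choosing the shift $c_{t}$ to absorb it together with $af(t)$ and $h(t)$ produces exactly $\hat{\mathcal{L}}(Z_{t})$.

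The only point requiring care is this representation-function bookkeeping under dilation, which is the same correction term (written there as $\int_{\mathbb{R}}c(mx)-mc(x)\nu_{t}(dx)$ with $m=f(t)$) already handled in the proof of Proposition \ref{final-general}. Since shifts are free, the correction is harmless and never obstructs QID-ness; it merely pins down the value of the drift. I therefore expect no genuine obstacle, the content being a direct bookkeeping exercise resting on the three closure properties (convolution, dilation, shift) together with the domination construction of an ID complement, all drawn from Remark 2.6 of \cite{LPS}.
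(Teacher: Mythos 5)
Your proof is correct and is in substance the paper's own: the paper disposes of this corollary in one line, observing that both displayed characteristic functions are direct specializations of Proposition \ref{final-general} --- $Y_{t}$ being the case $f\equiv 1$, $h\equiv 0$, and $Z_{t}$ the case $a_{t}=a$, $\sigma_{t}^{2}=\sigma^{2}$, $\nu_{t}=\nu$. You reach the same conclusion by inlining the two halves of that proposition's proof (the ID-complement/convolution step via Remark 2.6 points (a) and (c) of \cite{LPS} for $Y_{t}$, and the dilation-plus-shift step via Remark 2.6 point (b), with the same drift-correction bookkeeping, for $Z_{t}$), which is the identical argument with the citation unfolded rather than invoked.
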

\begin{proof}
	It follows directly from Proposition \ref{final-general}.
\end{proof}
A first example of QID process has been investigated in \cite{Zhang}, although they mostly focus on the level of distributions and in particular on a subclass of QID distributions call discrete pseudo-compound Poisson distributions, which have applications in insurance mathematics. Moreover, from the formulation $X+X^{(2)}\stackrel{d}{=}X^{(1)}$ and from the previous results many examples can be constructed. We do not deal with the existence of particular QID stochastic processes since it is a major topic and we leave it for further research. We only point out that most of the potential results seem to rely on the multidimensional extension of the results presented in \cite{LPS}. Hence, the following question.
\begin{open}
	Is it possible to extend the results in \cite{LPS} and \cite{Berger} to the multidimensional case, namely to distributions on $\mathbb{R}^{d}$?
\end{open}
\section{The atomless condition and the density of QID r.m.}\label{Sec-Atomless}
One of the main properties that independently scattered random measure might satisfy is the atomless condition. Indeed, the work of Pr\'{e}kopa \cite{Prekopa} is centred on this condition and in Theorem 2.2 of \cite{Prekopa} he proves that if an independently scattered random measure satisfies the atomless condition then it is an ID random measure. Recall that every $\sigma$-ring is a $\delta$-ring, but not every $\delta$-ring is a $\sigma$-ring, and that every $\sigma$-algebra is a $\sigma$-ring, but not every $\sigma$-ring is a $\sigma$-algebra.
\begin{defn}[atom, atomless]
	Let $\Lambda$ be a completely additive set function defined on a $\sigma$-ring $\mathcal{S}$. A set $A\in\mathcal{S}$ is called an \textnormal{atom} relative to the set function $\Lambda$ if for every $C\subseteq A$ with $C\in\mathcal{S}$ we have either $\Lambda(C)=0$ or $\Lambda(C)=\Lambda(A)$. Moreover, the completely additive set function $\Lambda$ will be called \textnormal{atomless} if for every atom $A$ we have $\Lambda(A)=0$.
\end{defn}
The atomless condition is for random measures what the continuity in probability is for continuous time stochastic processes. Notice that we have mentioned explicitly continuous time processes because for discrete time ones the condition is meaningless.\\
Then, we have the following result.
\begin{co}
	It does not exist a QID r.m.~on a $\sigma$-ring which is atomless and which is not ID.
\end{co}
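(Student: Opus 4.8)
The plan is to observe that the statement is a direct consequence of the atomless theorem of Pr\'{e}kopa together with the elementary fact that every QID random measure is, in particular, an independently scattered random measure. Indeed, Definition \ref{defQIDr.m.} constructs a QID r.m.~by first requiring $\Lambda$ to be independently scattered (and countably additive) and then imposing the \emph{additional} requirement that each $\Lambda(A)$ be a QID random variable; thus the class of QID r.m.~sits inside the class of independently scattered r.m., and nothing in the QID hypothesis relaxes the independently-scattered structure that Pr\'{e}kopa's result needs.

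First I would fix a $\sigma$-ring $\mathcal{S}$ and suppose, towards a contradiction, that $\Lambda$ is a QID r.m.~on $\mathcal{S}$ which is atomless but not ID. By the preceding observation $\Lambda$ is a countably additive, independently scattered random measure on the $\sigma$-ring $\mathcal{S}$, and by assumption it is atomless in the sense of the definition above. These are precisely the hypotheses of Theorem 2.2 in \cite{Prekopa}, whose conclusion is that such a random measure is ID. This contradicts the assumption that $\Lambda$ is not ID, and hence no such $\Lambda$ exists. Equivalently, every atomless QID r.m.~on a $\sigma$-ring is automatically ID.

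The main (and essentially only) point requiring care is the alignment of frameworks: the QID r.m.~of Definition \ref{defQIDr.m.} is defined on a $\delta$-ring, whereas both the notion of atom/atomless and Pr\'{e}kopa's theorem are phrased on a $\sigma$-ring, so the statement is deliberately restricted to the $\sigma$-ring case, where the two settings coincide. I would also verify explicitly that the atomless hypothesis used here is literally the one in the cited theorem, and that QID r.m.~inherit the countable additivity required by Pr\'{e}kopa (which they do by Definition \ref{defQIDr.m.}). Beyond this bookkeeping there is no genuine obstacle, since all the analytic content is already contained in \cite{Prekopa}; the corollary is really just the remark that passing from ``independently scattered'' to the narrower ``QID'' subclass cannot create new atomless-but-non-ID examples.
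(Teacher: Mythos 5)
Your proof is correct and is essentially identical to the paper's: both reduce the corollary immediately to Pr\'{e}kopa's Theorem 2.2, which states that any atomless independently scattered random measure is ID, using only the observation that a QID r.m.~is in particular independently scattered. Your extra remarks on aligning the $\delta$-ring/$\sigma$-ring frameworks and on countable additivity are sound bookkeeping but do not change the argument.
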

\begin{proof}
	The result is straightforward since any atomless independently scattered r.m.~is ID, see Theorem 2.2 in \cite{Prekopa}.
\end{proof}
In Section $\ref{chapter-IDvQID}$, we have shown the connections between QID and ID random measure. From this discussion it appears clear that in case the ID random measures considered are atomless the generated QID r.m.~is indeed an ID r.m.. This lead us to the following question: is it true that ID r.m.~are always atomless? The answer is negative. This is because in case either the drift $\nu_{0}$ or the Gaussian part $\nu_{1}$ or the measure $F_{\cdot}(B)$ for some $B\in\mathcal{B}(\mathbb{R})$ s.t.~$0\notin \overline{B}$ have at least one atom then the corresponding ID measure is not atomless. Therefore, the results shown in Section $\ref{chapter-IDvQID}$ applies to any possible ID and QID r.m.. We refer to Chapter 3.3 in \cite{Kallenberg2} for a lucid exposition of ID random measures. 

We point out that the atomless condition is not relevant for the spectral representation of the discrete parameter QID process presented in Section \ref{Ch-Spectr}. The associated r.m.~might potentially have atoms and this is true also in the ID framework. 

We present now some interesting results. We will restrict to the framework of \cite{Kallenberg2}, which is still very general.
\\

\textit{Notation:} For the rest of this section for ``random measure" we mean a ``$\mathbb{R}_{+}$-valued random measure", and for ``random signed measure" we mean ``$\mathbb{R}$-valued random measure"
\\

Let $(S,d)$ be a separable and complete metric space. Let $\textbf{S}$ the associated metric topology and let $\hat{\mathbf{S}}$ be the ring composed by bounded Borel sets in $S$. Let $\hat{C}_{S}$ be the space of all bounded continuous functions $f:S\rightarrow\mathbb{R}_{+}$ with bounded support. Let $\mathcal{M}_{S}$ be the space of locally finite measures, namely $\mu\in\mathcal{M}_{S}$ if $\mu(B)<\infty$ for every $B\in\hat{\mathbf{S}}$. The space $\mathcal{M}_{S}$ might be endowed with the vague topology, denoted by $\mathbf{B}_{\mathcal{M}_{S}}$, generated by the integration maps $\pi_{f}:\mu\mapsto\int f(x) \mu(dx)$, for all $f\in \hat{C}_{S}$. The vague topology is the coarsest topology making all $\pi_{f}$ continuous. The measurable space $(\mathcal{M}_{s},\mathbf{B}_{\mathcal{M}_{S}})$ is a Polish space. The associated notion of vague convergence denoted by $\mu_{n}\stackrel{v}{\rightarrow}\mu$ is defined by the condition $\int f(x) \mu_{n}(dx)\rightarrow\int f(x) \mu(dx)$ for all $f\in\hat{C}_{S}$.

A random measure $\xi$ on $S$, with underlying probability space $(\Omega,\mathcal{F},\mathbb{P})$, is a function $\Omega\times\textbf{S}\rightarrow[0,\infty]$, such that $\xi(\omega,B)$ is a $\mathcal{F}$-measurable in $\omega\in\Omega$ for fixed $B$ and a locally finite measure in $B\in\textbf{S}$ for fixed $\omega$.\footnote{An equivalent definition is the following: a random measure $\xi$ is a measurable mapping from $(\Omega,\mathcal{F},\mathbb{P})$ to $(\mathcal{M}_{S},\mathcal{B}_{\mathcal{M}_{S}})$, where $\mathcal{B}_{\mathcal{M}_{S}}$ is the topology generated by all projection maps $\pi_{B}:\mu\mapsto\mu(B)$ with $B\in\mathbf{S}$, or, equivalently, by all integration maps $\pi_{f}$ with measurable $f\geq0$. From Lemma 4.1 in \cite{Kallenberg0} or Theorem 4.2 in \cite{Kallenberg2}, we know that $\mathcal{B}_{\mathcal{M}_{S}}$ and $\mathbf{B}_{\mathcal{M}_{S}}$ coincide. Hence it is equivalent to consider a random measure as a measurable mapping from $(\Omega,\mathcal{F},\mathbb{P})$ to $(\mathcal{M}_{S},\textbf{B}_{\mathcal{M}_{S}})$ or to $(\mathcal{M}_{S},\mathcal{B}_{\mathcal{M}_{S}})$.} Then, convergence in distribution of $\xi_{n}$ to $\xi$ means that $\mathbb{E}[g(\xi_{n})]\rightarrow\mathbb{E}[g(\xi)]$ for every bounded continuous function $g$ on $\mathcal{M}_{S}$, or equivalently that $\mathcal{L}(\xi_{n})\stackrel{w}{\rightarrow}\mathcal{L}(\xi)$, where for any bounded measures $\mu_{n}$ and $\mu$, the weak convergence $\mu_{n}\stackrel{w}{\rightarrow}\mu$ means that $\int g(y)\mu_{n}(dy)\rightarrow\int g(y)\mu(dy)$ for all $g$ as above. We write $\xi_{n}\stackrel{vd}{\rightarrow}\xi$ to stress that the convergence of distribution is for random measures considered as random elements in the space $\mathcal{M}_{S}$ with vague topology. In this setting, a fixed atom of a random measure $\xi$ is an element $s\in S$ such that $\mathbb{P}(|\xi(\{s\})|>0)>0$.

We would like to consider QID random measures on $(S,\textbf{S})$ as independently scattered random measures s.t.~$\xi(B)$ is QID for every $B\in\hat{\textbf{S}}$. This is the usual definition of QID (and similarly of Poisson and of ID) random measures on $(S,\textbf{S})$. However, our previous definition (see Definition \ref{defQIDr.m.}) is different. In Definition \ref{defQIDr.m.}, QID random measure are real valued (stochastic processes) and so they cannot take infinite values, which in this framework means that they can only be defined on a ring $\textbf{U}\subseteq \hat{\textbf{S}}$. Thus, the question is how can we combine the two definitions? The answer is the following. By starting with the QID random measures as defined in Definition \ref{defQIDr.m.}, we consider only the ones which are $\mathbb{R}_{+}$-valued and then, thanks to a result by Harris (see \cite{Harris}), we can uniquely extend them to random measures on $(S,\textbf{S})$. Let us report here the mentioned result by Harris.
\begin{thm}
	[see Theorem 2.15 in \cite{Kallenberg2}] Given a
	process $\eta \geq 0$ on a generating ring $\textbf{U}\subset\hat{\textbf{S}}$, there exists a random measure $\xi$
	on $S$ with $\xi(U) = \eta(U)$ a.s.~for all $U\in\textbf{U}$ , iff
	\\\textnormal{(i)} $\eta(A \cup B) = \eta(A) + \eta(B)$ a.s., $A, B \in\textbf{U}$ disjoint,
	\\\textnormal{(ii)} $\eta(A_{n})\stackrel{P}{\rightarrow}0$ as $A_{n}\searrow\emptyset$ along $\textbf{U}$.\\
	In that case, $\xi$ is a.s.~unique.
\end{thm}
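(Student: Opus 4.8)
The plan is to prove the two implications separately, the forward (necessity) direction being a routine consequence of the continuity properties of measures and the converse (existence) being the substantial part. For necessity, suppose a random measure $\xi$ on $S$ with $\xi(U)=\eta(U)$ a.s.\ for all $U\in\mathbf{U}$ exists. For a.e.\ $\omega$ the section $\xi(\omega,\cdot)$ is a locally finite measure, hence finitely additive, and since every $U\in\mathbf{U}\subseteq\hat{\mathbf{S}}$ is bounded we have $\xi(\omega,U)<\infty$ a.s. Then (i) follows by comparing the a.s.\ identities $\xi(A\cup B)=\xi(A)+\xi(B)$ with $\xi(U)=\eta(U)$, and (ii) follows from continuity from above of $\xi(\omega,\cdot)$: for $A_n\searrow\emptyset$ in $\mathbf{U}$ one has $\xi(\omega,A_n)\downarrow \xi(\omega,\emptyset)=0$ for a.e.\ $\omega$, and a.s.\ convergence implies convergence in probability.

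For the converse the core difficulty is \emph{pathwise regularization}: conditions (i) and (ii) only yield additivity and continuity \emph{for each fixed} set or sequence up to a null set that may depend on it, whereas a genuine random measure requires a set function that is countably additive \emph{simultaneously} in all its arguments off a single null set. First I would exploit separability: since $(S,d)$ is separable and complete, one can choose a countable subring $\mathcal{R}\subseteq\mathbf{U}$ which generates $\hat{\mathbf{S}}$ and forms a dissecting system (a refining sequence of finite partitions of each bounded set, separating points). Because $\mathcal{R}$ is countable, the relation in (i) holds, off a single null set $N$, simultaneously for all disjoint pairs in $\mathcal{R}$; hence off $N$ the set function $\eta(\cdot,\omega)$ is finitely additive and monotone on $\mathcal{R}$.

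The $\sigma$-additivity on $\mathcal{R}$ is where (ii) and Theorem \ref{TheoremPrekopa} enter. For a fixed decreasing sequence $A_n\searrow\emptyset$ in $\mathcal{R}$, the variables $\eta(A_n)$ are nonnegative and, off $N$, nonincreasing, so they converge a.s.; by (ii) they tend to $0$ in probability, forcing the a.s.\ limit to be $0$. The delicate point, and what I expect to be the main obstacle, is to secure this along \emph{all} null-sequences at once; here the dissecting structure of $\mathcal{R}$ is essential, since it reduces continuity at $\emptyset$ to the countably many canonical decreasing sequences produced by refining the partitions, allowing $N$ to be enlarged to a still-null set $N'$ off which $\eta(\cdot,\omega)$ is continuous at $\emptyset$, and therefore countably additive, on $\mathcal{R}$ (this being the deterministic pathwise counterpart of Theorem \ref{TheoremPrekopa}). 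For each $\omega\notin N'$, Carath\'eodory's extension theorem then yields a unique locally finite measure $\xi(\omega,\cdot)$ on $\sigma(\mathcal{R})=\mathcal{B}(S)$ agreeing with $\eta(\cdot,\omega)$ on $\mathcal{R}$; set $\xi(\omega,\cdot)\equiv 0$ for $\omega\in N'$.

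It then remains to check three bookkeeping facts. Measurability of $\omega\mapsto\xi(\omega,B)$ for every Borel $B$ follows by a monotone-class argument: the class of $B$ for which this map is $\mathcal{F}$-measurable contains $\mathcal{R}$, on which $\xi(\cdot,U)=\eta(\cdot,U)$ is measurable by hypothesis, and is stable under increasing limits and proper differences, hence coincides with $\mathcal{B}(S)$. That $\xi(U)=\eta(U)$ a.s.\ for \emph{every} $U\in\mathbf{U}$, not merely $U\in\mathcal{R}$, is obtained by approximating $U$ from within $\mathcal{R}$ and invoking (ii) together with the continuity of the constructed measure. Finally, uniqueness: two extensions agree a.s.\ on the generating ring $\mathbf{U}$, and since $\mathbf{U}$ is closed under intersection the uniqueness clause of the extension theorem gives $\xi(\omega,\cdot)=\xi'(\omega,\cdot)$ for a.e.\ $\omega$.
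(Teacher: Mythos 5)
This statement is never proved in the paper: it is imported verbatim as Harris's extension theorem (Theorem 2.15 in \cite{Kallenberg2}, see also \cite{Harris}), so your attempt has to be judged against that known proof rather than against anything in the text. Your necessity direction is correct, and the skeleton of your sufficiency argument (pass to a countable generating subring $\mathcal{R}\subseteq\mathbf{U}$, secure the needed properties off a single null set, extend pathwise by Carath\'eodory, then handle measurability, agreement on all of $\mathbf{U}$, and uniqueness) is the right skeleton. The genuine gap sits exactly at the step you yourself flag as the main obstacle, and your proposed fix does not work. You claim that the dissecting structure of $\mathcal{R}$ ``reduces continuity at $\emptyset$ to the countably many canonical decreasing sequences produced by refining the partitions,'' so that enlarging the null set to cover those sequences makes $\eta(\cdot,\omega)$ countably additive on $\mathcal{R}$ pathwise. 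That reduction is false even as a deterministic statement, hence cannot be valid pathwise. Counterexample: take $S=\mathbb{Q}\cap[0,1)$ with the usual metric, let $\mathcal{R}$ be the countable dissecting ring of finite unions of sets $[a,b)\cap S$ with dyadic endpoints, and let $\mu$ be Lebesgue length extended by finite additivity, $\mu([a,b)\cap S)=b-a$. Writing $D_n(x)$ for the level-$n$ dyadic set containing $x$, every canonical refining chain satisfies $\mu(D_n(x))=2^{-n}\to0$, and more generally any decreasing sequence built from a bounded number of partition elements has measure tending to $0$; yet $\mu$ is not countably additive: enumerating $S=\{q_1,q_2,\dots\}$ and setting $A_n=S\setminus\bigcup_{i\le n}D_{i+2}(q_i)$ gives $A_n\in\mathcal{R}$, $A_n\searrow\emptyset$, but $\mu(A_n)\ge 1-\sum_{i}2^{-i-2}\ge\tfrac12$. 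So no countable family of test sequences determined by the dissecting system alone can certify countable additivity; what is missing is completeness of $S$, which your sufficiency argument never invokes. The actual proof uses condition (ii) together with separability \emph{and completeness}: off a single null set one establishes inner regularity of $\eta(\cdot,\omega)$ by totally bounded, hence relatively compact, sets — countably many tightness conditions, obtained by applying (ii) to the monotone remainders of countable covers by small balls — and compactness then forces continuity at $\emptyset$ along \emph{all} sequences in $\mathcal{R}$ simultaneously.

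Relatedly, your parenthetical that this step is ``the deterministic pathwise counterpart of Theorem \ref{TheoremPrekopa}'' misreads Pr\'ekopa's theorem. That theorem yields, for each fixed sequence $B_n\searrow\emptyset$, an almost sure conclusion whose exceptional null set depends on the sequence; it does not produce one null set off which the whole sample path is a measure. Converting ``for every sequence, a.s.'' into ``a.s., for every sequence'' is precisely the content of Harris's theorem, so it cannot be invoked as if already available. Two smaller points would also need repair: local finiteness of the Carath\'eodory extension requires the covering property of the generating ring (bounded sets must be covered by finitely many sets of $\mathcal{R}$ of a.s.\ finite $\eta$-value), which you never use; and your argument that $\xi(U)=\eta(U)$ a.s.\ for all $U\in\mathbf{U}$ applies (ii) to the sets $U\triangle R_n$, which are not monotone, so it needs the standard ring-approximation lemma for the pseudometric $\mathbb{E}[\eta(\cdot\,\triangle\,\cdot)\wedge 1]$ rather than condition (ii) directly.
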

Observe that a QID random measure as defined in Definition \ref{defQIDr.m.} satisfies the above conditions, and so it extends uniquely to $S$. Therefore, following our definitions, any QID random measure on $(S,\textbf{S})$ is the unique extension of QID random measure on $\hat{\textbf{S}}$ or on any generating ring $\textbf{U}\subset\hat{\textbf{S}}$.

The practical example one should think when dealing with this framework (and in general with the framework of this work) is the following: a random measure on $\mathcal{B}_{b}(\mathbb{R})$ (\textit{i.e.}~the set of bounded intervals of $\mathbb{R}$), which has almost surely finite values for any $B\in\mathcal{B}_{b}(\mathbb{R})$. Observe that $\mathcal{B}_{b}(\mathbb{R})$ is not an algebra because $\mathbb{R}\notin \mathcal{B}_{b}(\mathbb{R})$ and this is why it is important to work with rings. In this example, using the notation of this work, we have that $S=\mathbb{R}$ and ${\mathcal {S}}= \hat{\textbf{S}}=\mathcal{B}_{b}(\mathbb{R})$. Moreover, notice that the same applies to the case of $\mathcal{B}_{b}(\mathbb{R}_{+})$ instead of $\mathcal{B}_{b}(\mathbb{R})$. From this example, it also appears clear and natural the condition that imposes the existence of an increasing sequence of sets $S_{1},S_{2},\dots \in \mathcal{B}_{b}(\mathbb{R})$ s.t.~$\bigcup _{n\in \mathbb {N} }S_{n}=S$; indeed, take $S_{n}$ to be concentric balls of radii $n$. This condition is at the base of our work, of the work of Rajput and Rosinski \cite{RajRos}, and of Kallenberg's book (see page 15 in \cite{Kallenberg2}).

We report now another fundamental result by Harris, see \cite{Harris}.
\begin{thm}[see Theorem 4.11 in \cite{Kallenberg2}]\label{density-T1}
	Let $\xi,\xi_{1},\xi_{2},...$ be random measures on $S$. Then these conditions are equivalent:
	\\\textnormal{(i)} $\xi_{n}\stackrel{vd}{\rightarrow}\xi$,
	\\\textnormal{(ii)} $\int f(x)\xi_{n}(dx)\stackrel{d}{\rightarrow}\int f(x)\xi(dx)$ for all $f\in\hat{C}_{S}$,
	\\\textnormal{(ii)} $\mathbb{E}[\exp(-\int f(x)\xi_{n}(dx))]\rightarrow\mathbb{E}[\exp(-\int f(x)\xi(dx))]$ for all $f\in\hat{C}_{S}$ with $f\leq1$.	
\end{thm}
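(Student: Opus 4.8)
The plan is to prove the equivalence by establishing the cyclic chain (i) $\Rightarrow$ (ii) $\Rightarrow$ (iii) $\Rightarrow$ (i), with the Laplace functional $f\mapsto\mathbb{E}[\exp(-\int f\,d\xi)]$ serving as the organizing tool throughout. The decisive fact I would invoke is that the Laplace functional, evaluated over a sufficiently rich class of test functions, determines the law of a random measure on $(\mathcal{M}_{S},\mathbf{B}_{\mathcal{M}_{S}})$ uniquely; this is the random-measure analogue of the uniqueness theorem for Laplace transforms, and it is exactly what will let mere convergence of Laplace functionals be upgraded to convergence in distribution in the vague topology.

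The two forward implications are routine. For (i) $\Rightarrow$ (ii), recall that the vague topology on $\mathcal{M}_{S}$ is by definition the coarsest one making every integration map $\pi_{f}:\mu\mapsto\int f\,d\mu$ continuous, for $f\in\hat{C}_{S}$. Thus each $\pi_{f}$ is continuous, and the continuous mapping theorem applied to $\xi_{n}\stackrel{vd}{\rightarrow}\xi$ yields $\pi_{f}(\xi_{n})\stackrel{d}{\rightarrow}\pi_{f}(\xi)$, which is precisely (ii). For (ii) $\Rightarrow$ (iii), note that $\int f\,d\xi_{n}$ and $\int f\,d\xi$ are nonnegative real random variables and that $x\mapsto e^{-x}$ is bounded and continuous on $[0,\infty)$; hence convergence in distribution of the integrals gives convergence of the expectations $\mathbb{E}[\exp(-\int f\,d\xi_{n})]\to\mathbb{E}[\exp(-\int f\,d\xi)]$ directly from the definition of weak convergence, and a fortiori for those $f$ with $f\leq 1$.

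The substance of the proof is (iii) $\Rightarrow$ (i), which I would carry out in two steps: tightness, then identification of subsequential limits. For tightness, I would fix $f\in\hat{C}_{S}$ with $f\leq 1$ and observe that for $t\in(0,1]$ the rescaled function $tf$ is again admissible, so (iii) gives $\mathbb{E}[\exp(-t\int f\,d\xi_{n})]\to\mathbb{E}[\exp(-t\int f\,d\xi)]$, with the limit tending to $1$ as $t\downarrow 0$. By the Laplace-transform tightness criterion for nonnegative random variables, this forces $\{\int f\,d\xi_{n}\}_{n}$ to be tight in $\mathbb{R}_{+}$; letting $f$ range over $\hat{C}_{S}$ and invoking the tightness criterion for random measures in the vague topology, I obtain relative compactness of $\{\mathcal{L}(\xi_{n})\}$ via Prokhorov's theorem, where the Polishness of $(\mathcal{M}_{S},\mathbf{B}_{\mathcal{M}_{S}})$ stated in the preliminaries is used.

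For identification, take any subsequence with $\xi_{n_{k}}\stackrel{vd}{\rightarrow}\eta$; the already-proven implications (i) $\Rightarrow$ (iii) give $\mathbb{E}[\exp(-\int f\,d\eta)]=\lim_{k}\mathbb{E}[\exp(-\int f\,d\xi_{n_{k}})]=\mathbb{E}[\exp(-\int f\,d\xi)]$ for every admissible $f$. A scaling-and-analyticity argument (the map $t\mapsto\mathbb{E}[\exp(-t\int f\,d\xi)]$ is analytic on $\{\Re t>0\}$ and coincides with that of $\eta$ on a subinterval) extends this equality to all $f\in\hat{C}_{S}$, and then the uniqueness of the Laplace functional gives $\eta\stackrel{d}{=}\xi$. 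Since every subsequential limit equals $\xi$ in distribution and the sequence is relatively compact, the whole sequence converges, which is (i). I expect the tightness step to be the main obstacle: converting the pointwise-in-$f$ Laplace convergence into genuine relative compactness requires the correct compactness characterization of subsets of $\mathcal{M}_{S}$ in the vague topology, and some care is needed to confirm that the class $\{f\in\hat{C}_{S}:f\leq 1\}$ is rich enough to rule out escape of mass on every bounded Borel set.
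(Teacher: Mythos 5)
The paper itself gives no proof of this statement: it is quoted, with attribution, as Theorem 4.11 of Kallenberg's \emph{Random Measures, Theory and Applications} (Harris's theorem), so your proposal can only be judged against the known proofs. Your two forward implications are correct and standard: (i) $\Rightarrow$ (ii) is the continuous mapping theorem applied to the integration maps $\pi_{f}$, which are continuous by definition of the vague topology, and (ii) $\Rightarrow$ (iii) is immediate because $x\mapsto e^{-x}$ is bounded and continuous on $[0,\infty)$. Your scaling-plus-analyticity (Vitali) device for upgrading convergence of Laplace functionals from $\{f\le 1\}$ to all of $\hat{C}_{S}$, and the identification of subsequential limits via uniqueness of the Laplace functional, are also sound.

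The genuine gap is the tightness step in (iii) $\Rightarrow$ (i). The criterion you invoke — tightness of $\int f\,d\xi_{n}$ in $\mathbb{R}_{+}$ for every $f\in\hat{C}_{S}$ implies relative compactness in distribution of $(\xi_{n})$ in $\mathcal{M}_{S}$ — is a theorem only when $S$ is locally compact, so that bounded closed sets are compact. In the paper's setting $S$ is an arbitrary complete separable metric space, and the criterion is false. Concretely, take $S=\ell_{2}$ and $\xi_{n}=\delta_{e_{n}}$ (deterministic unit masses at the orthonormal basis vectors): for every $f\in\hat{C}_{S}$ the sequence $\int f\,d\xi_{n}=f(e_{n})$ is bounded, hence tight, yet $\{\delta_{e_{n}}\}$ has no vaguely convergent subsequence, since disjoint bump functions around each $e_{j}$ force any limit $\mu$ to satisfy $\mu\bigl(B(e_{j},1/2)\bigr)=0$ for all $j$, while $f(x)=\bigl(1-2\,d(x,\{e_{j}\}_{j})\bigr)^{+}$ lies in $\hat{C}_{S}$ and gives $\int f\,d\xi_{n}\equiv 1$. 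Note that the failure mode is not the one you flagged ("escape of mass on bounded sets" is indeed ruled out by your per-$f$ tightness, since every bounded $B$ is dominated by some $f\in\hat{C}_{S}$); it is the loss of uniform inner regularity: compact subsets of $\mathcal{M}_{S}$ over a non-locally-compact Polish space require a Prokhorov-type condition involving compact subsets of $S$, and this does not follow from pointwise convergence of Laplace functionals by the argument you give. A correct route — and the reason the theorem survives in this generality — bypasses tightness entirely: use your analyticity argument in its multivariate form to get joint convergence $(\xi_{n}f_{1},\ldots,\xi_{n}f_{k})\stackrel{d}{\rightarrow}(\xi f_{1},\ldots,\xi f_{k})$ for all $f_{1},\ldots,f_{k}\in\hat{C}_{S}$, then invoke the countable convergence-determining subclass of $\hat{C}_{S}$ (part of Theorem 4.2 of Kallenberg's book, which the paper already cites when asserting that $\mathcal{M}_{S}$ is Polish) to embed $\mathcal{M}_{S}$ homeomorphically onto a Borel subset of $\mathbb{R}_{+}^{\infty}$; there, finite-dimensional convergence is exactly convergence in distribution, and convergence in distribution passes to Borel subspaces that carry all the laws, yielding (i) with no compactness argument at all.
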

We are ready to present the following density result.
\begin{thm}\label{density-T2}QID random measures are dense in the space of independently scattered random measures, considered as random elements in $\mathcal{M}_{S}$ endowed with the vague topology, under the convergence in distribution.
\end{thm}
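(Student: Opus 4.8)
The plan is to reduce the statement, via Theorem \ref{density-T1}, to the convergence of Laplace functionals, and then to exhibit for an arbitrary independently scattered ($\mathbb{R}_+$-valued) random measure $\xi$ an explicit two-stage approximation by QID random measures. Throughout I use that $(\mathcal{M}_S,\mathbf{B}_{\mathcal{M}_S})$ is Polish, so that the weak topology on laws over $\mathcal{M}_S$ is metrizable; hence it suffices to produce a \emph{sequence} of QID random measures converging in distribution to $\xi$, and a final diagonal extraction will be legitimate.

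First stage (discretization). Fix the exhausting sequence $S_1\subset S_2\subset\cdots$ with $\bigcup_n S_n=S$. For each $n$ I choose a finite Borel partition $\{B^n_i\}_i$ of $S_n$ with diameters tending to $0$ (possible since bounded sets are relatively compact, hence totally bounded, in this localized setting), pick points $s^n_i\in B^n_i$, and set $\xi_n:=\sum_i \xi(B^n_i)\,\delta_{s^n_i}$. Each $\xi_n$ is an atomic, independently scattered random measure, since disjoint Borel sets meet disjoint families of atoms. For $f\in\hat{C}_S$ one has $\int f\,d\xi_n-\int f\,d\xi=\sum_i\int_{B^n_i}(f(s^n_i)-f(x))\,\xi(dx)$, which tends to $0$ almost surely by uniform continuity of $f$ on a compact carrier of its support and local finiteness of $\xi$; hence $\int f\,d\xi_n\stackrel{d}{\to}\int f\,d\xi$ and, by Theorem \ref{density-T1}, $\xi_n\stackrel{vd}{\to}\xi$.

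Second stage (QID-ification of the weights). For each fixed $n$ I replace the weights $\xi(B^n_i)$ by independent QID weights. The enabling lemma is that QID laws \emph{supported on $\mathbb{R}_+$} are weakly dense in all laws on $\mathbb{R}_+$: given a law on $[0,\infty)$ I discretize it onto a lattice $h\mathbb{Z}_+$, truncate to finite support, and perturb the finitely many atoms so that the resulting polynomial characteristic function has no zero on the unit circle, which by Theorem 8.1 in \cite{LPS} makes the law QID while keeping its support in $\mathbb{R}_+$. Thus I may choose independent $W^{n,m}_i\geq 0$, all on a common lattice $h_m\mathbb{Z}_+$, with $\mathcal{L}(W^{n,m}_i)\Rightarrow \mathcal{L}(\xi(B^n_i))$ as $m\to\infty$, and put $\xi_{n,m}:=\sum_i W^{n,m}_i\,\delta_{s^n_i}$. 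Then $\xi_{n,m}$ is a genuine QID random measure: for every bounded Borel $U$ the value $\xi_{n,m}(U)=\sum_{i:\,s^n_i\in U}W^{n,m}_i$ is a \emph{finite} convolution of independent QID laws on $\mathbb{R}_+$, hence QID by closure of the QID class under convolution (Remark 2.6 in \cite{LPS}); finite additivity and $\xi_{n,m}(A_k)\stackrel{p}{\to}0$ along $A_k\searrow\emptyset$ are clear for a locally finite atomic process, so Harris's extension theorem (Theorem 2.15 in \cite{Kallenberg2}) produces a unique $\mathbb{R}_+$-valued, independently scattered QID random measure on $S$. Since for fixed $n$ there are only finitely many atoms in each $S_k$ and $\mathcal{L}(W^{n,m}_i)\Rightarrow\mathcal{L}(\xi(B^n_i))$, the Laplace functionals converge and $\xi_{n,m}\stackrel{vd}{\to}\xi_n$ as $m\to\infty$, again by Theorem \ref{density-T1}.

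Combining the two stages, metrizability of convergence in distribution on the Polish space $\mathcal{M}_S$ lets me pass to a diagonal sequence $\xi_{n,m(n)}\stackrel{vd}{\to}\xi$ of QID random measures, which proves the density. The discretization estimate and the diagonalization are routine. The step I expect to be the main obstacle is the second-stage lemma that QID laws \emph{on $\mathbb{R}_+$} are weakly dense in all laws on $\mathbb{R}_+$: infinitely divisible laws alone are \emph{not} dense, which is precisely why the QID class is required, so one must use genuinely quasi-infinitely-divisible lattice laws with non-vanishing characteristic function, and verify carefully both that the zero-removing perturbation can be carried out without leaving the probability simplex supported on $\mathbb{R}_+$ and that the QID property is preserved under the finite convolutions defining $\xi_{n,m}(U)$.
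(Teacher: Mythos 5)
Your construction is genuinely different from the paper's. The paper does not discretize space at all: it invokes Kallenberg's structural representation of independently scattered random measures (Theorem 7.1 in \cite{Kallenberg0}), $\xi\stackrel{a.s.}{=}\alpha+\sum_{j=1}^{K}\beta_{j}\delta_{s_{j}}$, notes that the atomless part $\alpha$ is ID by Pr\'{e}kopa's theorem \cite{Prekopa}, keeps $\alpha$ and the fixed atom locations untouched, and replaces only the weights $\beta_{j}$ by QID approximants supplied by Theorem 4.1 of \cite{LPS}; vague convergence in distribution is then checked through characteristic functions of $\int f\,d\xi_{n}$. Your route trades the structural theorem for a two-stage approximation (spatial discretization, then QID-ification of finitely many weights) plus a diagonal argument, which is more elementary and self-contained. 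It also buys something real: your approximants have \emph{finitely many} atoms, so the QID property of their values follows cleanly from closure of the QID class under finite convolution, whereas the paper's $\xi_{n}(B)$, for a bounded $B$ containing infinitely many fixed atoms, is an infinite convolution of QID laws, whose QID-ness is a delicate point (the class of QID laws is not closed under weak limits). The price you pay is that you need a sharper one-dimensional lemma than the paper does — density of QID laws \emph{supported on} $\mathbb{R}_{+}$ among laws on $\mathbb{R}_{+}$, since the approximating random measures must be $\mathbb{R}_{+}$-valued — and you correctly identify this and sketch a workable proof (lattice discretization plus a zero-removing perturbation, e.g.\ mixing $\mu_{h}$ with $\delta\cdot\delta_{0}$ for all but finitely many small $\delta>0$, then Theorem 8.1 of \cite{LPS} combined with closure under dilation).

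There is, however, one genuine gap as written: your first stage rests on the claim that bounded sets ``are relatively compact, hence totally bounded, in this localized setting.'' That is false in the paper's generality. Here $(S,d)$ is an arbitrary separable complete metric space and $\hat{\mathbf{S}}$ consists of the \emph{metrically bounded} Borel sets; the closed unit ball of the sequence space $l_{2}$, or an infinite bounded discrete Polish space, are bounded but not totally bounded, so no finite Borel partition of $S_{n}$ with small diameters exists, and a function $f\in\hat{C}_{S}$ need not be uniformly continuous on its (bounded, generally non-compact) support. Both defects are repairable, but the repair must be made: by separability, $S_{n}$ admits a \emph{countable} Borel partition $\{B^{n}_{i}\}_{i\geq1}$ of diameter at most $\varepsilon_{n}$; since $\xi(S_{n})<\infty$ a.s.\ you may truncate to the first $I_{n}$ cells with $\mathbb{P}\bigl(\sum_{i>I_{n}}\xi(B^{n}_{i})>1/n\bigr)<1/n$, so that $\xi_{n}$ still has finitely many atoms and your second stage goes through unchanged; and the convergence $\int f\,d\xi_{n}\to\int f\,d\xi$ should be argued via pointwise continuity of $f$ and dominated convergence under the a.s.\ locally finite measure $\xi(\omega,\cdot)$ (dominating function $2\|f\|_{\infty}\mathbf{1}_{B'}$ with $B'$ a bounded enlargement of the support of $f$), rather than via uniform continuity. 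While patching, note also that your displayed identity for $\int f\,d\xi_{n}-\int f\,d\xi$ silently discards the mass of $\xi$ outside $S_{n}$ and, after truncation, the mass in the cells $i>I_{n}$; both error terms tend to $0$ in probability, but they must appear. With these corrections your argument is complete and yields the theorem in the stated generality.
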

\begin{proof}
From Theorem 7.1 \cite{Kallenberg0} we know that any independently scattered random measure has the following unique representation
\begin{equation*}
\xi\stackrel{a.s.}{=}\alpha+\sum_{j=1}^{K}\beta_{j}\delta_{s_{j}}
\end{equation*}
with $K\leq\infty$, where $\{s_{j}:j\geq1\}$ is the set of fixed atoms of $\xi$, $\alpha$ is a random measure without fixed atoms with independent increments (hence, $\alpha$ is an atomless ID r.m.), and $\beta_{j}$, $j\geq1$, are $\mathbb{R}_{+}$-valued r.v., which are mutually independent and independent of $\alpha$.

Let $\xi_{n}$ be an independently scattered r.m.~defined as 
\begin{equation*}
\xi_{n}\stackrel{a.s.}{=}\alpha+\sum_{j=1}^{K}\beta_{n,j}\delta_{s_{j}}
\end{equation*}
with $\beta_{n,j}$ QID random variables s.t. $\beta_{n,j}\stackrel{d}{\rightarrow}\beta_{j}$ as $n\rightarrow\infty$, for every $j\geq1$, and where $\alpha$ and $\{s_{j}:j\geq1\}$ are the same as above. First, notice that for fixed $j$ the sequence $\beta_{1,j},\beta_{2,j},...$ exists thanks to Theorem 4.1 from \cite{LPS}. Second, observe that, for each $n\in\mathbb{N}$, $\xi_{n}$ is a QID random measure because $\alpha$ is ID and $\beta_{j}$ are QID and they are independent of each other.

Now, we need to show that $\xi_{n}\stackrel{vd}{\rightarrow}\xi$. From Theorem \ref{density-T1}, it is sufficient to show that $\int f(x)\xi_{n}(dx)\stackrel{d}{\rightarrow}\int f(x)\xi(dx)$ for all $f\in\hat{C}_{S}$. Since $\alpha$ is both an element of $\xi_{n}$ and $\xi$ and it is independent of the $\beta_{n,j}$, $j\geq1$, this reduces to prove that $\sum_{j=1}^{K} f(s_{j})\beta_{n,j}\stackrel{d}{\rightarrow}\sum_{j=1}^{K} f(s_{j})\beta_{j}$ for all $f\in\hat{C}_{S}$. If $K$ is finite for any bounded set then the result follows easily from independence of the $\beta_{n,j}$, $j=1,...,K$, from the fact that $\beta_{n,j}\stackrel{d}{\rightarrow}\beta_{j}$ as $n\rightarrow\infty$, for every $j=1,...,K$ and from the continuous mapping theorem. Thus, assume that $K$ is infinite, formally that there are countably many fixed point for at least one bounded set. Consider any $f\in\hat{C}_{S}$. Then, we need to show that $\sum_{j=1}^{\infty} f(s_{j})\beta_{n,j}\stackrel{d}{\rightarrow}\sum_{j=1}^{\infty} f(s_{j})\beta_{j}$. This convergence also holds true for the same arguments as the ones mentioned for the case of $K$ finite, however we need to be careful because of the infinities. First, observe that, for each $n\in\mathbb{N}$, $\sum_{j=1}^{\infty} f(s_{j})\beta_{n,j}<\infty$ a.s.~and that $\sum_{j=1}^{\infty} f(s_{j})\beta_{j}<\infty$ a.s.. This is because $f\in\hat{C}_{S}$, hence, by denoting $B$ the support of $f$, we have that almost surely $\xi_{n}(B)<\infty$, $n\in\mathbb{N}$, and $\xi(B)<\infty$ since $B\in\hat{\textbf{S}}$, and that $f$ is bounded. Then, looking at the characteristic functions, using the continuity of $e^{z}$, for $z\in\mathbb{C}$, and the dominated convergence theorem, we have that for every $\theta\in\mathbb{R}$
\begin{equation*}
1\geq \mathbb{E}\bigg[\exp\bigg(i\theta\sum_{j=1}^{\infty} f(s_{j})\beta_{n,j}\bigg)\bigg]=\mathbb{E}\bigg[\lim\limits_{N\rightarrow\infty}\exp\bigg(i\theta\sum_{j=1}^{N} f(s_{j})\beta_{n,j}\bigg)\bigg]=\lim\limits_{N\rightarrow\infty}\mathbb{E}\bigg[\exp\bigg(i\theta\sum_{j=1}^{N} f(s_{j})\beta_{n,j}\bigg)\bigg]
\end{equation*}
\begin{equation*}
=\lim\limits_{N\rightarrow\infty}\prod_{j=1}^{N}\mathbb{E}\bigg[\exp\bigg(i\theta f(s_{j})\beta_{n,j}\bigg)\bigg]=\prod_{j=1}^{\infty}\mathbb{E}\bigg[\exp\bigg(i\theta f(s_{j})\beta_{n,j}\bigg)\bigg]
\end{equation*}
Since $\beta_{n,j}\stackrel{d}{\rightarrow}\beta_{j}$ then by continuous mapping theorem we have $f(s_{j})\beta_{n,j}\stackrel{d}{\rightarrow}f(s_{j})\beta_{j}$ and so
\begin{equation*}
\prod_{j=1}^{\infty}\mathbb{E}\bigg[\exp\bigg(i\theta f(s_{j})\beta_{n,j}\bigg)\bigg]\rightarrow \prod_{j=1}^{\infty}\mathbb{E}\bigg[\exp\bigg(i\theta f(s_{j})\beta_{j}\bigg)\bigg]=\mathbb{E}\bigg[\exp\bigg(i\theta\sum_{j=1}^{\infty} f(s_{j})\beta_{j}\bigg)\bigg],\quad \textnormal{as $n\rightarrow\infty$.}
\end{equation*}
Thus, we have that $\sum_{j=1}^{\infty} f(s_{j})\beta_{n,j}\stackrel{d}{\rightarrow}\sum_{j=1}^{\infty} f(s_{j})\beta_{j}$ and since $f$ was any function in $\hat{C}_{S}$ we obtain the stated result.
\end{proof}
It is possible to consider also the set of bounded measures, denoted by $\hat{\mathcal{M}}_{S}$, which can be endowed with the vague topology, as for $\mathcal{M}_{S}$, but also with the weak topology. The weak topology on $\hat{\mathcal{M}}_{S}$ is the topology generated by the integration maps $\pi_{f}$ for all bounded continuous functions. Then, for random measures $\xi,\xi_{1},\xi_{2},...$ considered as random elements in $\hat{\mathcal{M}}_{S}$, endowed with the weak topology, we will denote by $\xi_{n}\stackrel{wd}{\rightarrow}\xi$ the convergence in distribution. Observe that in this setting a QID random measures as defined in Definition are QID random measures on $(S,\textbf{S})$ (hence we do not need to extend them) because for every $B\in\textbf{S}$ they are all a.s.~bounded.

We will use the following result of Kallenberg to prove our next result.
\begin{thm}[see Theorem 4.19 in \cite{Kallenberg2}]\label{density-T"}
	Let $\xi,\xi_{1},\xi_{2},...$ be a.s.~bounded random measures on $S$. Then these conditions are equivalent
	\\\textnormal{(i)} $\xi_{n}\stackrel{wd}{\rightarrow}\xi$,
	\\\textnormal{(ii)} $\xi_{n}\stackrel{vd}{\rightarrow}\xi$, and $\xi_{n}(S)\stackrel{d}{\rightarrow}\xi(S)$.
\end{thm}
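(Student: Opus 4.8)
The plan is to prove the two implications separately, the direction (i)$\Rightarrow$(ii) being a soft consequence of comparing the two topologies, while (ii)$\Rightarrow$(i) carries all the work. For (i)$\Rightarrow$(ii): on $\hat{\mathcal{M}}_{S}$ the weak topology is finer than the vague one, so the inclusion into $(\mathcal{M}_{S},\mathbf{B}_{\mathcal{M}_{S}})$ is continuous and the continuous mapping theorem gives $\xi_{n}\stackrel{vd}{\rightarrow}\xi$; likewise the total-mass functional $\mu\mapsto\mu(S)=\pi_{f}(\mu)$ with $f\equiv1$ is weakly continuous, so $\xi_{n}(S)\stackrel{d}{\rightarrow}\xi(S)$.

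For (ii)$\Rightarrow$(i) I would work through the weak analogue of Theorem \ref{density-T1}, namely that $\xi_{n}\stackrel{wd}{\rightarrow}\xi$ is equivalent to $\int f\,d\xi_{n}\stackrel{d}{\rightarrow}\int f\,d\xi$ for every \emph{bounded} continuous $f\geq0$ (now without the bounded-support restriction). Fix such an $f$ and assume $0\leq f\leq1$ after scaling. Using the exhausting sequence of bounded sets $S_{m}\nearrow S$ I would construct, by a standard Urysohn argument, functions $h_{k}\in\hat{C}_{S}$ with $0\leq h_{k}\leq1$ and $h_{k}\uparrow1$ pointwise, and set $f_{k}:=f h_{k}\in\hat{C}_{S}$. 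The splitting $\int f\,d\xi_{n}=\int f_{k}\,d\xi_{n}+\int f(1-h_{k})\,d\xi_{n}$ isolates a vaguely continuous piece and an ``escaping mass'' piece bounded by $\int(1-h_{k})\,d\xi_{n}=\xi_{n}(S)-\int h_{k}\,d\xi_{n}=:Y_{n,k}$.

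The crux is to promote (ii) to the joint statement $(\xi_{n},\xi_{n}(S))\stackrel{d}{\rightarrow}(\xi,\xi(S))$ on $(\mathcal{M}_{S},\text{vague})\times\mathbb{R}_{+}$. Since $(\mathcal{M}_{S},\mathbf{B}_{\mathcal{M}_{S}})$ is Polish, the convergent family $\{\mathcal{L}(\xi_{n})\}$ is tight, and $\{\mathcal{L}(\xi_{n}(S))\}$ is tight by $\xi_{n}(S)\stackrel{d}{\rightarrow}\xi(S)$; hence the joint laws are tight and every subsequential limit $(\eta,\beta)$ has $\eta\stackrel{d}{=}\xi$ and $\beta\stackrel{d}{=}\xi(S)$. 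To identify $\beta=\eta(S)$ a.s.\ I would pass to the limit in $\int h_{k}\,d\xi_{n}\leq\xi_{n}(S)$ (the left side being vaguely continuous) to get $\int h_{k}\,d\eta\leq\beta$, let $k\to\infty$ so that $\eta(S)\leq\beta$ a.s.\ by monotone convergence, and then note that $\eta(S)\leq\beta$ a.s.\ together with $\eta(S)\stackrel{d}{=}\beta$ forces $\eta(S)=\beta$ a.s.\ (test against a bounded strictly increasing continuous $g$). This yields the joint convergence, and since $(\mu,t)\mapsto t-\int h_{k}\,d\mu$ is continuous on the product space, $Y_{n,k}\stackrel{d}{\rightarrow}Y_{k}:=\xi(S)-\int h_{k}\,d\xi$ as $n\to\infty$ for each fixed $k$.

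I would then conclude with the standard approximation lemma for convergence in distribution: for fixed $k$, $\int f_{k}\,d\xi_{n}\stackrel{d}{\rightarrow}\int f_{k}\,d\xi$ by vague convergence; as $k\to\infty$, $\int f_{k}\,d\xi\uparrow\int f\,d\xi$ a.s.\ by monotone convergence, so $\int f_{k}\,d\xi\stackrel{d}{\rightarrow}\int f\,d\xi$; and $\mathbb{P}(|\int f\,d\xi_{n}-\int f_{k}\,d\xi_{n}|>\varepsilon)\leq\mathbb{P}(Y_{n,k}>\varepsilon)$ with $\lim_{k}\limsup_{n}\mathbb{P}(Y_{n,k}>\varepsilon)=0$, the last because $Y_{n,k}\stackrel{d}{\rightarrow}Y_{k}$ and $Y_{k}=\int(1-h_{k})\,d\xi\downarrow0$ a.s.\ (as $\xi(S)<\infty$ a.s.). These three facts give $\int f\,d\xi_{n}\stackrel{d}{\rightarrow}\int f\,d\xi$ for all bounded continuous $f\geq0$, i.e.\ (i). The main obstacle is precisely the uniform-in-$n$ control of the escaping mass $Y_{n,k}$: vague convergence is blind to mass drifting out of every bounded set, and it is exactly the assumed total-mass convergence $\xi_{n}(S)\stackrel{d}{\rightarrow}\xi(S)$, upgraded to joint convergence with $\xi_{n}$, that excludes this leakage.
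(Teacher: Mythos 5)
First, a point of comparison: the paper does not prove this statement at all — it is quoted verbatim from Kallenberg (Theorem 4.19 in \cite{Kallenberg2}) and used as a black box in the proof of the subsequent density theorem — so there is no internal proof to measure you against, and I am judging your argument on its own merits. Your direction (i)$\Rightarrow$(ii) is correct. In (ii)$\Rightarrow$(i), the two substantive steps are also correct and well executed: the upgrade of (ii) to the joint convergence $(\xi_{n},\xi_{n}(S))\stackrel{d}{\rightarrow}(\xi,\xi(S))$ (tightness of the joint laws on the Polish product, identification of any subsequential limit $(\eta,\beta)$ via $\int h_{k}\,d\eta\leq\beta$, monotone convergence to get $\eta(S)\leq\beta$, and the observation that $\eta(S)\leq\beta$ together with $\eta(S)\stackrel{d}{=}\beta$ forces $\eta(S)=\beta$ a.s.), and the triangular approximation with the escaping-mass bound $Y_{n,k}$ combined with Billingsley's convergence-together lemma.

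There is, however, a genuine gap at the very last step. What you actually establish is that $\int f\,d\xi_{n}\stackrel{d}{\rightarrow}\int f\,d\xi$ for every bounded continuous $f\geq0$, and you then invoke an unproven ``weak analogue of Theorem \ref{density-T1}'' to convert this into $\xi_{n}\stackrel{wd}{\rightarrow}\xi$. That conversion is not a formality: convergence in distribution under each integration map $\pi_{f}$ separately does not in general imply convergence in distribution for the topology generated by those maps — this is exactly why Harris's Theorem \ref{density-T1} is a theorem rather than an observation — and the paper supplies such a result only for the vague topology, not the weak one. Worse, the natural proof of your weak analogue (single-function convergence gives vague convergence by Theorem \ref{density-T1}, and mass convergence by taking $f\equiv1$, hence weak convergence) runs through precisely the implication (ii)$\Rightarrow$(i) that you are trying to prove, so as written the reduction is circular. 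The good news is that the gap can be closed using only what you have already proven, bypassing single-function convergence entirely: the map $\Phi(\mu)=(\mu,\mu(S))$ is a homeomorphism from $(\hat{\mathcal{M}}_{S},\text{weak})$ onto its image in $(\mathcal{M}_{S},\text{vague})\times\mathbb{R}_{+}$ — continuity of $\Phi^{-1}$ is just the deterministic version of your $Y_{n,k}$ estimate — and this image $\{(\mu,t):\mu(S)=t\}$ is Borel since $\mu\mapsto\mu(S)=\lim_{k}\int h_{k}\,d\mu$ is Borel. All the laws $\mathcal{L}(\Phi(\xi_{n}))$ and $\mathcal{L}(\Phi(\xi))$ are carried by this set; weak convergence of probability measures carried by a common Borel set restricts to that set in its subspace topology (apply the portmanteau condition to relatively closed sets $F_{0}=\overline{F_{0}}\cap E_{0}$); and the continuous mapping theorem applied to $\Phi^{-1}$ then turns your joint convergence directly into $\xi_{n}\stackrel{wd}{\rightarrow}\xi$.
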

We are now ready to present our next result, which is similar to Theorem \ref{density-T2}, but applies to $\hat{\mathcal{M}}_{S}$ and involves both the vague and the weak topology.
\begin{thm}QID random measures are dense in the space of independently scattered random measures, considered as random elements in $\hat{\mathcal{M}}_{S}$ endowed with the vague topology or with the weak topology, under the convergence in distribution.
\end{thm}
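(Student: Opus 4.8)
The plan is to run the construction from the proof of Theorem \ref{density-T2} essentially verbatim, and then upgrade the resulting vague convergence to weak convergence by supplying the single extra ingredient demanded by Theorem \ref{density-T"}, namely convergence of the total masses. First I would fix an a.s.\ bounded independently scattered random measure $\xi$, viewed as a random element of $\hat{\mathcal{M}}_S$. By Theorem 7.1 in \cite{Kallenberg0} it admits the unique representation $\xi\stackrel{a.s.}{=}\alpha+\sum_{j=1}^K\beta_j\delta_{s_j}$, where $\alpha$ is an atomless (hence ID) random measure, $\{s_j\}$ are the fixed atoms, and the $\beta_j$ are mutually independent and independent of $\alpha$; a.s.\ boundedness of $\xi$ forces $\alpha(S)+\sum_j\beta_j<\infty$ a.s. As in Theorem \ref{density-T2}, I would pick QID random variables $\beta_{n,j}$ with $\beta_{n,j}\stackrel{d}{\to}\beta_j$ (these exist by Theorem 4.1 in \cite{LPS}) and set $\xi_n\stackrel{a.s.}{=}\alpha+\sum_{j=1}^K\beta_{n,j}\delta_{s_j}$, which is QID for every $n$. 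The one point of care beyond Theorem \ref{density-T2} is that here each $\xi_n$ must itself lie in $\hat{\mathcal{M}}_S$, i.e.\ be a.s.\ bounded; I would secure this by coupling the approximation tightly at each level $j$, controlling $\mathbb{E}[1\wedge\beta_{n,j}]$ by $\mathbb{E}[1\wedge\beta_j]+2^{-j}$ (a summable perturbation), so that the random series $\sum_j\beta_{n,j}$ converges a.s.\ and $\xi_n(S)=\alpha(S)+\sum_j\beta_{n,j}<\infty$ a.s.

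For the vague topology the argument is exactly that of Theorem \ref{density-T2}: by Theorem \ref{density-T1} it suffices to prove $\int f\,d\xi_n\stackrel{d}{\to}\int f\,d\xi$ for every $f\in\hat{C}_S$, and after cancelling the common summand $\alpha$ this reduces to $\sum_j f(s_j)\beta_{n,j}\stackrel{d}{\to}\sum_j f(s_j)\beta_j$, which is handled by the characteristic-function computation that factorises the sum into an infinite product and passes to the limit via the continuous mapping theorem and dominated convergence. This already yields density in $\hat{\mathcal{M}}_S$ under the vague topology.

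For the weak topology I would invoke Theorem \ref{density-T"}: since $\xi$ and all $\xi_n$ are a.s.\ bounded, $\xi_n\stackrel{wd}{\to}\xi$ is equivalent to the conjunction of $\xi_n\stackrel{vd}{\to}\xi$ and $\xi_n(S)\stackrel{d}{\to}\xi(S)$. The first has just been established, so it remains to prove convergence of the total masses. Writing $\xi_n(S)=\alpha(S)+\sum_j\beta_{n,j}$ and $\xi(S)=\alpha(S)+\sum_j\beta_j$ and cancelling $\alpha(S)$, this is the \emph{same} infinite-product argument as in the vague case, but now carried out for the constant weight $f\equiv 1$ in place of a function $f\in\hat{C}_S$.

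I expect the main obstacle to be precisely this last step. The constant function $1$ is not an element of $\hat{C}_S$ when $S$ is unbounded, so the vague-topology machinery of Theorem \ref{density-T1} does not apply directly and the total mass is genuinely the extra feature the weak topology sees. Moreover, the factorisation $\mathbb{E}[\exp(i\theta\sum_j\beta_{n,j})]=\prod_j\mathbb{E}[\exp(i\theta\beta_{n,j})]$ and the limit $\prod_j\mathbb{E}[\exp(i\theta\beta_{n,j})]\to\prod_j\mathbb{E}[\exp(i\theta\beta_j)]=\mathbb{E}[\exp(i\theta\sum_j\beta_j)]$ are only legitimate once one knows $\sum_j\beta_{n,j}<\infty$ a.s.\ with enough uniformity in $n$ to justify dominated convergence on the partial sums. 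This is exactly what the summable coupling chosen in the construction delivers; with it in hand, $\beta_{n,j}\stackrel{d}{\to}\beta_j$ gives convergence of each factor, the infinite products converge, and hence $\sum_j\beta_{n,j}\stackrel{d}{\to}\sum_j\beta_j$, so $\xi_n(S)\stackrel{d}{\to}\xi(S)$ and $\xi_n\stackrel{wd}{\to}\xi$, completing the proof for both topologies.
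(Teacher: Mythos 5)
Your proposal is correct and follows essentially the same route as the paper's own proof: the Kallenberg atomic decomposition $\xi=\alpha+\sum_j\beta_j\delta_{s_j}$, QID approximation $\beta_{n,j}\stackrel{d}{\rightarrow}\beta_j$ of the atom weights, Theorem \ref{density-T1} for the vague case, and Theorem \ref{density-T"} together with the $f\equiv 1$ total-mass argument for the weak case. Your only genuine addition is the summable coupling guaranteeing that each $\xi_n$ is itself a.s.~bounded (so that it lies in $\hat{\mathcal{M}}_{S}$ and Theorem \ref{density-T"} applies); the paper uses this hypothesis implicitly without comment, so your write-up is slightly more careful on that point but the approach is the same.
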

\begin{proof}
Consider first the case of $\hat{\mathcal{M}}_{S}$ endowed with the vague topology. Then, by the same arguments as the ones used in the proof of Theorem \ref{density-T2} we obtain the result.

For the weak topology case, by the same arguments as the ones used in the proof of Theorem \ref{density-T2} we have that $\xi_{n}\stackrel{vd}{\rightarrow}\xi$. Hence, according to Theorem \ref{density-T"} it remains to prove that $\xi_{n}(S)\stackrel{d}{\rightarrow}\xi(S)$, namely that $\alpha(S)+\sum_{j=1}^{\infty}\beta_{n,j}\stackrel{d}{\rightarrow}\alpha(S)+\sum_{j=1}^{\infty}\beta_{j}$. However, this has been proved in the proof of Theorem \ref{density-T2} -- indeed, consider $f\equiv1$ and notice that $\xi(S)<\infty$ a.s.~since $\xi$ is almost surely bounded. Thus, the proof is complete.
\end{proof}
So far in this section we have only discussed measures and random measures which take only non-negative values. The main reason is because, as far as we know, there are no results in the literature on convergence of \textit{real valued} random measures, namely of random signed measures. This is a pity since the random measures considered in this work are real valued (\textit{e.g.}~see Definition \ref{defQIDr.m.} of QID random measures). We believe in fact that our density results extend to the general (signed) case, at least for bounded random measures. Proving this requires the extension of several results of the first four chapters of \cite{Kallenberg2} to the signed case. Although we leave this as a topic of further research, we present next a first important result in this direction.

Let $\mathcal{M}^{-}_{S}$ be the space of locally finite signed measures. Recall that for a signed measure $|\mu(B)|<\infty$ iff $|\mu|(B)<\infty$. Hence, $\mu\in\mathcal{M}^{-}_{S}$ if $|\mu(B)|<\infty$ for every $B\in\hat{\textbf{S}}$. The next result is an extension of Theorem 7.1 in \cite{Kallenberg0} and Lemma 2.1 in \cite{Hellmund}.
\begin{thm} Every signed measure on $S$ has an atomic decomposition:
	\begin{equation*}
\mu=\gamma+\sum_{j=1}^{N}\lambda_{j}\delta_{t_{j}}
	\end{equation*}
	where $N\in\mathbb{Z}_{+}\cup\{\infty\}$, $t_{1},t_{2},...\in S$,  $\gamma$ is an atomless measure, and $\lambda_{j}$, $j\geq1$, are non-negative constants.
	
	Moreover, any random signed measure $\xi$ on $S$ with independent increments has the following almost sure unique representation:
	\begin{equation*}
	\xi=\alpha+\sum_{j=1}^{K}\beta_{j}\delta_{s_{j}}
	\end{equation*}
	for some fixed $K\in\mathbb{Z}_{+}\cup\{\infty\}$ and $s_{1},s_{2},...\in S$, some infinitely divisible independently scattered random signed measure $\alpha$ without fixed atoms, and some $\mathbb{R}$-valued r.v.~$\beta_{j}$, $j\geq1$, which are mutually independent and independent of $\alpha$.
\end{thm}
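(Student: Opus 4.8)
The plan is to treat the two assertions separately, reducing each to the corresponding positive-measure result by passing through the Jordan decomposition and the total variation. For the deterministic decomposition I would begin from the Jordan decomposition $\mu=\mu^{+}-\mu^{-}$ into mutually singular locally finite positive measures provided by Theorem \ref{Jordan}. Each of $\mu^{+}$ and $\mu^{-}$ is a locally finite positive measure, so the deterministic part of Theorem 7.1 in \cite{Kallenberg0} (equivalently Lemma 2.1 in \cite{Hellmund}) applies and yields atomic decompositions $\mu^{+}=\gamma^{+}+\sum_{i}a_{i}\delta_{x_{i}}$ and $\mu^{-}=\gamma^{-}+\sum_{k}b_{k}\delta_{y_{k}}$ with $\gamma^{\pm}$ atomless and $a_{i},b_{k}>0$. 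Since $\mu^{+}\perp\mu^{-}$, the two families of atom locations are disjoint, so no cancellation occurs; setting $\gamma:=\gamma^{+}-\gamma^{-}$, which is again atomless, and enumerating the (at most countably many) atoms as $\{t_{j}\}$ with the corresponding masses $\lambda_{j}$ gives the stated form $\mu=\gamma+\sum_{j}\lambda_{j}\delta_{t_{j}}$.

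For the random representation I would mimic the proof of Theorem 7.1 in \cite{Kallenberg0}, systematically replacing uses of positivity by the total variation $|\xi|$. Call $s\in S$ a fixed atom of $\xi$ if $\mathbb{P}(|\xi(\{s\})|>0)>0$. To see that there are at most countably many, fix $B\in\hat{\textbf{S}}$, recall that $|\xi|(B)<\infty$ almost surely (as $|\xi(B)|<\infty$ iff $|\xi|(B)<\infty$), and stratify the candidate atoms in $B$ by the sets $\{s:\mathbb{P}(|\xi(\{s\})|>1/k)>1/m\}$; since distinct singletons are disjoint, their $\xi$-masses are independent by the independent-increments hypothesis, so an infinite stratum would force $|\xi|(B)=\infty$ almost surely via a Borel--Cantelli argument, a contradiction. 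Enumerate the fixed atoms as $s_{1},s_{2},\dots$, set $\beta_{j}:=\xi(\{s_{j}\})$, now $\mathbb{R}$-valued, and define $\alpha:=\xi-\sum_{j}\beta_{j}\delta_{s_{j}}$. The almost sure bound $\sum_{s_{j}\in B}|\beta_{j}|\le|\xi|(B)<\infty$ shows the subtracted series is a locally finite signed measure, so $\alpha$ is a bona fide random element of $\mathcal{M}_{S}^{-}$; by construction $\alpha(\{s\})=0$ almost surely for every $s$, i.e. $\alpha$ has no fixed atoms, and $\alpha$ inherits the independent-increments property from $\xi$.

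It then remains to establish independence, infinite divisibility and uniqueness. Mutual independence of $\alpha,\beta_{1},\beta_{2},\dots$ follows from the independent-increments property applied to the disjoint singletons $\{s_{j}\}$ and their complements, together with a passage to the limit over finite families of atoms. That $\alpha$ is infinitely divisible is exactly the atomless criterion: $\alpha$ is independently scattered and has no fixed atoms, so Theorem 2.2 of \cite{Prekopa}, which is formulated for real-valued (hence signed) independently scattered random measures, gives that $\alpha$ is ID. Uniqueness is automatic because the fixed atoms and their masses are intrinsic to $\xi$---the locations $s_{j}$ are precisely the points with $\mathbb{P}(|\xi(\{s\})|>0)>0$ and $\beta_{j}=\xi(\{s_{j}\})$---so $\alpha$ is determined as the diffuse remainder. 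I expect the genuinely delicate steps to be the countability of the fixed atoms and the local finiteness of $\sum_{j}\beta_{j}\delta_{s_{j}}$ in the signed setting: these are precisely the places where \cite{Kallenberg0} exploits positivity, and the arguments must be rerouted through $|\xi|$ and the independence of the singleton masses.
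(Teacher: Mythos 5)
Your proposal is correct and follows essentially the same route as the paper: Jordan decomposition plus the positive-measure atomic decomposition for the deterministic part, and for the random part, countability of the fixed atoms via independence of singleton masses and a Borel--Cantelli-type contradiction with local finiteness of $|\xi|$, followed by subtracting the atoms and invoking Theorem 2.2 of \cite{Prekopa} to get infinite divisibility of the atomless remainder. Your write-up is in fact more detailed than the paper's (explicit stratification of candidate atoms, the bound $\sum_{s_{j}\in B}|\beta_{j}|\leq|\xi|(B)$, and the disjointness of the $\mu^{+}$- and $\mu^{-}$-atoms from mutual singularity), but the underlying argument is the same.
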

\begin{proof}
	The first statement follows from the fact that every measure $\mu^{+}$ on $S$ can be written as $\mu^{+}=\gamma^{+}+\sum_{j=1}^{N^{+}}\lambda^{+}_{j}\delta_{t^{+}_{j}}$. Then by Jordan decomposition we know that there exist two measures $\mu^{+}$ and $\mu^{-}$ such that $\mu=\mu^{+}-\mu^{-}$ and $\mu^{+}\perp\mu^{-}$, hence we obtain the stated representation.
	
	Regarding the second statement, we have that $\xi$ has at most countably many fixed atoms. To see this, observe that it is sufficient to prove that for fixed bounded set and $\epsilon>0$ there cannot be infinitely many atoms $\{s_{n}|n\in\mathbb{N}\}$ s.t.~$\mathbb{P}(|\xi(\{s_{n}\})|\geq\epsilon)\geq\epsilon$. Assume that this is true then $\mathbb{P}(\limsup\limits_{n\rightarrow\infty}|\xi|(s_{n})\geq\epsilon)\geq\epsilon$, hence $\sum_{n=1}^{\infty}|\xi|(s_{n})$ cannot converge, thus we obtain a contradiction.
	
	Once we subtract the fixed atoms we are left with an atomless independently scattered random measure $\alpha$ and, by Theorem 2.2 in \cite{Prekopa}, we conclude that it is ID. Finally, the independence follows from the independently scattered property of $\xi$ and the atomless property of $\alpha$.
\end{proof}
We end this section with the aforementioned conjecture. Let $\hat{\mathcal{M}}^{-}_{S}$ be the space of bounded signed measures. Observe that this space is quite interesting. First, it is a vector space since it is closed under summation and multiplication by a constant. Moreover, the total variation defines a norm, which makes $\hat{\mathcal{M}}^{-}_{S}$ a Banach space. Indeed, this space has been intensively studied under various properties of the signed measures and of the space $S$ (\textit{e.g.}~see the Riesz-Markov-Kakutani representation theorem).
\begin{conj}
Real valued QID random measures are dense in the space of independently scattered real valued random measures, considered as random elements in $\mathcal{M}^{-}_{S}$ endowed with the vague topology, under the convergence in distribution. 

Similarly, bounded real valued QID random measures are dense in the space of bounded independently scattered real valued random measures, considered as random elements in $\hat{\mathcal{M}}^{-}_{S}$ endowed with the vague topology or with the weak topology, under the convergence in distribution.
\end{conj}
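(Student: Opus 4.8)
The plan is to mirror the proof of Theorem \ref{density-T2}, replacing the non-negative decomposition of Kallenberg by the atomic decomposition of signed random measures with independent increments established in the theorem immediately preceding this conjecture. Given an independently scattered real valued random measure $\xi$, I would write $\xi = \alpha + \sum_{j=1}^{K}\beta_{j}\delta_{s_{j}}$, where $\alpha$ is an atomless ID random signed measure and the $\beta_{j}$ are mutually independent $\mathbb{R}$-valued random variables, independent of $\alpha$. Since QID distributions are dense in all probability distributions on $\mathbb{R}$ under weak convergence (Theorem 4.1 in \cite{LPS}), for each $j$ one may choose a sequence of QID random variables $\beta_{n,j}$ with $\beta_{n,j}\stackrel{d}{\rightarrow}\beta_{j}$; the fact that $\beta_{j}$ is now signed rather than non-negative is immaterial, as the LPS density result holds on all of $\mathbb{R}$. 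Setting $\xi_{n} := \alpha + \sum_{j=1}^{K}\beta_{n,j}\delta_{s_{j}}$, each $\xi_{n}$ is an independently scattered QID random signed measure by the same reasoning as in the proof of Theorem \ref{density-T2}: $\alpha$ is ID, the $\beta_{n,j}$ are QID, and all are mutually independent.

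It then remains to show $\xi_{n}\stackrel{vd}{\rightarrow}\xi$ (and, in the bounded weak-topology case, additionally $\xi_{n}(S)\stackrel{d}{\rightarrow}\xi(S)$). Assuming the signed analogue of the convergence criterion of Theorem \ref{density-T1}, this reduces to proving $\int f\, d\xi_{n}\stackrel{d}{\rightarrow}\int f\, d\xi$ for every $f\in\hat{C}_{S}$, and since $\alpha$ is common to $\xi_{n}$ and $\xi$ and independent of the jumps, a characteristic-function argument reduces it further to
\[
\sum_{j=1}^{K} f(s_{j})\beta_{n,j}\stackrel{d}{\rightarrow}\sum_{j=1}^{K} f(s_{j})\beta_{j}.
\]
For $K$ finite on the bounded support $B$ of $f$ this is immediate from independence, $\beta_{n,j}\stackrel{d}{\rightarrow}\beta_{j}$, and the continuous mapping theorem. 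For $K$ infinite I would argue exactly as in the proof of Theorem \ref{density-T2}, via the factorized characteristic function $\prod_{j=1}^{\infty}\mathbb{E}[e^{i\theta f(s_{j})\beta_{n,j}}]$, using $|\mathbb{E}[e^{i\theta f(s_{j})\beta_{n,j}}]|\leq 1$ together with dominated convergence to pass the limit through the product; here the local finiteness of the total variation, $\sum_{j}|f(s_{j})||\beta_{n,j}|\leq \|f\|_{\infty}|\xi_{n}|(B)<\infty$ a.s., guarantees absolute convergence of the series even though the summands may change sign. The weak-topology refinement on $\hat{\mathcal{M}}^{-}_{S}$ then follows by taking $f\equiv 1$, exactly as in the bounded weak-topology argument given earlier, once the signed analogue of Theorem \ref{density-T"} is available.

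The genuine obstacle, and the reason this remains a conjecture rather than a theorem, is precisely the validity of the signed analogues of Theorems \ref{density-T1} and \ref{density-T"} on $\mathcal{M}^{-}_{S}$ and $\hat{\mathcal{M}}^{-}_{S}$. In the non-negative setting these rest on the Laplace functional $\mathbb{E}[\exp(-\int f\, d\xi)]$, which is no longer available for signed measures: when $\int f\, d\xi$ can take negative values the functional $e^{-\int f\, d\xi}$ is unbounded and fails to characterise the law. The programme would therefore be to rebuild the relevant parts of the first four chapters of \cite{Kallenberg2} around the characteristic functional $\mathbb{E}[\exp(i\theta\int f\, d\xi)]$: one must verify that $\mathcal{M}^{-}_{S}$ endowed with the vague topology is again Polish, that the family of characteristic functionals separates laws on $\mathcal{M}^{-}_{S}$, and that convergence of these functionals is equivalent to $\stackrel{vd}{\rightarrow}$, with the corresponding vague–weak comparison on bounded signed measures. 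Once these analogues are in place, the two paragraphs above complete the argument essentially verbatim; the difficulty is entirely concentrated in this continuity theory for signed random measures.
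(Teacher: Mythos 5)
The statement you are proving is, in the paper, a \emph{conjecture}: the author explicitly declines to prove it, remarking just above it that a proof ``requires the extension of several results of the first four chapters of \cite{Kallenberg2} to the signed case,'' and offering the atomic decomposition of random signed measures (the theorem immediately preceding the conjecture) only as ``a first important result in this direction.'' Your proposal does not close that gap, and you say so yourself: the two load-bearing steps of your argument --- the signed analogues of Theorem \ref{density-T1} (equivalence of $\xi_{n}\stackrel{vd}{\rightarrow}\xi$ with convergence in distribution of $\int f\,d\xi_{n}$ for all $f\in\hat{C}_{S}$) and of Theorem \ref{density-T"} (vague convergence plus convergence of total mass implies weak convergence) --- are \emph{assumed}, not proved. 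Since every other ingredient (the decomposition $\xi=\alpha+\sum_{j}\beta_{j}\delta_{s_{j}}$, the density of QID laws on $\mathbb{R}$ from Theorem 4.1 of \cite{LPS}, the factorized characteristic-function computation over the fixed atoms) transfers essentially verbatim from the proof of Theorem \ref{density-T2}, what you have produced is a correct \emph{reduction} of the conjecture to exactly the missing continuity theory for signed random measures; that reduction is precisely why the paper states it as a conjecture, so the statement is exactly as open after your argument as before it.

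To be concrete about why the assumed step is the genuine obstruction and not a formality: Kallenberg's proof of Theorem \ref{density-T1} uses more than the boundedness of the Laplace functional $\mathbb{E}[\exp(-\int f\,d\xi)]$. It uses that $\mathcal{M}_{S}$ with the vague topology is Polish, that vague limits of locally finite measures are again locally finite measures, and a tightness calculus in which tightness of $\{\xi_{n}\}$ is read off from tightness of the real random variables $\{\xi_{n}(B)\}$, $B\in\hat{\mathbf{S}}$. All three break for $\mathcal{M}^{-}_{S}$: a sequence of signed measures can have $\int f\,d\mu_{n}$ convergent for every $f\in\hat{C}_{S}$ while $|\mu_{n}|(B)\rightarrow\infty$ (positive and negative mass cancelling against every test function), so $\mathcal{M}^{-}_{S}$ is not vaguely sequentially closed, its vague topology is not known to be Polish, and control of $\xi_{n}(B)$ no longer controls $|\xi_{n}|(B)$ --- which is what your domination $\sum_{j}|f(s_{j})||\beta_{n,j}|\leq\|f\|_{\infty}|\xi_{n}|(B)$ actually needs uniformly in $n$. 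A second point you inherit from the proof of Theorem \ref{density-T2} and should not wave through ``by the same reasoning'': when $K=\infty$ on a bounded set, $\xi_{n}(B)$ is an a.s.\ convergent infinite series of independent QID variables, and the QID class, while closed under finite convolution, is \emph{not} closed under weak limits (that non-closure is the very content of the density theorem you invoke); so the claim that each $\xi_{n}$ is a QID random signed measure itself requires an argument, for instance exhibiting a pair of generating ID random measures for $\xi_{n}$, rather than an appeal to convolution closure.
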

\section*{Conclusion}
In this work we have extended the theory of QID distributions to random measures and stochastic processes. In particular, we studied the existence and uniqueness of QID r.m.~and how they are related to ID r.m.. We have seen how to define stochastic integrals w.r.t.~QID r.m.~and show for which conditions on a functions $f$ we have that $\int fd\Lambda$ is defined. Moreover, we have investigated various representations of a QID r.m.~and of a QID stochastic process. We ended with density results for QID random measures.

There are several and pivotal questions that are left open. First, we know that QID distributions are dense in the space of all probability distributions under the weak convergence and we showed that similar results hold for QID random measures. Can we extend this processes? In other words, can we prove that for any stochastic process there is a sequence of QID processes that converge to it in distribution? Or at least in finite dimensional distribution, namely without tightness?
\\ Second, what properties does a QID process satisfy? Is there a similar L\'{e}vy-It\^{o} decomposition?
\\ Third, it has been seen that QID distributions are related to the Riemann zeta function. What insights on this function can we extract from QID random measures and stochastic processes?
\section*{Acknowledgement}
The author would like to thank Fabio Bernasconi, Georgios Chalivopulos, Mikko Pakkanen and Almut Veraart for useful discussions, and the CDT in MPE and the Grantham Institute for financial support. 


\begin{thebibliography}{0}\small
	\bibitem{Berger}Berger D. On Quasi-Infinitely Divisible Distributions with a Point Mass. (2018) \textit{ArXiv}
	\bibitem{Physics2} Chaiba H., Demni, N., Mouayn, Z. Analysis of generalized negative binomial distributions attached to hyperbolic Landau levels. \textit{Journal of Mathematical Physics} 57, 072103 (2016).
	\bibitem{Cuppens} Cuppens, R. Decomposition of Multivariate Probabilities. Academic Press, New York, (1975).
	\bibitem{Meyer} Dellacherie, C., Meyer, P.A.: Probabilities and Potential. Vol. A. \textit{North-Holland Mathematics studies}, (1982).
	\bibitem{Physics1} Demni, N., Mouayn,  Z. Analysis of generalized  Poisson  distributions  associated  with  higher  Landau  levels.  \textit{Infinite Dimensional Analysis, Quantum Probability and Related Topics}, 18(04), (2015).
	\bibitem{Harris} Harris T. E. Random measures and motions of point processes. \textit{Z. Wahrsch. verw. Geb.} 18, 85-115, (1971).
	\bibitem{Hellmund} Hellmund G. Completely random signed measures. \textit{Statistics and Probability Letters}, 79 (2009) 894-898.
	\bibitem{HornSteutel} Horn R.A, and Steutel F.W. Multivariate infinitely divisible distributions. \textit{Stoch. Process. Their Appl.}, 6 (1978) 139-151.
	\bibitem{Horo}Horowitz J. Une remarque sur les bimesures. \textit{S\'{e}minaire de probabilit\'{e}s (Strasbourg)}, tome  11 (1977), p. 59-64.
	\bibitem{Horo2}Horowitz J. Gaussian random measures. \textit{Stochastic Processes and their Applications} 22, 129-133, (1986).
		\bibitem{Kallenberg0} Kallenberg O. Random Measures. Third Edition, \textit{Akademie-Verlag, Berlin}, (1983).
	\bibitem{Kallenberg} Kallenberg O. Foundations of modern probability. Second Edition. \textit{Springer}, (2001).
		\bibitem{Kallenberg2} Kallenberg O. Random Measures, Theory and Applications. \textit{Springer} (2017).
	\bibitem{Talagrand} Ledoux, M. Talagrand, M. Probability in Banach spaces: isoperimetry and processes. \textit{Springer-Verlag}, (1991).
	\bibitem{LPS} Lindner A., Pan L., Sato K. On quasi-infinitely divisible distribution, \textit{Trans. Amer. Math. Soc.}, 2018.
	\bibitem{Maharam} D.  Maharam,  An  algebraic  characterization  of  measure  algebras, \textit{Ann.  of  Math.}	48 (1947), 154-167.
	\bibitem{Musielak} Musielak, J. Orlicz spaces and modular spaces. \textit{Lect. Notes Math.}, vol. 1034. Springer (1983).
	\bibitem{Naka} Nakamura, T. A complete Riemann zeta distribution and the Riemann hypothesis. \textit{Bernoulli}, 21(1), 604-617, (2015).
	\bibitem{Naka2} Nakamura, T. Zeta distributions generated by Dirichlet series and their (quasi) infinitely divisibility, \textit{private communication}.
	\bibitem{Parthasarathy} Parthasarathy, K.R. Probability Measures on Metric Spaces. \textit{Probability and Mathematical Statistics: A Series of Monographs and Textbooks}, Academic Press, (1967).
	\bibitem{PrekopaI}Pr\'{e}kopa, A.: On stochastic set functions I. \textit{Acta Math. Acad. Sci. Hung.} 7, 215-263 (1956).
	\bibitem{Prekopa}Pr\'{e}kopa, A.: On stochastic set functions III. \textit{Acta Math. Acad. Sci. Hung.} 8, 337-400 (1957).
\bibitem{RajRos}Rajput, B.S. and Rosinski, J., Spectral representations of infinitely divisible processes. \textit{Probability Theory and Related Fields}, 82, 451-487 (1989).
\bibitem{RosDissertation}Rosinski, J. Bilinear random integrals, \textit{Dissertationes Mathematicae CCLIX} , Polish Scientific Publications, Warsaw (1987).
\bibitem{Ros}Rosinski, J. Representations and isomorphism identities for infinitely divisible processes. To appear in \textit{The Annals of Probability}, (2018+).
\bibitem{Sato} Sato, K., L\'{e}vy Processes and Infinitely Divisible Distributions, vol. 68 of \textit{Cambridge Studies in Advanced Mathematics}. Cambridge University Press (1999)
\bibitem{Talagrand2} Talagrand M. Maharam’s problem. \textit{Ann. of Math.} 168 (2008), 981-1009
\bibitem{U-W} Urbanik, K., Woyczynski, W.A.: Random integrals and Orlicz spaces. \textit{Bull. Acad. Polon. Sci.} 15, 161-169 (1967)
\bibitem{Zhang} Zhang, H., Li, B., and Kerns, G. J. A characterization of signed discrete infinitely divisible distributions. \textit{Studia Scientiarum Mathematicarum Hungarica}, 54(4), 446-470 (2017). 
\end{thebibliography}
\end{document}